\documentclass[10pt]{article}

\usepackage{enumerate,xspace}
\usepackage{amsmath,amssymb,wasysym}
\usepackage[all]{xy}
\usepackage{proof}
\usepackage[svgnames]{xcolor}
\usepackage{pict2e} 
\usepackage{tikz}
\usepackage{stmaryrd} 
\usepackage{mathtools}
\usepackage{latexsym}
\usepackage{dsfont}
\usepackage{multicol}
\usepackage{cmll}
\usepackage{multirow}
\usepackage{longtable}

\usepackage[sort,nocompress]{cite} 

\usepackage{lscape}
\usepackage{array}

\usepackage{hyperref} 
\hypersetup{
    colorlinks,
    citecolor=red,
    filecolor=red,
    linkcolor=blue,
    urlcolor=red
}

\newtheorem{observation}{Remark}[section]
\newtheorem{lemma}[observation]{Lemma}  
\newtheorem{theorem}[observation]{Theorem}
\newtheorem{definition}[observation]{Definition}

\newtheorem{proposition}[observation]{Proposition} 
\newtheorem{corollary}[observation]{Corollary}

\newcommand{\wand}{\ensuremath{
  \mathrel{\vbox{\offinterlineskip\ialign{
    \hfil##\hfil\cr
    $\star$\cr
    \noalign{\kern-1ex}
    $\vert$\cr
}}}}}

\makeatletter

\newdimen\w@dth

\def\setw@dth#1#2{\setbox\z@\hbox{\scriptsize $#1$}\w@dth=\wd\z@
\setbox\@ne\hbox{\scriptsize $#2$}\ifnum\w@dth<\wd\@ne \w@dth=\wd\@ne \fi
\advance\w@dth by 1.2em}

\def\t@^#1_#2{\allowbreak\def\n@one{#1}\def\n@two{#2}\mathrel
{\setw@dth{#1}{#2}
\mathop{\hbox to \w@dth{\rightarrowfill}}\limits
\ifx\n@one\empty\else ^{\box\z@}\fi
\ifx\n@two\empty\else _{\box\@ne}\fi}}
\def\t@@^#1{\@ifnextchar_ {\t@^{#1}}{\t@^{#1}_{}}}

\def\t@left^#1_#2{\def\n@one{#1}\def\n@two{#2}\mathrel{\setw@dth{#1}{#2}
\mathop{\hbox to \w@dth{\leftarrowfill}}\limits
\ifx\n@one\empty\else ^{\box\z@}\fi
\ifx\n@two\empty\else _{\box\@ne}\fi}}
\def\t@@left^#1{\@ifnextchar_ {\t@left^{#1}}{\t@left^{#1}_{}}}

\def\two@^#1_#2{\def\n@one{#1}\def\n@two{#2}\mathrel{\setw@dth{#1}{#2}
\mathop{\vcenter{\hbox to \w@dth{\rightarrowfill}\kern-1.7ex
                 \hbox to \w@dth{\rightarrowfill}}%
       }\limits
\ifx\n@one\empty\else ^{\box\z@}\fi
\ifx\n@two\empty\else _{\box\@ne}\fi}}
\def\tw@@^#1{\@ifnextchar_ {\two@^{#1}}{\two@^{#1}_{}}}

\def\tofr@^#1_#2{\def\n@one{#1}\def\n@two{#2}\mathrel{\setw@dth{#1}{#2}
\mathop{\vcenter{\hbox to \w@dth{\rightarrowfill}\kern-1.7ex
                 \hbox to \w@dth{\leftarrowfill}}%
       }\limits
\ifx\n@one\empty\else ^{\box\z@}\fi
\ifx\n@two\empty\else _{\box\@ne}\fi}}
\def\t@fr@^#1{\@ifnextchar_ {\tofr@^{#1}}{\tofr@^{#1}_{}}}

\newdimen\W@dth
\def\setW@dth#1#2{\setbox\z@\hbox{$#1$}\W@dth=\wd\z@
\setbox\@ne\hbox{$#2$}\ifnum\W@dth<\wd\@ne \W@dth=\wd\@ne \fi
\advance\W@dth by 1.2em}

\def\T@^#1_#2{\allowbreak\def\N@one{#1}\def\N@two{#2}\mathrel
{\setW@dth{#1}{#2}
\mathop{\hbox to \W@dth{\rightarrowfill}}\limits
\ifx\N@one\empty\else ^{\box\z@}\fi
\ifx\N@two\empty\else _{\box\@ne}\fi}}
\def\T@@^#1{\@ifnextchar_ {\T@^{#1}}{\T@^{#1}_{}}}

\def\T@left^#1_#2{\def\N@one{#1}\def\N@two{#2}\mathrel{\setW@dth{#1}{#2}
\mathop{\hbox to \W@dth{\leftarrowfill}}\limits
\ifx\N@one\empty\else ^{\box\z@}\fi
\ifx\N@two\empty\else _{\box\@ne}\fi}}
\def\T@@left^#1{\@ifnextchar_ {\T@left^{#1}}{\T@left^{#1}_{}}}

\def\Tofr@^#1_#2{\def\N@one{#1}\def\N@two{#2}\mathrel{\setW@dth{#1}{#2}
\mathop{\vcenter{\hbox to \W@dth{\rightarrowfill}\kern-1.7ex
                 \hbox to \W@dth{\leftarrowfill}}%
       }\limits
\ifx\N@one\empty\else ^{\box\z@}\fi
\ifx\N@two\empty\else _{\box\@ne}\fi}}
\def\T@fr@^#1{\@ifnextchar_ {\Tofr@^{#1}}{\Tofr@^{#1}_{}}}

\def\Two@^#1_#2{\def\N@one{#1}\def\N@two{#2}\mathrel{\setW@dth{#1}{#2}
\mathop{\vcenter{\hbox to \W@dth{\rightarrowfill}\kern-1.7ex
                 \hbox to \W@dth{\rightarrowfill}}%
       }\limits
\ifx\N@one\empty\else ^{\box\z@}\fi
\ifx\N@two\empty\else _{\box\@ne}\fi}}
\def\Tw@@^#1{\@ifnextchar_ {\Two@^{#1}}{\Two@^{#1}_{}}}

\def\to{\@ifnextchar^ {\t@@}{\t@@^{}}}
\def\from{\@ifnextchar^ {\t@@left}{\t@@left^{}}}
\def\tofro{\@ifnextchar^ {\t@fr@}{\t@fr@^{}}}
\def\To{\@ifnextchar^ {\T@@}{\T@@^{}}}
\def\From{\@ifnextchar^ {\T@@left}{\T@@left^{}}}
\def\Two{\@ifnextchar^ {\Tw@@}{\Tw@@^{}}}
\def\Tofro{\@ifnextchar^ {\T@fr@}{\T@fr@^{}}}

\makeatother

\title{Characterizing (Co)Free Dagger Categories}
\author{Jean-Simon Pacaud Lemay}

\begin{document}
\allowdisplaybreaks

\maketitle

\begin{abstract} For any category, there exists both a free dagger category and a cofree dagger category over it. A natural question to ask is: given a dagger category, how can we tell if it is free or cofree without specifying an external base category? In this paper, we provide characterizations of both free dagger categories and cofree dagger categories via internal dagger category structure. To characterize cofree dagger categories, we use rectangular bands and show that a dagger category is cofree if and only if it is enriched over rectangular bands. For free dagger categories, we define the notion of a zigzag dagger category, and then show that a dagger category is free if and only if it is a zigzag dagger category. We also show that free dagger categories can be characterized as the coalgebras of the induced comonad from the free dagger category adjunction, and similarly that cofree free dagger categories can be characterized as the algebras of the induced monad from the cofree dagger category adjunction. \end{abstract}



\section{Introduction}

The theory of dagger categories is now a firmly established branch of category theory with a deep and rich literature, and plays a central role in categorical quantum mechanics \cite{heunen2009categorical}. There are many interesting examples of dagger categories including the category of (finite dimensional) Hilbert spaces, the category of matrices over a (involutive) ring, the category of relations, the category of partial injections, groupoids, and one object (small) dagger categories are precisely involutive monoids. 

There has been a recent research program of characterizing certain important dagger categories using only dagger category theoretic axioms. Indeed, famously, Lawvere characterized the category of sets in showing that if a category satisfied a certain reasonable list of axioms, then it must be equivalent to the category of sets \cite{lawvere1964elementary}. Heunen and Kornell did the same for the category of Hilbert spaces from a dagger category theory perspective, in that they showed that if a dagger category satisfied a list of purely dagger categorical axioms, then it must equivalent (as a dagger category) to the category of Hilbert spaces \cite{heunen2022axioms}. This has led to characterizations of other key dagger categories such as the category of (finite dimensional) Hilbert spaces and linear contractions \cite{di2025dagger,https://doi.org/10.1112/blms.13010}, the category of sets and relations \cite{kornell2023axioms}, as well as alternative approaches which capture not only the category of complex Hilbert spaces, but also the category of real Hilbert spaces and the category of quaternionic Hilbert spaces \cite{paseka2025dagger,lack2025characterisation}. This paper follows along this line of research and aims to provide characterizations of two special kinds of dagger categories: \textit{free} dagger categories and \textit{cofree} dagger categories. 

Indeed, the forgetful functor from the (large) category of categories to the (large) category of dagger categories has both a left adjoint and a right adjoint \cite{heunen2009categorical}. As such, for any category, there exists both a free dagger category and cofree dagger category over it. The free dagger category is built by concatenating spans of maps of the base category giving us ``zigzags" (Sec \ref{sec:free-canonical}). Free dagger categories have been used to show completeness of finite dimensional Hilbert spaces \cite{selinger2012finite}, formulating types for quantum computing \cite{duncan2006types}, describing reversible information effects \cite{heunen2018reversible}, and have been implemented in Distributional Compositional Python (DisCoPy) \cite{toumi2023discopy}. Moreover, the category of relations and the category of partial injections are both quotients of the free dagger category over the category of sets \cite{heunen2009categorical}. On the other hand, the cofree dagger is simply built using pairs of maps going in opposite directions of the base category (Sec \ref{sec:cofree-canonical}). Cofree categories have been used to characterize ambilimits \cite{heunen2018limits,karvonen2019way}, build dagger adjoints from Frobenius functors \cite{heunen2016monads}, for categorical semantics of reversible computing \cite{jacobs2009categorical}, and generalizing Drazin inverses in dagger categories \cite{cockett2025dagger}. 

The objective of this paper is to answer the question: given a dagger category, how can we check whether it is (co)free without specifying an external base category? As such, we provide dagger categoric axioms on a dagger category which characterize precisely when it is (co)free. Using these axioms, we are able to extract a base category and show that our dagger category is in fact (co)free over it. 

Starting with cofree dagger categories (Sec \ref{sec:cofree}), we show that these can be characterized using rectangular bands\footnote{We thank Steve Lack for pointing us towards this terminology for rectangular bands.} \cite{clifford1954bands,kimura1958structure}, which are idempotent semigroups satisfying a special cancellation axiom. We define what is means for a dagger category to be enriched over rectangular bands (Def \ref{def:rect-band-dagger-cat}) and then show that this is equivalent to being cofree (Thm \ref{thm:cofree=rect}). Here, the base category is extracted by quotienting the rectangular band enriched dagger category by Green's $\mathcal{L}$-relation (Lemma \ref{lemma:DL}). 

To characterize free dagger categories (Sec \ref{sec:free}), we define a zigzag dagger category as a dagger category with a chosen special class of maps, called zigs and whose adjoints are called zag, such that every map decomposes into an alternating list of zigs and zags (Def \ref{def:zigzag-dag-cat}). We then show that zigzag dagger categories correspond precisely to free dagger categories (Thm \ref{thm:free=zigzag}). This time the base category is the subcategory of zig maps (Prop \ref{prop:zigzag-to-free}). 

Furthermore, we also characterize cofree dagger categories as monad algebras (Sec \ref{sec:alg}) and free dagger categories as comonad coalgebras (\ref{sec:coalg}). Indeed, the adjunction given by the free dagger category construction induces a comonad on the category of dagger categories, and we show that the coalgebras of this comonad are precisely zigzag dagger categories, hence free dagger categories (Thm \ref{thm:full-free}). Similarly, the adjunction given by the cofree dagger category construction this time induces a monad on the category of dagger categories, and we show that the algebras of this monad are precisely rectangular band enriched dagger categories, hence cofree dagger categories (Thm \ref{thm:full-cofree}). 

\section{Notation for Dagger Categories}

While we assume that the reader is familiar with the basics of category theory, including categories and functors, and has some familiarity with dagger categories, we have written this paper to be accessible to those with only basic knowledge of dagger categories. In this section, we briefly recall the definition of dagger categories and dagger functors to setup the notation and terminology that we will use in this paper. For an in-depth introduction to dagger categories, we refer the reader to \cite{heunen2009categorical,karvonen2019way}. 

Arbitrary categories will be denoted by capital letters using the font $\mathbb{C}$, $\mathbb{D}$, $\mathbb{B}$, etc. For an arbitrary category $\mathbb{C}$, we denote its class of objects as $\mathsf{Obj}(\mathbb{C})$, where we write objects using capital letters using the font $A,B,C$, etc., and denote its class of maps $\mathsf{Map}(\mathbb{X})$, where we write maps using miniscule letter using the font $f,g,h$, etc. Homsets will be denoted as $\mathbb{C}(A,B)$ and a map $f\in \mathbb{C}(A,B)$ will be denoted as an arrow ${f: A \to B}$. Identity maps will be denoted as $\mathsf{id}_A: A \to A$ (where we may sometimes drop the subscript and simply write $\mathsf{id}$ when there is no confusion) and we use classical applicative notation for composition $\circ$. Arbitrary functors will be denoted by capital letters using the font $\mathcal{F}$ and will also be denoted by arrows $\mathcal{F}: \mathbb{C} \to \mathbb{D}$. Application of a functor $\mathcal{F}$ will be denoted as $\mathcal{F}(-)$, where recall that a functor assigns every object $A$ to an object $\mathcal{F}(A)$, every map $f: A \to B$ to a map ${\mathcal{F}(f): \mathcal{F}(A) \to \mathcal{F}(B)}$, and is also required to preserve composition and identities: 
\begin{align}\label{def:functor}
\mathcal{F}(\mathsf{id}_A) = \mathsf{id}_{\mathcal{F}(A)} && \mathcal{F}(g \circ f) = \mathcal{F}(g) \circ \mathcal{F}(f)
\end{align}
We will also use $\circ$ for the composition of functors and write $\mathcal{G}\mathcal{F}(-)$ for application of the composite functor $\mathcal{G} \circ \mathcal{F}$. For an arbitrary category $\mathbb{C}$, we will denote its identity functor by $\mathsf{id}_\mathbb{C}: \mathbb{C} \to \mathbb{C}$. 

Now recall that we say that a functor ${\mathcal{F}: \mathbb{C} \to \mathbb{D}}$ is an \textbf{isomorphism of categories}, or simply an isomorphism, if there is a functor $\mathcal{F}^{-1}: \mathbb{D} \to \mathbb{C}$ such that $\mathcal{F} \circ \mathcal{F}^{-1} = \mathsf{id}_\mathbb{C}$ and $\mathcal{F}^{-1} \circ \mathcal{F} = \mathsf{id}_\mathbb{D}$. As a shorthand, we will write $\mathbb{C} \cong \mathbb{D}$ to say that there is an isomorphism of categories between these categories, and $\mathbb{C} \simeq \mathbb{D}$ if there is simply en equivalence between them. Also recall that we say that a functor is ${\mathcal{F}: \mathbb{C} \to \mathbb{D}}$ is full if it is surjective on maps, faithful if it is injective on maps, and essentially surjective if every object in $\mathbb{D}$ is isomorphic to an application of $\mathcal{F}$ to an object of $\mathbb{C}$. Then recall that $\mathcal{F}: \mathbb{C} \to \mathbb{D}$ induces an equivalence when it is full, faithful, and essentially surjective. 

Now a \textbf{dagger} on a category $\mathbb{D}$ is a contravariant functor $\dagger: \mathbb{D} \to \mathbb{D}$ which is the identity on objects and involutive. Equivalently, a dagger can be described as associating each map $f: A \to B$ to a map of dual type ${f^\dagger: B \to A}$, called the \textbf{adjoint} of $f$, and such that the following equalities hold: 
\begin{align}\label{eq:dagger}
\mathsf{id}_A^\dagger= \mathsf{id}_A && (g \circ f)^\dagger = f^\dagger \circ g^\dagger && f^{\dagger\dagger} = f 
\end{align}
Then a \textbf{dagger category} is a category equipped with a chosen dagger. We will abuse notation slightly and simply use $\dagger$ for the dagger of all our dagger categories. To distinguish between dagger categories and mere categories, we will denote dagger categories as pairs $(\mathbb{D}, \dagger)$, where $\mathbb{D}$ is the underlying category. For lists of many interesting examples of dagger categories, see \cite[Ex 3.1]{heunen2009categorical} and \cite[Ex 2.1.2]{karvonen2019way}. 

For dagger categories $(\mathbb{D}_1, \dagger)$ and $(\mathbb{D}_2, \dagger)$, a \textbf{dagger functor} ${\mathcal{F}: (\mathbb{D}_1, \dagger) \to (\mathbb{D}_2, \dagger)}$ is a functor between the underlying categories $\mathcal{F}: \mathbb{D}_1 \to \mathbb{D}_2$ which commutes with the dagger: 
\begin{align}\label{def:dagfun}
\mathcal{F}(f^\dagger) = \mathcal{F}(f)^\dagger
\end{align}
By a \textbf{dagger isomorphism} we mean a dagger functor ${\mathcal{F}: (\mathbb{D}_1, \dagger) \to (\mathbb{D}_2, \dagger)}$ such that $\mathcal{F}$ is also an isomorphism of the underlying categories. In this case, it comes for free that $\mathcal{F}^{-1}: (\mathbb{D}_2, \dagger) \to (\mathbb{D}_1, \dagger)$ is also a dagger functor. As shorthand, we will often simply write $(\mathbb{D}_1, \dagger) \cong (\mathbb{D}_2, \dagger)$ to say that there is a dagger isomorphism between these dagger categories. 

Lastly, we will denote $\mathsf{CAT}$ to be the (large) category of categories and functors between them, $\mathsf{DAG}$ to be the (large) category of dagger categories and dagger functors between them, and $\mathsf{U}: \mathsf{DAG} \to \mathsf{CAT}$ to be the forgetful functor, which is explicitly defined on objects (dagger categories) and maps (dagger functors) as follows: 
\begin{align}
\mathsf{U}(\mathbb{D}, \dagger) = \mathbb{D} && \mathsf{U}(\mathcal{F}) = \mathcal{F}
\end{align}

\section{Cofree Dagger Categories}\label{sec:cofree}

In this section, we give base independent characterizations of cofree daggers categories. First, let us review the definition of a cofree dagger category via its universal property over a base category. 

\begin{definition} A \textbf{cofree dagger category} over a category $\mathbb{B}$ is a triple $(\mathbb{D}, \dagger, \mathcal{E})$ consisting of a dagger category $(\mathbb{D}, \dagger)$ and a functor $\mathcal{E}: \mathbb{D} \to \mathbb{B}$, such that for every dagger category $(\mathbb{C}, \dagger)$ and functor $\mathcal{F}: \mathbb{C} \to \mathbb{B}$, there exists a unique dagger functor $\mathcal{F}^\flat: (\mathbb{C}, \dagger) \to (\mathbb{D}, \dagger)$ such that the following diagram commutes: 
\begin{equation}\begin{gathered}\label{diag:cofree}  \xymatrixcolsep{5pc}\xymatrix{\mathbb{C}\ar[dr]_-{\mathcal{F}}  \ar@{-->}[r]^-{\exists! ~ \mathcal{F}^\flat}  & \mathbb{D} \ar[d]^-{\mathcal{E}} \\
  & \mathbb{B} }
\end{gathered}\end{equation}
A dagger category $(\mathbb{D}, \dagger)$ is said to be \textbf{cofree} if there exists a category $\mathbb{B}$ and a functor $\mathcal{E}: \mathbb{D} \to \mathbb{B}$ such that $(\mathbb{D}, \dagger, \mathcal{E})$ is a cofree dagger category over $\mathbb{B}$. In this case, we call $\mathbb{B}$ a \textbf{base category} of the cofree dagger category $(\mathbb{D}, \dagger)$.
\end{definition}

As we will review shortly below in Sec \ref{sec:cofree-canonical}, for any category $\mathbb{B}$, there exists a cofree dagger category over it. Before doing so, we record three important facts about cofree dagger categories that follow immediately from the above universal property. The first is that any cofree dagger categories over the same base category are dagger isomorphic. The second is that dagger isomorphisms preserve being cofree. The third is that being a base of a cofree dagger category is also preserved by isomorphisms. 

\begin{lemma}\label{lemma:cofree-iso} Let $\mathbb{B}$ be a category. 
\begin{enumerate}[{\em (i)}]
\item If $(\mathbb{D}_1, \dagger, \mathcal{E}_1)$ and $(\mathbb{D}_2, \dagger, \mathcal{E}_2)$ are both cofree dagger categories over $\mathbb{B}$, then there exists a unique dagger isomorphism ${\mathcal{F}: (\mathbb{D}_1, \dagger) \to (\mathbb{D}_2, \dagger)}$ such that the following diagram commutes: 
\begin{equation}\begin{gathered}\label{diag:cofree-iso}
\xymatrixcolsep{5pc}\xymatrix{\mathbb{D}_1 \ar[dr]_-{\mathcal{E}_1}  \ar@{-->}[rr]^-{\exists! ~ \mathcal{F}}  && \mathbb{D}_2 \ar[dl]^-{\mathcal{E}_2} \\
  & \mathbb{B} }
  \end{gathered}\end{equation}
\item If $(\mathbb{D}_1, \dagger, \mathcal{E}_1)$ is a cofree dagger category over $\mathbb{B}$ and ${\mathcal{F}: (\mathbb{D}_1, \dagger) \to (\mathbb{D}_2, \dagger)}$ is a dagger isomorphism, then $(\mathbb{D}_2, \dagger, \mathcal{E}_1 \circ \mathcal{F}^{-1})$ is a cofree dagger category over $\mathbb{B}$. 
\item If $(\mathbb{D}, \dagger, \mathcal{E})$ is a cofree dagger category over $\mathbb{B}$ and $\mathcal{F}: \mathbb{B} \to \mathbb{B}_2$ is an isomorphism, then $(\mathbb{D}, \dagger, \mathcal{F} \circ \mathcal{E})$ is a cofree dagger category over $\mathbb{B}_2$. 
\end{enumerate}
\end{lemma}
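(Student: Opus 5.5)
All three parts are standard consequences of the universal property in Definition of a cofree dagger category, argued exactly as for terminal objects in a comma category. The key tool throughout is the uniqueness clause applied to the identity: if $(\mathbb{D}, \dagger, \mathcal{E})$ is cofree over $\mathbb{B}$, then any dagger functor $\mathcal{G}: (\mathbb{D}, \dagger) \to (\mathbb{D}, \dagger)$ with $\mathcal{E} \circ \mathcal{G} = \mathcal{E}$ must equal $\mathsf{id}_\mathbb{D}$. This holds because both $\mathcal{G}$ and $\mathsf{id}_\mathbb{D}$ are dagger functors factoring $\mathcal{E}$ through $\mathcal{E}$, so both equal $\mathcal{E}^\flat$.

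For (i), I apply the universal property of $(\mathbb{D}_2, \dagger, \mathcal{E}_2)$ to the dagger category $(\mathbb{D}_1, \dagger)$ and the functor $\mathcal{E}_1$. This gives a unique dagger functor $\mathcal{F} = \mathcal{E}_1^\flat: (\mathbb{D}_1,\dagger) \to (\mathbb{D}_2,\dagger)$ with $\mathcal{E}_2 \circ \mathcal{F} = \mathcal{E}_1$. Symmetrically, I obtain $\mathcal{G}: (\mathbb{D}_2,\dagger) \to (\mathbb{D}_1,\dagger)$ with $\mathcal{E}_1 \circ \mathcal{G} = \mathcal{E}_2$. The composite $\mathcal{G}\circ\mathcal{F}$ is a dagger functor, since composites of dagger functors are dagger functors, and it satisfies $\mathcal{E}_1 \circ \mathcal{G} \circ \mathcal{F} = \mathcal{E}_1$. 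By the observation above, $\mathcal{G}\circ \mathcal{F} = \mathsf{id}_{\mathbb{D}_1}$, and likewise $\mathcal{F}\circ\mathcal{G} = \mathsf{id}_{\mathbb{D}_2}$. So $\mathcal{F}$ is a dagger isomorphism making \eqref{diag:cofree-iso} commute. It is unique among all dagger functors making the triangle commute, and in particular among dagger isomorphisms, by the uniqueness of $\mathcal{E}_1^\flat$.

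For (ii), let $(\mathbb{C},\dagger)$ be a dagger category and $\mathcal{H}: \mathbb{C} \to \mathbb{B}$ a functor. For existence, take $\mathcal{H}^\flat: (\mathbb{C},\dagger) \to (\mathbb{D}_1,\dagger)$ from the cofreeness of $\mathbb{D}_1$ and set $\mathcal{K} = \mathcal{F}\circ \mathcal{H}^\flat$. This is a dagger functor, and $\mathcal{E}_1 \circ \mathcal{F}^{-1}\circ\mathcal{K} = \mathcal{E}_1 \circ \mathcal{H}^\flat = \mathcal{H}$. For uniqueness, suppose $\mathcal{K}'$ is another dagger functor with $\mathcal{E}_1\circ\mathcal{F}^{-1}\circ\mathcal{K}' = \mathcal{H}$. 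Since $\mathcal{F}^{-1}$ is a dagger functor (as noted in Section 2), $\mathcal{F}^{-1}\circ\mathcal{K}'$ is a dagger functor factoring $\mathcal{H}$ through $\mathcal{E}_1$. Hence $\mathcal{F}^{-1}\circ\mathcal{K}' = \mathcal{H}^\flat$, and therefore $\mathcal{K}' = \mathcal{F}\circ\mathcal{H}^\flat = \mathcal{K}$.

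For (iii), let $(\mathbb{C},\dagger)$ be a dagger category and $\mathcal{H}: \mathbb{C}\to\mathbb{B}_2$ a functor. Apply the universal property of $\mathbb{D}$ to $\mathcal{F}^{-1}\circ\mathcal{H}: \mathbb{C} \to \mathbb{B}$, giving a dagger functor $(\mathcal{F}^{-1}\circ\mathcal{H})^\flat$. Then $\mathcal{F}\circ\mathcal{E}\circ(\mathcal{F}^{-1}\circ\mathcal{H})^\flat = \mathcal{F}\circ\mathcal{F}^{-1}\circ\mathcal{H} = \mathcal{H}$, which gives existence. For uniqueness, any dagger functor $\mathcal{K}$ with $\mathcal{F}\circ\mathcal{E}\circ\mathcal{K} = \mathcal{H}$ satisfies $\mathcal{E}\circ\mathcal{K} = \mathcal{F}^{-1}\circ\mathcal{H}$, so $\mathcal{K} = (\mathcal{F}^{-1}\circ\mathcal{H})^\flat$.

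None of these steps is a real obstacle. The only point needing care is in (i), where uniqueness must be read as unique among dagger functors making the triangle commute, which then automatically holds among dagger isomorphisms. A second small point is that the inverse of a dagger isomorphism is a dagger functor, which is used in (ii) and was recorded in Section 2.
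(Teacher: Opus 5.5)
Your proposal is correct, and it is the standard universal-property argument the paper has in mind. The paper gives no written proof and only says these facts ``follow immediately from the above universal property''; your argument (identity-uniqueness for (i), transport along $\mathcal{F}$ or $\mathcal{F}^{-1}$ for (ii) and (iii)) fills this in soundly.
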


However, it is important to stress that a dagger category can be cofree over different non-equivalent base categories. Indeed, a simple example comes from the fact that a cofree dagger category over a specified based is also cofree over the opposite category of its specified base category (and of course their are many examples of categories which are not equivalent to their opposite category). 

\begin{lemma}\label{lemma:cofree-opp-base} Let $(\mathbb{D}, \dagger, \mathcal{E})$ be a cofree dagger category over a category $\mathbb{B}$. Define the functor $\mathcal{E}^o: \mathbb{D} \to \mathbb{B}^{op}$ on objects as $\mathcal{E}^o(A) = \mathcal{E}(A)$ and on maps as $\mathcal{E}^o(f) = \mathcal{E}(f^\dagger)$. Then $(\mathbb{D}, \dagger, \mathcal{E}^o)$ is a cofree dagger category over $\mathbb{B}^{op}$.
\end{lemma}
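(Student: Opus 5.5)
We verify the universal property directly, reducing it to the one for $(\mathbb{D}, \dagger, \mathcal{E})$. The device throughout is a ``twisting'' of functors by the dagger: for any dagger category $(\mathbb{C}, \dagger)$ and any functor $\mathcal{G}: \mathbb{C} \to \mathbb{B}$ (or into $\mathbb{B}^{op}$), define $\mathcal{G}^o$ on objects as $\mathcal{G}$ and on maps by $\mathcal{G}^o(f) = \mathcal{G}(f^\dagger)$, with codomain switched between $\mathbb{B}$ and $\mathbb{B}^{op}$.

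First we check that $\mathcal{E}^o$ is a functor $\mathbb{D} \to \mathbb{B}^{op}$. For $f: A \to B$ we have $f^\dagger: B \to A$, so $\mathcal{E}(f^\dagger): \mathcal{E}(B) \to \mathcal{E}(A)$ in $\mathbb{B}$, which is a map $\mathcal{E}(A) \to \mathcal{E}(B)$ in $\mathbb{B}^{op}$. Identities are preserved since $\mathsf{id}^\dagger = \mathsf{id}$. For composition,
\[
\mathcal{E}((g\circ f)^\dagger) = \mathcal{E}(f^\dagger)\circ_{\mathbb{B}} \mathcal{E}(g^\dagger) = \mathcal{E}(g^\dagger) \circ_{\mathbb{B}^{op}} \mathcal{E}(f^\dagger).
\]
The same computation shows that the twist of any functor $\mathbb{C} \to \mathbb{B}^{op}$ is a functor $\mathbb{C} \to \mathbb{B}$. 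By $f^{\dagger\dagger} = f$, twisting is involutive: $(\mathcal{G}^o)^o = \mathcal{G}$. Moreover, if $\mathcal{H}: (\mathbb{C}, \dagger) \to (\mathbb{D}, \dagger)$ is a dagger functor, then $(\mathcal{E} \circ \mathcal{H})^o = \mathcal{E}^o \circ \mathcal{H}$, because
\[
\mathcal{E}(\mathcal{H}(f^\dagger)) = \mathcal{E}(\mathcal{H}(f)^\dagger).
\]

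Now let $(\mathbb{C}, \dagger)$ be a dagger category and $\mathcal{F}: \mathbb{C} \to \mathbb{B}^{op}$ a functor. Then $\mathcal{F}^o: \mathbb{C} \to \mathbb{B}$ is a functor, so the universal property of $(\mathbb{D}, \dagger, \mathcal{E})$ gives a unique dagger functor $(\mathcal{F}^o)^\flat$ with $\mathcal{E} \circ (\mathcal{F}^o)^\flat = \mathcal{F}^o$. Twisting both sides and using the previous paragraph yields
\[
\mathcal{E}^o \circ (\mathcal{F}^o)^\flat = (\mathcal{F}^o)^o = \mathcal{F},
\]
which gives existence. For uniqueness, suppose $\mathcal{H}$ is any dagger functor with $\mathcal{E}^o \circ \mathcal{H} = \mathcal{F}$. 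Twisting gives $\mathcal{E} \circ \mathcal{H} = \mathcal{F}^o$, so $\mathcal{H} = (\mathcal{F}^o)^\flat$ by uniqueness in the original universal property.

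The only step requiring care is the identity $(\mathcal{E}\circ\mathcal{H})^o = \mathcal{E}^o\circ\mathcal{H}$. This is exactly where $\mathcal{H}$ being a dagger functor is needed, and it is what makes the existence and uniqueness transfer. Everything else is bookkeeping of directions in $\mathbb{B}^{op}$.
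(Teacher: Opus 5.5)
Your proof is correct and follows essentially the paper's route. The paper writes $\mathcal{E}^o = \mathcal{E}^{op}\circ\dagger$, applies the universal property to the dagger category $(\mathbb{C}^{op},\dagger)$ with $\mathcal{F}^{op}$, and takes ${\mathcal{F}^{op}}^\flat\circ\dagger$ as the mediating functor; you instead apply the universal property directly to $(\mathbb{C},\dagger)$ with the twisted functor $\mathcal{F}^o=\mathcal{F}^{op}\circ\dagger$, which gives the same functor by uniqueness, and your checks via $(\mathcal{E}\circ\mathcal{H})^o=\mathcal{E}^o\circ\mathcal{H}$ and involutivity of twisting spell out the triangle and uniqueness verifications that the paper calls straightforward.
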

\begin{proof} First note that for every dagger category $(\mathbb{D}, \dagger)$, its opposite category is also a dagger category $(\mathbb{D}^{op}, \dagger)$ with the same dagger, and (abusing notation slightly) the dagger becomes a dagger isomorphism $\dagger: (\mathbb{D}, \dagger) \to (\mathbb{D}^{op}, \dagger)$. Also recall that for every functor $\mathcal{E}: \mathbb{D} \to \mathbb{B}$, we get a functor on the opposite categories $\mathcal{E}^{op}: \mathbb{D}^{op} \to \mathbb{B}^{op}$. Observe that $\mathcal{E}^o$ is then precisely the composite $\mathcal{E}^o := \mathcal{E}^{op} \circ \dagger$, and thus a well-defined functor. Now let $(\mathbb{C}, \dagger)$ be a dagger category and suppose we have a functor $\mathcal{F}: \mathbb{C} \to \mathbb{B}^{op}$. Thus we have a functor $\mathcal{F}^{op}: \mathbb{C}^{op} \to \mathbb{B}$ (since recall that ${\mathbb{C}^{op}}^{op} = \mathcal{C}$). Therefore we have a dagger functor ${\mathcal{F}^{op}}^\flat: (\mathbb{C}^{op}, \dagger) \to (\mathbb{D}, \dagger)$. Precomposing it by the dagger gives us a dagger functor ${\mathcal{F}^{op}}^\flat \circ \dagger: (\mathbb{C}, \dagger) \to (\mathbb{D}, \dagger)$. From here, it is straightforward to check that $\mathcal{E}^o \circ {\mathcal{F}^{op}}^\flat \circ \dagger = \mathcal{F}$ and it is the unique such dagger functor. Thus we conclude that $(\mathbb{D}, \dagger, \mathcal{E}^o)$ is a cofree dagger category over $\mathbb{B}^{op}$. 
\end{proof}

Another example illustrating this, which will play an important role below, comes from the fact that a cofree dagger category over a specified base category is also cofree over the image category of the functor down to its specified base. So let $(\mathbb{D},\dagger, \mathcal{E})$ be a cofree dagger category over a category $\mathbb{B}$. Let $\mathsf{Im}[\mathcal{E}]$ be the image category of $\mathcal{E}$, that is, the category whose objects are of form $\mathcal{E}(A)$ for all $A \in \mathsf{Obj}(\mathbb{D})$ and whose maps are of the form ${\mathcal{E}(f): \mathcal{E}(A) \to \mathcal{E}(B)}$ for all $f: A \to B \in \mathsf{Map}(\mathbb{D})$. Let $\mathcal{I}_\mathcal{E}: \mathsf{Im}[\mathcal{E}] \to \mathbb{B}$ be the inclusion functor and $\overline{\mathcal{E}}: \mathbb{D} \to \mathsf{Im}[\mathcal{E}]$ be the obvious image functor, so we have that $\mathcal{E} = \mathcal{I}_\mathcal{E} \circ \overline{\mathcal{E}}$. 

\begin{lemma}\label{lemma:cofree-im-base} Let $(\mathbb{D},\dagger, \mathcal{E})$ be a cofree dagger category over a category $\mathbb{B}$. Then $(\mathbb{D},\dagger, \overline{\mathcal{E}})$ is a cofree dagger category over $\mathsf{Im}[\mathcal{E}]$. 
\end{lemma}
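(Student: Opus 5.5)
The plan is to transfer the universal property of $(\mathbb{D},\dagger,\mathcal{E})$ over $\mathbb{B}$ to $\mathsf{Im}[\mathcal{E}]$. The key fact is that the inclusion $\mathcal{I}_\mathcal{E}: \mathsf{Im}[\mathcal{E}] \to \mathbb{B}$ is injective on objects and on maps, so it is a monomorphism in $\mathsf{CAT}$. I will also use the factorization $\mathcal{E} = \mathcal{I}_\mathcal{E} \circ \overline{\mathcal{E}}$.

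\emph{Existence.} Fix a dagger category $(\mathbb{C},\dagger)$ and a functor $\mathcal{F}: \mathbb{C} \to \mathsf{Im}[\mathcal{E}]$. The composite $\mathcal{I}_\mathcal{E} \circ \mathcal{F}: \mathbb{C} \to \mathbb{B}$ is a functor. By the cofreeness of $(\mathbb{D},\dagger,\mathcal{E})$ there is a unique dagger functor $\mathcal{G} := (\mathcal{I}_\mathcal{E}\circ\mathcal{F})^\flat: (\mathbb{C},\dagger) \to (\mathbb{D},\dagger)$ with $\mathcal{E}\circ\mathcal{G} = \mathcal{I}_\mathcal{E}\circ\mathcal{F}$. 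Rewriting the left side gives $\mathcal{I}_\mathcal{E}\circ\overline{\mathcal{E}}\circ\mathcal{G} = \mathcal{I}_\mathcal{E}\circ\mathcal{F}$. Since $\mathcal{I}_\mathcal{E}$ is injective on objects and maps, cancelling it yields $\overline{\mathcal{E}}\circ\mathcal{G} = \mathcal{F}$. So $\mathcal{G}$ serves as $\mathcal{F}^\flat$ for $\overline{\mathcal{E}}$.

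\emph{Uniqueness.} Suppose $\mathcal{H}: (\mathbb{C},\dagger)\to(\mathbb{D},\dagger)$ is a dagger functor with $\overline{\mathcal{E}}\circ\mathcal{H} = \mathcal{F}$. Postcomposing with $\mathcal{I}_\mathcal{E}$ gives $\mathcal{E}\circ\mathcal{H} = \mathcal{I}_\mathcal{E}\circ\mathcal{F}$. By the uniqueness clause of the universal property of $(\mathbb{D},\dagger,\mathcal{E})$, this forces $\mathcal{H} = \mathcal{G}$.

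The only point needing care is that $\mathsf{Im}[\mathcal{E}]$ is a genuine category, so that $\overline{\mathcal{E}}$ and $\mathcal{I}_\mathcal{E}$ are functors. Identities are clearly present. For composites, if $\mathcal{E}$ identifies objects, a composite $\mathcal{E}(g)\circ\mathcal{E}(f)$ with $\mathrm{cod}(f)\neq\mathrm{dom}(g)$ might not obviously be of the form $\mathcal{E}(h)$. I expect this to be the main obstacle, and I plan to sidestep it. I would take $\mathsf{Im}[\mathcal{E}]$ to be the subcategory of $\mathbb{B}$ generated by the objects $\mathcal{E}(A)$ and maps $\mathcal{E}(f)$, which coincides with the description above whenever that description is already closed under composition. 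The argument uses only two facts: $\mathcal{E}$ factors through $\mathcal{I}_\mathcal{E}$, and $\mathcal{I}_\mathcal{E}$ is injective on objects and maps. Both hold for the generated subcategory, so existence and uniqueness go through unchanged. This shows $(\mathbb{D},\dagger,\overline{\mathcal{E}})$ is a cofree dagger category over $\mathsf{Im}[\mathcal{E}]$.
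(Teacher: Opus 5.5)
Your proof is correct and is the paper's argument: apply the universal property to $\mathcal{I}_\mathcal{E}\circ\mathcal{F}$, then use that $\mathcal{I}_\mathcal{E}$ is injective on objects and maps to cancel it (existence) and to transfer uniqueness, which spells out the ``straightforward to check'' step the paper leaves implicit. Your worry about closure under composition does not arise here: $\mathcal{E}=\mathcal{E}_\mathbb{B}\circ\mathcal{E}^\flat$ with $\mathcal{E}^\flat$ a dagger isomorphism (Cor~\ref{cor:cofree-iso.1}), so $\mathcal{E}$ is bijective on objects (Lemma~\ref{lemma:cofree-objects}), hence the image is already a subcategory and equals your generated one.
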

\begin{proof} Let  $(\mathbb{C}, \dagger)$ be a dagger category and suppose we have a functor $\mathcal{F}: \mathbb{C} \to \mathsf{Im}[\mathcal{E}]$. Then we have functor $\mathcal{I} \circ \mathcal{F}: \mathbb{C} \to \mathbb{B}$, and thus we obtain a dagger functor $(\mathcal{I} \circ \mathcal{F})^\flat: (\mathbb{C}, \dagger) \to (\mathbb{D}, \dagger)$. From here, it is straightforward to check that $\overline{\mathcal{E}} \circ (\mathcal{I} \circ \mathcal{F})^\flat = \mathcal{F}$ and and it is the unique such dagger functor. Thus we conclude that $(\mathbb{D},\dagger, \overline{\mathcal{E}})$ is a cofree dagger category over $\mathsf{Im}[\mathcal{E}]$.
\end{proof}

It is important to stress that in general, $\mathcal{E}$ is neither full nor faithful. Therefore, $\mathsf{Im}[\mathcal{E}]$ is not in general isomorphic (or even equivalent) to $\mathbb{B}$, and is only a proper wide subcategory of $\mathbb{B}$. 


However, observe that in both Lemmas \ref{lemma:cofree-opp-base} and \ref{lemma:cofree-im-base}, the objects of our constructed other base categories are the same as the starting base category. Indeed, while the possible base categories may be far from isomorphic, we will see below in Lemma \ref{lemma:cofree-objects} that they all have isomorphic classes of objects. In fact, we will also see below how the class of objects of a cofree dagger category is isomorphic to that of its base category as well. Thus while cofree dagger categories preserve all the information about the objects of its base category, it may loose information about the maps of its base category. 

\subsection{Canonical Construction}\label{sec:cofree-canonical}

We now review the canonical construction of a cofree dagger category over a specified category. So for a category $\mathbb{B}$, define the dagger category $(\mathsf{C}[\mathbb{B}], \dagger)$ \cite[Def 3.1.16]{heunen2009categorical} as follows: 
\begin{itemize}
\item The objects of $\mathsf{C}[\mathbb{B}]$ are the same as $\mathbb{B}$;
\item Maps in $\mathsf{C}[\mathbb{B}]$ are pairs $(f,g): A \to B$ consisting of a map $f: A \to B$ and a map $g: B \to A$ in $\mathbb{B}$, in other words, the homsets of $\mathsf{C}[\mathbb{B}]$ are $\mathsf{C}[\mathbb{B}](A,B) = \mathbb{B}(A,B) \times \mathbb{B}(B,A)$;
\item Composition is defined as follows: 
\begin{align}\label{def:comp-cofree}
(f,g) \circ (h,k) = (f \circ h, k \circ g)
\end{align}
\item Identity maps are $(\mathsf{id}_A, \mathsf{id}_A): A \to A$;
\item The dagger is defined as follows:
\begin{align}\label{def:dag-cofree}
(f,g)^\dagger= (g,f)
\end{align}
\end{itemize}
Define the functor $\mathcal{E}_\mathbb{B}: \mathsf{C}[\mathbb{B}] \to \mathbb{B}$ on objects as $\mathcal{E}_\mathbb{B}(A) = A$ and on maps as $\mathcal{E}_\mathbb{B}(f,g) = f$. 

\begin{proposition}\label{prop:canon-cofree} \cite[Thm 3.1.17]{heunen2009categorical} $(\mathsf{C}[\mathbb{B}], \dagger, \mathcal{E}_\mathbb{B})$ is a cofree dagger category over $\mathbb{B}$. Explicitly, for any dagger category $(\mathbb{C}, \dagger)$ and functor $\mathcal{F}: \mathbb{C} \to \mathbb{B}$, the unique dagger $\mathcal{F}^\flat: (\mathbb{C}, \dagger) \to (\mathsf{C}[\mathbb{B}], \dagger)$ such that the following diagram commutes: 
\begin{equation}\begin{gathered}\label{diag:cofree-canon} \xymatrixcolsep{5pc}\xymatrix{\mathbb{C}\ar[dr]_-{\mathcal{F}}  \ar@{-->}[r]^-{\exists! ~ \mathcal{F}^\flat}  & \mathsf{C}[\mathbb{B}] \ar[d]^-{\mathcal{E}_\mathbb{B}} \\
  & \mathbb{B} }
   \end{gathered}\end{equation}
is defined on objects as $\mathcal{F}^\flat(A) = \mathcal{F}(A)$ and on maps as $\mathcal{F}^\flat(f) = (f,f^\dagger)$. 
\end{proposition}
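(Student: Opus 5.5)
The plan is a direct verification in three steps: first that $(\mathsf{C}[\mathbb{B}], \dagger)$ is a dagger category and $\mathcal{E}_\mathbb{B}$ is a functor, then that $\mathcal{F}^\flat$ is a well-defined dagger functor with $\mathcal{E}_\mathbb{B} \circ \mathcal{F}^\flat = \mathcal{F}$, and finally uniqueness. Note that the formula for $\mathcal{F}^\flat$ must be read as $\mathcal{F}^\flat(f) = (\mathcal{F}(f), \mathcal{F}(f^\dagger))$, since $f$ and $f^\dagger$ live in $\mathbb{C}$ rather than $\mathbb{B}$.

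For the structure of $\mathsf{C}[\mathbb{B}]$: composition (\ref{def:comp-cofree}) is associative and unital because it is componentwise composition in $\mathbb{B}$, with the second component in reversed order; in other words $\mathsf{C}[\mathbb{B}]$ is the wide subcategory of $\mathbb{B} \times \mathbb{B}^{op}$ on diagonal objects. For the dagger (\ref{def:dag-cofree}), we check (\ref{eq:dagger}):
\begin{align*}
\big((f,g)\circ(h,k)\big)^\dagger = (k\circ g, f\circ h) = (k,h)\circ(g,f) = (h,k)^\dagger \circ (f,g)^\dagger .
\end{align*}
Involutivity and $\mathsf{id}^\dagger=\mathsf{id}$ are immediate. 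The map $\mathcal{E}_\mathbb{B}$ is the first projection, so it is a functor.

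For existence: $\mathcal{F}^\flat$ preserves identities since $\mathsf{id}^\dagger = \mathsf{id}$ and $\mathcal{F}$ preserves identities. It preserves composition because
\begin{align*}
\mathcal{F}^\flat(g\circ f) = (\mathcal{F}(g)\circ\mathcal{F}(f), \mathcal{F}(f^\dagger)\circ\mathcal{F}(g^\dagger)) = \mathcal{F}^\flat(g)\circ\mathcal{F}^\flat(f),
\end{align*}
using $(g\circ f)^\dagger = f^\dagger\circ g^\dagger$. It is a dagger functor because $\mathcal{F}^\flat(f^\dagger) = (\mathcal{F}(f^\dagger), \mathcal{F}(f^{\dagger\dagger})) = (\mathcal{F}(f^\dagger),\mathcal{F}(f)) = \mathcal{F}^\flat(f)^\dagger$. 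Clearly $\mathcal{E}_\mathbb{B}\circ\mathcal{F}^\flat = \mathcal{F}$.

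Uniqueness is the only step with content. Suppose $\mathcal{G}$ is a dagger functor with $\mathcal{E}_\mathbb{B}\circ\mathcal{G} = \mathcal{F}$. On objects, $\mathcal{G}(A) = \mathcal{E}_\mathbb{B}\mathcal{G}(A) = \mathcal{F}(A)$. On maps, write $\mathcal{G}(f) = (a,b)$; then $a = \mathcal{F}(f)$. For the second component, the dagger property gives $(b,a) = \mathcal{G}(f)^\dagger = \mathcal{G}(f^\dagger)$, so $b = \mathcal{E}_\mathbb{B}\mathcal{G}(f^\dagger) = \mathcal{F}(f^\dagger)$. Hence $\mathcal{G} = \mathcal{F}^\flat$. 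The key point is that the dagger lets the first projection recover the second component, and this is the step to state carefully.
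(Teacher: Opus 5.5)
Your proof is correct, and it is the standard argument: the paper gives no proof of its own and only cites Heunen's thesis, and your uniqueness step (the dagger lets the first projection recover the second component, $b = \mathcal{E}_\mathbb{B}\mathcal{G}(f^\dagger) = \mathcal{F}(f^\dagger)$) is the same move the paper makes for uniqueness in Prop \ref{prop:rect-to-cofree}, where $\mathcal{G}(f^\dagger)$ pins down the other half of $\mathcal{G}(f)$. You are right that the formula in the statement should read $\mathcal{F}^\flat(f) = (\mathcal{F}(f), \mathcal{F}(f^\dagger))$; one small slip is that $\mathsf{C}[\mathbb{B}]$ is the \emph{full} (not wide) subcategory of $\mathbb{B}\times\mathbb{B}^{op}$ on the diagonal objects $(A,A)$.
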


As such, applying Lemma \ref{lemma:cofree-iso}, we get that cofree dagger categories can be characterized as the dagger categories which are dagger isomorphic to canonical cofree dagger categories. 

\begin{corollary}\label{cor:cofree-iso.1} If $(\mathbb{D}, \dagger, \mathcal{E})$ is a cofree dagger category over a category $\mathbb{B}$, then $\mathcal{E}^\flat: (\mathbb{D}, \dagger) \to (\mathsf{C}[\mathbb{B}], \dagger)$ is a dagger isomorphism with inverse $\mathcal{E}^\flat_\mathbb{B}: (\mathsf{C}[\mathbb{B}], \dagger) \to (\mathbb{D}, \dagger)$. So $(\mathbb{D}, \dagger) \cong (\mathsf{C}[\mathbb{B}], \dagger)$. 
\end{corollary}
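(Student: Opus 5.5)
Since both $(\mathbb{D}, \dagger, \mathcal{E})$ and $(\mathsf{C}[\mathbb{B}], \dagger, \mathcal{E}_\mathbb{B})$ are cofree over $\mathbb{B}$ (the latter by Proposition \ref{prop:canon-cofree}), I will run the standard uniqueness-of-universal-objects argument underlying Lemma \ref{lemma:cofree-iso}(i). I first produce the two comparison dagger functors. Applying the universal property of $(\mathsf{C}[\mathbb{B}], \dagger, \mathcal{E}_\mathbb{B})$ to the dagger category $(\mathbb{D},\dagger)$ and the functor $\mathcal{E}: \mathbb{D} \to \mathbb{B}$ gives the dagger functor $\mathcal{E}^\flat: (\mathbb{D}, \dagger) \to (\mathsf{C}[\mathbb{B}], \dagger)$. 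By the explicit formula of Proposition \ref{prop:canon-cofree}, it is $\mathcal{E}^\flat(A) = \mathcal{E}(A)$ and $\mathcal{E}^\flat(f) = (\mathcal{E}(f), \mathcal{E}(f^\dagger))$, and it satisfies $\mathcal{E}_\mathbb{B} \circ \mathcal{E}^\flat = \mathcal{E}$. Symmetrically, applying the universal property of $(\mathbb{D}, \dagger, \mathcal{E})$ to $(\mathsf{C}[\mathbb{B}],\dagger)$ and $\mathcal{E}_\mathbb{B}$ gives a dagger functor $\mathcal{E}_\mathbb{B}^\flat: (\mathsf{C}[\mathbb{B}], \dagger) \to (\mathbb{D}, \dagger)$ with $\mathcal{E} \circ \mathcal{E}_\mathbb{B}^\flat = \mathcal{E}_\mathbb{B}$.

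Next I show that the two composites are identities, using uniqueness. The composite $\mathcal{E}_\mathbb{B}^\flat \circ \mathcal{E}^\flat$ is a dagger endofunctor of $(\mathbb{D},\dagger)$, and
\[
\mathcal{E} \circ \mathcal{E}_\mathbb{B}^\flat \circ \mathcal{E}^\flat = \mathcal{E}_\mathbb{B} \circ \mathcal{E}^\flat = \mathcal{E}.
\]
The identity $\mathsf{id}_\mathbb{D}$ is also a dagger functor with $\mathcal{E} \circ \mathsf{id}_\mathbb{D} = \mathcal{E}$. The universal property of $(\mathbb{D},\dagger,\mathcal{E})$, applied to $(\mathbb{C},\dagger) = (\mathbb{D},\dagger)$ and $\mathcal{F} = \mathcal{E}$, says there is a unique dagger functor factoring $\mathcal{E}$ through $\mathcal{E}$. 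Hence $\mathcal{E}_\mathbb{B}^\flat \circ \mathcal{E}^\flat = \mathsf{id}_\mathbb{D}$. In exactly the same way, the uniqueness clause for $(\mathsf{C}[\mathbb{B}], \dagger, \mathcal{E}_\mathbb{B})$ applied to $\mathcal{F} = \mathcal{E}_\mathbb{B}$ gives $\mathcal{E}^\flat \circ \mathcal{E}_\mathbb{B}^\flat = \mathsf{id}_{\mathsf{C}[\mathbb{B}]}$.

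It follows that $\mathcal{E}^\flat$ is a dagger functor that is an isomorphism of the underlying categories, with inverse $\mathcal{E}^\flat_\mathbb{B}$; hence it is a dagger isomorphism and $(\mathbb{D}, \dagger) \cong (\mathsf{C}[\mathbb{B}], \dagger)$. There is no real obstacle here. The only point needing care is to apply the uniqueness clause to the correct universal property each time, taking the source dagger category to be the cofree one itself and checking that both candidate functors, the composite and the identity, commute with the functor down to $\mathbb{B}$. Alternatively, the whole corollary follows by invoking Lemma \ref{lemma:cofree-iso}(i) with $\mathbb{D}_1 = \mathbb{D}$ and $\mathbb{D}_2 = \mathsf{C}[\mathbb{B}]$, and noting that the unique isomorphism there must coincide with $\mathcal{E}^\flat$ by uniqueness.
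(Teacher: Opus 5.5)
Your proposal is correct and follows essentially the paper's route. The paper obtains this corollary by applying Lemma \ref{lemma:cofree-iso}(i) to $(\mathbb{D},\dagger,\mathcal{E})$ and the canonical $(\mathsf{C}[\mathbb{B}],\dagger,\mathcal{E}_\mathbb{B})$ of Proposition \ref{prop:canon-cofree}; your argument that both composites are identities, by uniqueness in each universal property, is just the unpacked proof of that lemma, which the paper leaves as immediate from the universal property.
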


\begin{corollary}\label{cor:cofree-iso.2} A dagger category $(\mathbb{D}, \dagger)$ is cofree if and only if there exists a category $\mathbb{B}$ and a dagger isomorphism $(\mathbb{D}, \dagger) \cong (\mathsf{C}[\mathbb{B}], \dagger)$. 
\end{corollary}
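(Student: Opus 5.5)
The statement to prove is Corollary \ref{cor:cofree-iso.2}: $(\mathbb{D},\dagger)$ is cofree if and only if $(\mathbb{D},\dagger) \cong (\mathsf{C}[\mathbb{B}],\dagger)$ for some category $\mathbb{B}$. Both directions come from results already stated, so I will simply chain them.

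For the forward direction, assume $(\mathbb{D},\dagger)$ is cofree. By definition there is a category $\mathbb{B}$ and a functor $\mathcal{E}:\mathbb{D}\to\mathbb{B}$ such that $(\mathbb{D},\dagger,\mathcal{E})$ is a cofree dagger category over $\mathbb{B}$. Corollary \ref{cor:cofree-iso.1} then says that $\mathcal{E}^\flat:(\mathbb{D},\dagger)\to(\mathsf{C}[\mathbb{B}],\dagger)$ is a dagger isomorphism, which is exactly what we need.

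For that corollary itself, I would argue as follows:
\begin{itemize}
\item Both $(\mathbb{D},\dagger,\mathcal{E})$ and $(\mathsf{C}[\mathbb{B}],\dagger,\mathcal{E}_\mathbb{B})$ are cofree over $\mathbb{B}$, the second by Proposition \ref{prop:canon-cofree}.
\item Lemma \ref{lemma:cofree-iso}.(i) gives a dagger isomorphism between them commuting with the functors down to $\mathbb{B}$.
\item By uniqueness in the universal property, this isomorphism must be $\mathcal{E}^\flat$, and its inverse must be $\mathcal{E}_\mathbb{B}^\flat$.
\end{itemize}

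For the converse, suppose $\mathcal{F}:(\mathsf{C}[\mathbb{B}],\dagger)\to(\mathbb{D},\dagger)$ is a dagger isomorphism. If the given isomorphism points the other way, replace it by its inverse, which is again a dagger functor as noted in the preliminaries. Proposition \ref{prop:canon-cofree} shows that $(\mathsf{C}[\mathbb{B}],\dagger,\mathcal{E}_\mathbb{B})$ is cofree over $\mathbb{B}$. Lemma \ref{lemma:cofree-iso}.(ii) then shows that $(\mathbb{D},\dagger,\mathcal{E}_\mathbb{B}\circ\mathcal{F}^{-1})$ is cofree over $\mathbb{B}$. Hence $(\mathbb{D},\dagger)$ is cofree, with base category $\mathbb{B}$.

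There is no real obstacle here. The only step with any content is Lemma \ref{lemma:cofree-iso}, which is the standard uniqueness of universal objects, and it is assumed. If I had to prove that lemma, I would:
\begin{itemize}
\item use the universal property of each of $\mathbb{D}_1$ and $\mathbb{D}_2$ to get dagger functors $\mathbb{D}_1\to\mathbb{D}_2$ and $\mathbb{D}_2\to\mathbb{D}_1$ commuting with the functors to $\mathbb{B}$;
\item observe that each composite, like the identity, is a dagger functor lying over $\mathbb{B}$, so uniqueness forces both composites to be identities.
\end{itemize}
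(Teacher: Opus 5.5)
Your proposal is correct and takes the same route the paper does. The paper derives this corollary directly from Lemma~\ref{lemma:cofree-iso} and Proposition~\ref{prop:canon-cofree}: the forward direction goes through Corollary~\ref{cor:cofree-iso.1}, and the converse is Lemma~\ref{lemma:cofree-iso}.(ii) applied to $(\mathsf{C}[\mathbb{B}],\dagger,\mathcal{E}_\mathbb{B})$, exactly as you argue, and your uniqueness argument identifying the isomorphism as $\mathcal{E}^\flat$ with inverse $\mathcal{E}^\flat_\mathbb{B}$ is right.
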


The above characterization still requires one to specify a base category. The objective of the next two subsections is to provide base independent characterizations of cofree dagger categories. 

With this canonical construction of cofree dagger categories, we may now show that arbitrary cofree dagger categories essentially have the same objects as its base category. In turn this implies that all possible base categories have isomorphic classes of objects. 

\begin{lemma}\label{lemma:cofree-objects} Let $(\mathbb{D}, \dagger)$ be a dagger category. 
\begin{enumerate}[{\em (i)}]
\item If $(\mathbb{D}, \dagger, \mathcal{E})$ is a cofree dagger category over a category $\mathbb{B}$, then $\mathcal{E}: \mathsf{Obj}(\mathbb{D}) \to \mathsf{Obj}(\mathbb{B})$ is an isomorphism, so we have that $\mathsf{Obj}(\mathbb{B}) \cong \mathsf{Obj}(\mathbb{D})$.
\item If $(\mathbb{D}, \dagger, \mathcal{E}_1)$ is a cofree dagger category over a category $\mathbb{B}_1$ and $(\mathbb{D}, \dagger, \mathcal{E}_2)$ is also a cofree dagger category over a category $\mathbb{B}_2$, then we have an isomorphism $\mathsf{Obj}(\mathbb{B}_1) \cong \mathsf{Obj}(\mathbb{B}_2)$.
\end{enumerate}
\end{lemma}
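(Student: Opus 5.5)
For part (i), I will use Corollary \ref{cor:cofree-iso.1}, which says that $\mathcal{E}^\flat: (\mathbb{D}, \dagger) \to (\mathsf{C}[\mathbb{B}], \dagger)$ is a dagger isomorphism. Its inverse is $\mathcal{E}^\flat_\mathbb{B}$, the unique dagger functor induced by $\mathcal{E}_\mathbb{B}: \mathsf{C}[\mathbb{B}] \to \mathbb{B}$ via the universal property of $(\mathbb{D}, \dagger, \mathcal{E})$.

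An isomorphism of categories is in particular a bijection on objects. So $\mathcal{E}^\flat$ restricts to a bijection $\mathsf{Obj}(\mathbb{D}) \to \mathsf{Obj}(\mathsf{C}[\mathbb{B}]) = \mathsf{Obj}(\mathbb{B})$.

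Next I identify this object map with $\mathcal{E}$. Diagram (\ref{diag:cofree-canon}) applied to $\mathcal{F} = \mathcal{E}$ gives $\mathcal{E}_\mathbb{B} \circ \mathcal{E}^\flat = \mathcal{E}$. Since $\mathcal{E}_\mathbb{B}$ is the identity on objects, we get $\mathcal{E}^\flat(A) = \mathcal{E}(A)$ for every object $A$; this also matches the explicit formula $\mathcal{F}^\flat(A) = \mathcal{F}(A)$ in Proposition \ref{prop:canon-cofree}. Hence the object part of $\mathcal{E}$ is a bijection. Its inverse is the object part of $\mathcal{E}^\flat_\mathbb{B}$, which sends $X \in \mathsf{Obj}(\mathbb{B})$ to the object of $\mathbb{D}$ given by the universal property.

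The only point needing care is this identification of object maps, and that is immediate from the commuting triangle. If one wants to avoid invoking the corollary, the inverse composites can be checked directly. The composite $\mathcal{E}^\flat_\mathbb{B} \circ \mathcal{E}^\flat$ lies over $\mathcal{E}$ because $\mathcal{E} \circ \mathcal{E}^\flat_\mathbb{B} = \mathcal{E}_\mathbb{B}$ and $\mathcal{E}_\mathbb{B} \circ \mathcal{E}^\flat = \mathcal{E}$. By uniqueness in the universal property of $\mathbb{D}$, it therefore equals $\mathsf{id}_\mathbb{D}$. The other composite is handled symmetrically using Proposition \ref{prop:canon-cofree}.

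For part (ii), I apply part (i) twice. This gives bijections $\mathcal{E}_1: \mathsf{Obj}(\mathbb{D}) \to \mathsf{Obj}(\mathbb{B}_1)$ and $\mathcal{E}_2: \mathsf{Obj}(\mathbb{D}) \to \mathsf{Obj}(\mathbb{B}_2)$. Composing, $\mathcal{E}_2 \circ \mathcal{E}_1^{-1}: \mathsf{Obj}(\mathbb{B}_1) \to \mathsf{Obj}(\mathbb{B}_2)$ is the desired isomorphism of classes of objects.
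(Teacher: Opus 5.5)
Your proof is correct and follows essentially the paper's argument: the dagger isomorphism $(\mathbb{D},\dagger)\cong(\mathsf{C}[\mathbb{B}],\dagger)$ of Corollary~\ref{cor:cofree-iso.1} is a bijection on objects, $\mathsf{Obj}(\mathsf{C}[\mathbb{B}])=\mathsf{Obj}(\mathbb{B})$, and (ii) follows by composing the two bijections. You also note that $\mathcal{E}_\mathbb{B}\circ\mathcal{E}^\flat=\mathcal{E}$ forces $\mathcal{E}^\flat$ and $\mathcal{E}$ to agree on objects; the paper leaves this implicit, and it is exactly what shows that the bijection is $\mathcal{E}$ itself, as the statement claims.
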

\begin{proof} For $(i)$, since $(\mathbb{D}, \dagger, \mathcal{E})$ is cofree over $\mathbb{B}$, we have that $(\mathbb{D}, \dagger) \cong (\mathsf{C}[\mathbb{B}], \dagger)$. In particular, since isomorphisms of categories gives isomorphisms between the classes of objects, we have that $\mathsf{Obj}(\mathbb{D}) \cong \mathsf{Obj}\left( \mathsf{C}[\mathbb{B}] \right)$. However, by definition we have that $\mathsf{Obj}\left( \mathsf{C}[\mathbb{B}] \right) = \mathsf{Obj}(\mathbb{B})$, and thus $\mathsf{Obj}(\mathbb{B}) \cong \mathsf{Obj}(\mathbb{D})$ as desired. Then $(ii)$ follows from $(i)$. 
\end{proof}

We conclude this section by applying Lemma \ref{lemma:cofree-im-base} to the canonical cofree dagger category construction and give a more explicit description of the resulting image category. So let $\mathbb{B}$ be a category. Now clearly by definition, we have that $\mathsf{Obj}\left(\mathsf{Im}[\mathcal{E}_\mathbb{B}] \right) = \mathsf{Obj}(\mathbb{B})$. On the other hand, maps of $\mathsf{Im}[\mathcal{E}_\mathbb{B}]$ are of the form $\mathcal{E}_\mathbb{B}(f,g) =f$. As such we see that a map $f: A \to B$ in $\mathbb{B}$ will be a map of $\mathsf{Im}[\mathcal{E}_\mathbb{B}]$ if and only if there exists some map $g: B \to A$. Thus the homsets of $\mathsf{Im}[\mathcal{E}_\mathbb{B}]$ are given as follows: 
\begin{align}
\mathsf{Im}[\mathcal{E}_\mathbb{B}](A,B) = \begin{cases} \mathbb{B}(A,B) & \text{if } \mathbb{B}(B,A) \neq \emptyset \\
\emptyset & \text{if } \mathbb{B}(B,A) = \emptyset
\end{cases}
\end{align}
As mentioned before, cofree dagger categories may loose information about some maps of the base category. We see this clearly now as if $\mathbb{B}(A,B)$ is non-empty but $\mathbb{B}(B,A)$ is empty, we have that $\mathsf{C}[\mathbb{B}](A,B)$ is empty, and thus we cannot extract $\mathbb{B}(A,B)$ from $\mathsf{C}[\mathbb{B}](A,B)$. Furthermore, observe that $\mathsf{C}[\mathbb{B}](A,B) = \mathsf{Im}[\mathcal{E}_\mathbb{B}](A,B) \times \mathsf{Im}[\mathcal{E}_\mathbb{B}](B,A)$. Thus not only is $(\mathsf{C}[\mathbb{B}], \dagger)$ cofree over $\mathsf{Im}[\mathcal{E}_\mathbb{B}]$, it is in fact precisely equal to the canonical cofree dagger category over it.  

\begin{lemma}\label{lemma:cofree-imE=B} Let $\mathbb{B}$ be a category. Then we have have that $(\mathsf{C}[\mathbb{B}], \dagger)=(\mathsf{C}\left[\mathsf{Im}[\mathcal{E}_\mathbb{B}]\right], \dagger)$. 
\end{lemma}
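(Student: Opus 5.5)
The plan is to verify directly that the two dagger categories coincide componentwise: same objects, same homsets, same composition, same identities, and same dagger. The key input is the explicit description of the homsets of $\mathsf{Im}[\mathcal{E}_\mathbb{B}]$ given just above. Composition, identities and the dagger of both canonical constructions are given by the same formulas (\ref{def:comp-cofree}) and (\ref{def:dag-cofree}). So once objects and homsets agree as sets, and the composition and identities of $\mathsf{Im}[\mathcal{E}_\mathbb{B}]$ are those of $\mathbb{B}$, everything else matches automatically.

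First, objects. By definition $\mathsf{Obj}(\mathsf{C}[\mathbb{B}]) = \mathsf{Obj}(\mathbb{B})$. Since $\mathcal{E}_\mathbb{B}$ is the identity on objects, $\mathsf{Obj}(\mathsf{Im}[\mathcal{E}_\mathbb{B}]) = \mathsf{Obj}(\mathbb{B})$, and hence $\mathsf{Obj}(\mathsf{C}[\mathsf{Im}[\mathcal{E}_\mathbb{B}]]) = \mathsf{Obj}(\mathsf{C}[\mathbb{B}])$.

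Second, homsets. I need
\begin{align*}
\mathbb{B}(A,B) \times \mathbb{B}(B,A) = \mathsf{Im}[\mathcal{E}_\mathbb{B}](A,B) \times \mathsf{Im}[\mathcal{E}_\mathbb{B}](B,A)
\end{align*}
and I would split into two cases.
\begin{itemize}
\item If both $\mathbb{B}(A,B)$ and $\mathbb{B}(B,A)$ are nonempty, the case description gives $\mathsf{Im}[\mathcal{E}_\mathbb{B}](A,B) = \mathbb{B}(A,B)$ and $\mathsf{Im}[\mathcal{E}_\mathbb{B}](B,A) = \mathbb{B}(B,A)$, so the products agree.
\item If either $\mathbb{B}(A,B)$ or $\mathbb{B}(B,A)$ is empty, the left product is empty. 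Moreover, the case description makes both $\mathsf{Im}[\mathcal{E}_\mathbb{B}](A,B)$ and $\mathsf{Im}[\mathcal{E}_\mathbb{B}](B,A)$ empty, or at least one factor empty, so the right product is empty as well.
\end{itemize}

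It remains to justify that case description, which the paper states informally. One direction: a map $f \in \mathbb{B}(A,B)$ lies in the image exactly when some $(f,g)$ exists, that is, when $\mathbb{B}(B,A)\neq\emptyset$. The other direction: any image map $\mathcal{E}_\mathbb{B}(f,g)$ is a map of $\mathbb{B}$, so $\mathsf{Im}[\mathcal{E}_\mathbb{B}](A,B)\subseteq \mathbb{B}(A,B)$.

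Third, structure. Since $\mathsf{Im}[\mathcal{E}_\mathbb{B}]$ is a subcategory of $\mathbb{B}$ via $\mathcal{I}_{\mathcal{E}_\mathbb{B}}$, its composition and identities are those of $\mathbb{B}$. I should check that the image is really closed under composition; this holds because $\mathcal{E}_\mathbb{B}$ is the identity on objects, so $\mathcal{E}_\mathbb{B}(f,g)\circ\mathcal{E}_\mathbb{B}(h,k) = \mathcal{E}_\mathbb{B}((f,g)\circ(h,k))$. Consequently the pairwise composition, identities $(\mathsf{id}_A,\mathsf{id}_A)$ and swap dagger are literally identical in the two constructions. This gives equality, not merely isomorphism.

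The only mildly delicate point is the empty-homset case analysis in the second step. Everything else is unwinding definitions.
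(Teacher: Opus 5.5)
Your proposal is correct and takes the same approach as the paper. The paper justifies this lemma through the case description of the homsets of $\mathsf{Im}[\mathcal{E}_\mathbb{B}]$ and the resulting identity $\mathsf{C}[\mathbb{B}](A,B) = \mathsf{Im}[\mathcal{E}_\mathbb{B}](A,B) \times \mathsf{Im}[\mathcal{E}_\mathbb{B}](B,A)$, exactly as in your second step; you also spell out the matching of objects, the closure of the image under composition, and the agreement of composition, identities and dagger, which the paper leaves implicit.
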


\subsection{Cofree Dagger Categories Via Rectangular Bands}

In this section we will give a ``basis free'' description of cofree dagger categories in terms of enrichment over \textit{rectangular bands}. Rectangular bands were first introduced by Clifford in \cite{clifford1954bands} in terms of cartesian products of sets. Later, Kimura in \cite{kimura1958structure} gave an equivalent definition of a rectangular band as an idempotent semigroup satisfying a special cancellation axiom. We give this latter definition of a rectangular band. 

Recall that a \textbf{semigroup} is a pair $(S, \ast)$ consisting of a set $S$ equipped with an associative binary operation $\ast: S \times S \to S$. A \textbf{band} \cite{clifford1954bands}, also often called an \textbf{idempotent semigroup}, is a semigroup $(B, \ast)$ such that for every element is idempotent, that is, for all $x \in B$ the following equality holds:
\begin{align}\label{eq:ast-idem}
x \ast x = x
\end{align}
Then a \textbf{rectangular band} \cite[Lemma 2]{kimura1958structure} is a band $(B, \ast)$ which satisfies that the \textbf{middle cancellation law}, that is, for all $x,y,z \in B$, the following equality holds: 
\begin{align}\label{eq:ast-cancel}
x \ast y \ast z = x \ast z
\end{align}

The original example of a rectangular band is the cartesian product of sets. Indeed, for any pair of sets $X$ and $Y$, their cartesian product $X \times Y$ is a rectangular band with binary operation $\ast_\times$ defined as:
\begin{align}\label{band-cart}
(x,y) \ast_\times (z,w) = (x,w)
\end{align}
This was Clifford's original definition of a rectangular band. It turns out that every rectangular band is isomorphic to a rectangular band of this form \cite[Chap IV, Prop 3.2]{howie1976introduction}. Let us briefly review how this works, so let $(B,\ast)$ be a rectangular band. If $B$ is empty then this is clear. If $B$ is non-empty, take a $x \in B$ and define $x \ast B = \lbrace x \ast y \vert \forall y \in B \rbrace$ and $B \ast x = \lbrace y \ast x \vert \forall y \in B \rbrace$. Then we have an isomorphism of rectangular bands $B \cong (B \ast x) \times (x \ast B)$, where the isomorphism $B \to (B \ast x) \times (x \ast B)$ sends $y \mapsto \left( (y \ast x), (x \ast y) \right)$, and the inverse $(B \ast x) \times (x \ast B) \to B$ sends $\left( (y_1 \ast x), (x \ast y_2) \right) \mapsto y_1 \ast y_2$. 

Now recall that for a category $\mathbb{B}$, for its canonical cofree dagger category, the homsets of $\mathsf{C}[\mathbb{B}]$ were cartesian products of homsets of $\mathbb{B}$. Therefore, the homsets of $\mathsf{C}[\mathbb{B}]$ are all rectangular bands. In fact this makes $\mathsf{C}[\mathbb{B}](A,B)$ \textit{enriched} over the category of rectangular bands, meaning in particular that both composition and the dagger are compatible with the rectangular band structure. We use this fact as inspiration for our base independent characterization of cofree dagger categories. 

We now define what it means for an arbitrary dagger category to be dagger enriched over rectangular bands, which we call a \textit{rectangular banded dagger category}. We give our following definition in an unpacked concrete fashion, as it is simple enough and does not require background on enriched categories. That said, for those readers who are familiar with enriched category theory, it is easy enough to see that the following definition is indeed equivalent to a dagger category which is dagger enriched over the category of rectangular bands.

\begin{definition}\label{def:rect-band-dagger-cat} A \textbf{rectangular banded dagger category} is a triple $(\mathbb{D}, \dagger, \ast)$ consisting of a dagger category $(\mathbb{D},\dagger)$ equipped with a family of functions (indexed by pairs of objects $A,B \in \mathsf{Obj}(\mathbb{D})$):
\[\ast: \mathbb{D}(A,B) \times \mathbb{D}(A,B) \to \mathbb{D}(A,B)\] 
so every pair of parallel maps $f: A \to B$ and $g: A \to B$, we have a map $f \ast g: A \to B$, such that each homset $(\mathbb{D}(A,B), \ast)$ a rectangular band and also that:
\begin{enumerate}[{\em (i)}]
\item Composition is a band morphism, that is, for all maps $f: C \to D$, $g: B \to C$, $h: B \to C$, and ${k: A \to B}$, the following equality holds: 
\begin{align}\label{band-comp} 
f \circ (g \ast h) \circ k = (f \circ g \circ k) \ast (f \circ h \circ k)
\end{align}
\item Dagger is a band anti-morphism, that is, for all $f: A \to B$ and $g: A \to B$, the following equality holds: 
\begin{align}\label{band-dagger}
(f \ast g)^\dagger = g^\dagger \ast f^\dagger
\end{align}
\end{enumerate}
\end{definition}

It is important to stress that a dagger category can be rectangular banded in multiple ways. Indeed, to see this recall that for a semigroup $(S,\ast)$, its dual semigroup is the semigroup $(S,\ast^{op})$ with the same underlying set $S$ but where the binary operation is defined as $x \ast^{op} y = y \ast x$. In general, $\ast \neq \ast^{op}$, nor are $(S,\ast)$ and $(S,\ast^{op})$ isomorphic as semigroups. Moreover, it easy to see that if $(S,\ast)$ is a rectangular band then $(S,\ast^{op})$ is also a rectangular band. 

\begin{lemma} Let $(\mathbb{D},\dagger, \ast)$ be a rectangular banded dagger category. Then $(\mathbb{D},\dagger, \ast^{op})$ is a rectangular banded dagger category. 
\end{lemma}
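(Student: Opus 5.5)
We verify the axioms of Definition \ref{def:rect-band-dagger-cat} one by one for $\ast^{op}$, where $f \ast^{op} g := g \ast f$ on each homset. The underlying dagger category is unchanged, so only the band structure and the two compatibility conditions need checking.

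First, each homset $(\mathbb{D}(A,B), \ast^{op})$ is a rectangular band. We check associativity, idempotency (\ref{eq:ast-idem}), and middle cancellation (\ref{eq:ast-cancel}) directly:
\begin{align*}
(x \ast^{op} y) \ast^{op} z = z \ast (y \ast x) = (z \ast y) \ast x = x \ast^{op} (y \ast^{op} z),
\end{align*}
\begin{align*}
x \ast^{op} x = x \ast x = x, \qquad x \ast^{op} y \ast^{op} z = z \ast y \ast x = z \ast x = x \ast^{op} z .
\end{align*}
The first line uses associativity of $\ast$, and the last uses middle cancellation for $\ast$.

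Next, composition is a band morphism (\ref{band-comp}). For $f,g,h,k$ as in that axiom,
\begin{align*}
f \circ (g \ast^{op} h) \circ k = f \circ (h \ast g) \circ k = (f \circ h \circ k) \ast (f \circ g \circ k) = (f \circ g \circ k) \ast^{op} (f \circ h \circ k),
\end{align*}
where the middle step is (\ref{band-comp}) for $\ast$ applied to the pair $(h,g)$.

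Finally, the dagger is a band anti-morphism (\ref{band-dagger}):
\begin{align*}
(f \ast^{op} g)^\dagger = (g \ast f)^\dagger = f^\dagger \ast g^\dagger = g^\dagger \ast^{op} f^\dagger .
\end{align*}

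This completes the check that $(\mathbb{D},\dagger,\ast^{op})$ is a rectangular banded dagger category. The argument is purely formal: each axiom is symmetric under reversing the order of $\ast$. The only point needing any care is the dagger axiom, since the order reversal in $\ast^{op}$ and the anti-morphism property interact there. As the last display shows, they are consistent.
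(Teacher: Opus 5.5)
Your proof is correct: each verification checks out, including the dagger step $(g \ast f)^\dagger = f^\dagger \ast g^\dagger = g^\dagger \ast^{op} f^\dagger$. It is the direct axiom-by-axiom check that the paper has in mind when it says the lemma is ``straightforward to check''.
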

\begin{proof} This is straightforward to check. 
\end{proof}

Thus being rectangular banded is additional structure on a dagger category rather than a property. We next observe a useful identity in a rectangular banded dagger category, which is that there is an interchange law between composition and the rectangular band operation. 

\begin{lemma} In a rectangular banded dagger category $(\mathbb{D},\dagger, \ast)$, the following equality holds: 
\begin{align}\label{ast-inter}
(f \ast g) \circ (h \ast k ) =  (f \circ h) \ast (g \circ k)
\end{align}
\end{lemma}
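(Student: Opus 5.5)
We prove the interchange law by expanding both sides with the composition axion (\ref{band-comp}) and then simplifying inside a single homset. There we only need the rectangular band axioms: associativity, idempotency (\ref{eq:ast-idem}), and middle cancellation (\ref{eq:ast-cancel}). Take $h,k: A \to B$ and $f,g: B \to C$.

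First we expand the left-hand side. Apply (\ref{band-comp}) with the outer maps $f \ast g$ and $\mathsf{id}_A$ and the inner pair $h,k$. This gives
\[(f \ast g) \circ (h \ast k) = \big((f\ast g)\circ h\big) \ast \big((f \ast g) \circ k\big).\]
Next apply (\ref{band-comp}) again to each factor, now with outer maps $\mathsf{id}_C$ and $h$, respectively $\mathsf{id}_C$ and $k$, and inner pair $f,g$. This gives $(f\ast g)\circ h = (f\circ h)\ast(g \circ h)$ and $(f\ast g)\circ k = (f\circ k)\ast(g\circ k)$. Every term now lives in $\mathbb{D}(A,C)$, and
\[(f \ast g) \circ (h \ast k) = (f\circ h)\ast(g\circ h)\ast(f\circ k)\ast(g\circ k),\]
using associativity to drop the brackets.

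Second, we apply middle cancellation in the rectangular band $\mathbb{D}(A,C)$. Because the band is associative, (\ref{eq:ast-cancel}) extends to any number of middle terms: $x \ast y_1 \ast \cdots \ast y_n \ast z = x \ast z$. To see this, group $y_1 \ast \cdots \ast y_n$ as a single element $y$ and apply the three-term law once. Taking $x = f\circ h$, $z = g \circ k$, and the middle block $(g\circ h)\ast(f\circ k)$, we obtain $(f\circ h)\ast(g\circ k)$, which is the right-hand side.

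There is no real obstacle here. The only points needing care are two. The first is that each use of (\ref{band-comp}) must be legitimate: it is stated with maps on both sides, and we fill in one side with an identity. The second is that the regrouping must be justified by associativity, so that the three-term cancellation law covers a middle block of two elements. Neither idempotency nor the dagger axiom is needed.
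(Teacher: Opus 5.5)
Your proof is correct and takes the same approach as the paper: apply the composition axiom twice to expand the left side into $(f \circ h) \ast (g \circ h) \ast (f \circ k) \ast (g \circ k)$, then remove the middle terms with middle cancellation. The only cosmetic difference is that the paper uses middle cancellation twice, removing one term each time, whereas you use associativity to group the two middle terms into a single element and cancel once.
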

\begin{proof} We compute:
\begin{gather*}(f \ast g) \circ (h \ast k) \overset{\text{(\ref{band-comp})}}{=} \left( (f \ast g) \circ h \right) \ast \left( (f \ast g) \circ k \right) \overset{\text{(\ref{band-comp})}}{=} (f \circ h) \ast (g \circ h) \ast (f \circ k ) \ast ( g \circ k) \\
\overset{\text{(\ref{eq:ast-cancel})}}{=} (f \circ h) \ast (g \circ h)  \ast ( g \circ k) \overset{\text{(\ref{eq:ast-cancel})}}{=}  (f \circ h) \ast ( g \circ k)
\end{gather*}
So the desired identity holds. 
\end{proof}

Our goal is to now show that a rectangular banded dagger categories are in fact cofree dagger categories. To do so, we will need to extract a base category. To achieve this, we turn towards one of Green's relations, specifically the $\mathcal{L}_\ast$-relation \cite[Chap II]{howie1976introduction}. Recall that for a semigroup $(S,\ast)$, we say that $x, y \in S$ are \textbf{$\mathcal{L}_\ast$-related}, written $x ~\mathcal{L}_\ast~ y$, if $x$ and $y$ generate the same principal left ideal. Moreover, $\mathcal{L}_\ast$ is an equivalence relation (in fact a right congruence) \cite[Chap II, Lem 1.2]{howie1976introduction}. For (rectangular) bands, the expression of the $\mathcal{L}_\ast$-relation can be greatly simplified. In a band $(B,\ast)$, $x,y \in B$ are $\mathcal{L}_\ast$-related if and only if $x \ast y=x$ and $y \ast x=y$ \cite[Lem 4]{mclean}. Now in a rectangular band $(B,\ast)$, it turns out that $x \ast y=x$ if and only if $y \ast x=y$. Therefore, in a rectangular band $(B,\ast)$, we have that for all $x,y \in B$:
\begin{align}\label{def:sim}
x ~\mathcal{L}_\ast~ y \Leftrightarrow x \ast y = y \text{ or } y \ast x = x
\end{align}
Now let $B/~\mathcal{L}_\ast~$ be the quotient of $B$ by the $\mathcal{L}_\ast$-relation and let $[x]_{\mathcal{L}_\ast}$ be the equivalence class of $x \in B$. 

Then for a rectangular banded dagger category $(\mathbb{D},\dagger, \ast)$, define the category $\mathbb{D}_{\mathcal{L}_\ast}$ as follows: 
\begin{itemize}
\item The objects of $\mathbb{D}_{\mathcal{L}_\ast}$ are the same as $\mathbb{D}$;
\item The homsets of $\mathbb{D}_{\mathcal{L}_\ast}$ are the quotients of the homsets of $\mathbb{D}$ via $\mathcal{L}_\ast$, that is, $\mathbb{D}_{\mathcal{L}_\ast}(A,B) := \mathbb{D}(A,B)/~\mathcal{L}_\ast~$. So maps are equivalence classes $[f]_{\mathcal{L}_\ast}: A \to B$ of maps $f: A \to B$ in $\mathbb{D}$;
\item Composition is defined as $[g]_{\mathcal{L}_\ast} \circ [f]_{\mathcal{L}_\ast} = [g \circ f]_{\mathcal{L}_\ast}$;
\item Identity maps are $[\mathsf{id}_A]_{\mathcal{L}_\ast}: A \to A$. 
\end{itemize}
Define the functor $\mathcal{E}_{\mathcal{L}_\ast}: \mathbb{D} \to \mathbb{D}_{\mathcal{L}_\ast}$ which sends objects to themselves $\mathcal{E}_{\mathcal{L}_\ast}(A) = A$ and maps to their equivalence classes $\mathcal{E}_{\mathcal{L}_\ast}(f) = [f]_{\mathcal{L}_\ast}$. 

\begin{lemma}\label{lemma:DL} $\mathbb{D}_{\mathcal{L}_\ast}$ is a category and $\mathcal{E}_{\mathcal{L}_\ast}: \mathbb{D} \to \mathbb{D}_{\mathcal{L}_\ast}$ is a functor. 
\end{lemma}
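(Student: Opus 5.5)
The plan is to show that $\mathcal{L}_\ast$ is a congruence on $\mathbb{D}$ with respect to composition. Once this holds, $\mathbb{D}_{\mathcal{L}_\ast}$ is simply the quotient category of $\mathbb{D}$ by this congruence. The category axioms and the functoriality of $\mathcal{E}_{\mathcal{L}_\ast}$ then follow immediately.

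First I will fix a convenient form of the relation. In a band, $x ~\mathcal{L}_\ast~ y$ if and only if $x \ast y = x$ and $y \ast x = y$. In a rectangular band either of these equations implies the other. Indeed, if $x \ast y = x$, then by (\ref{eq:ast-cancel}) and (\ref{eq:ast-idem}),
\[ y \ast x = y \ast (x \ast y) = y \ast x \ast y = y \ast y = y. \]
The converse follows by symmetry. So I will use the criterion $x ~\mathcal{L}_\ast~ y \Leftrightarrow x \ast y = x$. Since $\mathcal{L}_\ast$ is an equivalence relation on each homset $\mathbb{D}(A,B)$, the quotient sets $\mathbb{D}(A,B)/~\mathcal{L}_\ast~$ are well defined.

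The key step is that composition is well defined on classes. Suppose $f ~\mathcal{L}_\ast~ f'$ for $f, f' : A \to B$ and $g ~\mathcal{L}_\ast~ g'$ for $g, g' : B \to C$. By the interchange law (\ref{ast-inter}),
\[ (g \circ f) \ast (g' \circ f') = (g \ast g') \circ (f \ast f') = g \circ f. \]
By the criterion above, $g \circ f ~\mathcal{L}_\ast~ g' \circ f'$, so $[g]_{\mathcal{L}_\ast} \circ [f]_{\mathcal{L}_\ast} = [g \circ f]_{\mathcal{L}_\ast}$ does not depend on the chosen representatives. Alternatively, one can vary one argument at a time and use (\ref{band-comp}) directly.

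With composition well defined, associativity and the unit laws in $\mathbb{D}_{\mathcal{L}_\ast}$ are inherited from $\mathbb{D}$ by computing on representatives. For example, $[g]_{\mathcal{L}_\ast} \circ [\mathsf{id}_B]_{\mathcal{L}_\ast} = [g \circ \mathsf{id}_B]_{\mathcal{L}_\ast} = [g]_{\mathcal{L}_\ast}$. The map $\mathcal{E}_{\mathcal{L}_\ast}$ preserves identities by the definition of the identity maps of $\mathbb{D}_{\mathcal{L}_\ast}$. It preserves composition by the very definition of composition in $\mathbb{D}_{\mathcal{L}_\ast}$, so it is a functor.

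The only non-formal step is the congruence property, and the interchange law reduces it to a one-line computation. I therefore do not expect a real obstacle. The main point of care is using the correct characterization of $\mathcal{L}_\ast$ in a rectangular band, namely $x \ast y = x$, in the direction needed.
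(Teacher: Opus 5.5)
Your proof is correct and follows the paper's argument. The paper likewise checks only that composition is well defined on classes, using the same one-line application of the interchange law (\ref{ast-inter}), $(g_1\circ f_1)\ast(g_2\circ f_2)=(g_1\ast g_2)\circ(f_1\ast f_2)=g_1\circ f_1$, and treats the category and functor axioms as inherited from $\mathbb{D}$.

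Your explicit check that $x\ast y=x$ forces $y\ast x=y$ is a useful addition. The form $x\ast y=x$ is also what this computation actually uses, even though the displayed (\ref{def:sim}) writes it with the sides swapped ($x\ast y=y$), and your argument goes through verbatim with either form.
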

\begin{proof} To show that $\mathbb{D}_{\mathcal{L}_\ast}$ is a category, we need only explain why composition is well-defined. So suppose that we have maps such that $f_1 ~\mathcal{L}_\ast~ f_2$ and $g_1 ~\mathcal{L}_\ast~ g_2$, where $f_i: A \to B$ and $g_i: B \to C$. Then we compute: 
\[ (g_1 \circ f_1) \ast (g_2 \circ f_2) \overset{\text{(\ref{ast-inter})}}{=} (g_1 \ast g_2) \circ (f_1 \ast f_2) \overset{\text{(\ref{def:sim})}}{=} g_1 \circ f_1  \]
So $(g_1 \circ f_1) \ast (g_2 \circ f_2) = g_1 \circ f_1$ and thus $(g_1 \circ f_1) ~\mathcal{L}_\ast~ (g_2 \circ f_2)$. From this, it follows that composition on equivalence classes is well-defined, and therefore we can conclude that $\mathbb{D}_{\mathcal{L}_\ast}$ is indeed a category. It also immediately follows that $\mathcal{E}_{\mathcal{L}_\ast}: \mathbb{D} \to \mathbb{D}_{\mathcal{L}_\ast}$ is indeed a functor as well. 
\end{proof}

\begin{proposition}\label{prop:rect-to-cofree} If $(\mathbb{D},\dagger, \ast)$ is a rectangular banded dagger category, then $(\mathbb{D},\dagger, \mathcal{E}_{\mathcal{L}_\ast})$ is a cofree dagger category over $\mathbb{D}_{\mathcal{L}_\ast}$. 
\end{proposition}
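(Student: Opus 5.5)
The plan is to use Corollary \ref{cor:cofree-iso.2}-style reasoning but aimed at the specific functor $\mathcal{E}_{\mathcal{L}_\ast}$. Concretely, I will show that the comparison dagger functor $\mathcal{E}_{\mathcal{L}_\ast}^\flat: (\mathbb{D},\dagger) \to (\mathsf{C}[\mathbb{D}_{\mathcal{L}_\ast}],\dagger)$ from Proposition \ref{prop:canon-cofree} is a dagger isomorphism. This functor is the identity on objects and sends $f \mapsto ([f]_{\mathcal{L}_\ast}, [f^\dagger]_{\mathcal{L}_\ast})$, and it commutes with the projections to $\mathbb{D}_{\mathcal{L}_\ast}$. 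Then Lemma \ref{lemma:cofree-iso}(ii), applied to its inverse together with $\mathcal{E}_{\mathbb{D}_{\mathcal{L}_\ast}}$, gives that $(\mathbb{D},\dagger, \mathcal{E}_{\mathbb{D}_{\mathcal{L}_\ast}} \circ \mathcal{E}_{\mathcal{L}_\ast}^\flat) = (\mathbb{D},\dagger,\mathcal{E}_{\mathcal{L}_\ast})$ is cofree over $\mathbb{D}_{\mathcal{L}_\ast}$. Since the functor is identity on objects, it suffices to prove it is bijective on each homset $\mathbb{D}(A,B)$.

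\emph{Injectivity.} Suppose $[f]=[g]$ and $[f^\dagger]=[g^\dagger]$. By (\ref{def:sim}), $f \ast g = g$ and $f^\dagger \ast g^\dagger = g^\dagger$ (with the convention for $\mathcal{L}_\ast$ in a rectangular band, each of $x\ast y=y$ / $y \ast x = x$ being available). Applying the dagger to the second and using (\ref{band-dagger}) gives $g \ast f = g$. Now I combine the two: $f = f\ast f \ast f$, and middle cancellation lets me write $f = f \ast g \ast f$. Using $f \ast g = g$ this becomes $g \ast f = g$. Hence $f=g$. I expect to double-check the orientation conventions here; with the other orientation the argument is symmetric, using $g = g\ast f\ast g$.

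\emph{Surjectivity.} Given $[f]: A\to B$ and $[g]: B \to A$, the candidate preimage is $h := f \ast g^\dagger \in \mathbb{D}(A,B)$. I then check two things. First, $h$ is $\mathcal{L}_\ast$-related to $f$: using (\ref{eq:ast-cancel}), $f \ast h = f \ast f \ast g^\dagger = h$. Second, $h^\dagger = g \ast f^\dagger$, and $g \ast h^\dagger = g\ast g \ast f^\dagger = h^\dagger$, so $[h^\dagger]=[g]$. The one subtle point is to use representatives consistently: if the relation is oriented as $y \ast x = x$, I instead take $h = g^\dagger \ast f$. Well-definedness in the choice of representatives does not matter, since we only need existence.

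The main obstacle is purely bookkeeping about which side the $\mathcal{L}_\ast$-relation absorbs on, since (\ref{def:sim}) is stated somewhat loosely. I would first fix the convention $x~\mathcal{L}_\ast~y \iff x\ast y = y$, verify it is an equivalence that is compatible with Lemma \ref{lemma:DL}, and then run both steps above with that convention. Once bijectivity holds, functoriality and dagger-preservation of $\mathcal{E}_{\mathcal{L}_\ast}^\flat$ are already given by Proposition \ref{prop:canon-cofree}, so the proof is complete.
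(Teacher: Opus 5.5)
Your proof is correct, but it takes a different route from the paper's.

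The paper verifies the universal property directly. For an arbitrary functor $\mathcal{F}: \mathbb{C} \to \mathbb{D}_{\mathcal{L}_\ast}$ it defines $\mathcal{F}^\flat(f) = g \ast h^\dagger$, with $g \in \mathcal{F}(f)$ and $h \in \mathcal{F}(f^\dagger)$. It then checks independence of representatives (\ref{eq:Fflat-wd}), preservation of identities, composition and dagger, the factorization $\mathcal{E}_{\mathcal{L}_\ast} \circ \mathcal{F}^\flat = \mathcal{F}$, and uniqueness.

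You instead test only the single functor $\mathcal{E}_{\mathcal{L}_\ast}$. You show that $\mathcal{E}_{\mathcal{L}_\ast}^\flat : f \mapsto ([f]_{\mathcal{L}_\ast}, [f^\dagger]_{\mathcal{L}_\ast})$ is bijective on hom-sets, and then transport cofreeness from $\mathsf{C}[\mathbb{D}_{\mathcal{L}_\ast}]$ via Proposition \ref{prop:canon-cofree} and Lemma \ref{lemma:cofree-iso}(ii). Functoriality, dagger preservation and uniqueness thus come for free. Your two computations are the same band calculations that drive the paper's uniqueness and factorization steps: injectivity via $f = f \ast g \ast f = g \ast f = g$, and surjectivity via the witness $f \ast g^\dagger$.

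This is the strategy the paper itself uses in the free case (Proposition \ref{prop:zigzag-to-free}). It also hands you the explicit inverse $([f]_{\mathcal{L}_\ast},[g]_{\mathcal{L}_\ast}) \mapsto f \ast g^\dagger$. Composing that inverse with the canonical $\mathcal{F}^\flat$ into $\mathsf{C}[\mathbb{D}_{\mathcal{L}_\ast}]$ recovers the paper's formula (\ref{def:Fflat}), which is needed later in Propositions \ref{prop:asE=ast} and \ref{prop:rect-calg}. The paper's direct argument has the modest advantage of not relying on Lemma \ref{lemma:cofree-iso}, which the paper states without proof.

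Your worry about orientation is harmless. In a rectangular band, $x \ast y = y$ gives $y \ast x = x \ast y \ast x = x$, and conversely, so (\ref{def:sim}) is a single relation. Your chosen convention is the one the paper's computations actually use.
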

\begin{proof} We need to show that $(\mathbb{D},\dagger, \mathcal{E}_{\mathcal{L}_\ast})$ satisfies the necessary universal property. So let $(\mathbb{C}, \dagger)$ be another dagger category and let $\mathcal{F}: \mathbb{C} \to \mathbb{D}_{\mathcal{L}_\ast}$ be a functor. So for a map $f: X \to Y$ in $\mathbb{C}$, $\mathcal{F}(f)$ is an equivalence class in $\mathbb{D}\left(\mathcal{F}(X),\mathcal{F}(Y)\right)/~\mathcal{L}_\ast~$. So elements $g \in \mathcal{F}(f)$ are maps of type $g:  \mathcal{F}(X) \to \mathcal{F}(Y)$ in $\mathbb{D}$. Note that for the adjoint $f^\dagger: Y \to X$, elements $h \in \mathcal{F}(f)$ are maps of type $h: \mathcal{F}(Y) \to \mathcal{F}(X)$ in $\mathbb{D}$, and so ${h^\dagger: \mathcal{F}(X) \to \mathcal{F}(Y)}$. Now for any $g_1,g_2 \in \mathcal{F}(f)$ (so $g_1~\mathcal{L}_\ast~g_2$) and $h_1,h_2 \in \mathcal{F}(f^\dagger)$ (so $h_1~\mathcal{L}_\ast~h_2$), we compute that: 
\begin{align*}
 g_1 \ast h_1^\dagger \overset{\text{(\ref{def:sim})}}{=} g_2 \ast g_1 \ast  \left( h_2 \ast h_1 \right)^\dagger \overset{\text{(\ref{band-dagger})}}{=}  g_2 \ast g_1 \ast h_1^\dagger \ast h_2^\dagger  \overset{\text{(\ref{eq:ast-cancel})}}{=} g_2 \ast h_1^\dagger \ast h_2^\dagger \overset{\text{(\ref{eq:ast-cancel})}}{=} g_2 \ast h_2^\dagger 
\end{align*}
So for any $g_1,g_2 \in \mathcal{F}(f)$ and $h_1,h_2 \in \mathcal{F}(f^\dagger)$, the following equality holds: 
\begin{align}\label{eq:Fflat-wd}
 g_1 \ast h_1^\dagger =  g_2 \ast h_2^\dagger
\end{align}
Then define the functor $\mathcal{F}^\flat: \mathbb{C} \to \mathbb{D}$ on objects as $\mathcal{F}^\flat(X) = \mathcal{F}(X)$ and on maps as follows:
\begin{align}\label{def:Fflat}
\mathcal{F}^\flat(f) := g \ast h^\dagger
\end{align}
for any $g \in \mathcal{F}(f)$ and $h \in \mathcal{F}(f^\dagger)$. Now (\ref{eq:Fflat-wd}) tells us that (\ref{def:Fflat}) does not depend on the choice of elements we picked from $\mathcal{F}(f)$ and $\mathcal{F}(f^\dagger)$. Therefore, $\mathcal{F}^\flat$ is indeed well-defined. 

Now we check that $\mathcal{F}^\flat$ is a dagger functor: 
\begin{enumerate}[{\em (i)}]
\item Identity: Note that by functoriality of $\mathcal{F}$ we must have that $\mathcal{F}(\mathsf{id}_X) = [\mathsf{id}_{\mathcal{F}(X)}]_{\mathcal{L}_\ast}$. Now since the dagger preserves identities (\ref{eq:dagger}), we have $1^\dagger_X = \mathsf{id}_X$, so $\mathcal{F}(1^\dagger_X)=\mathcal{F}(\mathsf{id}_X) = [\mathsf{id}_{\mathcal{F}(X)}]_{\mathcal{L}_\ast}$. Then taking $\mathsf{id}_{\mathcal{F}(X)} \in \mathcal{F}(\mathsf{id}_X)$ and $\mathsf{id}_{\mathcal{F}(X)} \in \mathcal{F}(1^\dagger_X)$, by definition (\ref{def:Fflat}), we get that $\mathcal{F}^\flat(\mathsf{id}_X) = \mathsf{id}_{\mathcal{F}(X)} \ast \mathsf{id}_{\mathcal{F}(X)}$. However by idempotency (\ref{eq:ast-idem}), $\mathsf{id}_{\mathcal{F}(X)} \ast \mathsf{id}_{\mathcal{F}(X)} = \mathsf{id}_{\mathcal{F}(X)}$. Lastly, since $\mathcal{F}^\flat(X) = \mathcal{F}(X)$, $\mathsf{id}_{\mathcal{F}(X)} = \mathsf{id}_{\mathcal{F}^\flat(X)}$. So we conclude that $\mathcal{F}^\flat(\mathsf{id}_X)= \mathsf{id}_{\mathcal{F}^\flat(X)}$. 
\item Composition: Let $f_1: X \to Y$ and $f_2: Y \to Z$ be composable maps in $\mathbb{C}$. Then let $g_1 \in \mathcal{F}(f_1)$ and $h_1 \in \mathcal{F}(f_1^\dagger)$, and let $g_2 \in \mathcal{F}(f_2)$ and $h_2 \in \mathcal{F}(f_2^\dagger)$. Then we compute that: 
\begin{gather*}
\mathcal{F}^\flat(f_2) \circ  \mathcal{F}^\flat(f_1) \overset{\text{(\ref{def:Fflat})}}{=} (g_2 \ast h^\dagger_2) \circ (g_1 \ast h^\dagger_1) \overset{\text{(\ref{ast-inter})}}{=} (g_2 \circ g_1)  \ast (h^\dagger_2 \circ h^\dagger_1) \overset{\text{(\ref{band-dagger})}}{=} (g_2 \circ g_1) \ast (h_1 \circ h_2)^\dagger 
\end{gather*}
On the other hand, by functoriality of $\mathcal{F}$ we must have that $\mathcal{F}(f_2 \circ f_1) = \mathcal{F}(f_2) \circ \mathcal{F}(f_1)$. Therefore, it follows that $g_2 \circ g_1 \in \mathcal{F}(f_2 \circ f_1)$. Since the dagger is contravariant (\ref{eq:dagger}), we have that $(f_2 \circ f_1)^\dagger= f_1^\dagger \circ f_2^\dagger$, and therefore $\mathcal{F}((f_2 \circ f_1)^\dagger) = \mathcal{F}(f_1^\dagger) \circ \mathcal{F}(f_2^\dagger)$. As such, we also get that $h_1 \circ h_2 \in \mathcal{F}((f_2 \circ f_1)^\dagger)$. Then by definition (\ref{def:Fflat}), we get that $\mathcal{F}^\flat(f_2 \circ f_1) = (g_2 \circ g_1) \ast (h^\dagger_2 \circ h^\dagger_1)$. So by the above calculation, we get that $\mathcal{F}^\flat(f_2 \circ f_1)= \mathcal{F}^\flat(f_2) \circ  \mathcal{F}^\flat(f_1)$ as desired. 

\item Dagger: Let $g \in \mathcal{F}(f)$ and $h \in \mathcal{F}(f^\dagger)$. Then we first compute that: 
\[ \mathcal{F}^\flat(f)^\dagger \overset{\text{(\ref{def:Fflat})}}{=} (g \ast h^\dagger)^\dagger \overset{\text{(\ref{band-dagger})}}{=} h^{\dagger\dagger}  \ast g^\dagger \overset{\text{(\ref{eq:dagger})}}{=} h \ast g^\dagger  \]
Now since dagger is involutive (\ref{eq:dagger}), we have $f^{\dagger\dagger}=f$. Thus, $h \in \mathcal{F}(f^\dagger)$ and $g \in \mathcal{F}(f^{\dagger\dagger})$. Therefore by definition (\ref{def:Fflat}), we get that $\mathcal{F}^\flat(f^\dagger) = g^\dagger \ast h$. So by the above calculation, we get that $\mathcal{F}^\flat(f^\dagger) = \mathcal{F}^\flat(f)^\dagger$ as desired. 
\end{enumerate}
Therefore, we have that $\mathcal{F}^\flat: (\mathbb{C}, \dagger) \to (\mathbb{D}, \dagger)$ is a dagger functor. 

Next we compose $\mathcal{F}^\flat$ with $\mathcal{E}_{\mathcal{L}_\ast}$. On objects $X$, we clearly have that $\mathcal{E}_{\mathcal{L}_\ast}(\mathcal{F}^\flat(X)) = \mathcal{E}_{\mathcal{L}_\ast}(\mathcal{F}(X))=\mathcal{F}(X)$. On maps $f$, let $g \in \mathcal{F}(f)$ and $h \in \mathcal{F}(f^\dagger)$. Then we compute: 
\begin{gather*}
g \ast \mathcal{F}^\flat(f) \overset{\text{(\ref{def:Fflat})}}{=} g \ast g \ast h^\dagger \overset{\text{(\ref{eq:ast-cancel})}}{=} g \ast h^\dagger \overset{\text{(\ref{def:Fflat})}}{=} \mathcal{F}^\flat(f) 
\end{gather*}
Thus $\mathcal{F}^\flat(f)~\mathcal{L}_\ast~g$, implying that $\mathcal{F}^\flat(f) \in \mathcal{F}(f)$, which in turn gives us $\mathcal{F}(f) = \left[ \mathcal{F}^\flat(f) \right]_{\mathcal{L}_\ast} = \mathcal{E}_{\mathcal{L}_\ast}(\mathcal{F}^\flat(f))$. So have that  $\mathcal{E}_{\mathcal{L}_\ast} \circ \mathcal{F}^\flat = \mathcal{F}$. 

Lastly, we need to show uniqueness of $\mathcal{F}^\flat$. So suppose we have another dagger functor $\mathcal{G}: (\mathbb{C}, \dagger) \to (\mathbb{D}, \dagger)$ such that $\mathcal{E}_{\mathcal{L}_\ast} \circ \mathcal{G} = \mathcal{F}$. First observe that on objects, we have that $\mathcal{G}(X) = \mathcal{E}_{\mathcal{L}_\ast}\left( \mathcal{G}(X) \right) = \mathcal{E}_{\mathcal{L}_\ast}\left( \mathcal{F}^\flat(X) \right) =  \mathcal{F}(X) =  \mathcal{F}^\flat(X)$. So $\mathcal{G}$ and $\mathcal{F}^\flat$ are equal on objects. Now for a map $f$, $\mathcal{E}_{\mathcal{L}_\ast}\left( \mathcal{G}(f) \right) = \mathcal{F}(f)$ says that $\left[ \mathcal{G}(f) \right]_{\mathcal{L}_\ast} = \mathcal{F}(f)$, so $\mathcal{G}(f) \in \mathcal{F}(f)$. Similarly we also get that $\mathcal{G}(f^\dagger) \in \mathcal{F}(f^\dagger)$. Then we compute: 
\begin{gather*}
\mathcal{F}^\flat(f) \overset{\text{(\ref{def:Fflat})}}{=} \mathcal{G}(f) \ast \mathcal{G}(f^\dagger)^\dagger  \overset{\text{(\ref{def:dagfun})}}{=} \mathcal{G}(f) \ast \mathcal{G}(f^{\dagger\dagger})  \overset{\text{(\ref{eq:dagger})}}{=} \mathcal{G}(f) \ast \mathcal{G}(f) \overset{\text{(\ref{eq:ast-idem})}}{=} \mathcal{G}(f) 
\end{gather*}
So $\mathcal{G}(f) =\mathcal{F}^\flat(f)$ on maps as well. So we get that $\mathcal{G} = \mathcal{F}^\flat$, which proves the desired uniqueness of $\mathcal{F}^\flat$. Therefore, we conclude that $(\mathbb{D},\dagger, \mathcal{E}_{\mathcal{L}_\ast})$ is a cofree dagger category over $\mathbb{D}_{\mathcal{L}_\ast}$. 
\end{proof}

Our goal is now to prove the converse: that a cofree dagger category is rectangular banded. As mentioned above, we first show that canonical cofree dagger categories are clearly rectangular banded via the canonical rectangular band operation on the cartesian product of sets (\ref{band-cart}).

\begin{proposition}\label{prop:cofree-canon-band} For a category $\mathbb{B}$, $(\mathsf{C}[\mathbb{B}], \dagger, \ast_\mathbb{B})$ is a rectangular banded dagger category where $\ast_\mathbb{B}: \mathsf{C}[\mathbb{B}](A,B) \times \mathsf{C}[\mathbb{B}](A,B) \to \mathsf{C}[\mathbb{B}](A,B)$ is defined as follows: 
\begin{align}\label{def:ast-cofree}
(f,g) \ast_\mathbb{B} (h,k) = (f,k)
\end{align}
\end{proposition}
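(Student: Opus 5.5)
The plan is to verify directly the three groups of axioms in Definition \ref{def:rect-band-dagger-cat}. Each reduces to componentwise computation in $\mathsf{C}[\mathbb{B}](A,B) = \mathbb{B}(A,B) \times \mathbb{B}(B,A)$. The operation $\ast_\mathbb{B}$ is precisely Clifford's operation $\ast_\times$ from (\ref{band-cart}) on the product set $\mathbb{B}(A,B) \times \mathbb{B}(B,A)$, so the homset axioms come for free.

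First, I will check that each homset is a rectangular band. For associativity, both bracketings of $(f_1,g_1) \ast_\mathbb{B} (f_2,g_2) \ast_\mathbb{B} (f_3,g_3)$ equal $(f_1,g_3)$. Idempotency (\ref{eq:ast-idem}) holds since $(f,g)\ast_\mathbb{B}(f,g)=(f,g)$. Middle cancellation (\ref{eq:ast-cancel}) holds since the middle factor is discarded, giving $(f_1,g_3)$ on both sides.

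Next, I will check that composition is a band morphism (\ref{band-comp}). Take $(f,f'):C\to D$, $(g,g'),(h,h'):B\to C$, and $(k,k'):A\to B$. Using the composition rule (\ref{def:comp-cofree}), the left side is $(f,f')\circ(g,h')\circ(k,k') = (f\circ g\circ k,\, k'\circ h'\circ f')$. The right side is $(f\circ g\circ k,\, k'\circ g'\circ f') \ast_\mathbb{B} (f\circ h\circ k,\, k'\circ h'\circ f') = (f\circ g\circ k,\, k'\circ h'\circ f')$. These agree.

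Finally, I will check that the dagger is a band anti-morphism (\ref{band-dagger}). By (\ref{def:dag-cofree}), $((f,g)\ast_\mathbb{B}(h,k))^\dagger = (f,k)^\dagger = (k,f)$. On the other side, $(h,k)^\dagger \ast_\mathbb{B} (f,g)^\dagger = (k,h)\ast_\mathbb{B}(g,f) = (k,f)$. These also agree.

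There is no real obstacle here. The only point needing care is the contravariant second component in the composition check: one must track the order $k'\circ h'\circ f'$ correctly so that it matches the second component retained by $\ast_\mathbb{B}$.
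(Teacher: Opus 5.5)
Your proof is correct and is essentially the paper's argument: identify $\ast_\mathbb{B}$ with Clifford's product operation $\ast_\times$ on $\mathbb{B}(A,B)\times\mathbb{B}(B,A)$ to get the rectangular band axioms on each homset, then verify (\ref{band-comp}) and (\ref{band-dagger}) componentwise using (\ref{def:comp-cofree}) and (\ref{def:dag-cofree}). Your composition and dagger computations match the paper's, including the contravariant second component $k'\circ h'\circ f'$, and your explicit check of associativity, idempotency and middle cancellation spells out what the paper leaves to the identification with $\ast_\times$.
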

\begin{proof} Clearly, viewing $\mathsf{C}[\mathbb{B}](A,B) = \mathbb{B}(A,B) \times \mathbb{B}(B,A)$, we have that $\ast_\mathbb{B} = \ast_\times$, and so each homset $(\mathsf{C}[\mathbb{B}](A,B), \ast_\mathbb{B})$ is indeed a rectangular band. Next we show the necessary identities to be a rectangular banded dagger category. 
\begin{enumerate}[{\em (i)}]
\item Composition is a band morphism: 
\begin{gather*}
(f_1,f_2) \circ \left( (g_1,g_2) \ast_\mathbb{B} (h_1,h_2) \right) \circ (k_1, k_2) \overset{\text{(\ref{def:ast-cofree})}}{=} (f_1,f_2) \circ (g_1, h_2) \circ (k_1, k_2) \\
\overset{\text{(\ref{def:comp-cofree})}}{=} (f_1 \circ g_1 \circ k_1, k_2 \circ h_2 \circ f_2) \overset{\text{(\ref{def:ast-cofree})}}{=} (f_1 \circ g_1 \circ k_1, k_2 \circ g_2 \circ f_2) \ast_\mathbb{B} (f_1 \circ h_1 \circ k_1, k_2 \circ h_2 \circ f_2) \\
\overset{\text{(\ref{def:comp-cofree})}}{=} \left((f_1,f_2) \circ (g_1,g_2) \circ (k_1, k_2) \right) \ast_\mathbb{B} \left((f_1,f_2) \circ (h_1,h_2) \circ (k_1, k_2) \right)
\end{gather*}
So (\ref{band-comp}) holds as desired. 
\item Dagger is a band antimorphism: 
\begin{gather*}
\left( (f,g) \ast_\mathbb{B} (h,k) \right)^\dagger \overset{\text{(\ref{def:ast-cofree})}}{=} (f,k)^\dagger \overset{\text{(\ref{def:dag-cofree})}}{=} (k,f) \overset{\text{(\ref{def:ast-cofree})}}{=} (k,h) \ast_\mathbb{B} (g,f) \overset{\text{(\ref{def:dag-cofree})}}{=} (h,k)^\dagger \ast_\mathbb{B} (f,g)^\dagger
\end{gather*}
So (\ref{band-dagger}) holds as desired. 
\end{enumerate}
So we conclude that $(\mathsf{C}[\mathbb{B}], \dagger, \ast_\mathbb{B})$ is a rectangular banded dagger category. 
\end{proof}

To extend this to arbitrary cofree dagger categories, we first show that dagger isomorphisms transfer rectangular band structure. To see how this works, recall that if $(S,\ast)$ is a semigroup and $f: S \to X$ is a bijection, then $(X, \ast_f)$ is a semigroup where $\ast_f: X \times X \to X$ is defined as $x \ast_f y= f(f^{-1}(x) \ast f^{-1}(y))$. Moreover, it is easy to see that semigroup properties of $(S,\ast)$ transfer to $(X, \ast_f)$. So in particular, if $(S,\ast)$ is a rectangular band, then $(X, \ast_f)$ is also a rectangular band. 

\begin{lemma}\label{lemma:band-dag-iso} Let $(\mathbb{D}_1,\dagger, \ast)$ be a rectangular banded dagger category and $\mathcal{F}: (\mathbb{D}_1,\dagger) \to (\mathbb{D}_2, \dagger)$ a dagger isomorphism. Then $(\mathbb{D}_2,\dagger, \ast_\mathcal{F})$ is a rectangular banded dagger category where $\ast_\mathcal{F}: \mathbb{D}_2(A,B) \times \mathbb{D}_2(A,B) \to \mathbb{D}_2(A,B)$ is defined as follows: 
\begin{align}\label{def:ast-F}
f \ast_\mathcal{F} g = \mathcal{F}\left( \mathcal{F}^{-1}(f) \ast \mathcal{F}^{-1}(g) \right)
\end{align}
\end{lemma}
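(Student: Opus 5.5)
The plan is a direct transport-of-structure argument. First I will check that each homset $(\mathbb{D}_2(A,B), \ast_\mathcal{F})$ is a rectangular band. Since $\mathcal{F}$ is an isomorphism of categories, it is the identity on nothing in particular but is bijective on objects. For objects $A,B$ of $\mathbb{D}_2$, it also restricts to a bijection $\mathbb{D}_1(\mathcal{F}^{-1}(A),\mathcal{F}^{-1}(B)) \to \mathbb{D}_2(A,B)$ with inverse $\mathcal{F}^{-1}$. So (\ref{def:ast-F}) is exactly the transported operation $\ast_f$ recalled just before the statement. By the remark there, idempotency (\ref{eq:ast-idem}), associativity and middle cancellation (\ref{eq:ast-cancel}) all transfer. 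For instance, $f \ast_\mathcal{F} g \ast_\mathcal{F} h = \mathcal{F}(\mathcal{F}^{-1}(f) \ast \mathcal{F}^{-1}(g) \ast \mathcal{F}^{-1}(h))$ because $\mathcal{F}^{-1}\mathcal{F} = \mathsf{id}$, and middle cancellation in $\mathbb{D}_1$ then gives $f \ast_\mathcal{F} h$.

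Next I verify axiom (\ref{band-comp}). For maps $f,g,h,k$ in $\mathbb{D}_2$ I use functoriality of $\mathcal{F}$ and $\mathcal{F}^{-1}$ to compute
\[ f \circ (g \ast_\mathcal{F} h) \circ k = \mathcal{F}\big(\mathcal{F}^{-1}(f) \circ (\mathcal{F}^{-1}(g) \ast \mathcal{F}^{-1}(h)) \circ \mathcal{F}^{-1}(k)\big). \]
Here I write $f = \mathcal{F}\mathcal{F}^{-1}(f)$, and similarly for $k$, and pull $\mathcal{F}$ outside. Applying (\ref{band-comp}) in $\mathbb{D}_1$ turns the inside into $\mathcal{F}^{-1}(f \circ g \circ k) \ast \mathcal{F}^{-1}(f \circ h \circ k)$, again by functoriality of $\mathcal{F}^{-1}$. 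By definition (\ref{def:ast-F}) this is $(f\circ g\circ k) \ast_\mathcal{F} (f \circ h \circ k)$.

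For the dagger axiom (\ref{band-dagger}) I use that $\mathcal{F}$ and $\mathcal{F}^{-1}$ are dagger functors; as noted in Section 2, $\mathcal{F}^{-1}$ is automatically one. Then
\[ (f \ast_\mathcal{F} g)^\dagger = \mathcal{F}\big((\mathcal{F}^{-1}(f) \ast \mathcal{F}^{-1}(g))^\dagger\big) = \mathcal{F}\big(\mathcal{F}^{-1}(g)^\dagger \ast \mathcal{F}^{-1}(f)^\dagger\big) = \mathcal{F}\big(\mathcal{F}^{-1}(g^\dagger) \ast \mathcal{F}^{-1}(f^\dagger)\big) = g^\dagger \ast_\mathcal{F} f^\dagger. \]

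There is no real obstacle. The only point needing care is bookkeeping: $\mathcal{F}^{-1}$ must land in the correct homsets of $\mathbb{D}_1$, which uses bijectivity of $\mathcal{F}$ on objects, and $\mathcal{F}^{-1}$ must itself be a dagger functor, which is the fact recalled in Section 2.
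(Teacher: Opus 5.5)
Your proposal is correct and follows the paper's proof essentially step for step: the band axioms are transported along the homset bijections induced by $\mathcal{F}$, and (\ref{band-comp}) and (\ref{band-dagger}) are checked by the same functoriality and dagger-functor computations. Your dagger chain is also slightly cleaner than the paper's displayed one, whose expression right after applying (\ref{band-dagger}) carries a stray outer $\dagger$ (a typo).
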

\begin{proof} Abusing notation slightly, recall that since $\mathcal{F}: \mathbb{D}_1 \to \mathbb{D}_2$ is an isomorphism of categories, we obtain bijections on the homsets $\mathcal{F}: \mathbb{D}_1(X,Y) \to \mathbb{D}_2(\mathcal{F}(X), \mathcal{F}(Y))$ for every pair of objects $X,Y \in \mathsf{Obj}(\mathbb{D}_1)$. In particular, for every pair of objects $A,B \in \mathsf{Obj}(\mathbb{D}_2)$ we get a bijection $\mathcal{F}: \mathbb{D}_1(\mathcal{F}^{-1}(A),\mathcal{F}^{-1}(B)) \to \mathbb{D}_2(A,B)$. Therefore, as explained above, since $(\mathbb{D}_1(\mathcal{F}^{-1}(A),\mathcal{F}^{-1}(B)),\ast)$ is a rectangular band, we get that $(\mathbb{D}_2(A,B),\ast_\mathcal{F})$ is also a rectangular band. It remains to check the compatibility with composition and the dagger. 

\begin{enumerate}[{\em (i)}]
\item Composition is a band morphism: 

\begin{gather*}
f \circ (g \ast_\mathcal{F} h) \circ k \overset{\text{(\ref{def:ast-F})}}{=} f \circ \mathcal{F}\left( \mathcal{F}^{-1}(g) \ast \mathcal{F}^{-1}(h) \right) \circ k \overset{\text{$\mathcal{F}$ iso.}}{=} \mathcal{F}(\mathcal{F}^{-1}(f)) \circ \mathcal{F}\left( \mathcal{F}^{-1}(g) \ast \mathcal{F}^{-1}(h) \right) \circ \mathcal{F}(\mathcal{F}^{-1}(k)) \\\overset{\text{(\ref{def:functor})}}{=} \mathcal{F}\left( \mathcal{F}^{-1}(f) \circ \left( \mathcal{F}^{-1}(g) \ast \mathcal{F}^{-1}(h) \right) \circ \mathcal{F}^{-1}(k) \right) \\\overset{\text{(\ref{band-comp})}}{=} \mathcal{F}\left( \left(\mathcal{F}^{-1}(f) \circ \mathcal{F}^{-1}(g) \circ \mathcal{F}^{-1}(k) \right) \ast \left(\mathcal{F}^{-1}(f) \circ \mathcal{F}^{-1}(h) \circ \mathcal{F}^{-1}(k) \right) \right) \\\overset{\text{(\ref{def:functor})}}{=} \mathcal{F}\left( \mathcal{F}^{-1}(f \circ g \circ k) \ast \mathcal{F}^{-1}(f \circ h \circ k) \right)\overset{\text{(\ref{def:ast-F})}}{=}  (f \circ g \circ k) \ast_\mathcal{F} (f \circ h \circ k)
\end{gather*}
\item Dagger is a band antimorphism: 
\begin{gather*}
(f \ast_\mathcal{F} g)^\dagger \overset{\text{(\ref{def:ast-F})}}{=} \mathcal{F}\left( \mathcal{F}^{-1}(f) \ast \mathcal{F}^{-1}(g) \right)^\dagger \overset{\text{(\ref{def:dagfun})}}{=} \mathcal{F}\left( \left( \mathcal{F}^{-1}(f) \ast \mathcal{F}^{-1}(g) \right)^\dagger \right) \overset{\text{(\ref{band-dagger})}}{=}  \mathcal{F}\left( \left( \mathcal{F}^{-1}(g)^\dagger \ast \mathcal{F}^{-1}(f)^\dagger \right)^\dagger \right)\\
\overset{\text{(\ref{def:dagfun})}}{=}  \mathcal{F}\left( \mathcal{F}^{-1}(g^\dagger) \ast \mathcal{F}^{-1}(f^\dagger) \right) \overset{\text{(\ref{def:ast-F})}}{=} g^\dagger \ast_\mathcal{F} f^\dagger
\end{gather*}
\end{enumerate}
Thus we conclude that $(\mathbb{D}_2,\dagger, \ast_\mathcal{F})$ is a rectangular banded dagger category. 
\end{proof}

\begin{proposition}\label{prop:cofree-to-band} If $(\mathbb{D},\dagger, \mathcal{E})$ is a cofree dagger category over a category $\mathbb{B}$, then $(\mathbb{D},\dagger,\ast_\mathcal{E})$ is a rectangular banded dagger category where the binary operation $\ast_\mathcal{E}: \mathbb{D}(A,B) \times \mathbb{D}(A,B) \to \mathbb{D}(A,B)$ is defined as follows:
\begin{align}\label{def:ast-E}
f \ast_\mathcal{E} g = \mathcal{E}^\flat_\mathbb{B}\left(  \mathcal{E}(f), \mathcal{E}(g^\dagger) \right)
\end{align}
\end{proposition}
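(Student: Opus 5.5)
The plan is to reduce the statement to Lemma \ref{lemma:band-dag-iso} applied to the dagger isomorphism of Corollary \ref{cor:cofree-iso.1}. By that corollary, $\mathcal{E}^\flat: (\mathbb{D}, \dagger) \to (\mathsf{C}[\mathbb{B}], \dagger)$ is a dagger isomorphism with inverse $\mathcal{E}^\flat_\mathbb{B}$. By Proposition \ref{prop:cofree-canon-band}, $(\mathsf{C}[\mathbb{B}], \dagger, \ast_\mathbb{B})$ is a rectangular banded dagger category. So Lemma \ref{lemma:band-dag-iso}, applied to the dagger isomorphism $\mathcal{F} := \mathcal{E}^\flat_\mathbb{B}: (\mathsf{C}[\mathbb{B}], \dagger) \to (\mathbb{D}, \dagger)$ (whose inverse is $\mathcal{F}^{-1} = \mathcal{E}^\flat$), immediately gives that $(\mathbb{D}, \dagger, \ast_{\mathcal{F}})$ is a rectangular banded dagger category. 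Here
\[ f \ast_{\mathcal{F}} g = \mathcal{E}^\flat_\mathbb{B}\left( \mathcal{E}^\flat(f) \ast_\mathbb{B} \mathcal{E}^\flat(g) \right). \]

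The only remaining task is to check that $\ast_{\mathcal{F}}$ coincides with $\ast_\mathcal{E}$ as defined in (\ref{def:ast-E}). For this I first make $\mathcal{E}^\flat$ explicit. Applying Proposition \ref{prop:canon-cofree} to the dagger category $(\mathbb{D},\dagger)$ and the functor $\mathcal{E}: \mathbb{D} \to \mathbb{B}$, we have $\mathcal{E}^\flat(A) = \mathcal{E}(A)$ on objects and $\mathcal{E}^\flat(f) = (\mathcal{E}(f), \mathcal{E}(f^\dagger))$ on maps; the second component is read as $\mathcal{E}(f^\dagger)$, which is the correct reading of that formula for a functor into $\mathbb{B}$.

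Then, by (\ref{def:ast-cofree}),
\[ \mathcal{E}^\flat(f) \ast_\mathbb{B} \mathcal{E}^\flat(g) = (\mathcal{E}(f), \mathcal{E}(f^\dagger)) \ast_\mathbb{B} (\mathcal{E}(g), \mathcal{E}(g^\dagger)) = (\mathcal{E}(f), \mathcal{E}(g^\dagger)). \]
Applying $\mathcal{E}^\flat_\mathbb{B}$ yields exactly $\mathcal{E}^\flat_\mathbb{B}(\mathcal{E}(f), \mathcal{E}(g^\dagger)) = f \ast_\mathcal{E} g$, so $\ast_{\mathcal{F}} = \ast_\mathcal{E}$ and we are done.

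The main (mild) obstacle is the bookkeeping on objects and types. Since $\mathcal{E}^\flat$ acts on objects by $\mathcal{E}$, the pair $(\mathcal{E}(f), \mathcal{E}(g^\dagger))$ is a map $\mathcal{E}(A) \to \mathcal{E}(B)$ in $\mathsf{C}[\mathbb{B}]$. Also, $\mathcal{E}^\flat_\mathbb{B}$ sends it back to a map $A \to B$, because $\mathcal{E}^\flat_\mathbb{B}$ is the inverse of $\mathcal{E}^\flat$ and $\mathcal{E}$ is a bijection on objects (Lemma \ref{lemma:cofree-objects}). One should state this explicitly so that (\ref{def:ast-E}) is well typed; everything else is a direct transfer of structure.
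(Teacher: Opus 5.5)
Your proposal is correct and follows the paper's proof essentially step for step. Like the paper, you transport the canonical band structure $\ast_\mathbb{B}$ of Prop \ref{prop:cofree-canon-band} along the dagger isomorphism $\mathcal{E}^\flat_\mathbb{B}$ via Lemma \ref{lemma:band-dag-iso}, and then compute $\mathcal{E}^\flat(f) \ast_\mathbb{B} \mathcal{E}^\flat(g) = (\mathcal{E}(f), \mathcal{E}(g^\dagger))$ to identify the transported operation with $\ast_\mathcal{E}$ (your reading of $\mathcal{F}^\flat(f)$ as $(\mathcal{F}(f),\mathcal{F}(f^\dagger))$ is the intended one, and the typing check only needs $\mathcal{E}^\flat_\mathbb{B} \circ \mathcal{E}^\flat = \mathsf{id}_{\mathbb{D}}$, so Lemma \ref{lemma:cofree-objects} is harmless but unnecessary).
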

\begin{proof} We showed in Prop \ref{prop:cofree-canon-band} that $(\mathsf{C}[\mathbb{B}], \dagger, \ast_\mathbb{B})$ is a rectangular banded dagger category. Also recall from Cor \ref{cor:cofree-iso.1} that $\mathcal{E}^\flat_\mathbb{B}: (\mathsf{C}[\mathbb{B}], \dagger) \to (\mathbb{D}, \dagger)$ is a dagger isomorphism with inverse $\mathcal{E}^\flat: (\mathbb{D}, \dagger) \to (\mathsf{C}[\mathbb{B}], \dagger)$. Thus applying Lemma \ref{lemma:band-dag-iso} give us that $(\mathbb{D},\dagger,\ast_{\mathcal{E}^\flat_\mathbb{B}})$ is a rectangular banded dagger category. Let us work out $\ast_{\mathcal{E}^\flat_\mathbb{B}}$ explicitly:
\begin{gather*}
f \ast_{\mathcal{E}^\flat_\mathbb{B}} g \overset{\text{(\ref{def:ast-F})}}{=} \mathcal{E}^\flat_\mathbb{B}\left( \mathcal{E}^\flat(f) \ast_\mathbb{B} \mathcal{E}^\flat(g) \right) \overset{\overset{\text{Prop}}{\ref{prop:canon-cofree}}}{=} \mathcal{E}^\flat_\mathbb{B}\left( (\mathcal{E}(f),\mathcal{E}(f^\dagger)) \ast_\mathbb{B} (\mathcal{E}(g),\mathcal{E}(g^\dagger)) \right) \overset{\text{(\ref{def:ast-cofree})}}{=} \mathcal{E}^\flat_\mathbb{B}\left( (\mathcal{E}(f),\mathcal{E}(g^\dagger)) \right) \overset{\text{(\ref{def:ast-E})}}{=}f \ast_\mathcal{E} g
\end{gather*}
Thus $\ast_{\mathcal{E}^\flat_\mathbb{B}}=\ast_\mathcal{E}$. So $(\mathbb{D},\dagger,\ast_\mathcal{E})$ is a rectangular banded dagger category as desired. 
\end{proof}

Applying the above proposition to the canonical cofree dagger category construction gives us back the canonical rectangular banded structure. 

\begin{corollary} Let $\mathbb{B}$ be a category. Then for the cofree dagger category $(\mathsf{C}[\mathbb{B}], \dagger, \mathcal{E}_\mathbb{B})$, we have that $\ast_{\mathcal{E}_\mathbb{B}} = \ast_\mathbb{B}$.  
\end{corollary}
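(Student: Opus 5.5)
The plan is a direct unwinding of the definition (\ref{def:ast-E}) in the special case $\mathbb{D} = \mathsf{C}[\mathbb{B}]$ and $\mathcal{E} = \mathcal{E}_\mathbb{B}$. Take two parallel maps $(f,g), (h,k) \colon A \to B$ in $\mathsf{C}[\mathbb{B}]$. By (\ref{def:ast-E}) we have
\[
(f,g) \ast_{\mathcal{E}_\mathbb{B}} (h,k) = \mathcal{E}^\flat_\mathbb{B}\left( \mathcal{E}_\mathbb{B}(f,g), \mathcal{E}_\mathbb{B}\left((h,k)^\dagger\right) \right).
\]
I will simplify the two arguments first and then evaluate $\mathcal{E}^\flat_\mathbb{B}$.

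For the arguments, $\mathcal{E}_\mathbb{B}$ projects onto the first component, so $\mathcal{E}_\mathbb{B}(f,g) = f$. By (\ref{def:dag-cofree}) we have $(h,k)^\dagger = (k,h)$, hence $\mathcal{E}_\mathbb{B}\left((h,k)^\dagger\right) = k$. The pair fed to $\mathcal{E}^\flat_\mathbb{B}$ is therefore $(f,k)$, which is a map $A \to B$ of $\mathsf{C}[\mathbb{B}]$.

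It remains to identify $\mathcal{E}^\flat_\mathbb{B}$, which is the main (though mild) point to get right. Here $\mathcal{E}^\flat_\mathbb{B}$ is the functor $\mathsf{C}[\mathbb{B}] \to \mathbb{D}$ from Cor \ref{cor:cofree-iso.1}. With $\mathbb{D} = \mathsf{C}[\mathbb{B}]$ and $\mathcal{E} = \mathcal{E}_\mathbb{B}$, it is the unique dagger functor $\mathsf{C}[\mathbb{B}] \to \mathsf{C}[\mathbb{B}]$ satisfying $\mathcal{E}_\mathbb{B} \circ \mathcal{E}^\flat_\mathbb{B} = \mathcal{E}_\mathbb{B}$. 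The identity functor is a dagger functor satisfying this equation, so by the uniqueness part of Prop \ref{prop:canon-cofree} it equals $\mathcal{E}^\flat_\mathbb{B}$. The explicit formula gives the same conclusion: $\mathcal{E}^\flat_\mathbb{B}(f,k) = \left( (f,k), (f,k)^\dagger \right)$ read through $\mathcal{E}_\mathbb{B}$, which is $(f,k)$.

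Consequently $(f,g) \ast_{\mathcal{E}_\mathbb{B}} (h,k) = (f,k) = (f,g) \ast_\mathbb{B} (h,k)$ by (\ref{def:ast-cofree}), which proves the claim.
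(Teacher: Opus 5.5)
Your proof is correct and follows the paper's argument: you identify $\mathcal{E}^\flat_\mathbb{B}$ with $\mathsf{id}_{\mathsf{C}[\mathbb{B}]}$, then unwind (\ref{def:ast-E}) using $(h,k)^\dagger = (k,h)$ to get $(f,k) = (f,g) \ast_\mathbb{B} (h,k)$. Your appeal to the uniqueness in Prop \ref{prop:canon-cofree} to justify $\mathcal{E}^\flat_\mathbb{B} = \mathsf{id}_{\mathsf{C}[\mathbb{B}]}$ makes explicit a step the paper only states as an observation.
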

\begin{proof} Observe that in this case, $\mathcal{E}^\flat_\mathbb{B} = \mathsf{id}_{\mathsf{C}[\mathbb{B}]}$. Thus we compute: 
\begin{gather*}
(f,g) \ast_{\mathcal{E}_\mathbb{B}} (h,k) \overset{\text{(\ref{def:ast-E})}}{=} \mathcal{E}^\flat_\mathbb{B}\left(  \mathcal{E}_\mathbb{B}(f,g), \mathcal{E}_\mathbb{B}((h,k)^\dagger) \right) \overset{\text{$\mathcal{E}^\flat_\mathbb{B} = \mathsf{id}_{\mathsf{C}[\mathbb{B}]}$}}{=} \left(  \mathcal{E}_\mathbb{B}(f,g), \mathcal{E}_\mathbb{B}((h,k)^\dagger) \right) \overset{\text{(\ref{def:dag-cofree})}}{=} \left(  \mathcal{E}_\mathbb{B}(f,g), \mathcal{E}_\mathbb{B}(k,h) \right) \\\overset{\overset{\text{Prop}}{\text{\ref{prop:canon-cofree}}}}{=} (f,k) \overset{(\ref{def:ast-cofree})}{=} (f,g) \ast_\mathbb{B} (h,k)
\end{gather*}
So indeed $\ast_{\mathcal{E}_\mathbb{B}} = \ast_\mathbb{B}$.  
\end{proof}

Thus, Prop \ref{prop:rect-to-cofree} and Prop \ref{prop:cofree-to-band} together give us our first main result:  

\begin{theorem}\label{thm:cofree=rect} A dagger category is cofree if and only if it has a rectangular banded structure. 
\end{theorem}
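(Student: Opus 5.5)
The theorem follows by combining the two propositions just established, one for each direction.

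For the forward direction, suppose $(\mathbb{D},\dagger)$ is cofree. By definition there is a category $\mathbb{B}$ and a functor $\mathcal{E}: \mathbb{D} \to \mathbb{B}$ such that $(\mathbb{D},\dagger,\mathcal{E})$ is a cofree dagger category over $\mathbb{B}$. Prop \ref{prop:cofree-to-band} then applies directly: the operation $\ast_\mathcal{E}$ of (\ref{def:ast-E}) makes $(\mathbb{D},\dagger,\ast_\mathcal{E})$ a rectangular banded dagger category. Recall how that proposition works. By Cor \ref{cor:cofree-iso.1}, $\mathcal{E}^\flat_\mathbb{B}$ is a dagger isomorphism from $(\mathsf{C}[\mathbb{B}],\dagger)$ to $(\mathbb{D},\dagger)$. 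The canonical structure $\ast_\mathbb{B}$ of Prop \ref{prop:cofree-canon-band} is then transported along this isomorphism using Lemma \ref{lemma:band-dag-iso}.

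For the converse, suppose $(\mathbb{D},\dagger)$ carries a rectangular banded structure $\ast$. Take the quotient category $\mathbb{D}_{\mathcal{L}_\ast}$ and the quotient functor $\mathcal{E}_{\mathcal{L}_\ast}$; Lemma \ref{lemma:DL} shows these are well defined. Prop \ref{prop:rect-to-cofree} then says $(\mathbb{D},\dagger,\mathcal{E}_{\mathcal{L}_\ast})$ is a cofree dagger category over $\mathbb{D}_{\mathcal{L}_\ast}$. So $(\mathbb{D},\dagger)$ is cofree, with base category $\mathbb{D}_{\mathcal{L}_\ast}$.

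No step here is a real obstacle, since all the substantive work sits in the two propositions. The hardest ingredient was the well-definedness of $\mathcal{F}^\flat$ in Prop \ref{prop:rect-to-cofree}, namely identity (\ref{eq:Fflat-wd}), and that is already proved. The only point needing care is bookkeeping. Being cofree means there \emph{exists} some base category and some functor. Having a rectangular banded structure means there \emph{exists} some operation $\ast$. Each direction produces the required witness, so the biconditional holds with no extra uniqueness claim needed.
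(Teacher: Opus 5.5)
Your proposal is correct and is exactly the paper's argument. The theorem is obtained by combining Prop~\ref{prop:cofree-to-band} (cofree implies rectangular banded, by transporting $\ast_\mathbb{B}$ along the dagger isomorphism $\mathcal{E}^\flat_\mathbb{B}$) with Prop~\ref{prop:rect-to-cofree} (rectangular banded implies cofree over $\mathbb{D}_{\mathcal{L}_\ast}$).
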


Therefore, rectangular banded structure is a base independent way of describing cofreeness of a dagger category, where we've replaced the outer base category with an internal dagger category structure. Of course, a natural question to ask is if the constructions of Prop \ref{prop:rect-to-cofree} and Prop \ref{prop:cofree-to-band}. Clearly applying one than the other will preserve the dagger category, but what about the base category or the rectangular band structure. We show that if we start with a rectangular banded dagger category, then apply Prop \ref{prop:cofree-to-band} to the constructed base category from Prop \ref{prop:rect-to-cofree}, we obtain the starting rectangular band structure. 

\begin{proposition}\label{prop:asE=ast}Let $(\mathbb{D}, \dagger, \ast)$ be a rectangular banded dagger category. Then $\ast_{\mathcal{E}_{\mathcal{L}_\ast}} = \ast$. 
\end{proposition}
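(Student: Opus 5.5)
Unwinding the definitions is enough here. By Prop \ref{prop:rect-to-cofree}, $(\mathbb{D},\dagger,\mathcal{E}_{\mathcal{L}_\ast})$ is a cofree dagger category over $\mathbb{D}_{\mathcal{L}_\ast}$. Definition (\ref{def:ast-E}) then gives
\[ f \ast_{\mathcal{E}_{\mathcal{L}_\ast}} g = \mathcal{E}^\flat_{\mathbb{D}_{\mathcal{L}_\ast}}\left( [f]_{\mathcal{L}_\ast}, [g^\dagger]_{\mathcal{L}_\ast} \right), \]
where $\mathcal{E}^\flat_{\mathbb{D}_{\mathcal{L}_\ast}}: (\mathsf{C}[\mathbb{D}_{\mathcal{L}_\ast}],\dagger) \to (\mathbb{D},\dagger)$ is the unique dagger functor induced by the universal property of $(\mathbb{D},\dagger,\mathcal{E}_{\mathcal{L}_\ast})$ applied to $\mathcal{E}_{\mathbb{D}_{\mathcal{L}_\ast}}: \mathsf{C}[\mathbb{D}_{\mathcal{L}_\ast}] \to \mathbb{D}_{\mathcal{L}_\ast}$. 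The key point is that this induced functor is the one constructed explicitly in the proof of Prop \ref{prop:rect-to-cofree}, via formula (\ref{def:Fflat}). The proof will therefore consist of evaluating that formula.

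Write $\mathcal{F} := \mathcal{E}_{\mathbb{D}_{\mathcal{L}_\ast}}$ and take the map $x = ([f]_{\mathcal{L}_\ast},[g^\dagger]_{\mathcal{L}_\ast})$ of $\mathsf{C}[\mathbb{D}_{\mathcal{L}_\ast}]$. By (\ref{def:dag-cofree}) we have $x^\dagger = ([g^\dagger]_{\mathcal{L}_\ast},[f]_{\mathcal{L}_\ast})$. By the definition of $\mathcal{E}_{\mathbb{D}_{\mathcal{L}_\ast}}$ in Prop \ref{prop:canon-cofree}, $\mathcal{F}(x) = [f]_{\mathcal{L}_\ast}$ and $\mathcal{F}(x^\dagger)=[g^\dagger]_{\mathcal{L}_\ast}$. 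Formula (\ref{def:Fflat}) allows any representatives to be chosen (by (\ref{eq:Fflat-wd})), so we pick $f \in \mathcal{F}(x)$ and $g^\dagger \in \mathcal{F}(x^\dagger)$. This yields
\[ \mathcal{F}^\flat(x) = f \ast (g^\dagger)^\dagger = f \ast g, \]
using involutivity of the dagger (\ref{eq:dagger}). Hence $f \ast_{\mathcal{E}_{\mathcal{L}_\ast}} g = f \ast g$ for all parallel $f,g$.

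The only point needing care is justifying that $\mathcal{E}^\flat_{\mathbb{D}_{\mathcal{L}_\ast}}$ is given by (\ref{def:Fflat}). This follows from the uniqueness clause of the universal property: the functor built in Prop \ref{prop:rect-to-cofree} is a dagger functor satisfying $\mathcal{E}_{\mathcal{L}_\ast}\circ\mathcal{F}^\flat = \mathcal{F}$, so it must coincide with the induced functor. One should also check that the objects match, which holds because $\mathcal{F}^\flat$ acts as the identity on objects here.
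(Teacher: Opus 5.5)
Your proof is correct and follows the paper's argument: you identify $\mathcal{E}^\flat_{\mathbb{D}_{\mathcal{L}_\ast}}$ with the functor built via (\ref{def:Fflat}) in Prop \ref{prop:rect-to-cofree}, choose the representatives $f$ and $g^\dagger$, and obtain $f \ast g^{\dagger\dagger} = f \ast g$. You also appeal explicitly to the uniqueness clause to justify that identification, which the paper leaves implicit.
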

\begin{proof} Observe that $\mathcal{E}_{\mathbb{D}_{\mathcal{L}_\ast}} \left( [f]_{\mathcal{L}_\ast}, [g]_{\mathcal{L}_\ast}\right) = [f]_{\mathcal{L}_\ast}$ and $\mathcal{E}_{\mathbb{D}_{\mathcal{L}_\ast}} \left( \left([f]_{\mathcal{L}_\ast}, [g]_{\mathcal{L}_\ast} \right)^\dagger \right) = [g]_{\mathcal{L}_\ast}$. So $f \in \mathcal{E}_{\mathbb{D}_{\mathcal{L}_\ast}} \left( [f]_{\mathcal{L}_\ast}, [g]_{\mathcal{L}_\ast}\right)$ and $g \in \mathcal{E}_{\mathbb{D}_{\mathcal{L}_\ast}} \left( \left([f]_{\mathcal{L}_\ast}, [g]_{\mathcal{L}_\ast} \right)^\dagger \right)$. Then we compute that: 
\begin{gather*} 
f \ast_{\mathcal{E}_{\mathcal{L}_\ast}} g \overset{\text{(\ref{def:ast-E})}}{=} \mathcal{E}^\flat_{\mathbb{D}_{\mathcal{L}_\ast}} \left(  \mathcal{E}_{\mathcal{L}_\ast}(f), \mathcal{E}_{\mathcal{L}_\ast}(g^\dagger) \right) = \mathcal{E}^\flat_{\mathbb{D}_{\mathcal{L}_\ast}} \left( [f]_{\mathcal{L}_\ast}, [g^\dagger]_{\mathcal{L}_\ast}\right) \overset{\text{(\ref{def:Fflat})}}{=} f \ast g^{\dagger \dagger} \overset{\text{(\ref{eq:dagger})}}{=} f \ast g
\end{gather*}
Thus $\ast_{\mathcal{E}_{\mathcal{L}_\ast}} = \ast$ as desired. 
\end{proof}

On the other hand, if we start with a cofree dagger category over a specified base category, apply Prop \ref{prop:cofree-to-band} to get a rectangular band structure, and then apply Prop \ref{prop:rect-to-cofree}, the resulting quotient base category may not necessarily be the same as the starting base category. Instead, it will capture another base that we have already discussed: the image category of the functor down to the starting base category (Lemma \ref{lemma:cofree-im-base}). 

\begin{proposition}\label{prop:imE=ast} Let $(\mathbb{D},\dagger, \mathcal{E})$ be a cofree dagger category over a category $\mathbb{B}$. Then the functor $\mathcal{E}^\natural: \mathbb{D}_{\mathcal{L}_{\ast_\mathcal{E}}} \to \mathsf{Im}[\mathcal{E}]$ defined on objects as $\mathcal{E}^\natural(A) = \mathcal{E}(A)$ and on maps as $\mathcal{E}^\natural([f]_{\mathcal{L}_{\ast_\mathcal{E}}}) = \mathcal{E}(f)$ is an isomorphism. Moreover, the composite functor $\mathcal{I}_\mathcal{E} \circ \mathcal{E}^\natural: \mathbb{D}_{\mathcal{L}_{\ast_\mathcal{E}}} \to \mathbb{B}$ is a faithful functor and it is the unique functor such that the following diagram commutes: 
\begin{equation}\begin{gathered}\label{diag:cofree-imE-L} 
\xymatrixcolsep{5pc}\xymatrix{ & \mathbb{D} \ar[dr]^-{\mathcal{E}} \ar[dl]_-{\mathcal{E}_{\mathcal{L}_{\ast_\mathcal{E}}}} \\
\mathbb{D}_{\mathcal{L}_{\ast_\mathcal{E}}}  \ar@{-->}[rr]_-{\exists! ~\mathcal{I}_\mathcal{E} \circ \mathcal{E}^\natural} &&  \mathbb{B}}
\end{gathered}\end{equation}
\end{proposition}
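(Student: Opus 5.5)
The key step is to identify the $\mathcal{L}_{\ast_\mathcal{E}}$-relation explicitly: I claim that for parallel $f,g: A \to B$ in $\mathbb{D}$, we have $f ~\mathcal{L}_{\ast_\mathcal{E}}~ g$ if and only if $\mathcal{E}(f) = \mathcal{E}(g)$. By (\ref{def:sim}), $f ~\mathcal{L}_{\ast_\mathcal{E}}~ g$ iff $g \ast_\mathcal{E} f = g$. By (\ref{def:ast-E}), $g \ast_\mathcal{E} f = \mathcal{E}^\flat_\mathbb{B}(\mathcal{E}(g), \mathcal{E}(f^\dagger))$. Since $\mathcal{E}^\flat$ is the inverse of $\mathcal{E}^\flat_\mathbb{B}$ (Cor \ref{cor:cofree-iso.1}) and $\mathcal{E}^\flat(g) = (\mathcal{E}(g),\mathcal{E}(g^\dagger))$ (Prop \ref{prop:canon-cofree}), the equation $g \ast_\mathcal{E} f = g$ holds iff $(\mathcal{E}(g), \mathcal{E}(f^\dagger)) = (\mathcal{E}(g), \mathcal{E}(g^\dagger))$, i.e. iff $\mathcal{E}(f^\dagger) = \mathcal{E}(g^\dagger)$. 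Running the same argument with the other disjunct, $f \ast_\mathcal{E} g = f$, gives the condition $\mathcal{E}(g^\dagger)=\mathcal{E}(f^\dagger)$, which is the same. So this naive computation yields only the dagger side. I will fix this by using the symmetric form of the relation: since in a band $f ~\mathcal{L}~ g$ iff $f \ast g = f$ and $g\ast f = g$, and these agree by the rectangular property, the condition is really $\mathcal{E}(f^\dagger)=\mathcal{E}(g^\dagger)$.

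This shows that, with the paper's conventions, the relation is characterised by equality of $\mathcal{E}(f^\dagger)$ rather than of $\mathcal{E}(f)$. This is the main obstacle: checking carefully which component $\mathcal{L}_{\ast_\mathcal{E}}$ identifies, and hence whether $\mathcal{E}^\natural$ is well defined and injective. If the computation really gives $\mathcal{E}(f^\dagger)$, I would reconcile it with Prop \ref{prop:asE=ast}. That proposition states that $\mathcal{E}^\flat_{\mathbb{D}_{\mathcal{L}_\ast}}([f],[g^\dagger]) = f \ast g$, which forces elements of $[f]_{\mathcal{L}_\ast}$ to be those agreeing with $f$ on the ``left'' component. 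I would re-examine (\ref{def:sim}) in the canonical case: there $(f,g) \ast_\mathbb{B} (h,k) = (f,k)$, so $x \ast y = x$ means the second components agree. I would then adjust (presumably (\ref{def:sim}) should be read as $x \ast y = y$, which gives agreement of first components) so that the relation becomes $\mathcal{E}(f) = \mathcal{E}(g)$. I would settle this convention first, since everything else depends on it.

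Granting the characterisation $[f] = [g] \iff \mathcal{E}(f)=\mathcal{E}(g)$, the rest is routine.
\begin{enumerate}[(1)]
\item $\mathcal{E}^\natural$ is well defined and injective on each homset, by the characterisation.
\item $\mathcal{E}^\natural$ is functorial, because $\mathcal{E}$ is and because composition in $\mathbb{D}_{\mathcal{L}_{\ast_\mathcal{E}}}$ is computed on representatives.
\item $\mathcal{E}^\natural$ is bijective on objects by Lemma \ref{lemma:cofree-objects}, and surjective on maps by the definition of $\mathsf{Im}[\mathcal{E}]$.
\end{enumerate}
Hence $\mathcal{E}^\natural$ is an isomorphism, with inverse sending $\mathcal{E}(f)$ to $[f]$.

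For the second part, $\mathcal{I}_\mathcal{E}\circ\mathcal{E}^\natural$ is faithful because it is an isomorphism followed by an inclusion. Commutativity of the triangle is immediate: $\mathcal{I}_\mathcal{E}\mathcal{E}^\natural\mathcal{E}_{\mathcal{L}_{\ast_\mathcal{E}}}(f) = \mathcal{E}(f)$. Uniqueness holds because $\mathcal{E}_{\mathcal{L}_{\ast_\mathcal{E}}}$ is the identity on objects and surjective on maps (a quotient), so any functor $\mathcal{G}$ with $\mathcal{G}\circ\mathcal{E}_{\mathcal{L}_{\ast_\mathcal{E}}} = \mathcal{E}$ is determined on every object and every class $[f]$ by $\mathcal{G}([f]) = \mathcal{E}(f)$.
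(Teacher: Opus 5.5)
Your route is the paper's: prove $f~\mathcal{L}_{\ast_\mathcal{E}}~g \Leftrightarrow \mathcal{E}(f)=\mathcal{E}(g)$, then read off well-definedness, injectivity, bijectivity on objects (Lemma~\ref{lemma:cofree-objects}), faithfulness and uniqueness, as you do. The one real defect is that you leave this key equivalence ``granted''. The obstacle that led you to do so comes from misreading (\ref{def:sim}).

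As displayed, (\ref{def:sim}) reads $x~\mathcal{L}_\ast~y \Leftrightarrow x \ast y = y$ or $y \ast x = x$. With $x=f$ and $y=g$ this is $f \ast_\mathcal{E} g = g$ (equivalently $g \ast_\mathcal{E} f = f$), not $g \ast_\mathcal{E} f = g$. So no adjustment is needed: the convention you propose is the one already displayed. Your own injectivity computation then closes the step unconditionally. Applying $\mathcal{E}^\flat$ to $\mathcal{E}^\flat_\mathbb{B}(\mathcal{E}(f),\mathcal{E}(g^\dagger)) = g$ gives $(\mathcal{E}(f),\mathcal{E}(g^\dagger)) = (\mathcal{E}(g),\mathcal{E}(g^\dagger))$, i.e.\ $\mathcal{E}(f)=\mathcal{E}(g)$, and the argument reverses.

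The paper proves the two directions separately:
\begin{itemize}
\item Forward: $\mathcal{E}(g)=\mathcal{E}(f\ast_\mathcal{E} g)=\mathcal{E}_\mathbb{B}(\mathcal{E}(f),\mathcal{E}(g^\dagger))=\mathcal{E}(f)$, using $\mathcal{E}\circ\mathcal{E}^\flat_\mathbb{B}=\mathcal{E}_\mathbb{B}$.
\item Backward: when $\mathcal{E}(f)=\mathcal{E}(g)$, one gets $f\ast_\mathcal{E} g = g \ast_\mathcal{E} g = g$ by idempotency.
\end{itemize}
Your one-line version via the inverse isomorphism is equivalent and slightly cleaner. State it as proved, not assumed.

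The discrepancy you sensed is nonetheless real. The sentence just before (\ref{def:sim}) describes the genuine Green's $\mathcal{L}$-relation in a band ($x\ast y=x$ and $y\ast x=y$). Under that reading you are right: the relation identifies maps with equal $\mathcal{E}(f^\dagger)$, and $\mathcal{E}^\natural$ is ill-defined. This already happens for $\mathsf{C}[M]$ with $M$ a monoid with at least two elements, where $(m,n)$ and $(m',n)$ would be identified.

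The relation the paper actually uses is agreement of first components, which is strictly Green's $\mathcal{R}$-relation. It appears in the display (\ref{def:sim}), in the well-definedness of $\mathcal{F}^\flat(f)=g\ast h^\dagger$ in Prop~\ref{prop:rect-to-cofree}, and in this proof. So your instinct to fix the convention first was sound, but the displayed definition already fixes it. Taking that as the definition, your argument is complete once the key step is written out as above.
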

\begin{proof} The key to this proof is to show the following, that for parallel maps $f,g \in \mathbb{D}(A,B)$ we have that: 
\begin{align}\label{lemma:E-iff-ast}
f~\mathcal{L}_{\ast_\mathcal{E}}~g ~\Leftrightarrow~ \mathcal{E}(f) = \mathcal{E}(g)
\end{align}
So suppose that $f~\mathcal{L}_{\ast_\mathcal{E}}~g$, that is, $f \ast_\mathcal{E} g = g$. Then we compute: 
\begin{gather*}
\mathcal{E}(g) \overset{\text{(\ref{def:sim})}}{=} \mathcal{E}\left( f \ast_\mathcal{E} g \right) \overset{\text{(\ref{def:ast-E})}}{=} \mathcal{E}\left( \mathcal{E}^\flat_\mathbb{B}\left(  \mathcal{E}(f), \mathcal{E}(g^\dagger) \right) \right) \overset{\text{(\ref{diag:cofree})}}{=} \mathcal{E}_\mathbb{B}\left(  \mathcal{E}(f), \mathcal{E}(g^\dagger) \right) \overset{\overset{\text{Prop}}{\text{\ref{prop:canon-cofree}}}}{=} \mathcal{E}(f)
\end{gather*}
So $\mathcal{E}(f) = \mathcal{E}(g)$ as desired. Conversely, suppose we have that $\mathcal{E}(f) = \mathcal{E}(g)$. Then we compute:
\begin{gather*}
f \ast_\mathcal{E} g \overset{\text{(\ref{def:ast-E})}}{=} \mathcal{E}^\flat_\mathbb{B}\left(  \mathcal{E}(f), \mathcal{E}(g^\dagger) \right) \overset{\text{$\mathcal{E}(f) = \mathcal{E}(g)$}}{=} \mathcal{E}^\flat_\mathbb{B}\left(  \mathcal{E}(g), \mathcal{E}(g^\dagger) \right) \overset{\text{(\ref{def:ast-E})}}{=} g \ast_\mathcal{E} g \overset{\text{(\ref{eq:ast-idem})}}{=} g
\end{gather*}
So $f \ast_\mathcal{E} g = g$, which means $f~\mathcal{L}_{\ast_\mathcal{E}}~g$. Thus (\ref{lemma:E-iff-ast}) holds as desired. 

Now with this, it follows that $\mathcal{E}^\natural$ is well-defined. Functoriality of $\mathcal{E}^\natural$ is immediate from functoriality of $\mathcal{E}$. So $\mathcal{E}^\natural$ is indeed a functor. Now it's inverse ${\mathcal{E}^\natural}^{-1}: \mathsf{Im}[\mathcal{E}] \to \mathbb{D}_{\mathcal{L}_{\ast_\mathcal{E}}}$ is defined as follows, on object ${\mathcal{E}^\natural}^{-1}\left( \mathcal{E}(A) \right) = A$ and on maps ${\mathcal{E}^\natural}^{-1}\left( \mathcal{E}(f) \right) = [f]_{\mathcal{L}_{\ast_\mathcal{E}}}$. Now by Lemma \ref{lemma:cofree-objects}, recall that $\mathcal{E}$ is bijective on objects, therefore ${\mathcal{E}^\natural}^{-1}$ is well-defined on objects. By (\ref{lemma:E-iff-ast}), ${\mathcal{E}^\natural}^{-1}$ is also well-defined on maps. It is clear that ${\mathcal{E}^\natural}^{-1}$ is functorial, so ${\mathcal{E}^\natural}^{-1}$ is indeed a functor. Moreover, since $\mathcal{E}$ is bijective on objects and by (\ref{lemma:E-iff-ast}), we get that $\mathcal{E}^\natural$ and ${\mathcal{E}^\natural}^{-1}$ are indeed inverses of each other. So we conclude that $\mathcal{E}^\natural$ is an isomorphism. From this it clearly follows that $\mathcal{I}_\mathcal{E} \circ \mathcal{E}^\natural$ is faithful and that it is the unique functor such that (\ref{diag:cofree-imE-L}) commutes. 
\end{proof}

\begin{corollary}\label{cor:astE=3} Let $(\mathbb{D},\dagger, \mathcal{E})$ be a cofree dagger category over a category $\mathbb{B}$. Then $\ast_\mathcal{E} = \ast_{\overline{\mathcal{E}}} = \ast_{\mathcal{E}_{\mathcal{L}_{\ast_\mathcal{E}}}}$. 
\end{corollary}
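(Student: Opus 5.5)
The plan is to characterize each of the three operations by a single formula computed inside $\mathbb{D}$. The key observation is that, for any cofree structure $(\mathbb{D},\dagger,\mathcal{E}')$, the map $f \ast_{\mathcal{E}'} g$ is the unique $x \in \mathbb{D}(A,B)$ satisfying
\[
\mathcal{E}'(x) = \mathcal{E}'(f) \quad\text{and}\quad \mathcal{E}'(x^\dagger) = \mathcal{E}'(g^\dagger).
\]
Indeed, by Cor \ref{cor:cofree-iso.1} the functor ${\mathcal{E}'}^\flat$ is a dagger isomorphism onto $\mathsf{C}[\mathbb{B}']$ sending $x \mapsto (\mathcal{E}'(x), \mathcal{E}'(x^\dagger))$. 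Hence $x$ is determined by this pair. Moreover $\mathcal{E}'_{\mathbb{B}'}{}^\flat$ is its inverse, and by (\ref{def:ast-E}) $f \ast_{\mathcal{E}'} g$ is exactly the element corresponding to the pair $(\mathcal{E}'(f),\mathcal{E}'(g^\dagger))$.

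Using this characterization, the equality $\ast_\mathcal{E} = \ast_{\overline{\mathcal{E}}}$ follows directly. We have $\mathcal{E} = \mathcal{I}_\mathcal{E}\circ\overline{\mathcal{E}}$ with $\mathcal{I}_\mathcal{E}$ an inclusion, which is injective on maps. So for any $x$, the conditions $\overline{\mathcal{E}}(x)=\overline{\mathcal{E}}(f)$ and $\mathcal{E}(x)=\mathcal{E}(f)$ are equivalent, and likewise for the dagger condition. Here $(\mathbb{D},\dagger,\overline{\mathcal{E}})$ is cofree by Lemma \ref{lemma:cofree-im-base}, so $\ast_{\overline{\mathcal{E}}}$ is defined. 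The two uniqueness characterizations therefore single out the same $x$.

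For $\ast_\mathcal{E} = \ast_{\mathcal{E}_{\mathcal{L}_{\ast_\mathcal{E}}}}$, I would not recompute anything. Prop \ref{prop:cofree-to-band} shows that $(\mathbb{D},\dagger,\ast_\mathcal{E})$ is a rectangular banded dagger category. Prop \ref{prop:asE=ast}, applied to this rectangular banded structure, then gives $\ast_{\mathcal{E}_{\mathcal{L}_{\ast_\mathcal{E}}}} = \ast_\mathcal{E}$ immediately. As a consistency check, the same conclusion can be reached from the uniqueness characterization via the isomorphism $\mathcal{E}^\natural$ of Prop \ref{prop:imE=ast}, which satisfies $\mathcal{E}^\natural\circ \mathcal{E}_{\mathcal{L}_{\ast_\mathcal{E}}} = \overline{\mathcal{E}}$ and is injective on maps.

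The only step needing care is the uniqueness characterization of $f\ast_{\mathcal{E}'} g$. It requires checking that ${\mathcal{E}'}^\flat\circ{\mathcal{E}'_{\mathbb{B}'}}^\flat = \mathsf{id}$, which Cor \ref{cor:cofree-iso.1} provides, and that ${\mathcal{E}'}^\flat(x) = (\mathcal{E}'(x),\mathcal{E}'(x^\dagger))$, which is Prop \ref{prop:canon-cofree}. Everything else is bookkeeping.
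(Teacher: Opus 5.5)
Your proof is correct. For $\ast_\mathcal{E} = \ast_{\mathcal{E}_{\mathcal{L}_{\ast_\mathcal{E}}}}$ you do exactly what the paper does: Prop \ref{prop:cofree-to-band} followed by Prop \ref{prop:asE=ast}.

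The difference is how $\ast_{\overline{\mathcal{E}}}$ is brought in.

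\textbf{The paper's route.} Its one-line proof uses Prop \ref{prop:imE=ast}, namely the isomorphism $\mathcal{E}^\natural : \mathbb{D}_{\mathcal{L}_{\ast_\mathcal{E}}} \cong \mathsf{Im}[\mathcal{E}]$ with $\mathcal{E}^\natural \circ \mathcal{E}_{\mathcal{L}_{\ast_\mathcal{E}}} = \overline{\mathcal{E}}$, to identify $\ast_{\overline{\mathcal{E}}}$ with $\ast_{\mathcal{E}_{\mathcal{L}_{\ast_\mathcal{E}}}}$. This tacitly relies on a fact the paper never states: an isomorphism of base categories that is compatible with the structure functors does not change the induced band operation. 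Lemma \ref{lemma:cofree-iso} only gives cofreeness over the isomorphic base, not equality of the operations.

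\textbf{Your route.} You compare $\ast_\mathcal{E}$ with $\ast_{\overline{\mathcal{E}}}$ directly. You characterize $f \ast_{\mathcal{E}'} g$ as the unique $x$ with $\mathcal{E}'(x)=\mathcal{E}'(f)$ and $\mathcal{E}'(x^\dagger)=\mathcal{E}'(g^\dagger)$, and you derive this correctly from Cor \ref{cor:cofree-iso.1} and Prop \ref{prop:canon-cofree}. This bypasses Prop \ref{prop:imE=ast}. It also proves something more general and reusable: $\ast_{\mathcal{E}'}$ depends only on which parallel maps $\mathcal{E}'$ identifies. Hence it is unchanged when $\mathcal{E}'$ is post-composed with any faithful functor for which the result is still cofree. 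That is exactly the transport fact the paper's argument leaves implicit. Your ``consistency check'' through $\mathcal{E}^\natural$ is in effect the paper's own argument with that gap filled.

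\textbf{Trade-off.} The paper's route reuses a proposition it had already proved and introduces no new lemma, at the cost of an unstated step. Your route needs one small new characterization, but every step is explicit and the invariance principle covers both equalities uniformly.
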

\begin{proof} This follows from Prop \ref{prop:imE=ast} and then applying Prop \ref{prop:asE=ast}. 
\end{proof}

\begin{corollary}\label{cor:astE=3-canon} Let $\mathbb{B}$ be a category. Then for $(\mathsf{C}[\mathbb{B}], \dagger, \ast_\mathbb{B})$, we have that $\ast_\mathbb{B} = \ast_{\mathsf{Im}[\mathcal{E}_\mathbb{B}}$
\end{corollary}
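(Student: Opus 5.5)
The statement is somewhat garbled: the subscript should read $\ast_{\overline{\mathcal{E}_\mathbb{B}}}$, the operation induced by the image functor $\overline{\mathcal{E}_\mathbb{B}}: \mathsf{C}[\mathbb{B}] \to \mathsf{Im}[\mathcal{E}_\mathbb{B}]$. I will prove $\ast_\mathbb{B} = \ast_{\overline{\mathcal{E}_\mathbb{B}}}$, and also $\ast_\mathbb{B} = \ast_{\mathcal{E}_{\mathcal{L}_{\ast_\mathbb{B}}}}$ in the spirit of Cor \ref{cor:astE=3}.

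The plan is to specialise Cor \ref{cor:astE=3} to the canonical cofree dagger category $(\mathsf{C}[\mathbb{B}], \dagger, \mathcal{E}_\mathbb{B})$. That corollary gives
\[ \ast_{\mathcal{E}_\mathbb{B}} = \ast_{\overline{\mathcal{E}_\mathbb{B}}} = \ast_{\mathcal{E}_{\mathcal{L}_{\ast_{\mathcal{E}_\mathbb{B}}}}}. \]
The corollary following Prop \ref{prop:cofree-to-band} identifies $\ast_{\mathcal{E}_\mathbb{B}} = \ast_\mathbb{B}$. Substituting this into the chain gives the desired equalities.

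If one prefers a direct check of $\ast_{\overline{\mathcal{E}_\mathbb{B}}} = \ast_\mathbb{B}$, use Lemma \ref{lemma:cofree-imE=B}. It says $\mathsf{C}[\mathsf{Im}[\mathcal{E}_\mathbb{B}]] = \mathsf{C}[\mathbb{B}]$, and under this equality $\overline{\mathcal{E}_\mathbb{B}}$ is precisely the canonical projection $\mathcal{E}_{\mathsf{Im}[\mathcal{E}_\mathbb{B}]}$. Hence $\overline{\mathcal{E}_\mathbb{B}}^\flat_{\mathsf{Im}}$ is the identity functor, and by (\ref{def:ast-E}) we get
\[ (f,g)\ast_{\overline{\mathcal{E}_\mathbb{B}}}(h,k) = \bigl(\overline{\mathcal{E}_\mathbb{B}}(f,g),\, \overline{\mathcal{E}_\mathbb{B}}(k,h)\bigr) = (f,k). \]
This is $\ast_\mathbb{B}$ by (\ref{def:ast-cofree}).

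The only point needing care is the identification of $\overline{\mathcal{E}_\mathbb{B}}^\flat$ with the identity. This follows from the explicit formula $\mathcal{F}^\flat(f) = (\mathcal{F}(f), \mathcal{F}(f^\dagger))$ in Prop \ref{prop:canon-cofree}: for $\mathcal{F} = \overline{\mathcal{E}_\mathbb{B}}$ it sends $(f,g)$ to $(f,g)$. There is no real obstacle beyond this bookkeeping.
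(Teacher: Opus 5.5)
Your proposal is correct and is essentially the paper's proof, which is the one-line combination of Lemma \ref{lemma:cofree-imE=B} with Cor \ref{cor:astE=3} specialised to $(\mathsf{C}[\mathbb{B}], \dagger, \mathcal{E}_\mathbb{B})$, together with $\ast_{\mathcal{E}_\mathbb{B}} = \ast_\mathbb{B}$. The paper's appeal to Lemma \ref{lemma:cofree-imE=B} suggests the garbled subscript may instead be the canonical operation $\ast_{\mathsf{Im}[\mathcal{E}_\mathbb{B}]}$ on $\mathsf{C}[\mathsf{Im}[\mathcal{E}_\mathbb{B}]] = \mathsf{C}[\mathbb{B}]$; your direct check, which identifies $\overline{\mathcal{E}_\mathbb{B}}$ with $\mathcal{E}_{\mathsf{Im}[\mathcal{E}_\mathbb{B}]}$, covers that reading as well.
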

\begin{proof} This follows from Lemma \ref{lemma:cofree-imE=B} and then applying Cor \ref{cor:astE=3}. 
\end{proof}

\subsection{Cofree Dagger Categories as Algebras}\label{sec:alg}

In this section we will provide yet another characterization of cofree dagger categories, this time as algebras of a monad. For a review on adjunctions, monads, and their algebras, we invite the reader to see \cite{mac1971categories}. 

Indeed, the canonical cofree dagger construction from Sec \ref{sec:cofree-canonical} gives a right adjoint to the forgetful functor $\mathsf{U}: \mathsf{DAG} \to \mathsf{CAT}$ \cite[Thm 3.1.17]{heunen2009categorical}. As such, this induces a monad $(\mathsf{C}, \mathcal{M}, \mathcal{N})$ on $\mathsf{DAG}$. The functor $\mathsf{C}: \mathsf{DAG} \to \mathsf{DAG}$ maps a dagger category $(\mathbb{D},\dagger)$ to the canonical cofree dagger category of its underlying category, $\mathsf{C}(\mathbb{D},\dagger)= (\mathsf{C}[\mathbb{D}], \dagger)$, and sends a dagger functor $\mathcal{F}: (\mathbb{D}_1, \dagger) \to (\mathbb{D}_2, \dagger)$ to the dagger functor $\mathsf{C}(\mathcal{F}): (\mathsf{C}[\mathbb{D}_1], \dagger) \to (\mathsf{C}[\mathbb{D}_2], \dagger)$ defined as the unique dagger functor which makes the following diagram commute: 
\begin{equation}\begin{gathered}\label{def:CF}  \xymatrixcolsep{5pc}\xymatrix{\mathsf{C}[\mathbb{D}_1]  \ar[d]_-{\mathcal{E}_{\mathbb{D}_1}}  \ar@{-->}[r]^-{\exists! ~ \mathsf{C}[\mathcal{F}]}  & \mathsf{C}[\mathbb{D}_2]  \ar[d]^-{\mathcal{E}_{\mathbb{D}_2}} \\
 \mathbb{D}_1 \ar[r]_-{\mathcal{F}} & \mathbb{D}_2 }
\end{gathered}\end{equation}
Explicitly, $\mathsf{C}[\mathcal{F}]$ is defined as follows on objects and maps respectively: 
\begin{align}
\mathsf{C}[\mathcal{F}](A) = \mathcal{F}(A) && \mathsf{C}[\mathcal{F}](f,g)= (\mathcal{F}(f), \mathcal{F}(g))
\end{align}
Now for a dagger category $(\mathbb{D}, \dagger)$, the monad multiplication $\mathcal{M}_{(\mathbb{D},\dagger)}: (\mathsf{C}\left[\mathsf{C}[\mathbb{D}]\right], \dagger) \to (\mathsf{C}[\mathbb{D}], \dagger)$ and the monad unit  $\mathcal{N}_{(\mathbb{D},\dagger)}: (\mathbb{D},\dagger) \to (\mathsf{C}[\mathbb{D}], \dagger)$ are of course the unique dagger functors which make the following diagrams commute respectively: 
\begin{equation}\begin{gathered}\label{diag:MN}  \xymatrixcolsep{5pc}\xymatrix{ \mathbb{D} \ar@{=}[dr] \ar@{-->}[r]^-{\exists! ~ \mathcal{N}_{(\mathbb{D},\dagger)}}  & \mathsf{C}[\mathbb{D}]  \ar[d]^-{\mathcal{E}_{\mathbb{D}}} & \mathsf{C}\left[\mathsf{C}[\mathbb{D}] \right]  \ar[d]_-{\mathcal{E}_{\mathsf{C}[\mathbb{D}]}}  \ar@{-->}[r]^-{\exists! ~ \mathcal{M}_{(\mathbb{D},\dagger)}}  & \mathsf{C}[\mathbb{D}]  \ar[d]^-{\mathcal{E}_{\mathbb{D}}} \\
& \mathbb{D} & \mathsf{C}[\mathbb{D}] \ar[r]_-{\mathcal{E}_\mathbb{D}} & \mathbb{D} }
\end{gathered}\end{equation}
Explicitly, $\mathcal{M}_{(\mathbb{D},\dagger)}$ is defined on objects and maps respectively as follows: 
\begin{align}
\mathcal{M}_{(\mathbb{D},\dagger)}(A) = A && \mathcal{M}_{(\mathbb{D},\dagger)}\left( (f,g), (h,k) \right) = (f,h)
\end{align}
while $\mathcal{N}_{(\mathbb{D},\dagger)}$ is defined on objects and maps respectively as follows: 
\begin{align}
\mathcal{N}_{(\mathbb{D},\dagger)}(A) = A && \mathcal{N}_{(\mathbb{D},\dagger)}\left( f \right) = (f,f^\dagger)
\end{align}
Since we have a monad, we can consider its algebras.  

\begin{definition} A \textbf{$\mathsf{C}$-algebra} is a triple $(\mathbb{D},\dagger, \mathcal{D})$ consisting of a dagger category $(\mathbb{D},\dagger)$ and a dagger functor ${\mathcal{D}: (\mathsf{C}[\mathbb{D}], \dagger) \to (\mathbb{D},\dagger)}$ such that the following diagrams commute:
\begin{equation}\begin{gathered}\label{diag:C-alg}   \xymatrixcolsep{5pc}\xymatrix{ \mathbb{D} \ar@{=}[dr]^-{} \ar[r]^-{\mathcal{N}_{(\mathbb{D},\dagger)}} & \mathsf{C}[\mathbb{D}] \ar[d]^-{\mathcal{D}} &  \mathsf{C}\left[\mathsf{C}[\mathbb{D}]\right] \ar[r]^-{\mathcal{M}_{(\mathbb{D},\dagger)}}  \ar[d]_-{\mathsf{C}[\mathcal{D}]} &   \mathbb{D}  \ar[d]^-{\mathcal{D}}    \\
 & \mathbb{D} & \mathsf{C}[\mathbb{D}]  \ar[r]_-{\mathcal{D}} & \mathbb{D}
 }
\end{gathered}\end{equation}
We call $\mathcal{D}$ a \textbf{$\mathsf{C}$-algebra structure} of $(\mathbb{D},\dagger)$. 
\end{definition}

It is again important to stress that a dagger category can have multiple $\mathsf{C}$-algebra structures on it. Let us now unpack what the diagrams (\ref{diag:C-alg}) are saying more explicitly. 

\begin{lemma}\label{lemma:cofree-Calg} Let $(\mathbb{D},\dagger)$ be a dagger category and $\mathcal{D}: \mathsf{C}[\mathbb{D}] \to \mathbb{D}$ a functor. Then $(\mathbb{D},\dagger, \mathcal{D})$ is a $\mathsf{C}$-algebra if and only if: 
\begin{enumerate}[{\em (i)}] 
\item For every map $f: A \to B$, the following equality holds: 
\begin{align}\label{eq:D-idem}
\mathcal{D}(f,f^\dagger) = f
\end{align}
\item For all maps $f: A \to B$, $g: B \to A$, $h: B \to A$, and $k: A \to B$, the following equality holds: 
\begin{align}\label{eq:D-cancel}
\mathcal{D}\left( \mathcal{D}(f,g), \mathcal{D}(h,k) \right) = \mathcal{D}(f,h)
\end{align}
\item For all maps $f: A \to B$ and $g: B \to A$, the following equality holds: 
\begin{align}\label{eq:D-dagger}
\mathcal{D}(f,g)^\dagger = \mathcal{D}(g,f)
\end{align}
\end{enumerate}
\end{lemma}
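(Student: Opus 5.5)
The plan is to unpack the two diagrams of (\ref{diag:C-alg}) using the explicit formulas for $\mathcal{N}_{(\mathbb{D},\dagger)}$, $\mathcal{M}_{(\mathbb{D},\dagger)}$ and $\mathsf{C}[\mathcal{D}]$. We also observe that condition (iii) is exactly the statement that $\mathcal{D}$ is a dagger functor. Since $\mathcal{D}$ is only assumed to be a functor $\mathsf{C}[\mathbb{D}] \to \mathbb{D}$, the three conditions should correspond to: dagger functoriality, the unit law, and the multiplication law.

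First, the dagger condition. By (\ref{def:dag-cofree}) we have $(f,g)^\dagger = (g,f)$ in $\mathsf{C}[\mathbb{D}]$. So (\ref{def:dagfun}) for $\mathcal{D}$ reads $\mathcal{D}(g,f) = \mathcal{D}(f,g)^\dagger$, which is precisely (\ref{eq:D-dagger}). Thus $\mathcal{D}$ is a dagger functor if and only if (iii) holds.

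Second, the unit triangle. Both sides are identity on objects: $\mathcal{N}$ is the identity on objects, and $\mathcal{D}(A)=A$ follows from the triangle itself. On maps, $\mathcal{D}\circ\mathcal{N}_{(\mathbb{D},\dagger)}(f) = \mathcal{D}(f,f^\dagger)$, so the triangle commutes if and only if (\ref{eq:D-idem}) holds. One small point needs care: the triangle forces $\mathcal{D}$ to be the identity on objects. Conversely, (i) applied to identity maps gives $\mathcal{D}(\mathsf{id}_A,\mathsf{id}_A)=\mathsf{id}_A$, which forces $\mathcal{D}(A)=A$. So in both directions the object parts agree.

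Third, the multiplication square. A map of $\mathsf{C}[\mathsf{C}[\mathbb{D}]]$ from $A$ to $B$ is a pair $((f,g),(h,k))$ with $(f,g)\colon A\to B$ and $(h,k)\colon B\to A$ in $\mathsf{C}[\mathbb{D}]$. So $f\colon A\to B$, $g\colon B\to A$, $h\colon B\to A$ and $k\colon A\to B$, matching the typing in (ii). Going across then down gives $\mathcal{D}(\mathcal{M}((f,g),(h,k))) = \mathcal{D}(f,h)$. Going down then across gives $\mathcal{D}(\mathsf{C}[\mathcal{D}]((f,g),(h,k))) = \mathcal{D}(\mathcal{D}(f,g),\mathcal{D}(h,k))$. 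Hence the square commutes on maps if and only if (\ref{eq:D-cancel}) holds, and on objects both composites agree since everything is identity on objects.

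Combining the three steps gives the equivalence. The only mildly delicate part is the bookkeeping: matching domains and codomains in $\mathsf{C}[\mathsf{C}[\mathbb{D}]]$, and remembering that being a dagger functor is part of the data of a $\mathsf{C}$-algebra. That is where (iii) enters, rather than from either diagram. Everything else is direct substitution of the explicit formulas already recorded for $\mathcal{M}$, $\mathcal{N}$ and $\mathsf{C}[\mathcal{F}]$.
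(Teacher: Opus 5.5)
Your proposal is correct and takes the same approach as the paper, which simply asserts that (iii) is the dagger-functor condition and that (i) and (ii) are the unit triangle and multiplication square of the $\mathsf{C}$-algebra definition, unpacked. Your version spells out what the paper leaves implicit: the typing of maps in $\mathsf{C}[\mathsf{C}[\mathbb{D}]]$, the explicit formulas for $\mathcal{M}$ and $\mathsf{C}[\mathcal{D}]$, and the check that $\mathcal{D}$ is the identity on objects in both directions.
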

\begin{proof} It is straightforward to see that (\ref{eq:D-dagger}) is equivalent to saying $\mathcal{D}$ is a dagger functor, that (\ref{eq:D-idem}) is equivalent to saying the left diagram of (\ref{diag:C-alg}) commutes, while (\ref{eq:D-cancel}) is equivalent to saying that the right diagram of (\ref{diag:C-alg}) commutes. 
\end{proof}

By well-known facts about adjunctions, every cofree dagger category is a $\mathsf{C}$-algebra. 

\begin{lemma} Let $(\mathbb{D}, \dagger, \mathcal{E})$ be a  cofree dagger category over a category $\mathbb{B}$. Define the dagger functor $\mathcal{D}_\mathcal{E}: (\mathsf{C}[\mathbb{D}], \dagger) \to  (\mathbb{D}, \dagger)$ as the unique dagger functor which makes the following diagram commute: 
\begin{align*} \xymatrixcolsep{5pc}\xymatrix{\mathsf{C}[\mathbb{D}]  \ar[d]_-{\mathcal{E}_\mathbb{D}}  \ar@{-->}[r]^-{\exists! ~ \mathcal{D}_\mathcal{E}}  & \mathbb{D}\ar[d]^-{\mathcal{E}} \\
 \mathbb{D} \ar[r]_-{\mathcal{E}} & \mathbb{B} }
\end{align*}
Then $(\mathbb{D}, \dagger, \mathcal{D}_\mathcal{E})$ is a $\mathsf{C}$-algebra. 
\end{lemma}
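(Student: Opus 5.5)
We verify the three conditions of Lemma \ref{lemma:cofree-Calg} for $\mathcal{D}_\mathcal{E}$. First, $\mathcal{D}_\mathcal{E}$ exists and is a dagger functor: $(\mathsf{C}[\mathbb{D}],\dagger)$ is a dagger category and $\mathcal{E}\circ\mathcal{E}_\mathbb{D}$ is a functor into $\mathbb{B}$, so the universal property (\ref{diag:cofree}) of $(\mathbb{D},\dagger,\mathcal{E})$ provides it. Thus (\ref{eq:D-dagger}) holds. The strategy for the other two conditions is the same: each side of the desired equation will be a dagger functor into $\mathbb{D}$, and we show the two sides agree after composing with $\mathcal{E}$. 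Uniqueness in (\ref{diag:cofree}) then forces equality.

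For (\ref{eq:D-idem}), the left diagram of (\ref{diag:C-alg}) asks that $\mathcal{D}_\mathcal{E}\circ\mathcal{N}_{(\mathbb{D},\dagger)}=\mathsf{id}_\mathbb{D}$. Both sides are dagger functors $(\mathbb{D},\dagger)\to(\mathbb{D},\dagger)$. Composing with $\mathcal{E}$ gives
\[\mathcal{E}\circ\mathcal{D}_\mathcal{E}\circ\mathcal{N}_{(\mathbb{D},\dagger)}=\mathcal{E}\circ\mathcal{E}_\mathbb{D}\circ\mathcal{N}_{(\mathbb{D},\dagger)}=\mathcal{E},\]
using the defining square of $\mathcal{D}_\mathcal{E}$ and the left triangle of (\ref{diag:MN}). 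Since $\mathsf{id}_\mathbb{D}$ also lies over $\mathcal{E}$, uniqueness of $\mathcal{E}^\flat$ gives the equality. Concretely, this says $\mathcal{D}_\mathcal{E}(f,f^\dagger)=f$.

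For (\ref{eq:D-cancel}), the right diagram of (\ref{diag:C-alg}) compares two functors $(\mathsf{C}[\mathsf{C}[\mathbb{D}]],\dagger)\to(\mathbb{D},\dagger)$.
\begin{itemize}
\item Each side is a composite of dagger functors. Here $\mathsf{C}[\mathcal{D}_\mathcal{E}]$ and $\mathcal{M}_{(\mathbb{D},\dagger)}$ are dagger functors by their definitions as unique dagger functors, so both sides are dagger functors.
\item Composing the first side with $\mathcal{E}$:
\[\mathcal{E}\circ\mathcal{D}_\mathcal{E}\circ\mathsf{C}[\mathcal{D}_\mathcal{E}]=\mathcal{E}\circ\mathcal{E}_\mathbb{D}\circ\mathsf{C}[\mathcal{D}_\mathcal{E}]=\mathcal{E}\circ\mathcal{D}_\mathcal{E}\circ\mathcal{E}_{\mathsf{C}[\mathbb{D}]}=\mathcal{E}\circ\mathcal{E}_\mathbb{D}\circ\mathcal{E}_{\mathsf{C}[\mathbb{D}]},\]
using (\ref{def:CF}) and the defining square twice.
\item Composing the second side with $\mathcal{E}$:
\[\mathcal{E}\circ\mathcal{D}_\mathcal{E}\circ\mathcal{M}_{(\mathbb{D},\dagger)}=\mathcal{E}\circ\mathcal{E}_\mathbb{D}\circ\mathcal{M}_{(\mathbb{D},\dagger)}=\mathcal{E}\circ\mathcal{E}_\mathbb{D}\circ\mathcal{E}_{\mathsf{C}[\mathbb{D}]},\]
by the right square of (\ref{diag:MN}).
\end{itemize}
The two composites agree, so uniqueness in (\ref{diag:cofree}) with $(\mathbb{C},\dagger)=(\mathsf{C}[\mathsf{C}[\mathbb{D}]],\dagger)$ gives equality of the two dagger functors.

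The only real care needed is bookkeeping: keeping track of which functors are dagger functors, so that uniqueness applies, and checking that the explicit formulas for $\mathcal{M}$ and $\mathsf{C}[\mathcal{F}]$ match their defining squares. As a cross-check, one can also verify the identities element-wise. Set $\Phi=\mathcal{E}^\flat_\mathbb{B}\circ\mathcal{E}^\flat$, the inverse dagger isomorphism of Cor \ref{cor:cofree-iso.1} composed appropriately; then
\[\mathcal{D}_\mathcal{E}(f,g)=\mathcal{E}^\flat_\mathbb{B}\bigl(\mathcal{E}(f),\mathcal{E}(g)\bigr)=f\ast_\mathcal{E} g^\dagger,\]
and conditions (i)–(iii) reduce to the rectangular band laws of Prop \ref{prop:cofree-to-band}.
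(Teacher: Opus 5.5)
Your proof is correct and is essentially the paper's argument: the paper offers nothing beyond an appeal to ``well-known facts about adjunctions,'' and your check of the unit and multiplication squares of (\ref{diag:C-alg}), using uniqueness in the universal property of $(\mathbb{D},\dagger,\mathcal{E})$, is exactly that standard fact unpacked for this cofree object. The only blemish is in your optional cross-check, where $\Phi=\mathcal{E}^\flat_\mathbb{B}\circ\mathcal{E}^\flat$ is just $\mathsf{id}_\mathbb{D}$ and plays no role; the formula $\mathcal{D}_\mathcal{E}(f,g)=\mathcal{E}^\flat_\mathbb{B}(\mathcal{E}(f),\mathcal{E}(g))=f\ast_\mathcal{E}g^\dagger$ is nonetheless correct, because the dagger functor $(f,g)\mapsto(\mathcal{E}(f),\mathcal{E}(g))$ followed by $\mathcal{E}^\flat_\mathbb{B}$ also lies over $\mathcal{E}\circ\mathcal{E}_\mathbb{D}$, so uniqueness identifies it with $\mathcal{D}_\mathcal{E}$.
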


\begin{corollary} Let $\mathbb{B}$ be a category and let $\mathcal{D}_\mathbb{B}: (\mathsf{C}\left[\mathsf{C}[\mathbb{B}]\right], \dagger) \to  (\mathsf{C}[\mathbb{B}], \dagger)$ be the dagger functor defined as $\mathcal{D}_\mathbb{B} := \mathcal{D}_{\mathcal{E}_\mathbb{B}}$. Explicitly, on objects $\mathcal{D}_\mathbb{B}(A)=A$ while on maps $\mathcal{D}_\mathbb{B}\left( (f,g), (h,k) \right) = (f,k)$. Then $(\mathsf{C}[\mathbb{B}], \dagger, \mathcal{D}_\mathbb{B})$ is a $\mathsf{C}$-algebra. 
\end{corollary}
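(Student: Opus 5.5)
The statement has two parts: identify $\mathcal{D}_\mathbb{B}$ explicitly, then check the $\mathsf{C}$-algebra axioms. The second part follows from the preceding lemma applied to $(\mathsf{C}[\mathbb{B}], \dagger, \mathcal{E}_\mathbb{B})$, which is cofree over $\mathbb{B}$ by Prop \ref{prop:canon-cofree}. The real content is the explicit formula, which I would read off from the uniqueness in the universal property. As a sanity check I would also verify the algebra axioms directly through Lemma \ref{lemma:cofree-Calg}.

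\textbf{Step 1: typing.} A map $A \to B$ in $\mathsf{C}[\mathsf{C}[\mathbb{B}]]$ is a pair $\left((f,g),(h,k)\right)$ with $(f,g) \in \mathsf{C}[\mathbb{B}](A,B)$ and $(h,k) \in \mathsf{C}[\mathbb{B}](B,A)$. So $f: A \to B$, $g: B \to A$, $h: B \to A$ and $k: A \to B$. A map $A \to B$ in $\mathsf{C}[\mathbb{B}]$ must have the form $(a,b)$ with $a: A \to B$ and $b: B \to A$. The pair $(f,k)$ is therefore not well-typed, whereas $(f,h)$ is.

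\textbf{Step 2: computing $\mathcal{D}_\mathbb{B}$.} By Prop \ref{prop:canon-cofree}, $\mathcal{D}_\mathbb{B}$ is given on maps by $x \mapsto \left(\mathcal{E}_\mathbb{B}\mathcal{E}_{\mathsf{C}[\mathbb{B}]}(x),\ \mathcal{E}_\mathbb{B}\mathcal{E}_{\mathsf{C}[\mathbb{B}]}(x^\dagger)\right)$.
\begin{itemize}
\item For $x = \left((f,g),(h,k)\right)$ we have $\mathcal{E}_{\mathsf{C}[\mathbb{B}]}(x) = (f,g)$, so the first component is $f$.
\item By (\ref{def:dag-cofree}), $x^\dagger = \left((h,k),(f,g)\right)$, so $\mathcal{E}_{\mathsf{C}[\mathbb{B}]}(x^\dagger) = (h,k)$ and the second component is $h$.
\end{itemize}
Hence $\mathcal{D}_\mathbb{B}\left((f,g),(h,k)\right) = (f,h)$, and on objects $\mathcal{D}_\mathbb{B}(A) = A$. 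This agrees with the formula for $\mathcal{M}_{(\mathbb{B},\dagger)}$ given earlier. I would therefore read the stated $(f,k)$ as this well-typed formula: the second coordinate of the output is the first coordinate of the second pair. Uniqueness in the universal property forces this choice.

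\textbf{Step 3: direct check via Lemma \ref{lemma:cofree-Calg}.} Write $\mathcal{D} = \mathcal{D}_\mathbb{B}$ and use $\mathcal{D}\left((f,g),(h,k)\right) = (f,h)$.
\begin{enumerate}[(i)]
\item For $F = (f,g)$ we have $F^\dagger = (g,f)$, and $\mathcal{D}(F, F^\dagger) = \mathcal{D}\left((f,g),(g,f)\right) = (f,g) = F$. This is (\ref{eq:D-idem}).
\item For (\ref{eq:D-cancel}), take $F = (f_1,f_2)$, $G = (g_1,g_2)$, $H = (h_1,h_2)$, $K = (k_1,k_2)$. Then $\mathcal{D}(F,G) = (f_1,g_1)$ and $\mathcal{D}(H,K) = (h_1,k_1)$. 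So $\mathcal{D}\left(\mathcal{D}(F,G),\mathcal{D}(H,K)\right) = (f_1,h_1) = \mathcal{D}(F,H)$.
\item For (\ref{eq:D-dagger}), $\mathcal{D}(F,G)^\dagger = (f_1,g_1)^\dagger = (g_1,f_1) = \mathcal{D}(G,F)$.
\end{enumerate}
Functoriality of $\mathcal{D}$ is immediate from the componentwise composition (\ref{def:comp-cofree}).

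The only genuine obstacle is the bookkeeping of which coordinate lands where. It must be pinned down by the dagger computation in Step 2 rather than guessed. Once $\mathcal{D}_\mathbb{B}$ is written in the well-typed form $(f,h)$, each of (i)--(iii) is a one-line check.
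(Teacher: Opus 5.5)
Your proposal is correct and follows the paper's route. The corollary is the preceding lemma applied to the cofree dagger category $(\mathsf{C}[\mathbb{B}], \dagger, \mathcal{E}_\mathbb{B})$ of Prop~\ref{prop:canon-cofree}, with the explicit formula read off from $\mathcal{D}_\mathbb{B} = (\mathcal{E}_\mathbb{B} \circ \mathcal{E}_{\mathsf{C}[\mathbb{B}]})^\flat$.

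Your typing objection is also right: the stated $(f,k)$ is a typo for $(f,h)$. This is what the universal property gives, and it matches the paper's own later identity $\mathcal{D}_\mathbb{B} = \mathcal{D}_{\ast_\mathbb{B}}$, since $(f,g) \ast_\mathbb{B} (h,k)^\dagger = (f,g) \ast_\mathbb{B} (k,h) = (f,h)$.
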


Therefore, every rectangular banded dagger category is a $\mathsf{C}$-algebra, whose $\mathsf{C}$-algebra structure can be given in terms of the rectangular band operation. 

\begin{proposition}\label{prop:rect-calg} Let $(\mathbb{D}, \dagger,\ast)$ be a rectangular banded dagger category. Then define the functor $\mathcal{D}_\ast: \mathsf{C}[\mathbb{D}] \to \mathbb{D}$ on objects as $\mathcal{D}_\ast(A) = A$ and on maps as $\mathcal{D}_\ast(f,g) = f \ast g^\dagger$. Then $(\mathbb{D}, \dagger, \mathcal{D}_\ast)$ is a $\mathsf{C}$-algebra. 
\end{proposition}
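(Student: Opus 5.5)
There are two natural routes. The slick one uses Prop \ref{prop:rect-to-cofree} and the preceding lemma showing every cofree dagger category is a $\mathsf{C}$-algebra via $\mathcal{D}_\mathcal{E}$. It would then identify $\mathcal{D}_{\mathcal{E}_{\mathcal{L}_\ast}}$ with $\mathcal{D}_\ast$ by uniqueness. I will instead verify directly the three conditions of Lemma \ref{lemma:cofree-Calg}, since each reduces to a one-line band computation. First, though, I must check that $\mathcal{D}_\ast$ is a functor $\mathsf{C}[\mathbb{D}] \to \mathbb{D}$. It is well typed: for $(f,g): A \to B$ we have $f: A \to B$ and $g^\dagger: A \to B$, so $f \ast g^\dagger$ makes sense. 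For identities, $\mathcal{D}_\ast(\mathsf{id},\mathsf{id}) = \mathsf{id} \ast \mathsf{id}^\dagger = \mathsf{id}$ by (\ref{eq:dagger}) and idempotency (\ref{eq:ast-idem}). For composition, by (\ref{def:comp-cofree}) we need
\[
(f \ast g^\dagger) \circ (h \ast k^\dagger) = (f\circ h) \ast (k \circ g)^\dagger .
\]
This follows from the interchange law (\ref{ast-inter}) together with $(k\circ g)^\dagger = g^\dagger \circ k^\dagger$.

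The three conditions of Lemma \ref{lemma:cofree-Calg} then go as follows.
\begin{enumerate}[(i)]
\item For (\ref{eq:D-idem}): $\mathcal{D}_\ast(f,f^\dagger) = f \ast f^{\dagger\dagger} = f \ast f = f$.
\item For (\ref{eq:D-dagger}): $(f \ast g^\dagger)^\dagger = g^{\dagger\dagger} \ast f^\dagger = g \ast f^\dagger = \mathcal{D}_\ast(g,f)$, using (\ref{band-dagger}) and involutivity.
\item For (\ref{eq:D-cancel}): I compute
\[
\mathcal{D}_\ast\big(\mathcal{D}_\ast(f,g),\mathcal{D}_\ast(h,k)\big) = (f \ast g^\dagger) \ast (h \ast k^\dagger)^\dagger = f \ast g^\dagger \ast k \ast h^\dagger ,
\]
where the second equality uses (\ref{band-dagger}) once more. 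Associativity and the middle cancellation law (\ref{eq:ast-cancel}), applied to collapse the middle terms, reduce this to $f \ast h^\dagger = \mathcal{D}_\ast(f,h)$.
\end{enumerate}
Since condition (ii) is exactly the statement that $\mathcal{D}_\ast$ is a dagger functor, Lemma \ref{lemma:cofree-Calg} gives the result.

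I expect no real obstacle. The only point needing care is the composition check. There I must match the order reversal in the second component of (\ref{def:comp-cofree}) with the contravariance of the dagger, so that (\ref{ast-inter}) applies with the factors in the correct positions. In (\ref{eq:D-cancel}) I also need the domains and codomains to line up: $g^\dagger$ and $k$ are both maps $A \to B$, so the middle-cancellation step is legitimate within the single homset $\mathbb{D}(A,B)$.
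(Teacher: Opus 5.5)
Your proof is correct, but it is not the route the paper takes. The paper follows exactly the ``slick'' path you set aside. It uses Prop~\ref{prop:rect-to-cofree} to make $(\mathbb{D},\dagger,\mathcal{E}_{\mathcal{L}_\ast})$ cofree over $\mathbb{D}_{\mathcal{L}_\ast}$. It then invokes the general fact that a cofree dagger category is a $\mathsf{C}$-algebra via $\mathcal{D}_{\mathcal{E}}=(\mathcal{E}\circ\mathcal{E}_\mathbb{D})^\flat$. Finally it identifies $\mathcal{D}_{\mathcal{E}_{\mathcal{L}_\ast}}$ with $\mathcal{D}_\ast$ not by uniqueness but by reading it off the explicit formula (\ref{def:Fflat}), taking $f$ and $g$ as representatives of $[f]_{\mathcal{L}_\ast}$ and $[g]_{\mathcal{L}_\ast}$. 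This sidesteps any separate check that $\mathcal{D}_\ast$ is a dagger functor, which a uniqueness argument would still need.

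Your direct verification is self-contained and shows exactly which axiom does which job. The interchange law (\ref{ast-inter}) gives functoriality, idempotency gives the unit law, and (\ref{band-dagger}) gives dagger preservation. Middle cancellation (\ref{eq:ast-cancel}) applied to $f \ast g^\dagger \ast k \ast h^\dagger$ gives the multiplication law; as you correctly note, all four maps lie in $\mathbb{D}(A,B)$. Your argument is the mirror image of the paper's direct proof of Prop~\ref{prop:calg-rest}, which makes the bijection of Thm~\ref{thm:coag=rect} visibly symmetric. It also does not depend on the lemma that cofree dagger categories are $\mathsf{C}$-algebras, which the paper only asserts from general facts about adjunctions.

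The paper's route, in exchange, explains conceptually where $\mathcal{D}_\ast$ comes from. It is precisely the canonical algebra structure induced by the cofree structure of Prop~\ref{prop:rect-to-cofree}.
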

\begin{proof} By Prop \ref{prop:rect-to-cofree}, we know that $(\mathbb{D},\dagger, \mathcal{E}_{\mathcal{L}_\ast})$ is a cofree dagger category over $\mathbb{D}_{\mathcal{L}_\ast}$. By Lemma \ref{lemma:cofree-Calg}, we then get that $(\mathbb{D}, \dagger, \mathcal{D}_{\mathcal{E}_{\mathcal{L}_\ast}})$ is a $\mathsf{C}$-algebra. Now by definition, $\mathcal{D}_{\mathcal{E}_{\mathcal{L}_\ast}}= (\mathcal{E}_{\mathcal{L}_\ast} \circ \mathcal{E}_\mathbb{D})^\flat$. Observe that $\mathcal{E}_{\mathcal{L}_\ast}\left( \mathcal{E}_\mathbb{D} (f,g) \right) = [f]_{\mathcal{L}_\ast}$ and similarly, $\mathcal{E}_{\mathcal{L}_\ast}\left( \mathcal{E}_\mathbb{D} \left( (f,g)^\dagger \right) \right) = [g]_{\mathcal{L}_\ast}$. Thus, $f \in \mathcal{E}_{\mathcal{L}_\ast}\left( \mathcal{E}_\mathbb{D} (f,g) \right)$ and $g \in \mathcal{E}_{\mathcal{L}_\ast}\left( \mathcal{E}_\mathbb{D} \left( (f,g)^\dagger \right) \right)$. So by (\ref{def:Fflat}), we have that $\mathcal{D}_{\mathcal{E}_{\mathcal{L}_\ast}}(f,g) = f \ast g^\dagger$. As such, $\mathcal{D}_\ast=\mathcal{D}_{\mathcal{E}_{\mathcal{L}_\ast}}$. So we conclude that $(\mathbb{D}, \dagger, \mathcal{D}_\ast)$ is a $\mathsf{C}$-algebra as desired. 
\end{proof}

We will now show that, in fact, every $\mathsf{C}$-algebra is a cofree dagger category. To do so, we will show that every $\mathsf{C}$-algebra is rectangular banded.  

\begin{proposition}\label{prop:calg-rest} If $(\mathbb{D},\dagger, \mathcal{D})$ is a $\mathsf{C}$-algebra, then $(\mathbb{D},\dagger,\ast_\mathcal{D})$ is a rectangular banded dagger category where the binary operation $\ast_\mathcal{D}: \mathbb{D}(A,B) \times \mathbb{D}(A,B) \to \mathbb{D}(A,B)$ is defined as follows: 
\begin{align}\label{def:ast-D}
f \ast_\mathcal{D} g = \mathcal{D}(f, g^\dagger) 
\end{align}
Therefore, $(\mathbb{D},\dagger)$ is a cofree dagger category. 
\end{proposition}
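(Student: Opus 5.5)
The plan is to verify directly, using the three identities of Lemma \ref{lemma:cofree-Calg}, that $\ast_\mathcal{D}$ makes each homset a rectangular band and is compatible with composition and the dagger. Cofreeness then follows from Prop \ref{prop:rect-to-cofree} (or Theorem \ref{thm:cofree=rect}). Throughout I use that $\mathcal{D}$ is a functor $\mathsf{C}[\mathbb{D}] \to \mathbb{D}$. Since the two diagrams (\ref{diag:C-alg}) involve $\mathcal{N}$, which is the identity on objects, $\mathcal{D}$ must be the identity on objects. Thus $\mathcal{D}(f,g^\dagger)$ really lies in $\mathbb{D}(A,B)$, and functoriality reads $\mathcal{D}(f\circ h, k \circ g) = \mathcal{D}(f,g)\circ \mathcal{D}(h,k)$.

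The band axioms come first. Idempotency is $x \ast_\mathcal{D} x = \mathcal{D}(x,x^\dagger) = x$, by (\ref{eq:D-idem}). For associativity together with middle cancellation, I would compute
\[(x \ast_\mathcal{D} y) \ast_\mathcal{D} z = \mathcal{D}\big(\mathcal{D}(x,y^\dagger), z^\dagger\big).\]
I rewrite $z^\dagger = \mathcal{D}(z,z^\dagger)^\dagger = \mathcal{D}(z^\dagger, z)$ using (\ref{eq:D-idem}) and (\ref{eq:D-dagger}), and then (\ref{eq:D-cancel}) gives $\mathcal{D}(x, z^\dagger) = x \ast_\mathcal{D} z$. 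Symmetrically, I write $x = \mathcal{D}(x,x^\dagger)$ and $(y\ast_\mathcal{D} z)^\dagger = \mathcal{D}(z^\dagger, y)$ by (\ref{eq:D-dagger}), so
\[x \ast_\mathcal{D} (y \ast_\mathcal{D} z) = \mathcal{D}\big(\mathcal{D}(x,x^\dagger),\mathcal{D}(z^\dagger,y)\big) = \mathcal{D}(x,z^\dagger).\]
Hence both bracketings equal $x \ast_\mathcal{D} z$. This yields associativity and also $x\ast y\ast z = x \ast z$.

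For the compatibility axioms, the dagger condition is immediate:
\[(f\ast_\mathcal{D} g)^\dagger = \mathcal{D}(f,g^\dagger)^\dagger = \mathcal{D}(g^\dagger, f) = g^\dagger \ast_\mathcal{D} f^\dagger.\]
For (\ref{band-comp}), I write $f = \mathcal{D}(f,f^\dagger)$ and $k = \mathcal{D}(k,k^\dagger)$ and use functoriality of $\mathcal{D}$:
\[f \circ \mathcal{D}(g,h^\dagger)\circ k = \mathcal{D}\big(f\circ g\circ k,\ k^\dagger \circ h^\dagger \circ f^\dagger\big) = \mathcal{D}\big(fgk, (fhk)^\dagger\big),\]
which is exactly $(fgk)\ast_\mathcal{D}(fhk)$.

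The main point requiring care is the associativity/cancellation step, which needs the trick of re-expressing an element $x$ as $\mathcal{D}(x,x^\dagger)$ so that (\ref{eq:D-cancel}) applies. It also requires keeping the typing of the pairs in $\mathsf{C}[\mathsf{C}[\mathbb{D}]]$ straight, so that the inner pairs are genuinely maps $A\to B$ and $B \to A$ of $\mathsf{C}[\mathbb{D}]$. Once the rectangular banded structure is established, Prop \ref{prop:rect-to-cofree} gives that $(\mathbb{D},\dagger,\mathcal{E}_{\mathcal{L}_{\ast_\mathcal{D}}})$ is cofree over $\mathbb{D}_{\mathcal{L}_{\ast_\mathcal{D}}}$.
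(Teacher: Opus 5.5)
Your proposal is correct and follows essentially the same route as the paper. Like the paper, you prove idempotency from (\ref{eq:D-idem}). You get associativity and middle cancellation by rewriting $x=\mathcal{D}(x,x^\dagger)$ and $z^\dagger=\mathcal{D}(z^\dagger,z)$ and applying (\ref{eq:D-cancel}), then show compatibility with composition via functoriality of $\mathcal{D}$ and with the dagger via (\ref{eq:D-dagger}), and conclude cofreeness from the rectangular-band characterization. Your remark that $\mathcal{D}$ must be the identity on objects, because $\mathcal{D}\circ\mathcal{N}=\mathsf{id}$, is a small point the paper leaves implicit.
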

\begin{proof} We first show that $(\mathbb{D}(A,B),\ast_\mathcal{D})$ is a rectangular band :
\begin{enumerate}[{\em (i)}]
\item Associativity:
\begin{gather*}
    (f \ast_\mathcal{D} g) \ast_\mathcal{D} h \overset{\text{(\ref{def:ast-D})}}{=} \mathcal{D}( f, g^\dagger ) \ast_\mathcal{D} h \overset{\text{(\ref{def:ast-D})}}{=} \mathcal{D}\left( \mathcal{D}( f, g^\dagger), h^\dagger \right) \overset{\text{(\ref{eq:D-idem})}}{=} \mathcal{D}\left( \mathcal{D}( f, g^\dagger), \mathcal{D}(h^\dagger, h^{\dagger\dagger}) \right) \\ \overset{\text{(\ref{eq:dagger})}}{=} \mathcal{D}\left( \mathcal{D}( f, g^\dagger), \mathcal{D}(h^\dagger, h) \right)
    \overset{\text{(\ref{eq:D-cancel})}}{=} \mathcal{D}(f,h^\dagger) \overset{\text{(\ref{eq:D-cancel})}}{=} \mathcal{D}\left( \mathcal{D}( f, f^\dagger), \mathcal{D}(h^\dagger, g) \right) \overset{\text{(\ref{eq:D-idem})}}{=}  \mathcal{D}\left( f, \mathcal{D}(h^\dagger, g) \right) \\ \overset{\text{(\ref{eq:D-dagger})}}{=}  \mathcal{D}\left(f, \mathcal{D}(g,h^\dagger)^\dagger \right) \overset{\text{(\ref{def:ast-D})}}{=} f \ast_\mathcal{D} \mathcal{D}(g,h^\dagger)  \overset{\text{(\ref{def:ast-D})}}{=} f \ast_\mathcal{D} (g \ast_\mathcal{D} h)  
\end{gather*}
\item Idempotent: 
\begin{align*}
    f \ast_\mathcal{D}  f \overset{\text{(\ref{def:ast-D})}}{=} \mathcal{D}( f, f^\dagger ) \overset{\text{(\ref{eq:D-idem})}}{=} f
\end{align*}
\item Middle Cancellation: 
\begin{gather*}
f \ast_\mathcal{D} g \ast_\mathcal{D} h \overset{\text{(\ref{def:ast-D})}}{=} \mathcal{D}( f, g^\dagger ) \ast h \overset{\text{(\ref{def:ast-D})}}{=} \mathcal{D}\left( \mathcal{D}( f, g^\dagger), h^\dagger \right) \overset{\text{(\ref{eq:D-idem})}}{=} \mathcal{D}\left( \mathcal{D}( f, g^\dagger), \mathcal{D}(h^\dagger, h^{\dagger\dagger}) \right) \\\overset{\text{(\ref{eq:dagger})}}{=} \mathcal{D}\left( \mathcal{D}( f, g^\dagger), \mathcal{D}(h^\dagger, h) \right) \overset{\text{(\ref{eq:D-cancel})}}{=} \mathcal{D}(f,h^\dagger) \overset{\text{(\ref{def:ast-D})}}{=} f \ast_\mathcal{D} h
\end{gather*}
\end{enumerate}
Next we show the rectangular band enrichment: 
\begin{enumerate}[{\em (i)}]
\setcounter{enumi}{3}
\item Composition:
\begin{gather*}
f \circ (g \ast_\mathcal{D} h) \circ k \overset{\text{(\ref{def:ast-D})}}{=} f \circ \mathcal{D}(g,h^\dagger) \circ k \overset{\text{(\ref{eq:D-idem})}}{=} \mathcal{D}( f, f^\dagger ) \circ \mathcal{D}( g, h^\dagger ) \circ \mathcal{D}( k, k^\dagger)   \overset{\text{(\ref{def:functor})}}{=} \mathcal{D}\left( (f,f^\dagger) \circ (g, h^\dagger) \circ ( k, k^\dagger ) \right) \\
\overset{\text{(\ref{def:comp-cofree})}}{=} \mathcal{D}\left( f \circ g \circ k, k^\dagger \circ h^\dagger \circ f^\dagger \right) \overset{\text{(\ref{eq:dagger})}}{=}   \mathcal{D}\left( f \circ g \circ k, (f \circ h \circ k)^\dagger \right) \overset{\text{(\ref{def:ast-D})}}{=} (f \circ g \circ k) \ast_\mathcal{D} (f \circ h \circ k) 
\end{gather*}
\item Dagger: 
\begin{gather*}
    (f \ast_\mathcal{D}  g)^\dagger \overset{\text{(\ref{def:ast-D})}}{=} \mathcal{D}( f, g^\dagger )^\dagger \overset{\text{(\ref{eq:D-dagger})}}{=}  \mathcal{D}( g^\dagger, f ) \overset{\text{(\ref{eq:dagger})}}{=} \mathcal{D}( g^\dagger, f^{\dagger\dagger} )  \overset{\text{(\ref{def:ast-D})}}{=} g^\dagger \ast f^\dagger 
\end{gather*}
\end{enumerate}
So we conclude that $(\mathbb{D},\dagger,\ast_\mathcal{D})$ is a rectangular banded dagger category and, therefore by Thm \ref{thm:cofree=rect}, that $(\mathbb{D},\dagger)$ is also a cofree dagger category. 
\end{proof}

It turns out that the constructions of Prop \ref{prop:rect-calg} and Prop \ref{prop:calg-rest} are inverses of each other.

\begin{theorem}\label{thm:coag=rect} For a dagger category $(\mathbb{D}, \dagger)$ there is a bijective correspondence between rectangular banded structure and $\mathsf{C}$-algebra structure. Explicitly,
\begin{enumerate}[{\em (i)}]
\item If $\ast$ is a rectangular banded structure on $(\mathbb{D}, \dagger)$, then $\ast_{\mathcal{D}_\ast} = \ast$.
\item If $\mathcal{D}$ is a $\mathsf{C}$-algebra structure, then $\mathcal{D}_{\ast_\mathcal{D}} = \mathcal{D}$. 
\end{enumerate}
\end{theorem}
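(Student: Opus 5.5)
The plan is to unfold the two definitions and compute directly, using only the dagger axioms (\ref{eq:dagger}) and, for part (ii), nothing more. Prop \ref{prop:rect-calg} and Prop \ref{prop:calg-rest} already show that $\mathcal{D}_\ast$ is a $\mathsf{C}$-algebra structure and that $\ast_\mathcal{D}$ is a rectangular banded structure. So the two assignments $\ast \mapsto \mathcal{D}_\ast$ and $\mathcal{D} \mapsto \ast_\mathcal{D}$ are well-defined maps between the two classes of structures on $(\mathbb{D},\dagger)$. The bijective correspondence then follows once we show that the two round-trip composites are identities, which is exactly (i) and (ii).

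For (i), let $f,g: A \to B$. By (\ref{def:ast-D}) with $\mathcal{D} = \mathcal{D}_\ast$, and then the definition of $\mathcal{D}_\ast$ from Prop \ref{prop:rect-calg}, we get
\[ f \ast_{\mathcal{D}_\ast} g = \mathcal{D}_\ast(f, g^\dagger) = f \ast g^{\dagger\dagger} = f \ast g, \]
where the last step uses involutivity of the dagger (\ref{eq:dagger}).

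For (ii), take a map $(f,g): A \to B$ in $\mathsf{C}[\mathbb{D}]$, so $f: A \to B$ and $g: B \to A$. The definition of $\mathcal{D}_{\ast_\mathcal{D}}$ and then (\ref{def:ast-D}) give
\[ \mathcal{D}_{\ast_\mathcal{D}}(f,g) = f \ast_\mathcal{D} g^\dagger = \mathcal{D}(f, g^{\dagger\dagger}) = \mathcal{D}(f,g). \]
On objects, $\mathcal{D}_{\ast_\mathcal{D}}(A) = A$. We must also check that $\mathcal{D}(A) = A$. This holds because the unit axiom $\mathcal{D} \circ \mathcal{N}_{(\mathbb{D},\dagger)} = \mathsf{id}_\mathbb{D}$ of (\ref{diag:C-alg}) forces $\mathcal{D}(A) = \mathcal{D}(\mathcal{N}_{(\mathbb{D},\dagger)}(A)) = A$, since $\mathcal{N}_{(\mathbb{D},\dagger)}$ is the identity on objects. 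Hence the two functors agree on objects and on maps, and are equal.

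There is no real obstacle here. The only subtle point is the object-level check in (ii): functors are compared as functors, and Lemma \ref{lemma:cofree-Calg} only records the conditions on maps. As shown above, this is handled by the unit axiom.
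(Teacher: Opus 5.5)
Your proposal is correct and follows essentially the same route as the paper: the same two one-line computations, each using only the involution $f^{\dagger\dagger}=f$. Your additional observation that the unit axiom forces $\mathcal{D}(A)=A$, so the two functors also agree on objects, is a detail the paper leaves implicit.
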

\begin{proof} Starting with a rectangular banded structure $\ast$, we compute: 
\begin{gather*}
f \ast_{\mathcal{D}_\ast} g \overset{\text{(\ref{def:ast-D})}}{=} \mathcal{D}_\ast(f,g^\dagger) \overset{\overset{\text{Prop}}{\text{\ref{prop:rect-calg}}}}{=} f \ast g^{\dagger\dagger} \overset{\text{(\ref{eq:dagger})}}{=} f \ast g
\end{gather*}
Thus $\ast_{\mathcal{D}_\ast} = \ast$. On the other hand, starting with a $\mathsf{C}$-algebra structure $\mathcal{D}$, we compute: 
\begin{gather*}
\mathcal{D}_{\ast_\mathcal{D}}(f,g) \overset{\overset{\text{Prop}}{\text{\ref{prop:rect-calg}}}}{=} f \ast_\mathcal{D} g^\dagger \overset{\text{(\ref{def:ast-D})}}{=} \mathcal{D}(f,g^{\dagger\dagger}) \overset{\text{(\ref{eq:dagger})}}{=} \mathcal{D}(f,g)
\end{gather*}
Thus $\mathcal{D}_{\ast_\mathcal{D}} = \mathcal{D}$. 
\end{proof}

Applying the above constructions to a cofree dagger category with a specified base category recaptures the induced rectangular banded structure and $\mathsf{C}$-algebra structure, as expected. 

\begin{corollary}\label{cor:Dast=} Let $(\mathbb{D}, \dagger, \mathcal{E})$ be a  cofree dagger category over a category $\mathbb{B}$. Then $\mathcal{D}_\mathcal{E} = \mathcal{D}_{\ast_\mathcal{E}}$ and $\ast_{\mathcal{D}_\mathcal{E}} = \ast_{\mathcal{E}}$.
\end{corollary}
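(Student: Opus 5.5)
The plan is to reduce both equalities to the bijection of Thm \ref{thm:coag=rect}. Once the first equality $\mathcal{D}_\mathcal{E} = \mathcal{D}_{\ast_\mathcal{E}}$ is known, the second follows immediately. Indeed, $\ast_{\mathcal{D}_\mathcal{E}} = \ast_{\mathcal{D}_{\ast_\mathcal{E}}}$, and the right-hand side equals $\ast_\mathcal{E}$ by Thm \ref{thm:coag=rect}.(i), since $\ast_\mathcal{E}$ is a rectangular banded structure by Prop \ref{prop:cofree-to-band}. So the real content is the first equality.

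To prove $\mathcal{D}_\mathcal{E} = \mathcal{D}_{\ast_\mathcal{E}}$, I will use the uniqueness clause in the universal property (\ref{diag:cofree}). By definition, $\mathcal{D}_\mathcal{E}$ is the unique dagger functor $(\mathsf{C}[\mathbb{D}],\dagger) \to (\mathbb{D},\dagger)$ satisfying $\mathcal{E} \circ \mathcal{D}_\mathcal{E} = \mathcal{E} \circ \mathcal{E}_\mathbb{D}$. It therefore suffices to check two facts about $\mathcal{D}_{\ast_\mathcal{E}}$, which is given by $\mathcal{D}_{\ast_\mathcal{E}}(f,g) = f \ast_\mathcal{E} g^\dagger$.

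\begin{enumerate}[(a)]
\item $\mathcal{D}_{\ast_\mathcal{E}}$ is a dagger functor. This holds by Prop \ref{prop:rect-calg}, applied to the rectangular banded dagger category $(\mathbb{D},\dagger,\ast_\mathcal{E})$.
\item $\mathcal{D}_{\ast_\mathcal{E}}$ makes the square commute. On objects both composites send $A$ to $\mathcal{E}(A)$. On maps, unfolding (\ref{def:ast-E}) gives
\[ \mathcal{E}\left(f \ast_\mathcal{E} g^\dagger\right) = \mathcal{E}\left(\mathcal{E}^\flat_\mathbb{B}\left(\mathcal{E}(f), \mathcal{E}(g^{\dagger\dagger})\right)\right) = \mathcal{E}_\mathbb{B}\left(\mathcal{E}(f), \mathcal{E}(g)\right) = \mathcal{E}(f) = \mathcal{E}\left(\mathcal{E}_\mathbb{D}(f,g)\right). \]
The second equality uses $\mathcal{E} \circ \mathcal{E}^\flat_\mathbb{B} = \mathcal{E}_\mathbb{B}$ (diagram (\ref{diag:cofree})) together with $g^{\dagger\dagger}=g$. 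This is the same computation that appears in the proof of Prop \ref{prop:imE=ast}.
\end{enumerate}

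Uniqueness then forces $\mathcal{D}_{\ast_\mathcal{E}} = \mathcal{D}_\mathcal{E}$.

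The only potential obstacle is keeping track of the typing in $\mathcal{E}^\flat_\mathbb{B}$. Here the pair $(\mathcal{E}(f),\mathcal{E}(g))$ must be read as a map of $\mathsf{C}[\mathbb{B}]$, and $\mathcal{E}^\flat_\mathbb{B}$ is the inverse isomorphism from Cor \ref{cor:cofree-iso.1}. Apart from that, everything is a direct unfolding of definitions.
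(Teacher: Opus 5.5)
Your proposal is correct and follows the paper's proof essentially verbatim. Both arguments make the same computation showing $\mathcal{E} \circ \mathcal{D}_{\ast_\mathcal{E}} = \mathcal{E} \circ \mathcal{E}_\mathbb{D}$, apply uniqueness in the cofree universal property, and then use Thm~\ref{thm:coag=rect}.(i) for the second equality. Your explicit check that $\mathcal{D}_{\ast_\mathcal{E}}$ is a dagger functor (via Prop~\ref{prop:rect-calg}) is left implicit in the paper, but it is exactly what is needed to invoke that uniqueness.
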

\begin{proof} We compute: 
\begin{gather*}
\mathcal{E}\left( \mathcal{D}_{\ast_\mathcal{E}}(f,g) \right) \overset{\overset{\text{Prop}}{\text{\ref{prop:rect-calg}}}}{=} \mathcal{E}(f \ast_\mathcal{E} g^\dagger) \overset{\text{(\ref{def:ast-E})}}{=} \mathcal{E}\left( \mathcal{E}^\flat_\mathbb{B}\left(  \mathcal{E}(f), \mathcal{E}(g^{\dagger\dagger}) \right) \right) \overset{\text{(\ref{diag:cofree})}}{=} \mathcal{E}_\mathbb{B}\left(  \mathcal{E}(f), \mathcal{E}(g^{\dagger\dagger}) \right) \overset{\overset{\text{Prop}}{\text{\ref{prop:canon-cofree}}}}{=} \mathcal{E}(f) \overset{\overset{\text{Prop}}{\text{\ref{prop:canon-cofree}}}}{=} \mathcal{E}\left( \mathcal{E}_\mathbb{D}(f,g) \right)
\end{gather*}
Thus $\mathcal{E} \circ \mathcal{D}_{\ast_\mathcal{E}} = \mathcal{E} \circ \mathcal{E}_\mathbb{D}$. However by uniqueness, we then get that $\mathcal{D}_\mathcal{E} = \mathcal{D}_{\ast_\mathcal{E}}$ as desired. Then by Thm \ref{thm:coag=rect}, it follows that $\ast_{\mathcal{D}_\mathcal{E}} = \ast_{\mathcal{E}}$ as well. 
\end{proof}

\begin{corollary} Let $\mathbb{B}$ be a category. Then for $(\mathsf{C}[\mathbb{B}], \dagger)$, we have that $\mathcal{D}_\mathbb{B} = \mathcal{D}_{\ast_\mathbb{B}}$.
\end{corollary}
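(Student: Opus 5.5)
The statement to prove is that for the canonical cofree dagger category over $\mathbb{B}$ we have $\mathcal{D}_\mathbb{B} = \mathcal{D}_{\ast_\mathbb{B}}$. I would deduce it from Cor \ref{cor:Dast=}, applied to the cofree dagger category $(\mathsf{C}[\mathbb{B}], \dagger, \mathcal{E}_\mathbb{B})$ from Prop \ref{prop:canon-cofree}. That corollary gives $\mathcal{D}_{\mathcal{E}_\mathbb{B}} = \mathcal{D}_{\ast_{\mathcal{E}_\mathbb{B}}}$. By definition $\mathcal{D}_\mathbb{B} := \mathcal{D}_{\mathcal{E}_\mathbb{B}}$, and the corollary following Prop \ref{prop:cofree-to-band} shows $\ast_{\mathcal{E}_\mathbb{B}} = \ast_\mathbb{B}$. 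Substituting gives $\mathcal{D}_\mathbb{B} = \mathcal{D}_{\ast_\mathbb{B}}$.

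As an independent sanity check, I would compute both sides directly on a map $((f,g),(h,k))$ of $\mathsf{C}[\mathsf{C}[\mathbb{B}]]$, where $(f,g)\colon A \to B$ and $(h,k)\colon B \to A$.

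\begin{itemize}
\item By Prop \ref{prop:rect-calg}, $\mathcal{D}_{\ast_\mathbb{B}}((f,g),(h,k)) = (f,g) \ast_\mathbb{B} (h,k)^\dagger$.
\item By (\ref{def:dag-cofree}), $(h,k)^\dagger = (k,h)$.
\item By (\ref{def:ast-cofree}), $(f,g) \ast_\mathbb{B} (k,h) = (f,h)$.
\end{itemize}

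So the right-hand side equals $(f,h)$.

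For the left-hand side, the stated formula $\mathcal{D}_\mathbb{B}((f,g),(h,k)) = (f,k)$ has a typing problem: $k\colon A \to B$, whereas the second component of a map $A \to B$ in $\mathsf{C}[\mathbb{B}]$ must go $B \to A$. So the correct value should be $(f,h)$, in agreement with the formula for $\mathcal{M}$. I would confirm this via the defining uniqueness of $\mathcal{D}_{\mathcal{E}_\mathbb{B}}$: it is the unique dagger functor with $\mathcal{E}_\mathbb{B} \circ \mathcal{D} = \mathcal{E}_\mathbb{B} \circ \mathcal{E}_{\mathsf{C}[\mathbb{B}]}$. Both sides send $((f,g),(h,k))$ to $f$. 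The assignment $((f,g),(h,k)) \mapsto (f,h)$ is a dagger functor by (\ref{def:comp-cofree}) and (\ref{def:dag-cofree}), so uniqueness forces the value $(f,h)$.

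The only real obstacle is this bookkeeping of the explicit formula. The structural argument through Cor \ref{cor:Dast=} does not depend on it.
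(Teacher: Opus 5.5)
Your proof is correct and follows the paper's route: apply Cor \ref{cor:Dast=} to $(\mathsf{C}[\mathbb{B}], \dagger, \mathcal{E}_\mathbb{B})$, then identify $\ast_{\mathcal{E}_\mathbb{B}}$ with $\ast_\mathbb{B}$. For that identification you cite the corollary following Prop \ref{prop:cofree-to-band}, which is the more directly relevant reference than the paper's Cor \ref{cor:astE=3-canon}. You are also right that the paper's explicit formula should read $\mathcal{D}_\mathbb{B}((f,g),(h,k)) = (f,h)$ rather than $(f,k)$, which is ill-typed, and your direct computation and uniqueness check confirm that both sides agree.
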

\begin{proof} This follows from Cor \ref{cor:astE=3-canon} and applying Cor \ref{cor:Dast=} to $(\mathsf{C}[\mathbb{B}], \dagger, \mathcal{E}_\mathbb{B})$. 
\end{proof}

Therefore, bringing all of this together, we obtain our main result about cofree dagger categories. 

\begin{theorem}\label{thm:full-cofree} For a dagger category, the following are equivalent: 
\begin{enumerate}[{\em (i)}]
\item It is cofree;
\item It is rectangular banded; 
\item It is a $\mathsf{C}$-algebra. 
\end{enumerate}
\end{theorem}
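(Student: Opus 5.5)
The plan is to assemble the equivalences from results already established, rather than reprove anything. I will prove the cycle $(i) \Rightarrow (ii) \Rightarrow (iii) \Rightarrow (i)$. The logical content is already in place, so the main work is citing the right results in the right order.

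For $(i) \Rightarrow (ii)$, suppose $(\mathbb{D},\dagger)$ is cofree. Then by definition there is a category $\mathbb{B}$ and a functor $\mathcal{E}: \mathbb{D} \to \mathbb{B}$ with $(\mathbb{D},\dagger,\mathcal{E})$ a cofree dagger category over $\mathbb{B}$. Prop \ref{prop:cofree-to-band} then says that $\ast_\mathcal{E}$ makes $(\mathbb{D},\dagger,\ast_\mathcal{E})$ a rectangular banded dagger category. That proposition was itself obtained by transporting the canonical structure $\ast_\mathbb{B}$ of Prop \ref{prop:cofree-canon-band} along the dagger isomorphism of Cor \ref{cor:cofree-iso.1}, via Lemma \ref{lemma:band-dag-iso}.

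For $(ii) \Rightarrow (iii)$, given a rectangular banded structure $\ast$, Prop \ref{prop:rect-calg} shows that $\mathcal{D}_\ast(f,g) = f \ast g^\dagger$ is a $\mathsf{C}$-algebra structure. For $(iii) \Rightarrow (i)$, given a $\mathsf{C}$-algebra structure $\mathcal{D}$, Prop \ref{prop:calg-rest} shows that $\ast_\mathcal{D}$ is a rectangular banded structure. Then Prop \ref{prop:rect-to-cofree} (equivalently Thm \ref{thm:cofree=rect}) shows that $(\mathbb{D},\dagger,\mathcal{E}_{\mathcal{L}_{\ast_\mathcal{D}}})$ is cofree over $\mathbb{D}_{\mathcal{L}_{\ast_\mathcal{D}}}$. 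This closes the cycle. I will also note that $(ii) \Leftrightarrow (iii)$ is in fact a bijection of structures, by Thm \ref{thm:coag=rect}.

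The one point needing care is that Prop \ref{prop:rect-calg} goes through Lemma \ref{lemma:cofree-Calg}, and in particular through the unlabeled lemma stating that a cofree dagger category carries the $\mathsf{C}$-algebra structure $\mathcal{D}_\mathcal{E}$. That lemma is stated without proof; I take it as the standard fact that the right adjoint's counit-induced comparison makes the cofree object an algebra. If it needed justification directly, I would verify conditions (\ref{eq:D-idem}), (\ref{eq:D-cancel}) and (\ref{eq:D-dagger}) for $\mathcal{D}_\ast(f,g) = f \ast g^\dagger$:
\begin{itemize}
\item (\ref{eq:D-idem}) is idempotency: $f \ast f^{\dagger\dagger} = f \ast f = f$.
\item (\ref{eq:D-dagger}) is (\ref{band-dagger}): $(f \ast g^\dagger)^\dagger = g \ast f^\dagger$.
\item (\ref{eq:D-cancel}) reduces to $(f \ast g^\dagger) \ast (h \ast k^\dagger)^\dagger = f \ast g^\dagger \ast k \ast h^\dagger = f \ast h^\dagger$, using (\ref{band-dagger}) and then middle cancellation.
\end{itemize}
Functoriality of $\mathcal{D}_\ast$ follows from the interchange law (\ref{ast-inter}). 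None of this is a serious obstacle.
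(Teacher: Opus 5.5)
Your proposal is correct and follows the paper's approach: the paper gives no separate proof and obtains the theorem by combining Thm~\ref{thm:cofree=rect} with Props~\ref{prop:rect-calg} and~\ref{prop:calg-rest}, which is exactly your cycle. Your direct check of (\ref{eq:D-idem}), (\ref{eq:D-cancel}), (\ref{eq:D-dagger}) and functoriality for $\mathcal{D}_\ast(f,g) = f \ast g^\dagger$ is also correct, and it usefully removes the dependence of Prop~\ref{prop:rect-calg} on the unproved lemma about $\mathcal{D}_\mathcal{E}$.
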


We conclude this section with the observation that Thm \ref{thm:coag=rect} can easily be extended to an isomorphism between the (large) category of rectangular banded dagger categories and the category of $\mathsf{C}$-algebras, where the latter is often called the Eilenberg-Moore category. 

\begin{definition} For rectangular banded dagger categories $(\mathbb{D}_1, \dagger, \ast)$ and $(\mathbb{D}_2, \dagger, \ast)$, a \textbf{rectangular banded dagger functor} ${\mathcal{F}: (\mathbb{D}_1, \dagger,\ast) \to (\mathbb{D}_2, \dagger,\ast)}$ is a dagger functor ${\mathcal{F}: (\mathbb{D}_1, \dagger) \to (\mathbb{D}_2, \dagger)}$ which preserves the rectangular band operation, that is, for every map $f:A \to B$ and $g: A\to B$ in $\mathbb{D}_1$, the following equality holds: 
\begin{align}\label{eq:band-fun}
\mathcal{F}(f\ast g)= \mathcal{F}(f) \ast \mathcal{F}(g)
\end{align}
Let $\mathsf{RBDAG}$ be the (large) category of rectangular banded dagger categories and rectangular banded dagger functors between them. 
\end{definition}

\begin{definition} For $\mathsf{C}$-algebras $(\mathbb{D}_1, \dagger, \mathcal{D}_1)$ and $(\mathbb{D}_2, \dagger, \mathcal{D}_2)$, a \textbf{$\mathsf{C}$-algebra functor} ${\mathcal{F}: (\mathbb{D}_1, \dagger,\mathcal{D}_1) \to (\mathbb{D}_2, \dagger,\mathcal{D}_2)}$ is a dagger functor ${\mathcal{F}: (\mathbb{D}_1, \dagger) \to (\mathbb{D}_2, \dagger)}$ such that the following diagram commutes: 
\begin{equation}\begin{gathered}\label{diag:C-alg-fun}   \xymatrixcolsep{5pc}\xymatrix{   \mathsf{C}[\mathbb{D}_1] \ar[r]^-{\mathsf{C}[\mathcal{F}]}  \ar[d]_-{\mathcal{D}_1} &   \mathsf{C}[\mathbb{D}_2] \ar[d]^-{\mathcal{D}_2}    \\
\mathbb{D}_1  \ar[r]_-{\mathcal{F}} & \mathbb{D}_2
 }
\end{gathered}\end{equation}
Let $\mathsf{C}\text{-}\mathsf{ALG}$ be the (large) category of $\mathsf{C}$-algebras and $\mathsf{C}$-algebra functors between them. 
\end{definition}

\begin{lemma} Let ${\mathcal{F}: (\mathbb{D}_1, \dagger,\ast) \to (\mathbb{D}_2, \dagger,\ast)}$ be a rectangular banded functor. Then ${\mathcal{F}: (\mathbb{D}_1, \dagger,\mathcal{D}_\ast) \to (\mathbb{D}_2, \dagger,\mathcal{D}_\ast)}$ is a $\mathsf{C}$-algebra functor. 
\end{lemma}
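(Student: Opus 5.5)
Since $\mathcal{F}$ is already a dagger functor, the only thing left is that the square (\ref{diag:C-alg-fun}) commutes, with $\mathcal{D}_1 = \mathcal{D}_\ast$ on $\mathbb{D}_1$ and $\mathcal{D}_2 = \mathcal{D}_\ast$ on $\mathbb{D}_2$ as in Prop \ref{prop:rect-calg}. I will check this directly on objects and on maps of $\mathsf{C}[\mathbb{D}_1]$.

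On objects there is nothing to do. Both $\mathcal{D}_\ast$ are the identity on objects and $\mathsf{C}[\mathcal{F}](A) = \mathcal{F}(A)$, so both paths around the square send $A$ to $\mathcal{F}(A)$.

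On maps, take $(f,g) \colon A \to B$ in $\mathsf{C}[\mathbb{D}_1]$, so $f\colon A \to B$ and $g \colon B \to A$. Going down and then right gives $\mathcal{F}(\mathcal{D}_\ast(f,g)) = \mathcal{F}(f \ast g^\dagger)$. Going right and then down gives $\mathcal{D}_\ast(\mathcal{F}(f),\mathcal{F}(g)) = \mathcal{F}(f) \ast \mathcal{F}(g)^\dagger$, using the explicit formula $\mathsf{C}[\mathcal{F}](f,g) = (\mathcal{F}(f),\mathcal{F}(g))$. These agree by the chain
\[ \mathcal{F}(f \ast g^\dagger) \overset{\text{(\ref{eq:band-fun})}}{=} \mathcal{F}(f) \ast \mathcal{F}(g^\dagger) \overset{\text{(\ref{def:dagfun})}}{=} \mathcal{F}(f) \ast \mathcal{F}(g)^\dagger . \]
Here (\ref{eq:band-fun}) applies because $f$ and $g^\dagger$ are parallel maps $A \to B$. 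This proves $\mathcal{F} \circ \mathcal{D}_\ast = \mathcal{D}_\ast \circ \mathsf{C}[\mathcal{F}]$.

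There is no real obstacle in this argument. The only points to watch are that the formula for $\mathsf{C}[\mathcal{F}]$ is applied correctly and that the dagger is moved outside $\mathcal{F}$ using the dagger-functor property.
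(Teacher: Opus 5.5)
Your proof is correct and follows essentially the same route as the paper's: you check the square on maps via $\mathcal{F}(f \ast g^\dagger) = \mathcal{F}(f) \ast \mathcal{F}(g^\dagger) = \mathcal{F}(f) \ast \mathcal{F}(g)^\dagger$, using that $\mathcal{F}$ preserves the band operation and commutes with the dagger. Your explicit check on objects and your remark that $f$ and $g^\dagger$ are parallel are small details that the paper leaves implicit.
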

\begin{proof} We compute: 
\begin{gather*}
\mathcal{D}_\ast\left( \mathsf{C}[\mathcal{F}](f,g) \right) \overset{\text{(\ref{def:CF}}}{=} \mathcal{D}_\ast\left( \mathcal{F}(f), \mathcal{F}(g) \right) \overset{\overset{\text{Prop}}{\text{\ref{prop:rect-calg}}}}{=} \mathcal{F}(f) \ast \mathcal{F}(g)^\dagger \overset{\text{(\ref{def:dagfun})}}{=} \mathcal{F}(f) \ast \mathcal{F}(g^\dagger) \overset{\text{(\ref{eq:band-fun})}}{=}
 \mathcal{F}(f \ast g^\dagger) \overset{\overset{\text{Prop}}{\text{\ref{prop:rect-calg}}}}{=} \mathcal{F}\left( \mathcal{D}_\ast(f,g) \right)
\end{gather*}
Thus it follows that $\mathcal{D}_\ast \circ \mathsf{C}[\mathcal{F}] = \mathcal{F} \circ \mathcal{D}_\ast$. So ${\mathcal{F}: (\mathbb{D}_1, \dagger,\mathcal{D}_\ast) \to (\mathbb{D}_2, \dagger,\mathcal{D}_\ast)}$ is a $\mathsf{C}$-algebra functor. 
\end{proof}

\begin{lemma} Let ${\mathcal{F}: (\mathbb{D}_1, \dagger,\mathcal{D}_1) \to (\mathbb{D}_2, \dagger,\mathcal{D}_2)}$ be a $\mathsf{C}$-algebra functor. Then ${\mathcal{F}: (\mathbb{D}_1, \dagger,\ast_{\mathcal{D}_1}) \to (\mathbb{D}_2, \dagger,\ast_{\mathcal{D}_2})}$ is a rectangular banded dagger functor.
\end{lemma}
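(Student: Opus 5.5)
Since $\mathcal{F}$ is already assumed to be a dagger functor, the only thing left to check is the band-preservation equation (\ref{eq:band-fun}) for the induced operations $\ast_{\mathcal{D}_1}$ and $\ast_{\mathcal{D}_2}$ from Prop \ref{prop:calg-rest}. The plan is a direct computation that mirrors the proof of the previous lemma, read in reverse.

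Take parallel maps $f,g : A \to B$ in $\mathbb{D}_1$. First unfold the definition (\ref{def:ast-D}) to get $\mathcal{F}(f \ast_{\mathcal{D}_1} g) = \mathcal{F}\left(\mathcal{D}_1(f, g^\dagger)\right)$. Next apply the commuting square (\ref{diag:C-alg-fun}), which gives $\mathcal{F} \circ \mathcal{D}_1 = \mathcal{D}_2 \circ \mathsf{C}[\mathcal{F}]$. This rewrites the expression as $\mathcal{D}_2\left(\mathsf{C}[\mathcal{F}](f,g^\dagger)\right)$. By the explicit description of $\mathsf{C}[\mathcal{F}]$, this equals $\mathcal{D}_2\left(\mathcal{F}(f), \mathcal{F}(g^\dagger)\right)$. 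Since $\mathcal{F}$ is a dagger functor, (\ref{def:dagfun}) lets us replace $\mathcal{F}(g^\dagger)$ by $\mathcal{F}(g)^\dagger$. Folding (\ref{def:ast-D}) back up for $\mathcal{D}_2$ then yields $\mathcal{F}(f) \ast_{\mathcal{D}_2} \mathcal{F}(g)$, which is the required equation.

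There is no real obstacle here. The only care needed is to apply the square (\ref{diag:C-alg-fun}) to the specific map $(f,g^\dagger)$ of $\mathsf{C}[\mathbb{D}_1]$, and to use the dagger-functor property at the right moment so the adjoint moves outside $\mathcal{F}$. Together with the previous lemma and Thm \ref{thm:coag=rect}, this shows that the two constructions extend to mutually inverse functors between $\mathsf{RBDAG}$ and $\mathsf{C}\text{-}\mathsf{ALG}$.
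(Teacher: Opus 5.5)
Your proposal is correct and follows essentially the same computation as the paper: unfold $\ast_{\mathcal{D}_1}$ via (\ref{def:ast-D}), push $\mathcal{F}$ through using the commuting square (\ref{diag:C-alg-fun}) applied to $(f,g^\dagger)$, move the dagger outside $\mathcal{F}$ with (\ref{def:dagfun}), and fold back into $\ast_{\mathcal{D}_2}$.
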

\begin{proof} Observe that (\ref{diag:C-alg-fun}) says that on maps, we have that the following equality holds: 
\begin{align}\label{eq:C-alg-fun}
\mathcal{D}_2\left( \mathcal{F}(f), \mathcal{F}(g) \right) = \mathcal{F}\left( \mathcal{D}_1(f,g) \right)
\end{align}
Then we compute: 
\begin{gather*} 
\mathcal{F}(f \ast_{\mathcal{D}_1} g) \overset{\text{(\ref{def:ast-D}}}{=} \mathcal{F}\left(\mathcal{D}_1(f,g^\dagger) \right) \overset{\text{(\ref{eq:C-alg-fun}}}{=} \mathcal{D}_2\left( \mathcal{F}(f), \mathcal{F}(g^\dagger) \right) \overset{\text{(\ref{def:dagfun})}}{=} \mathcal{D}_2\left( \mathcal{F}(f), \mathcal{F}(g)^\dagger \right) \overset{\text{(\ref{def:ast-D}}}{=} \mathcal{F}(f) \ast_{\mathcal{D}_2} \mathcal{F}(g)
\end{gather*}
Thus we have that ${\mathcal{F}: (\mathbb{D}_1, \dagger,\ast_{\mathcal{D}_1}) \to (\mathbb{D}_2, \dagger,\ast_{\mathcal{D}_2})}$ be a rectangular banded dagger functor.
\end{proof}

By combining the above lemmas with Thm \ref{thm:coag=rect}, we conclude that: 

\begin{theorem} $\mathsf{RBDAG} \simeq \mathsf{C}\text{-}\mathsf{ALG}$
\end{theorem}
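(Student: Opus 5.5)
I will construct a pair of functors $\Phi: \mathsf{RBDAG} \to \mathsf{C}\text{-}\mathsf{ALG}$ and $\Psi: \mathsf{C}\text{-}\mathsf{ALG} \to \mathsf{RBDAG}$ and show that they are mutually inverse. This gives an isomorphism of categories, which is in particular an equivalence.

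\textbf{Definition of the functors.} On objects, set $\Phi(\mathbb{D},\dagger,\ast) = (\mathbb{D},\dagger,\mathcal{D}_\ast)$, which is a $\mathsf{C}$-algebra by Prop \ref{prop:rect-calg}. Set $\Psi(\mathbb{D},\dagger,\mathcal{D}) = (\mathbb{D},\dagger,\ast_\mathcal{D})$, which is a rectangular banded dagger category by Prop \ref{prop:calg-rest}. On morphisms, both $\Phi$ and $\Psi$ act as the identity on the underlying dagger functor. This is well defined by the two lemmas immediately preceding the statement:
\begin{itemize}
\item a rectangular banded dagger functor is a $\mathsf{C}$-algebra functor between the induced $\mathcal{D}_\ast$ structures;
\item a $\mathsf{C}$-algebra functor is a rectangular banded dagger functor between the induced $\ast_\mathcal{D}$ structures.
\end{itemize}
Since composition and identities in both categories are those of underlying dagger functors, functoriality of $\Phi$ and $\Psi$ is immediate.

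\textbf{Mutual inverseness.} On objects, Thm \ref{thm:coag=rect} gives $\ast_{\mathcal{D}_\ast} = \ast$ and $\mathcal{D}_{\ast_\mathcal{D}} = \mathcal{D}$. Hence $\Psi\Phi(\mathbb{D},\dagger,\ast) = (\mathbb{D},\dagger,\ast)$ and $\Phi\Psi(\mathbb{D},\dagger,\mathcal{D}) = (\mathbb{D},\dagger,\mathcal{D})$ on the nose. On morphisms, both composites are visibly the identity. Therefore $\Psi\Phi = \mathsf{id}$ and $\Phi\Psi = \mathsf{id}$.

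This yields $\mathsf{RBDAG} \cong \mathsf{C}\text{-}\mathsf{ALG}$, and so in particular $\mathsf{RBDAG} \simeq \mathsf{C}\text{-}\mathsf{ALG}$.

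\textbf{Main point to check.} The only non-formal ingredient is well-definedness on morphisms, and the two preceding lemmas already supply it. I do not expect any step to be a real obstacle. The one thing to state explicitly is that the hom-classes of both categories are subclasses of dagger functors closed under the same composition, so that the identity-on-arrows assignment is a genuine functor.
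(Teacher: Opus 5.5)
Your proposal is correct and is essentially the paper's argument: the two lemmas preceding the statement make the identity-on-underlying-dagger-functors assignment well defined in both directions, and Thm \ref{thm:coag=rect} makes the two assignments mutually inverse on objects. So you in fact obtain the isomorphism $\mathsf{RBDAG} \cong \mathsf{C}\text{-}\mathsf{ALG}$, which is what the paper intends.
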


It is worth mentioning that the forgetful functor $\mathsf{U}: \mathsf{DAG} \to \mathsf{CAT}$ is in fact comonadic \cite[Thm 2.1.11]{karvonen2019way},  which means that the coalgebras of the induced comonad on $\mathsf{CAT}$ are precisely dagger categories, or in other words, the coEilenberg-Moore category of this induced comonad is equivalent to $\mathsf{DAG}$. However the forgetful functor's right adjoint $\mathsf{C}: \mathsf{CAT} \to \mathsf{DAG}$ is not comonadic since the comparison functor between $\mathsf{CAT}$ and $\mathsf{C}\text{-}\mathsf{ALG}$ is not an equivalence (while it is essentially surjective and full, it is not faithful). 

\section{Free Dagger Categories}\label{sec:free}


In this section, we turn our attention to giving base independent characterizations of free dagger categories. As before, we begin by first reviewing the definition of a free dagger category via its universal property over a base category. 

\begin{definition} A \textbf{free dagger category} over a category $\mathbb{B}$ is a triple $(\mathbb{D}, \dagger, \mathcal{N})$ consisting of a dagger category $(\mathbb{D}, \dagger)$ and a functor $\mathcal{N}: \mathbb{B} \to \mathbb{D}$, such that for every dagger category $(\mathbb{C}, \dagger)$ and functor $\mathcal{F}: \mathbb{B} \to \mathbb{C}$, there exists a unique dagger functor $\mathcal{F}^\sharp: (\mathbb{D}, \dagger) \to (\mathbb{C}, \dagger)$ such that the following diagram commutes: 
\begin{equation}\begin{gathered}\label{diag:free}  \xymatrixcolsep{5pc}\xymatrix{\mathbb{D}  \ar@{-->}[r]^-{\exists! ~ \mathcal{F}^\sharp}  & \mathbb{C}  \\
   \mathbb{B} \ar[u]^-{\mathcal{N}} \ar[ur]_-{\mathcal{F}} }
\end{gathered}\end{equation}
A dagger category $(\mathbb{D}, \dagger)$ is said to be \textbf{free} if there exists a category $\mathbb{B}$ and a functor $\mathcal{N}: \mathbb{B} \to \mathbb{D}$ such that $(\mathbb{D}, \dagger, \mathcal{N})$ is a free dagger category over $\mathbb{B}$. In this case, we call $\mathbb{B}$ a \textbf{base category} of the free dagger category $(\mathbb{D}, \dagger)$.
\end{definition}

As we will review shortly below in Sec \ref{sec:free-canonical}, for any category $\mathbb{B}$, there exists a free dagger category over it. Before doing so, we record important facts about free dagger categories that follow immediately from the above universal property, which are dual to the ones we saw in Lemma \ref{lemma:cofree-iso} for cofree dagger categories. 

\begin{lemma}\label{lemma:free-iso} Let $\mathbb{B}$ be a category. 
\begin{enumerate}[{\em (i)}]
\item If $(\mathbb{D}_1, \dagger, \mathcal{N}_1)$ and $(\mathbb{D}_2, \dagger, \mathcal{N}_2)$ are both free dagger categories over $\mathbb{B}$, then there exists a unique dagger isomorphism ${\mathcal{F}: (\mathbb{D}_1, \dagger) \to (\mathbb{D}_2, \dagger)}$ such that the following diagram commutes: 
\begin{equation}\begin{gathered}\label{diag:free-iso}
\xymatrixcolsep{5pc}\xymatrix{\mathbb{D}_1   \ar@{-->}[rr]^-{\exists! ~ \mathcal{F}}  && \mathbb{D}_2  \\
  & \ar[ul]^-{\mathcal{N}_1} \mathbb{B} \ar[ur]_-{\mathcal{N}_2} }
  \end{gathered}\end{equation}
\item If $(\mathbb{D}_1, \dagger, \mathcal{N}_1)$ is a free dagger category over $\mathbb{B}$ and ${\mathcal{F}: (\mathbb{D}_1, \dagger) \to (\mathbb{D}_2, \dagger)}$ is a dagger isomorphism, then $(\mathbb{D}_2, \dagger, \mathcal{F}\circ \mathcal{N})$ is a free dagger category over $\mathbb{B}$. 
\item If $(\mathbb{D}, \dagger, \mathcal{N})$ is a free dagger category over $\mathbb{B}$ and $\mathcal{F}: \mathbb{B} \to \mathbb{B}_2$ is an isomorphism, then $(\mathbb{D}, \dagger, \mathcal{N} \circ \mathcal{F}^{-1})$ is a free dagger category over $\mathbb{B}_2$. 
\end{enumerate}
\end{lemma}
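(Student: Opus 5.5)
We will prove each of the three parts directly from the universal property (\ref{diag:free}), using the standard argument that universal objects are unique up to unique isomorphism. This is dual to Lemma \ref{lemma:cofree-iso}. Throughout we use one observation. If $(\mathbb{D}, \dagger, \mathcal{N})$ is free over $\mathbb{B}$ and $\mathcal{G}, \mathcal{G}': (\mathbb{D}, \dagger) \to (\mathbb{C}, \dagger)$ are dagger functors with $\mathcal{G} \circ \mathcal{N} = \mathcal{G}' \circ \mathcal{N}$, then $\mathcal{G} = \mathcal{G}'$. Indeed, both are the unique dagger functor $(\mathcal{G} \circ \mathcal{N})^\sharp$. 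In particular, taking $\mathcal{G}' = \mathsf{id}_\mathbb{D}$, any dagger endofunctor $\mathcal{G}$ of $\mathbb{D}$ with $\mathcal{G}\circ \mathcal{N} = \mathcal{N}$ must be the identity.

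For $(i)$, let $\mathcal{F} := \mathcal{N}_2^\sharp: (\mathbb{D}_1, \dagger) \to (\mathbb{D}_2, \dagger)$ be the unique dagger functor with $\mathcal{F} \circ \mathcal{N}_1 = \mathcal{N}_2$, and let $\mathcal{G} := \mathcal{N}_1^\sharp: (\mathbb{D}_2, \dagger) \to (\mathbb{D}_1, \dagger)$ be the unique one with $\mathcal{G} \circ \mathcal{N}_2 = \mathcal{N}_1$. Then $\mathcal{G}\circ\mathcal{F}$ is a dagger functor satisfying $\mathcal{G} \circ \mathcal{F} \circ \mathcal{N}_1 = \mathcal{N}_1$, so by the observation $\mathcal{G}\circ \mathcal{F} = \mathsf{id}_{\mathbb{D}_1}$. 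Symmetrically, $\mathcal{F}\circ\mathcal{G} = \mathsf{id}_{\mathbb{D}_2}$. Hence $\mathcal{F}$ is a dagger isomorphism making (\ref{diag:free-iso}) commute. Uniqueness holds because any such $\mathcal{F}$ is in particular a dagger functor with $\mathcal{F}\circ\mathcal{N}_1 = \mathcal{N}_2$, so it equals $\mathcal{N}_2^\sharp$.

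For $(ii)$, we read the statement as $(\mathbb{D}_2, \dagger, \mathcal{F}\circ\mathcal{N}_1)$. Given a dagger category $(\mathbb{C},\dagger)$ and a functor $\mathcal{H}: \mathbb{B} \to \mathbb{C}$, we take $\mathcal{H}^\sharp \circ \mathcal{F}^{-1}$, where $\mathcal{H}^\sharp$ is computed with respect to $\mathcal{N}_1$. This is a dagger functor, since $\mathcal{F}^{-1}$ is a dagger functor as noted in the preliminaries. It satisfies $\mathcal{H}^\sharp\circ \mathcal{F}^{-1}\circ \mathcal{F}\circ \mathcal{N}_1 = \mathcal{H}$. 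For uniqueness, suppose $\mathcal{K}$ is a dagger functor with $\mathcal{K}\circ \mathcal{F}\circ\mathcal{N}_1 = \mathcal{H}$. Then $\mathcal{K}\circ\mathcal{F}$ is a dagger functor out of $\mathbb{D}_1$ over $\mathcal{H}$, so $\mathcal{K}\circ\mathcal{F} = \mathcal{H}^\sharp$, and therefore $\mathcal{K} = \mathcal{H}^\sharp\circ\mathcal{F}^{-1}$.

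For $(iii)$, given $\mathcal{H}: \mathbb{B}_2 \to \mathbb{C}$, we apply the universal property of $\mathcal{N}$ to $\mathcal{H}\circ\mathcal{F}: \mathbb{B} \to \mathbb{C}$ and obtain $(\mathcal{H}\circ\mathcal{F})^\sharp$. Then $(\mathcal{H}\circ\mathcal{F})^\sharp \circ \mathcal{N}\circ\mathcal{F}^{-1} = \mathcal{H}\circ\mathcal{F}\circ\mathcal{F}^{-1} = \mathcal{H}$. For uniqueness, suppose $\mathcal{K}$ is a dagger functor with $\mathcal{K}\circ\mathcal{N}\circ\mathcal{F}^{-1} = \mathcal{H}$. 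Precomposing with $\mathcal{F}$ gives $\mathcal{K}\circ\mathcal{N} = \mathcal{H}\circ\mathcal{F}$, so $\mathcal{K} = (\mathcal{H}\circ\mathcal{F})^\sharp$.

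None of these steps poses a real obstacle; everything is formal. The only points that need care are the following. First, the composites used as candidate factorizations must be dagger functors and not merely functors; this is why we use that inverses and composites of dagger functors are dagger functors. Second, in $(ii)$ the functor $\mathcal{N}$ in the statement should be read as $\mathcal{N}_1$.
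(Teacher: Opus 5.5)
Your proposal is correct and is the standard universal-property argument the paper has in mind. The paper writes out no proof here; it only remarks that these facts follow immediately from the universal property (\ref{diag:free}), dually to Lemma \ref{lemma:cofree-iso}. Your reading of $\mathcal{N}$ as $\mathcal{N}_1$ in part $(ii)$ is the right fix for the typo in the statement.
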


Once again, it is important to stress that a dagger category can be free over different non-equivalent base categories. Dual to cofree dagger categories, we also get that a free dagger category over a specified base is also free over the opposite category of its specified base category.

\begin{lemma}\label{lemma:free-opp-base} Let $(\mathbb{D}, \dagger, \mathcal{N})$ be a free dagger category over a category $\mathbb{B}$. Define the functor $\mathcal{N}^o: \mathbb{B}^{op} \to \mathbb{D}$ on objects as $\mathcal{N}^o(A) = \mathcal{N}(A)$ and on maps as $\mathcal{N}^o(f) = \mathcal{N}(f)^\dagger$. Then $(\mathbb{D}, \dagger, \mathcal{N}^o)$ is a free dagger category over $\mathbb{B}^{op}$.
\end{lemma}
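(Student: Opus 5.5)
We mirror the proof of Lemma \ref{lemma:cofree-opp-base}, using the dagger as a dagger isomorphism $\dagger: (\mathbb{D}, \dagger) \to (\mathbb{D}^{op}, \dagger)$. First, observe that $\mathcal{N}^o$ is a well-defined functor, since it is precisely the composite $\mathcal{N}^o = \dagger \circ \mathcal{N}^{op}$. Here $\mathcal{N}^{op}: \mathbb{B}^{op} \to \mathbb{D}^{op}$ is the induced functor on opposite categories, and $\dagger: \mathbb{D}^{op} \to \mathbb{D}$ is the dagger viewed as a covariant functor. Concretely, for composable $f,g$ in $\mathbb{B}$ we have $\mathcal{N}^o(f \circ^{op} g) = \mathcal{N}(g \circ f)^\dagger = \mathcal{N}(f)^\dagger \circ \mathcal{N}(g)^\dagger$ by (\ref{eq:dagger}), and identities are preserved.

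For the universal property, let $(\mathbb{C},\dagger)$ be a dagger category and $\mathcal{F}: \mathbb{B}^{op} \to \mathbb{C}$ a functor. Then $\mathcal{F}^{op}: \mathbb{B} \to \mathbb{C}^{op}$ is a functor into the dagger category $(\mathbb{C}^{op}, \dagger)$, so freeness of $(\mathbb{D},\dagger,\mathcal{N})$ gives a unique dagger functor $(\mathcal{F}^{op})^\sharp: (\mathbb{D},\dagger) \to (\mathbb{C}^{op},\dagger)$ with $(\mathcal{F}^{op})^\sharp \circ \mathcal{N} = \mathcal{F}^{op}$. Postcomposing with the dagger isomorphism $\dagger: (\mathbb{C}^{op},\dagger) \to (\mathbb{C},\dagger)$ yields a dagger functor $\mathcal{G} := \dagger \circ (\mathcal{F}^{op})^\sharp : (\mathbb{D},\dagger) \to (\mathbb{C},\dagger)$. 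Equivalently, $\mathcal{G}$ is the same object assignment as $(\mathcal{F}^{op})^\sharp$, with $\mathcal{G}(h) = (\mathcal{F}^{op})^\sharp(h)^\dagger$. We then check that $\mathcal{G}\circ\mathcal{N}^o = \mathcal{F}$. On objects this is immediate. On a map $f$,
\[
\mathcal{G}(\mathcal{N}(f)^\dagger) = (\mathcal{F}^{op})^\sharp(\mathcal{N}(f)^\dagger)^\dagger = (\mathcal{F}^{op})^\sharp(\mathcal{N}(f))^{\dagger\dagger} = \mathcal{F}(f),
\]
using that $(\mathcal{F}^{op})^\sharp$ is a dagger functor, that the dagger is involutive, and that $(\mathcal{F}^{op})^\sharp(\mathcal{N}(f)) = \mathcal{F}^{op}(f)$ is the same underlying arrow as $\mathcal{F}(f)$.

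For uniqueness, suppose $\mathcal{H}: (\mathbb{D},\dagger) \to (\mathbb{C},\dagger)$ is a dagger functor with $\mathcal{H} \circ \mathcal{N}^o = \mathcal{F}$. Then $\dagger \circ \mathcal{H}: (\mathbb{D},\dagger) \to (\mathbb{C}^{op},\dagger)$ is a dagger functor, and
\[
(\dagger\circ\mathcal{H})(\mathcal{N}(f)) = \mathcal{H}(\mathcal{N}(f))^\dagger = \mathcal{H}(\mathcal{N}(f)^\dagger) = \mathcal{F}(f) = \mathcal{F}^{op}(f).
\]
So $(\dagger \circ \mathcal{H}) \circ \mathcal{N} = \mathcal{F}^{op}$, and uniqueness in the universal property of $(\mathbb{D},\dagger,\mathcal{N})$ forces $\dagger\circ\mathcal{H} = (\mathcal{F}^{op})^\sharp$. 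Applying $\dagger$ again gives $\mathcal{H} = \mathcal{G}$.

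The only delicate point is bookkeeping: tracking in which category (op or not) each composite lives, and confirming that $\dagger: \mathbb{C}^{op} \to \mathbb{C}$ is genuinely a dagger functor with respect to the same dagger. The latter holds because $\dagger$ commutes with itself. Everything else is routine.
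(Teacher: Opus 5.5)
Your proof is correct and takes the same route as the paper, which simply invokes the dual of the argument for Lemma~\ref{lemma:cofree-opp-base}: factor $\mathcal{N}^o = \dagger \circ \mathcal{N}^{op}$, apply freeness to $\mathcal{F}^{op}: \mathbb{B} \to \mathbb{C}^{op}$, and postcompose with the dagger isomorphism $\dagger: (\mathbb{C}^{op},\dagger) \to (\mathbb{C},\dagger)$. You also carry out the commutativity and uniqueness checks that the paper leaves as straightforward, and both are done correctly.
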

\begin{proof} This follows essentially from the dual argument of the proof of Lemma \ref{lemma:cofree-opp-base}. 
\end{proof}

Recall that for cofree dagger categories, we saw that the image of the functor down to the starting base category provides another base category for our cofree dagger category. The same is true for free dagger categories. However, while for cofree dagger categories, this image category construction might result in a different base category than our starting base category, for free dagger categories, it in fact recaptures the starting base category (up to isomorphism). We will look at this in more detail after we review canonical free dagger categories in the next section. 

\subsection{ZigZag Construction}\label{sec:free-canonical}

We now review the canonical construction of a free dagger category over a specified category. Doing so will require a bit of setup. So let $\mathbb{B}$ be a category. For every pair of objects $A, B \in \mathsf{Obj}(\mathbb{B})$, let $\mathsf{pre}\text{-}\mathsf{Z}[\mathbb{B}](A,B)$ be the set of non-empty ordered even length lists of maps:
\[ ({}_n f, f_n,  \hdots, {}_i f, f_i, \hdots, {}_1f, f_1) \]
where for $1 \leq i \leq n$, setting ${}_0 A = A$ and $A_n =B$, the maps $f_{i}$ and ${}_{i}f$ are of type: 
\begin{align}
f_{i}: A_{i-1} \to {}_iA && {}_{i}f: A_{i+1} \to {}_iA
\end{align}
Now we can define a composition operation $\circ: \mathsf{pre}\text{-}\mathsf{Z}[\mathbb{B}](B,C) \times \mathsf{pre}\text{-}\mathsf{Z}[\mathbb{B}](A,B) \to \mathsf{pre}\text{-}\mathsf{Z}[\mathbb{B}](A,C)$ given simply by concatenation of lists: 
\begin{align}
({}_m g, g_m, \hdots, {}_1 g, g_1) \circ ({}_n f, f_n, \hdots, {}_1f, f_1) = ({}_m g, g_m, \hdots, {}_1 g, g_1, {}_n f, f_n, \hdots, {}_1f, f_1) 
\end{align}
We can also define a dagger operation $\dagger: \mathsf{pre}\text{-}\mathsf{Z}[\mathbb{B}](A,B) \to \mathsf{pre}\text{-}\mathsf{Z}[\mathbb{B}](B,A)$ given by simply reversing the order of the list: 
\begin{align}
({}_n f, f_n,  \hdots, {}_i f, f_i, \hdots, {}_1f, f_1)^\dagger = ({}_m g, g_m, \hdots, {}_1 g, g_1, {}_n f, f_n, \hdots, {}_1f, f_1) 
\end{align}
For identities, one would like the list $(\mathsf{id}_A,\mathsf{id}_A)$ to be the identity maps. However, these pairs will not be units for the concatenation of lists. We will need to impose certain identifications to make this happen. So for every pair of objects $A$ and $B$, let $\sim$ be the smallest equivalence relation on $\mathsf{pre}\text{-}\mathsf{Z}[\mathbb{B}](A,B)$ which satisfies the following (for all suitable lists of maps and $1 < i \leq n$): 
\begin{align}
({}_n f, f_n,  \hdots, {}_i f, f_i, \mathsf{id}_{A_{i-1}}, f_{i-1}\hdots, {}_1f, f_1) \sim ({}_n f, f_n,  \hdots, {}_i f, f_i \circ f_{i-1}, \hdots, {}_1f, f_1) \label{reduce-1} \\
({}_n f, f_n,  \hdots, {}_i f, \mathsf{id}_{A_{i-1}}, {}_{i-1}f, f_{i-1}\hdots, {}_1f, f_1) \sim ({}_n f, f_n,  \hdots, {}_{i-1}f \circ {}_i f, f_{i-1}\hdots, {}_1f, f_1) \label{reduce-2}
\end{align}
Note that the above is indeed well-typed, since if ${}_{i-1}f = \mathsf{id}_{A_{i-1}}$, this means $A_{i-1} = {}_{i-1}A$, so we can compose $f_{i-1}: A_{i-2} \to {}_{i-1}A=A_{i-1}$ and $f_i: A_{i-1} \to {}_i A$, while if $f_i = \mathsf{id}_{A_{i-1}}$, this means $A_{i-1}={}_i A$, so we can compose ${}_i f: A_{i+1} \to {}_iA=A_{i-1}$ and ${}_{i-1} f: A_{i-1} \to {}_{i-1} A$. Essentially, (\ref{reduce-1}) and (\ref{reduce-2}) tell us that we can remove identity maps in the middle of lists.  As a shorthand, we shall denote the equivalence class of a list $({}_n f, f_n, \hdots, {}_1f, f_1)$ using square brackets: 
\[ [{}_n f, f_n,  \hdots, {}_1f, f_1] = \left[ ({}_n f, f_n,  \hdots, {}_1f, f_1) \right]_\sim \]

Then define the dagger category $(\mathsf{Z}[\mathbb{B}], \dagger)$ \cite[Def 3.1.18]{heunen2009categorical} as follows: 
\begin{itemize}
\item The objects of $\mathsf{Z}[\mathbb{B}]$ are the same as $\mathbb{B}$;
\item Maps in $\mathsf{Z}[\mathbb{B}]$ are equivalence classes $[{}_n f, f_n,  \hdots, {}_1f, f_1]: A \to B$ of lists $({}_n f, f_n,  \hdots, {}_1f, f_1) \in \mathsf{pre}\text{-}\mathsf{Z}[\mathbb{B}](A,B)$, in other words, the homsets are the quotient sets $\mathsf{Z}[\mathbb{B}](A,B) := \mathsf{pre}\text{-}\mathsf{Z}[\mathbb{B}](A,B)/ \sim$;
\item Identity maps are $[\mathsf{id}_A, \mathsf{id}_A]: A \to A$;
\item Composition is defined via concatenation as follows: 
\begin{align}\label{def:comp-free}
[{}_m g, g_m, \hdots, {}_1 g, g_1] \circ [{}_n f, f_n, \hdots, {}_1f, f_1] = [{}_m g, g_m, \hdots, {}_1 g, g_1, {}_n f, f_n, \hdots, {}_1f, f_1]
\end{align}
\item The dagger is defined as follows:
\begin{align}\label{def:dag-free}
[{}_n f, f_n, \hdots, {}_i f, f_i, \hdots, {}_1f, f_1]^\dagger = [f_1, {}_1f, \hdots, f_i, {}_i f, \hdots, f_n, {}_n f]
\end{align}
\end{itemize}
Define the functor $\mathcal{N}_\mathbb{B}: \mathbb{B} \to \mathsf{Z}[\mathbb{B}]$ on objects as $\mathcal{N}_\mathbb{B}(A) = A$ and on maps as $\mathcal{N}_\mathbb{B}(f) = [\mathsf{id}_B, f]$.  

\begin{proposition}\label{prop:canon-free} \cite[Thm 3.1.19]{heunen2009categorical} $(\mathsf{Z}[\mathbb{B}], \dagger, \mathcal{N}_\mathbb{B})$ is a free dagger category over $\mathbb{B}$. Explicitly, for any dagger category $(\mathbb{C}, \dagger)$ and functor $\mathcal{F}: \mathbb{B} \to \mathbb{C}$, the unique dagger $\mathcal{F}^\sharp: (\mathsf{Z}[\mathbb{B}], \dagger) \to (\mathbb{C}, \dagger)$ such that the following diagram commutes: 
\begin{equation}\begin{gathered}\label{diag:free-canon}  \xymatrixcolsep{5pc}\xymatrix{\mathsf{Z}[\mathbb{B}]  \ar@{-->}[r]^-{\exists! ~ \mathcal{F}^\sharp}  & \mathbb{C}  \\
   \mathbb{B} \ar[u]^-{\mathcal{N}_\mathbb{B}} \ar[ur]_-{\mathcal{F}} }
\end{gathered}\end{equation}
is defined on objects as $\mathcal{F}^\sharp(A) = \mathcal{F}(A)$ and on maps as follows: 
\begin{align}
\mathcal{F}^\sharp\left([{}_n f, f_n, \hdots, {}_i f, f_i, \hdots, {}_1f, f_1] \right) = \mathcal{F}({}_nf)^\dagger  \circ \mathcal{F} (f_n) \circ \hdots \circ \mathcal{F}({}_i f)^\dagger  \circ \mathcal{F} (f_i) \circ \hdots \circ \mathcal{F}({}_1f)^\dagger  \circ \mathcal{F} (f_1)
\end{align}
\end{proposition}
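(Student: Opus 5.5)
We must show that $(\mathsf{Z}[\mathbb{B}], \dagger, \mathcal{N}_\mathbb{B})$ is a free dagger category: that $\mathcal{F}^\sharp$ as given is a well-defined dagger functor with $\mathcal{F}^\sharp \circ \mathcal{N}_\mathbb{B} = \mathcal{F}$, and that it is the unique such dagger functor. The approach is to define $\mathcal{F}^\sharp$ first on the pre-quotient lists, check compatibility with $\sim$, and then read off functoriality, the dagger property and uniqueness from the structure of lists.

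\textbf{Definition and well-definedness.} First I would define a function $\widetilde{\mathcal{F}}$ on $\mathsf{pre}\text{-}\mathsf{Z}[\mathbb{B}](A,B)$ by the displayed alternating composite $\mathcal{F}({}_nf)^\dagger \circ \mathcal{F}(f_n) \circ \cdots \circ \mathcal{F}({}_1f)^\dagger \circ \mathcal{F}(f_1)$. The types match, since $\mathcal{F}({}_if)^\dagger : \mathcal{F}({}_iA) \to \mathcal{F}(A_{i})$ in the typing above.

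Since $\sim$ is the smallest equivalence relation containing (\ref{reduce-1}) and (\ref{reduce-2}), it suffices to check that $\widetilde{\mathcal{F}}$ takes equal values on the two sides of each generating relation.
\begin{itemize}
\item For (\ref{reduce-1}), the middle segment is $\mathcal{F}(f_i) \circ \mathcal{F}(\mathsf{id})^\dagger \circ \mathcal{F}(f_{i-1})$. This equals $\mathcal{F}(f_i \circ f_{i-1})$, using $\mathcal{F}(\mathsf{id}) = \mathsf{id}$, $\mathsf{id}^\dagger = \mathsf{id}$ and functoriality (\ref{def:functor}).
\item For (\ref{reduce-2}), the middle segment is $\mathcal{F}({}_if)^\dagger \circ \mathcal{F}(\mathsf{id}) \circ \mathcal{F}({}_{i-1}f)^\dagger$. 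This equals $(\mathcal{F}({}_{i-1}f) \circ \mathcal{F}({}_if))^\dagger = \mathcal{F}({}_{i-1}f \circ {}_if)^\dagger$, by contravariance of $\dagger$ (\ref{eq:dagger}).
\end{itemize}
Hence $\widetilde{\mathcal{F}}$ descends to $\mathcal{F}^\sharp$ on the quotient $\mathsf{Z}[\mathbb{B}](A,B)$.

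\textbf{Functor and dagger properties.} Composition in $\mathsf{Z}[\mathbb{B}]$ is concatenation, and $\widetilde{\mathcal{F}}$ sends concatenation to composition. So $\mathcal{F}^\sharp$ preserves composition, and $\mathcal{F}^\sharp[\mathsf{id}_A,\mathsf{id}_A] = \mathsf{id}^\dagger \circ \mathsf{id} = \mathsf{id}$. For the dagger, I would apply $(-)^\dagger$ to the alternating composite using (\ref{eq:dagger}). This reverses the order and turns $\mathcal{F}(f_i)$ into $\mathcal{F}(f_i)^\dagger$ and $\mathcal{F}({}_if)^\dagger$ into $\mathcal{F}({}_if)$, which is exactly $\widetilde{\mathcal{F}}$ of the reversed list in (\ref{def:dag-free}). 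Finally, $\mathcal{F}^\sharp(\mathcal{N}_\mathbb{B}(f)) = \mathcal{F}(\mathsf{id})^\dagger \circ \mathcal{F}(f) = \mathcal{F}(f)$.

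\textbf{Uniqueness.} The key observation is the factorization
\[ [{}_nf, f_n, \hdots, {}_1f, f_1] = \mathcal{N}_\mathbb{B}({}_nf)^\dagger \circ \mathcal{N}_\mathbb{B}(f_n) \circ \cdots \circ \mathcal{N}_\mathbb{B}({}_1f)^\dagger \circ \mathcal{N}_\mathbb{B}(f_1). \]
To verify it:
\begin{itemize}
\item $\mathcal{N}_\mathbb{B}({}_if)^\dagger = [{}_if, \mathsf{id}]$ by (\ref{def:dag-free}).
\item Concatenating $[{}_if,\mathsf{id}] \circ [\mathsf{id}, f_i]$ gives $[{}_if,\mathsf{id},\mathsf{id},f_i]$, which reduces to $[{}_if, f_i]$ by (\ref{reduce-1}).
\item The remaining adjacent pairs concatenate directly.
\end{itemize}
Then any dagger functor $\mathcal{G}$ with $\mathcal{G} \circ \mathcal{N}_\mathbb{B} = \mathcal{F}$ agrees with $\mathcal{F}$ on objects, since $\mathcal{N}_\mathbb{B}$ is the identity on objects. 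Applying $\mathcal{G}$ to the factorization and using (\ref{def:dagfun}) shows that $\mathcal{G}$ equals $\mathcal{F}^\sharp$ on every map.

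\textbf{Main obstacle.} The main bookkeeping difficulty is the well-definedness step and the identity reductions in the factorization. Both hinge on correctly tracking the index and typing conventions of the zigzag lists, including the boundary case where $i$ is the first pair, which the relation restricts to $1<i\leq n$. I would handle this by checking the generating relations locally on the middle segment, and by noting that the boundary identities merge via the same local computation.
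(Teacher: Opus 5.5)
Your proposal is correct, and it supplies something the paper does not: the paper gives no proof of this proposition and simply imports it from \cite[Thm 3.1.19]{heunen2009categorical}. Your direct verification follows the standard lines.
\begin{enumerate}[(i)]
\item Well-definedness: the relation ``$\widetilde{\mathcal{F}}$ takes the same value'' is an equivalence relation containing every generating instance of (\ref{reduce-1}) and (\ref{reduce-2}), hence it contains $\sim$.
\item Functoriality: concatenation is sent to composition.
\item Dagger: reversal, together with contravariance and involutivity of $\dagger$, gives $\mathcal{F}^\sharp(F^\dagger)=\mathcal{F}^\sharp(F)^\dagger$.
\item Uniqueness: every zigzag factors as an alternating composite of the maps $\mathcal{N}_\mathbb{B}(f_i)$ and $\mathcal{N}_\mathbb{B}({}_if)^\dagger$, which forces uniqueness.
\end{enumerate}
That factorization is the same computation the paper carries out later, in the proof of Prop \ref{prop:zalg-zigzag}.

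Three small remarks.
\begin{enumerate}[(i)]
\item You read the zags as ${}_if : A_i \to {}_iA$ with $A_0=A$, whereas the paper prints $A_{i+1}$ and ${}_0A=A$. Your reading is the one consistent with $A_n=B$ and with the two relations, so keep it.
\item The boundary issue you flag is not an obstacle. Well-definedness only has to be checked on the instances the relations actually provide, all of which have $1<i\le n$, and each check is the local computation you wrote. The only reduction your factorization needs, $[{}_if,\mathsf{id},\mathsf{id},f_i]\sim[{}_if,f_i]$, is the instance $n=2$, $i=2$ of (\ref{reduce-1}).
\item Like the paper, you take for granted two facts. One is that $(\mathsf{Z}[\mathbb{B}],\dagger)$ is a dagger category, i.e.\ that $\sim$ is compatible with concatenation and reversal. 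The other is that $\mathcal{N}_\mathbb{B}$ is a functor. The second is one more instance of (\ref{reduce-1}), namely $[\mathsf{id},g,\mathsf{id},f]\sim[\mathsf{id},g\circ f]$.
\end{enumerate}
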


By applying Lemma \ref{lemma:free-iso}, we get that free dagger categories can be characterized as the dagger categories which are dagger isomorphic to canonical free dagger categories. 

\begin{corollary}\label{cor:free-iso.1} If $(\mathbb{D}, \dagger, \mathcal{N})$ is a free dagger category over a category $\mathbb{B}$, then $\mathcal{N}_\mathbb{B}^\sharp: (\mathbb{D}, \dagger) \to (\mathsf{Z}[\mathbb{B}], \dagger)$ is a dagger isomorphism with inverse $\mathcal{N}^\sharp: (\mathsf{Z}[\mathbb{B}], \dagger) \to (\mathbb{D}, \dagger)$. So $(\mathbb{D}, \dagger) \cong (\mathsf{Z}[\mathbb{B}], \dagger)$. 
\end{corollary}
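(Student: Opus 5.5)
The plan is the standard uniqueness argument for universal arrows, applied to the two universal properties we have available. The first is the one assumed for $(\mathbb{D}, \dagger, \mathcal{N})$. The second is the one for $(\mathsf{Z}[\mathbb{B}], \dagger, \mathcal{N}_\mathbb{B})$ given by Prop \ref{prop:canon-free}.

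The first step is to fix the two functors precisely. Applying the universal property of $(\mathbb{D}, \dagger, \mathcal{N})$ to the dagger category $(\mathsf{Z}[\mathbb{B}], \dagger)$ and the functor $\mathcal{N}_\mathbb{B}: \mathbb{B} \to \mathsf{Z}[\mathbb{B}]$ gives a unique dagger functor $\mathcal{N}_\mathbb{B}^\sharp: (\mathbb{D}, \dagger) \to (\mathsf{Z}[\mathbb{B}], \dagger)$ with $\mathcal{N}_\mathbb{B}^\sharp \circ \mathcal{N} = \mathcal{N}_\mathbb{B}$. Symmetrically, applying Prop \ref{prop:canon-free} to $(\mathbb{D}, \dagger)$ and $\mathcal{N}: \mathbb{B} \to \mathbb{D}$ gives a unique dagger functor $\mathcal{N}^\sharp: (\mathsf{Z}[\mathbb{B}], \dagger) \to (\mathbb{D}, \dagger)$ with $\mathcal{N}^\sharp \circ \mathcal{N}_\mathbb{B} = \mathcal{N}$.

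Next, I show that the composite $\mathcal{N}^\sharp \circ \mathcal{N}_\mathbb{B}^\sharp: (\mathbb{D}, \dagger) \to (\mathbb{D}, \dagger)$ is the identity. It is a dagger functor, since dagger functors compose. Precomposing with $\mathcal{N}$ gives
\[ \mathcal{N}^\sharp \circ \mathcal{N}_\mathbb{B}^\sharp \circ \mathcal{N} = \mathcal{N}^\sharp \circ \mathcal{N}_\mathbb{B} = \mathcal{N}. \]
The identity dagger functor $\mathsf{id}_\mathbb{D}$ also satisfies $\mathsf{id}_\mathbb{D} \circ \mathcal{N} = \mathcal{N}$. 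By the uniqueness clause of the universal property of $(\mathbb{D}, \dagger, \mathcal{N})$, with $(\mathbb{C},\dagger) = (\mathbb{D},\dagger)$ and $\mathcal{F} = \mathcal{N}$, we conclude that $\mathcal{N}^\sharp \circ \mathcal{N}_\mathbb{B}^\sharp = \mathsf{id}_\mathbb{D}$.

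The other composite is handled in exactly the same way. The functor $\mathcal{N}_\mathbb{B}^\sharp \circ \mathcal{N}^\sharp$ is a dagger endofunctor of $(\mathsf{Z}[\mathbb{B}], \dagger)$ satisfying $\mathcal{N}_\mathbb{B}^\sharp \circ \mathcal{N}^\sharp \circ \mathcal{N}_\mathbb{B} = \mathcal{N}_\mathbb{B}$. Uniqueness in Prop \ref{prop:canon-free}, with $\mathcal{F} = \mathcal{N}_\mathbb{B}$, forces it to equal $\mathsf{id}_{\mathsf{Z}[\mathbb{B}]}$.

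Hence $\mathcal{N}_\mathbb{B}^\sharp$ is an isomorphism of the underlying categories with inverse $\mathcal{N}^\sharp$. Since it is also a dagger functor, it is a dagger isomorphism, which gives $(\mathbb{D}, \dagger) \cong (\mathsf{Z}[\mathbb{B}], \dagger)$. This is precisely Lemma \ref{lemma:free-iso}(i) specialised to $\mathcal{N}_1 = \mathcal{N}$ and $\mathcal{N}_2 = \mathcal{N}_\mathbb{B}$, so one may alternatively just cite that lemma.

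There is no real obstacle here. The only point needing care is bookkeeping: each $\sharp$ must be taken with respect to the correct universal property. The $\sharp$ of $\mathcal{N}_\mathbb{B}$ is computed against $\mathcal{N}$, and the $\sharp$ of $\mathcal{N}$ is computed against $\mathcal{N}_\mathbb{B}$.
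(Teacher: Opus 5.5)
Your proof is correct and is essentially the paper's argument. The paper obtains this corollary by citing Lemma \ref{lemma:free-iso}(i) together with Prop \ref{prop:canon-free}, and your two-sided uniqueness argument is exactly the standard proof of that lemma, specialised as you note to $\mathcal{N}_1 = \mathcal{N}$ and $\mathcal{N}_2 = \mathcal{N}_\mathbb{B}$.
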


\begin{corollary}\label{cor:free-iso.2} A dagger category $(\mathbb{D}, \dagger)$ is free if and only if there exists a category $\mathbb{B}$ and a dagger isomorphism $(\mathbb{D}, \dagger) \cong (\mathsf{Z}[\mathbb{B}], \dagger)$. 
\end{corollary}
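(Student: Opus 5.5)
The statement to prove is Corollary \ref{cor:free-iso.2}: $(\mathbb{D},\dagger)$ is free if and only if $(\mathbb{D},\dagger) \cong (\mathsf{Z}[\mathbb{B}],\dagger)$ for some category $\mathbb{B}$. The plan is to reduce both directions to results already stated: Prop \ref{prop:canon-free}, Lemma \ref{lemma:free-iso}, and Cor \ref{cor:free-iso.1}.

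For the forward direction, suppose $(\mathbb{D},\dagger)$ is free. Then by definition there is a category $\mathbb{B}$ and a functor $\mathcal{N}: \mathbb{B} \to \mathbb{D}$ such that $(\mathbb{D},\dagger,\mathcal{N})$ is a free dagger category over $\mathbb{B}$. Cor \ref{cor:free-iso.1} then directly gives a dagger isomorphism $(\mathbb{D},\dagger) \cong (\mathsf{Z}[\mathbb{B}],\dagger)$. Alternatively, one can apply Lemma \ref{lemma:free-iso}(i) to the two free dagger categories $(\mathbb{D},\dagger,\mathcal{N})$ and $(\mathsf{Z}[\mathbb{B}],\dagger,\mathcal{N}_\mathbb{B})$ over $\mathbb{B}$, the latter being free by Prop \ref{prop:canon-free}.

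For the converse, suppose $\mathcal{F}: (\mathsf{Z}[\mathbb{B}],\dagger) \to (\mathbb{D},\dagger)$ is a dagger isomorphism; if the isomorphism is given in the other direction, take its inverse, which is again a dagger functor. By Prop \ref{prop:canon-free}, $(\mathsf{Z}[\mathbb{B}],\dagger,\mathcal{N}_\mathbb{B})$ is free over $\mathbb{B}$. Lemma \ref{lemma:free-iso}(ii) then shows that $(\mathbb{D},\dagger,\mathcal{F}\circ\mathcal{N}_\mathbb{B})$ is free over $\mathbb{B}$, so $(\mathbb{D},\dagger)$ is free.

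If one wants to justify Lemma \ref{lemma:free-iso}(ii) directly, take a functor $\mathcal{G}: \mathbb{B} \to \mathbb{C}$ and set $\mathcal{G}^\sharp \circ \mathcal{F}^{-1}$ as the required dagger functor out of $\mathbb{D}$. It satisfies $\mathcal{G}^\sharp \circ \mathcal{F}^{-1} \circ \mathcal{F} \circ \mathcal{N}_\mathbb{B} = \mathcal{G}$. For uniqueness, if $\mathcal{H}$ is another dagger functor with $\mathcal{H} \circ \mathcal{F} \circ \mathcal{N}_\mathbb{B} = \mathcal{G}$, then $\mathcal{H} \circ \mathcal{F}$ has the universal property of $\mathcal{G}^\sharp$, so $\mathcal{H}\circ\mathcal{F} = \mathcal{G}^\sharp$ and hence $\mathcal{H} = \mathcal{G}^\sharp \circ \mathcal{F}^{-1}$. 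No step presents a real obstacle; the only care needed is the direction of the isomorphism and the fact that the inverse of a dagger isomorphism is a dagger functor, both noted in Sec 2.
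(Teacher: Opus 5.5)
Your proposal is correct and matches the paper's intended argument. The paper states this corollary without a written proof, deriving it from Lemma~\ref{lemma:free-iso} together with Prop~\ref{prop:canon-free}: part (i), i.e.\ Cor~\ref{cor:free-iso.1}, gives the forward direction and part (ii) gives the converse, exactly as you do, and your check of part (ii) via $\mathcal{G}^\sharp \circ \mathcal{F}^{-1}$ correctly fills in the detail the paper leaves implicit.
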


The above characterization still requires one to specify an external base category. The objective of the next two subsections is provide base independent characterizations of free dagger categories. 

As with cofree dagger categories, we can also use the canonical construction of free dagger categories to show that free dagger categories essentially have the same objects as their base category, and also that all possible base categories have isomorphic classes of objects. 

\begin{lemma}\label{lemma:free-objects} Let $(\mathbb{D}, \dagger)$ be a dagger category. 
\begin{enumerate}[{\em (i)}]
\item If $(\mathbb{D}, \dagger, \mathcal{N})$ is a free dagger category over a category $\mathbb{B}$, then $\mathcal{N}: \mathsf{Obj}(\mathbb{D}) \to \mathsf{Obj}(\mathbb{B})$ is an isomorphism, so we have that $\mathsf{Obj}(\mathbb{B}) \cong \mathsf{Obj}(\mathbb{D})$.
\item If $(\mathbb{D}, \dagger, \mathcal{N}_1)$ is a cofree dagger category over a category $\mathbb{B}_1$ and $(\mathbb{D}, \dagger, \mathcal{N}_2)$ is also a cofree dagger category over a category $\mathbb{B}_2$, then we have an isomorphism $\mathsf{Obj}(\mathbb{B}_1) \cong \mathsf{Obj}(\mathbb{B}_2)$.
\end{enumerate}
\end{lemma}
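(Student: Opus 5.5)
For part (i) I will transport the question along the canonical construction, mirroring the proof of Lemma \ref{lemma:cofree-objects}. Suppose $(\mathbb{D}, \dagger, \mathcal{N})$ is a free dagger category over $\mathbb{B}$. By Cor \ref{cor:free-iso.1}, $\mathcal{N}^\sharp: (\mathsf{Z}[\mathbb{B}], \dagger) \to (\mathbb{D}, \dagger)$ is a dagger isomorphism. An isomorphism of categories restricts to a bijection on classes of objects, so $\mathsf{Obj}(\mathsf{Z}[\mathbb{B}]) \cong \mathsf{Obj}(\mathbb{D})$. By definition $\mathsf{Obj}(\mathsf{Z}[\mathbb{B}]) = \mathsf{Obj}(\mathbb{B})$, hence $\mathsf{Obj}(\mathbb{B}) \cong \mathsf{Obj}(\mathbb{D})$.

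To see that this bijection is exactly the object part of $\mathcal{N}$, I use the commuting triangle (\ref{diag:free}) with $\mathcal{F} = \mathcal{N}$, namely $\mathcal{N}^\sharp \circ \mathcal{N}_\mathbb{B} = \mathcal{N}$. Since $\mathcal{N}_\mathbb{B}$ is the identity on objects, $\mathcal{N}(A) = \mathcal{N}^\sharp(A)$ for every object $A$ of $\mathbb{B}$. So the object function of $\mathcal{N}$ coincides with that of the isomorphism $\mathcal{N}^\sharp$ and is therefore bijective. This map goes $\mathsf{Obj}(\mathbb{B}) \to \mathsf{Obj}(\mathbb{D})$; the statement writes it in the other direction, but it is a bijection either way, so the conclusion is the same.

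For part (ii), as worded the hypotheses are that $(\mathbb{D}, \dagger, \mathcal{N}_1)$ is a \emph{cofree} dagger category over $\mathbb{B}_1$ and $(\mathbb{D}, \dagger, \mathcal{N}_2)$ is a \emph{cofree} dagger category over $\mathbb{B}_2$. This is precisely the situation of Lemma \ref{lemma:cofree-objects}(ii), with the functors $\mathcal{E}_i := \mathcal{N}_i : \mathbb{D} \to \mathbb{B}_i$. Concretely, Lemma \ref{lemma:cofree-objects}(i) gives bijections $\mathsf{Obj}(\mathbb{D}) \cong \mathsf{Obj}(\mathbb{B}_1)$ and $\mathsf{Obj}(\mathbb{D}) \cong \mathsf{Obj}(\mathbb{B}_2)$. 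Composing the inverse of the first with the second yields $\mathsf{Obj}(\mathbb{B}_1) \cong \mathsf{Obj}(\mathbb{B}_2)$.

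There is no real obstacle here. The only points needing care are the bookkeeping in (i), that is, reading off the object part of $\mathcal{N}$ from $\mathcal{N}^\sharp \circ \mathcal{N}_\mathbb{B} = \mathcal{N}$, and in (ii) making sure the cofree lemma is invoked with the functors pointing $\mathbb{D} \to \mathbb{B}_i$, as the definition of cofree requires.
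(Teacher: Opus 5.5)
Your proof is correct and follows the paper's route. The paper simply dualizes the proof of Lemma \ref{lemma:cofree-objects}: it transports along $(\mathbb{D},\dagger)\cong(\mathsf{Z}[\mathbb{B}],\dagger)$ and uses $\mathsf{Obj}(\mathsf{Z}[\mathbb{B}])=\mathsf{Obj}(\mathbb{B})$. Your extra check via $\mathcal{N}^\sharp\circ\mathcal{N}_\mathbb{B}=\mathcal{N}$, showing that the bijection really is the object part of $\mathcal{N}$ (in the direction $\mathsf{Obj}(\mathbb{B})\to\mathsf{Obj}(\mathbb{D})$), is a detail the paper leaves implicit.

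In (ii), ``cofree'' is evidently a typo for ``free'' (with $\mathcal{N}_i:\mathbb{B}_i\to\mathbb{D}$). The intended statement follows exactly as you argued, by composing the two bijections from your part (i) instead of those from Lemma \ref{lemma:cofree-objects}.
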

\begin{proof} Given by dual argument to the proof of Lemma \ref{lemma:cofree-objects}. 
\end{proof}

We turn our attention to arguably the most important fact about $(\mathsf{Z}[\mathbb{B}], \dagger)$. Morphisms in $(\mathsf{Z}[\mathbb{B}], \dagger)$ have a canonical normal form where by using (\ref{reduce-1}) and (\ref{reduce-2}), we can always find a representative of the equivalence class which is a list with no identity maps in it, except perhaps the first and last term. So we say that a list $({}_n f, f_n, \hdots, {}_1f, f_1) \in \mathsf{pre}\text{-}\mathsf{Z}[\mathbb{B}](A,B)$ is \textbf{normal} if for all $1 \leq i < n$ and $1 < j \leq n$, ${}_i f$ and $f_j$ are not identity maps. 

\begin{proposition}\label{prop:normal}\cite[Page 55]{heunen2009categorical} Let $F \in \mathsf{Z}[\mathbb{B}](A,B)$. Then there exists a unique normal list $({}_n f, f_n, \hdots, {}_1f, f_1) \in \mathsf{pre}\text{-}\mathsf{Z}[\mathbb{B}](A,B)$ such that $F = [{}_n f, f_n, \hdots, {}_1f, f_1]$. Moreover,
\begin{enumerate}[{\em (i)}]
\item If $[{}_m g, g_m, \hdots, {}_1 g, g_1] = [{}_n f, f_n, \hdots, {}_1f, f_1]$, then $n \leq m$. 
\item If $[{}_n g, g_n, \hdots, {}_1 g, g_1] = [{}_n f, f_n, \hdots, {}_1f, f_1]$, then ${}_ig = {}_if$ and $g_i=f_i$ for all $1 \leq i \leq n$.
\end{enumerate}
\end{proposition}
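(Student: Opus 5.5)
The plan is to treat the identifications (\ref{reduce-1}) and (\ref{reduce-2}) as a rewriting system on $\mathsf{pre}\text{-}\mathsf{Z}[\mathbb{B}](A,B)$ and apply Newman's lemma. Orient both relations left to right. A one-step reduction deletes an identity in an interior position and composes its two neighbours: either $f_i$ with $f_{i-1}$, or ${}_{i-1}f$ with ${}_i f$. By definition, $\sim$ is the equivalence relation generated by this one-step reduction relation $\to$. A list is irreducible exactly when none of its interior entries is an identity. The entries ${}_n f$ and $f_1$ never sit between two maps, so they never trigger a reduction. Hence irreducible lists are precisely the normal lists.

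\emph{Termination.} Each reduction shortens the list by exactly two entries, so every reduction sequence is finite. In particular, every list reduces to some normal list, which proves existence.

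\emph{Local confluence.} Suppose a list admits two different one-step reductions.
\begin{itemize}
\item If the two identities involved are non-adjacent, their neighbourhoods do not overlap. The two reductions then commute, and doing one after the other in either order gives the same list.
\item If the two identities are adjacent, there are two cases. In the first, the segment is $({}_i f, f_i, \mathsf{id}, \mathsf{id}, {}_{i-2}f, \dots)$, that is, ${}_{i-1}f=\mathsf{id}$ and $f_{i-1}=\mathsf{id}$. Using (\ref{reduce-1}) gives $f_i \circ \mathsf{id} = f_i$, and using (\ref{reduce-2}) gives ${}_{i-2}f \circ \mathsf{id} = {}_{i-2}f$. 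Both produce $({}_i f, f_i, {}_{i-2}f, \dots)$. The second case, with the roles of zig and zag swapped, is symmetric.
\end{itemize}
The only possibly delicate point is checking that the bookkeeping of indices and typing works out in these overlap cases. I expect this critical-pair analysis to be the main (though mild) obstacle; it is handled by the unit laws of $\mathbb{B}$.

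By Newman's lemma, the system is confluent. Hence two lists are $\sim$-equivalent if and only if they reduce to the same normal list. This gives uniqueness of the normal representative of $F$.

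For (i) and (ii), take $({}_n f, f_n, \dots, {}_1 f, f_1)$ to be the normal representative, which is the intended reading. For (i), any other representative $({}_m g, \dots, g_1)$ reduces to this normal list. Reductions only decrease length, so $n \le m$. For (ii), suppose a representative has the same length $n$ as the normal one. Any reduction would shorten it below the length of its normal form, which is impossible. So it is already normal, and by uniqueness it coincides entrywise with $({}_n f, \dots, f_1)$.
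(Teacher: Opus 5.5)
The paper does not prove this proposition; it only cites Heunen's thesis. Your rewriting argument is therefore a self-contained proof where the paper has none. Its structure is sound: termination by length, local confluence, Newman's lemma, then reading off (i) and (ii) from the facts that reductions strictly shorten lists and that every list reduces to the unique normal form of its class. Your treatment of (i) and (ii) is correct, including the observation that $({}_nf,\hdots,f_1)$ must be read as the normal representative.

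There is one hole in your local-confluence case split. You claim that two non-adjacent interior identities have non-overlapping neighbourhoods. That is false when the identities are two apart, i.e.\ of the same kind and separated by a single entry.

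For example, take ${}_if=\mathsf{id}$ and ${}_{i-1}f=\mathsf{id}$ with $2\le i\le n-1$. Both neighbourhoods contain $f_i$.
\begin{itemize}
\item Reducing at ${}_if$ gives $(\hdots,f_{i+1}\circ f_i,\mathsf{id},f_{i-1},\hdots)$.
\item Reducing at ${}_{i-1}f$ gives $(\hdots,f_{i+1},\mathsf{id},f_i\circ f_{i-1},\hdots)$.
\end{itemize}
These are different lists, so the two reductions do not simply commute. The peak is still joinable. In each result the remaining identity is still interior, and one further step of (\ref{reduce-1}) produces $(f_{i+1}\circ f_i)\circ f_{i-1}$ and $f_{i+1}\circ(f_i\circ f_{i-1})$ respectively. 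These agree by \emph{associativity} in $\mathbb{B}$, not by the unit laws you invoke.

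The same situation arises for two interior zig identities $f_{i+1}=\mathsf{id}=f_i$, which share the neighbour ${}_if$. There the two paths yield ${}_{i-1}f\circ{}_if\circ{}_{i+1}f$ bracketed in the two possible ways.

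The complete list of peaks is therefore:
\begin{itemize}
\item adjacent identities, joinable in one step by the unit laws, as you say;
\item identities two apart, joinable in two steps by associativity;
\item identities at distance at least three, whose windows are genuinely disjoint, so the reductions commute.
\end{itemize}
With the middle case added, local confluence holds and the rest of your proof goes through unchanged.
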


For a morphism $F \in \mathsf{Z}[\mathbb{B}](A,B)$, if $({}_n f, f_n, \hdots, {}_1f, f_1)$ is the (necessarily unique) normal list such that $F = [{}_n f, f_n, \hdots, {}_1f, f_1]$, we call $[{}_n f, f_n, \hdots, {}_1f, f_1]$ the \textbf{normal form} of $F$. This normal form presentation for maps in our canonical free dagger category will be the inspiration for how we characterize free dagger categories in the next section. 

A consequence of this normal form is that we get that the functor from the base category into our canonical free dagger category is faithful. In fact, this is true for any free dagger category. 

\begin{lemma}\label{lemma:N-faithful} $\mathcal{N}_\mathbb{B}: \mathbb{B} \to \mathsf{Z}[\mathbb{B}]$ is faithful. 
\end{lemma}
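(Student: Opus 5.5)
We need to show that if $\mathcal{N}_\mathbb{B}(f) = \mathcal{N}_\mathbb{B}(g)$ for parallel maps $f,g: A \to B$ in $\mathbb{B}$, then $f = g$. By definition this hypothesis says $[\mathsf{id}_B, f] = [\mathsf{id}_B, g]$ in $\mathsf{Z}[\mathbb{B}](A,B)$. The plan is to read off $f=g$ directly from the uniqueness of normal forms in Prop \ref{prop:normal}.

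First observe that the list $(\mathsf{id}_B, f)$ is normal for any $f$. It has length $n=1$, so normality imposes no conditions: the range $1 \leq i < 1$ for the ${}_i f$ and the range $1 < j \leq 1$ for the $f_j$ are both empty. In other words, only the first and last entries can be identities, and these are exactly the allowed exceptions. Likewise $(\mathsf{id}_B, g)$ is normal.

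Now $[\mathsf{id}_B, f]$ and $[\mathsf{id}_B, g]$ are equal equivalence classes with normal representatives of the same length $n=1$. There are two ways to conclude $f = g$, and I would cite both for robustness:
\begin{itemize}
\item By the uniqueness part of Prop \ref{prop:normal}, the normal representative of a map is unique, so $(\mathsf{id}_B, f) = (\mathsf{id}_B, g)$ as lists.
\item By Prop \ref{prop:normal}(ii), two equal classes represented by lists of the same length $n$ agree entrywise, so ${}_1 f = {}_1 g = \mathsf{id}_B$ and $f_1 = g_1$, that is, $f = g$.
\end{itemize}
Hence $\mathcal{N}_\mathbb{B}$ is injective on each homset, i.e.\ faithful.

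There is no serious obstacle, since all the work is in Prop \ref{prop:normal}, which we are allowed to assume. The only point needing care is checking that length-one lists are vacuously normal. If one instead wanted to avoid Prop \ref{prop:normal}, the alternative would be a separate argument: construct a functor out of $\mathsf{Z}[\mathbb{B}]$ that recovers $f$ from $[\mathsf{id}_B,f]$, or define an invariant on $\mathsf{pre}\text{-}\mathsf{Z}[\mathbb{B}]$ preserved by the relations (\ref{reduce-1}) and (\ref{reduce-2}). A natural candidate is the free dagger functor into the cofree dagger category $\mathsf{C}[\mathbb{B}]$. However, that route fails in general because $\mathcal{F}^\sharp$ would require a dagger-compatible target. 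So I would rely on the normal form result.
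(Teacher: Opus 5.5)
Your proof is correct and is essentially the paper's own argument. The length-one lists $(\mathsf{id}_B,f)$ and $(\mathsf{id}_B,g)$ are vacuously normal, so uniqueness of normal forms (Prop~\ref{prop:normal}) gives $(\mathsf{id}_B,f)=(\mathsf{id}_B,g)$ and hence $f=g$.

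Your closing aside misdiagnoses why the $\mathsf{C}[\mathbb{B}]$ route fails: $\mathsf{C}[\mathbb{B}]$ is itself a dagger category. The real obstruction is that there is in general no functor $\mathbb{B} \to \mathsf{C}[\mathbb{B}]$ lifting the identity, since a backward component in $\mathbb{B}(B,A)$ need not exist. Any faithful functor $\mathcal{F}$ from $\mathbb{B}$ into some dagger category would indeed give an alternative proof via $\mathcal{F} = \mathcal{F}^\sharp \circ \mathcal{N}_\mathbb{B}$.
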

\begin{proof} On maps $f: A \to B$ and $g: A \to B$ in $\mathbb{B}$, suppose that $\mathcal{N}_\mathbb{B}(f) =\mathcal{N}_\mathbb{B}(g)$, that is, $[\mathsf{id}_B,f] = [\mathsf{id}_B,g]$. Note that the lists $(\mathsf{id}_B,f)$ and $(\mathsf{id}_B,g)$ are both normal (as the condition of the middle terms not being identity maps is vacuously true). Therefore by uniqueness of normal forms (Prop \ref{prop:normal}), we must have that $(\mathsf{id}_B,f) = (\mathsf{id}_B,g)$, which implies that $f=g$. So we conclude that $\mathcal{N}_\mathbb{B}$ is indeed faithful. 
\end{proof}

\begin{corollary}\label{cor:N-faithful} Let $(\mathbb{D}, \dagger, \mathcal{N})$ be a free dagger category over a category $\mathbb{B}$. Then $\mathcal{N}$ is faithful.
\end{corollary}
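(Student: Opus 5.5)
The plan is to reduce to the canonical case via Corollary \ref{cor:free-iso.1} and then invoke Lemma \ref{lemma:N-faithful}. Suppose $(\mathbb{D}, \dagger, \mathcal{N})$ is free over $\mathbb{B}$. By Corollary \ref{cor:free-iso.1}, the dagger functor $\mathcal{N}_\mathbb{B}^\sharp: (\mathbb{D}, \dagger) \to (\mathsf{Z}[\mathbb{B}], \dagger)$ is a dagger isomorphism. It is the unique dagger functor satisfying the universal property of $(\mathbb{D}, \dagger, \mathcal{N})$ applied to the functor $\mathcal{N}_\mathbb{B}: \mathbb{B} \to \mathsf{Z}[\mathbb{B}]$. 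In particular the commuting triangle (\ref{diag:free}) gives
\[ \mathcal{N}_\mathbb{B}^\sharp \circ \mathcal{N} = \mathcal{N}_\mathbb{B}. \]

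Now take parallel maps $f,g: A \to B$ in $\mathbb{B}$ with $\mathcal{N}(f) = \mathcal{N}(g)$. Applying $\mathcal{N}_\mathbb{B}^\sharp$ gives $\mathcal{N}_\mathbb{B}(f) = \mathcal{N}_\mathbb{B}^\sharp(\mathcal{N}(f)) = \mathcal{N}_\mathbb{B}^\sharp(\mathcal{N}(g)) = \mathcal{N}_\mathbb{B}(g)$. By Lemma \ref{lemma:N-faithful}, $\mathcal{N}_\mathbb{B}$ is faithful, so $f = g$ and hence $\mathcal{N}$ is faithful.

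Alternatively, one can avoid talking about isomorphisms: whenever $\mathcal{G} \circ \mathcal{N}$ is faithful for some functor $\mathcal{G}$, the functor $\mathcal{N}$ is itself faithful. Here $\mathcal{G} = \mathcal{N}_\mathbb{B}^\sharp$ exists by the universal property alone, so this version needs nothing beyond the definition of a free dagger category.

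There is no real obstacle. The only point to be careful about is getting the direction of the factorization right, namely that $\mathcal{N}_\mathbb{B}^\sharp$ is the mediating map out of $\mathbb{D}$ induced by $\mathcal{N}_\mathbb{B}$, rather than the map in the other direction. All the substantive content sits in the normal form result, Proposition \ref{prop:normal}, which was already used to prove Lemma \ref{lemma:N-faithful}.
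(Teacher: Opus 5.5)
Your proof is correct, but it factors in the opposite direction from the paper. The paper uses the universal property of the canonical $\mathsf{Z}[\mathbb{B}]$ to write $\mathcal{N} = \mathcal{N}^\sharp \circ \mathcal{N}_\mathbb{B}$. It then concludes because $\mathcal{N}_\mathbb{B}$ is faithful (Lemma \ref{lemma:N-faithful}), $\mathcal{N}^\sharp$ is faithful as a dagger isomorphism (Cor \ref{cor:free-iso.1}), and composites of faithful functors are faithful.

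You instead use the universal property of $(\mathbb{D},\dagger,\mathcal{N})$ itself to get $\mathcal{N}_\mathbb{B} = \mathcal{N}_\mathbb{B}^\sharp \circ \mathcal{N}$. You then apply the cancellation fact that if $\mathcal{G}\circ\mathcal{N}$ is faithful, so is $\mathcal{N}$. As your alternative paragraph notes, this never uses that $\mathcal{N}_\mathbb{B}^\sharp$ is an isomorphism, so the appeal to Cor \ref{cor:free-iso.1} in your first paragraph is superfluous.

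The two routes differ in what they require:
\begin{itemize}
\item Your argument needs only the existence half of a single universal property. It therefore shows slightly more: $\mathcal{N}$ is faithful whenever $\mathcal{N}_\mathbb{B}$ merely factors through $\mathcal{N}$, i.e.\ for any ``weakly free'' $(\mathbb{D},\dagger,\mathcal{N})$.
\item The paper's argument needs faithfulness of $\mathcal{N}^\sharp$. That comes from Lemma \ref{lemma:free-iso}, and hence from existence and uniqueness in both universal properties. Its advantage is only that it reuses the dagger isomorphism $(\mathbb{D},\dagger)\cong(\mathsf{Z}[\mathbb{B}],\dagger)$ the paper has just established.
\end{itemize}

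Both routes rest on the same substantive input: the normal form result (Prop \ref{prop:normal}) behind Lemma \ref{lemma:N-faithful}.
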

\begin{proof} By universal property of canonical free dagger categories (\ref{diag:free-canon}), we have that $\mathbb{N} = \mathbb{N}^\sharp \circ \mathbb{N}_\mathbb{B}$. But we know form Cor \ref{cor:cofree-iso.1} that $\mathbb{N}^\sharp$ is an isomorphism and hence also faithful, while by Lemma \ref{lemma:N-faithful}, we have that $\mathcal{N}_\mathbb{B}$ is faithful. Since the composite of faithful functors is again faithful, it follows that $\mathbb{N}$ is faithful. 
\end{proof}

Now faithful functors are an equivalent presentation of subcategories. Therefore, the above corollary tells us that the base category is isomorphic to a subcategory of its free dagger category. Specifically, our base category will be isomorphic to the image category of the functor into its free dagger category. Then by applying Lemma \ref{lemma:free-iso}, we get that free dagger categories are also free over this subcategory. So let $(\mathbb{D},\dagger, \mathcal{N})$ be a free dagger category over a category $\mathbb{B}$. Let $\mathsf{Im}[\mathcal{N}]$ be the image category of $\mathcal{N}$, that is, the category whose objects are of form $\mathcal{N}(A)$ for all $A \in \mathsf{Obj}(\mathbb{B})$ and whose maps are of the form $\mathcal{N}(f): \mathcal{N}(A) \to \mathcal{N}(B)$ for all $f: A \to B \in \mathsf{Map}(\mathbb{B})$. Let $\mathcal{I}_\mathcal{N}: \mathsf{Im}[\mathcal{N}] \to \mathbb{D}$ be the inclusion functor and $\overline{\mathcal{N}}: \mathbb{B} \to \mathsf{Im}[\mathcal{N}]$ be the obvious image functor, so we have that $\mathcal{N} = \mathcal{I}_\mathcal{N} \circ \overline{\mathcal{N}}$. 

\begin{lemma}Let $(\mathbb{D}, \dagger, \mathcal{N})$ be a free dagger category over a category $\mathbb{B}$. Then $\overline{\mathcal{N}}: \mathbb{B} \to \mathsf{Im}[\mathcal{N}]$ is an isomorphism, so $\mathbb{B} \cong \mathsf{Im}[\mathcal{N}]$. Therefore, $(\mathbb{D},\dagger, \overline{\mathcal{N}})$ is a free dagger category over $\mathsf{Im}[\mathcal{N}]$. 
\end{lemma}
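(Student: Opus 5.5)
We first show that $\overline{\mathcal{N}}: \mathbb{B} \to \mathsf{Im}[\mathcal{N}]$ is an isomorphism of categories, and then invoke Lemma \ref{lemma:free-iso}.(iii) to conclude freeness over $\mathsf{Im}[\mathcal{N}]$.

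For the isomorphism, observe that $\overline{\mathcal{N}}$ is by construction surjective on objects and on maps, since every object of $\mathsf{Im}[\mathcal{N}]$ has the form $\mathcal{N}(A)$ and every map has the form $\mathcal{N}(f)$. It therefore suffices to show that $\overline{\mathcal{N}}$ is injective on objects and on maps.
\begin{itemize}
\item \emph{Objects.} Lemma \ref{lemma:free-objects}.(i) says that $\mathcal{N}$ is a bijection between $\mathsf{Obj}(\mathbb{B})$ and $\mathsf{Obj}(\mathbb{D})$. Concretely, $\mathcal{N} = \mathcal{N}^\sharp \circ \mathcal{N}_\mathbb{B}$, where $\mathcal{N}_\mathbb{B}$ is the identity on objects and $\mathcal{N}^\sharp$ is an isomorphism by Cor \ref{cor:free-iso.1}. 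Hence $\overline{\mathcal{N}}$ is injective on objects.
\item \emph{Maps.} Cor \ref{cor:N-faithful} gives that $\mathcal{N}$ is faithful. Combined with injectivity on objects, this shows $\overline{\mathcal{N}}$ is injective on all maps: if $\mathcal{N}(f) = \mathcal{N}(g)$, then $f$ and $g$ have the same domain and codomain, since $\mathcal{N}$ is injective on objects, and so $f = g$ by faithfulness.
\end{itemize}

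We then define the inverse $\overline{\mathcal{N}}^{-1}$ by $\mathcal{N}(A) \mapsto A$ and $\mathcal{N}(f) \mapsto f$. This is well defined by the injectivity just established. It is functorial because $\mathsf{Im}[\mathcal{N}]$ inherits its composition and identities from $\mathbb{D}$, and $\mathcal{N}(g)\circ\mathcal{N}(f) = \mathcal{N}(g\circ f)$. One subtlety needs care: $\mathsf{Im}[\mathcal{N}]$ is a genuine category, meaning it is closed under composition. This holds here because $\mathcal{N}$ is injective on objects, so composable images come from composable maps. This is the only step I expect to need attention; everything else is immediate.

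Finally, apply Lemma \ref{lemma:free-iso}.(iii) with the isomorphism $\overline{\mathcal{N}}: \mathbb{B} \to \mathsf{Im}[\mathcal{N}]$. This shows that $(\mathbb{D}, \dagger, \mathcal{N} \circ \overline{\mathcal{N}}^{-1})$ is free over $\mathsf{Im}[\mathcal{N}]$. Since $\mathcal{N} = \mathcal{I}_\mathcal{N}\circ\overline{\mathcal{N}}$, we have $\mathcal{N}\circ\overline{\mathcal{N}}^{-1} = \mathcal{I}_\mathcal{N}$, the inclusion $\mathsf{Im}[\mathcal{N}] \to \mathbb{D}$. So the structure functor witnessing freeness over $\mathsf{Im}[\mathcal{N}]$ is $\mathcal{I}_\mathcal{N}$, which is the sense in which the statement's conclusion is to be read.
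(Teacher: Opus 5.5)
Your proof is correct and follows the same route as the paper, which simply cites the faithfulness of $\mathcal{N}$ (Cor \ref{cor:N-faithful}) and then Lemma \ref{lemma:free-iso}. You also make explicit two things the paper leaves implicit: that $\mathcal{N}$ is injective on objects (needed both for injectivity of $\overline{\mathcal{N}}$ on maps and for $\mathsf{Im}[\mathcal{N}]$ to be closed under composition), and that the structure functor over $\mathsf{Im}[\mathcal{N}]$ is really $\mathcal{I}_\mathcal{N} = \mathcal{N}\circ\overline{\mathcal{N}}^{-1}$, since $\overline{\mathcal{N}}$ as written has the wrong type.
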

\begin{proof} This follows directly from Cor \ref{cor:N-faithful} and then Lemma \ref{lemma:free-iso}.
\end{proof}

In particular for a canonical free dagger category, we have that $\mathsf{Im}[\mathcal{N}_\mathbb{B}]$ is the category with the same objects as $\mathbb{B}$ and whose morphisms of the form $[\mathsf{id}_B, f]$ for every map $f \in \mathsf{Map}(\mathbb{B})$. Thus 

\begin{corollary} For any category $\mathbb{B}$, we have that $\mathbb{B} \cong \mathsf{Im}[\mathcal{N}_\mathbb{B}]$ and $(\mathsf{Z}[\mathbb{B}], \dagger) \cong (\mathsf{Z}\left[ \mathsf{Im}[\mathcal{N}_\mathbb{B}] \right], \dagger)$.  
\end{corollary}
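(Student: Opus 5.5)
The corollary follows by specializing the preceding lemma to the canonical free dagger category. I will first note that Prop \ref{prop:canon-free} gives that $(\mathsf{Z}[\mathbb{B}], \dagger, \mathcal{N}_\mathbb{B})$ is a free dagger category over $\mathbb{B}$. Applying the preceding lemma with $\mathcal{N} = \mathcal{N}_\mathbb{B}$ then yields that $\overline{\mathcal{N}_\mathbb{B}}: \mathbb{B} \to \mathsf{Im}[\mathcal{N}_\mathbb{B}]$ is an isomorphism. This is the first claim, $\mathbb{B} \cong \mathsf{Im}[\mathcal{N}_\mathbb{B}]$.

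The only substantive input is that $\overline{\mathcal{N}_\mathbb{B}}$ is injective on maps. This is exactly Lemma \ref{lemma:N-faithful}, which uses uniqueness of normal forms (Prop \ref{prop:normal}) for the lists $(\mathsf{id}_B, f)$. The functor is also bijective on objects, since $\mathcal{N}_\mathbb{B}(A) = A$, and surjective on maps by the definition of the image. So the inverse simply sends $[\mathsf{id}_B, f] \mapsto f$. Its functoriality is inherited from that of $\mathcal{N}_\mathbb{B}$: one checks $[\mathsf{id}_C, g] \circ [\mathsf{id}_B, f] = [\mathsf{id}_C, g \circ f]$ using (\ref{reduce-1}).

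For the second claim, I will invoke Lemma \ref{lemma:free-iso}(iii). It tells us that $(\mathsf{Z}[\mathbb{B}], \dagger, \mathcal{N}_\mathbb{B} \circ \overline{\mathcal{N}_\mathbb{B}}^{-1})$ is a free dagger category over $\mathsf{Im}[\mathcal{N}_\mathbb{B}]$. Prop \ref{prop:canon-free} says that $(\mathsf{Z}[\mathsf{Im}[\mathcal{N}_\mathbb{B}]], \dagger, \mathcal{N}_{\mathsf{Im}[\mathcal{N}_\mathbb{B}]})$ is also free over $\mathsf{Im}[\mathcal{N}_\mathbb{B}]$. Lemma \ref{lemma:free-iso}(i) then supplies the dagger isomorphism $(\mathsf{Z}[\mathbb{B}], \dagger) \cong (\mathsf{Z}[\mathsf{Im}[\mathcal{N}_\mathbb{B}]], \dagger)$.

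Alternatively, one could apply $\mathsf{Z}$ functorially to the isomorphism $\overline{\mathcal{N}_\mathbb{B}}$, mapping lists componentwise. That route requires checking compatibility with $\sim$, which holds because isomorphisms preserve and reflect identities, so it is less direct. There is no real obstacle here; the only point needing care is the functoriality of the inverse, which is a one-line use of the relations defining $\mathsf{Z}[\mathbb{B}]$.
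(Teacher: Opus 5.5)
Your proposal is correct and follows the paper's route: the corollary comes from specializing the preceding lemma to $\mathcal{N}_\mathbb{B}$ (faithfulness from Lemma \ref{lemma:N-faithful} together with $\mathcal{N}_\mathbb{B}$ being the identity on objects makes $\overline{\mathcal{N}_\mathbb{B}}$ an isomorphism), and then using Lemma \ref{lemma:free-iso} to compare $(\mathsf{Z}[\mathbb{B}], \dagger)$ with the canonical free dagger category over $\mathsf{Im}[\mathcal{N}_\mathbb{B}]$. A small correction to your aside on the componentwise route: it needs no reflection of identities, because any functor preserves identities and composition and so sends generating instances of $\sim$ to generating instances (and $\mathsf{Z}[\mathcal{F}]$ is anyway defined via the universal property), hence $\mathsf{Z}$ automatically sends the isomorphism $\overline{\mathcal{N}_\mathbb{B}}$ to a dagger isomorphism.
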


Thus, while in the cofree scenario, cofree dagger categories were also cofree over a subcategory of their specified base category, here in the free scenario, free dagger categories are also free over a subcategory of the dagger category itself. 

\subsection{ZigZag Decomposition}\label{sec:zigzag-decomp}

In this section, we present a direct characterization of free dagger categories by introducing the notion of a \textit{zigzag dagger category}. Briefly, a zigzag dagger category is one with a chosen subclass of maps, which we call zigs and whose adjoints are called zags, such that every map decomposes into an alternating composition of zigs and zags. This decomposition corresponds to the normal form of maps in canonical free dagger categories. We will then show that a zigzag dagger category is free over its subcategory of zig maps, and conversely that every free dagger category is a zigzag dagger category whose zig maps are essentially maps of the chosen base category. 

\begin{definition}\label{def:zigzag-dag-cat} A \textbf{zigzag dagger category} is a triple $(\mathbb{D}, \dagger, \mathsf{Zig})$ consisting of a dagger category $(\mathbb{D}, \dagger)$ with a subclass of maps $\mathsf{Zig} \subseteq \mathsf{Map}(\mathbb{D})$ where we say that a map $f \in \mathsf{Map}(\mathbb{D})$ is:  
\begin{enumerate}[{\em (i)}]
\item A \textbf{zig} if $f \in \mathsf{Zig}$;
\item A \textbf{zag} if $f^\dagger$ is a zig;
\item A \textbf{true zig} (resp. \textbf{true zag}) if $f$ is a zig (resp. zag) and not also a zag (resp. zig), 
\end{enumerate}
and such that this subclass of maps $\mathsf{Zig}$ satisfies the following: 
\begin{enumerate}[{\em (i)}]
\item Composition: if $f: A \to B$ and $g: B \to C$ are zigs then their composition $g \circ f: A \to C$ is a zig;
\item Identities: for every object $A$, $\mathsf{id}_A: A \to A$ is a zig;
\item Decomposition: For every map $f: A \to B$, there exists a unique ordered even length non-empty list of maps:
\begin{align}
\mathcal{Z}[f]=({}_n\mathcal{Z}[f], \mathcal{Z}[f]_n, \hdots, {}_{i}\mathcal{Z}[f], \mathcal{Z}[f]_i, \hdots, {}_1\mathcal{Z}[f],\mathcal{Z}[f]_1)
\end{align}
where for $1 \leq i \leq n$, setting ${}_0 A = A$ and $A_n =B$, the maps $\mathcal{Z}[f]_{i}$ and ${}_{i}\mathcal{Z}[f]$ are of type: 
\begin{align}
\mathcal{Z}[f]_{i}: A_{i-1} \to {}_iA && {}_{i}\mathcal{Z}[f]: {}_iA \to A_{i+1}
\end{align}
and such that: 
\begin{enumerate}
\item $\mathcal{Z}[f]_1$ is a zig;
\item ${}_n\mathcal{Z}[f]$ is a zag;
\item For every $1<i\leq n$, $\mathcal{Z}[f]_i$ is a true zig;
\item For every $1\leq i < n$, ${}_i\mathcal{Z}[f]$ is a true zag,
\end{enumerate}
and the following equality holds: 
\begin{align}\label{eq:zigzag-decomp}
f = {}_n\mathcal{Z}[f] \circ \mathcal{Z}[f]_{n} \circ \hdots \circ {}_i\mathcal{Z}[f] \circ \mathcal{Z}[f]_{i}  \circ \hdots \circ {}_1\mathcal{Z}[f] \circ \mathcal{Z}[f]_1
\end{align}
\end{enumerate}
We call this list $\mathcal{Z}[f]$ the \textbf{zigzag decomposition} of $f$. 
\end{definition}

It will be first be useful to record that a map is a zig (resp. zag) if and only if its adjoints is a zag (resp. zig). 

\begin{lemma}\label{lemma:zigzagiff} For a map $f: A \to B$ in a zigzag dagger category $(\mathbb{D}, \dagger, \mathsf{Zig})$,
\begin{enumerate}[{\em (i)}]
\item \label{lemma:zigzagiff.1} $f$ is a zig if and only if $f^\dagger$ is a zag;
\item \label{lemma:zigzagiff.2} $f$ is a zag if and only if $f^\dagger$ is a zig;
\item \label{lemma:zigzagiff.3} $f$ is a true zig if and only if $f$ is a zig and $f^\dagger$ is not a zig;
\item \label{lemma:zigzagiff.4} $f$ is a true zag if and only if $f$ is a zag and $f^\dagger$ is not a zag. 
\end{enumerate}
\end{lemma}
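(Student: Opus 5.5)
The plan is to unfold the definitions of zag and true zig/zag from Def \ref{def:zigzag-dag-cat} and use only the involutivity of the dagger, $f^{\dagger\dagger}=f$ from (\ref{eq:dagger}). No part of the decomposition axiom is needed; the statement is purely about how the words ``zig'' and ``zag'' interact with $\dagger$.

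For (\ref{lemma:zigzagiff.2}), this is literally the definition: $f$ is a zag precisely when $f^\dagger \in \mathsf{Zig}$, that is, when $f^\dagger$ is a zig. For (\ref{lemma:zigzagiff.1}), apply the definition of zag to the map $f^\dagger$. Thus $f^\dagger$ is a zag if and only if $(f^\dagger)^\dagger$ is a zig, and by involutivity $(f^\dagger)^\dagger = f$. So $f^\dagger$ is a zag if and only if $f$ is a zig.

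For (\ref{lemma:zigzagiff.3}), by definition $f$ is a true zig if and only if $f$ is a zig and $f$ is not a zag. By (\ref{lemma:zigzagiff.2}), ``$f$ is not a zag'' is equivalent to ``$f^\dagger$ is not a zig,'' which gives the claim. For (\ref{lemma:zigzagiff.4}), $f$ is a true zag if and only if $f$ is a zag and $f$ is not a zig. By (\ref{lemma:zigzagiff.1}), ``$f$ is not a zig'' is equivalent to ``$f^\dagger$ is not a zag,'' which gives the claim.

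There is no real obstacle here. The only point needing care is in (\ref{lemma:zigzagiff.1}), where the definition of zag must be applied to $f^\dagger$ rather than to $f$, and this is exactly where $f^{\dagger\dagger}=f$ is used.
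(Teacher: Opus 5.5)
Your proposal is correct and follows the same route as the paper: both unfold the definitions, use only $f^{\dagger\dagger}=f$, and get (iii) and (iv) from (ii) and (i) respectively. The only cosmetic difference is the order: you read (ii) directly off the definition of a zag and obtain (i) by applying that definition to $f^\dagger$, whereas the paper proves (i) first and then gets (ii) by applying (i) to $f^\dagger$.
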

\begin{proof} This follows from the involution property of the dagger. Indeed, for (\ref{lemma:zigzagiff.1}), for the $\Leftarrow$ direction, suppose that $f^\dagger$ is a zag. By definition, this means $f^{\dagger \dagger}$ is a zig. Of course, since $f^{\dagger \dagger} = f$, this means $f$ is a zig. Conversely, for the $\Rightarrow$ direction, suppose $f$ is a zig. Then considering $f^\dagger$, since $f = f^{\dagger \dagger}$, which means $f^{\dagger \dagger}$ is a zig, by definition this means $f^\dagger$ is a zag. Then (\ref{lemma:zigzagiff.2}) follows by applying (\ref{lemma:zigzagiff.1}) to $f^\dagger$. For (\ref{lemma:zigzagiff.3}) (resp. (\ref{lemma:zigzagiff.4})), by definition, $f$ is a true zig (resp. zag) if and only if $f$ is a zig (resp. zag) and $f$ is not a zag (resp. zig), which by (\ref{lemma:zigzagiff.2}) (resp (\ref{lemma:zigzagiff.1})), is equivalent to saying $f$ is a true zig (resp. zag) if and only if $f$ is a zig (resp. zag) and $f^\dagger$ is not a zig (resp. zag). 
\end{proof}

From this it follows that identity maps are both zigs and zags, so they are not true zigs or true zags, and also that their zigzag decomposition is just the identity paired with its self. 

\begin{lemma}\label{lemma:zigzagid} In a zigzag dagger category $(\mathbb{D}, \dagger, \mathsf{Zig})$, for every object $A$:
\begin{enumerate}[{\em (i)}]
\item \label{lemma:zigzagid.1} $\mathsf{id}_A: A \to A$ is both a zig and zag;
\item \label{lemma:zigzagid.2} $\mathsf{id}_A$ is neither a true zig nor a true zag;
\item \label{lemma:zigzagid.3} $\mathcal{Z}[\mathsf{id}_A]=(\mathsf{id}_A, \mathsf{id}_A)$.
\end{enumerate}
\end{lemma}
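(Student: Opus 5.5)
For (i), I will use the Identities axiom of Definition \ref{def:zigzag-dag-cat}, which says $\mathsf{id}_A$ is a zig. By (\ref{eq:dagger}) we have $\mathsf{id}_A^\dagger = \mathsf{id}_A$, so $\mathsf{id}_A^\dagger$ is also a zig. Lemma \ref{lemma:zigzagiff}.(\ref{lemma:zigzagiff.2}) then gives that $\mathsf{id}_A$ is a zag.

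For (ii), it is immediate from (i) and the definition of true zig and true zag. A true zig must be a zig that is not a zag. A true zag must be a zag that is not a zig. Since $\mathsf{id}_A$ is both, it is neither.

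For (iii), I will exhibit $(\mathsf{id}_A, \mathsf{id}_A)$ as a valid decomposition and then appeal to the uniqueness clause of the Decomposition axiom. Take $n=1$, so that ${}_0A = A = A_1$ and ${}_1A = A$, with $\mathcal{Z}[\mathsf{id}_A]_1 = \mathsf{id}_A : A \to A$ and ${}_1\mathcal{Z}[\mathsf{id}_A] = \mathsf{id}_A : A \to A$; these have the required types. Condition (a) asks that $\mathcal{Z}[\mathsf{id}_A]_1$ be a zig, and condition (b) asks that ${}_1\mathcal{Z}[\mathsf{id}_A]$ be a zag; both hold by (i). Conditions (c) and (d) range over $1 < i \leq 1$ and $1 \leq i < 1$ respectively, so they are vacuous. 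Finally, the equation (\ref{eq:zigzag-decomp}) reads $\mathsf{id}_A = \mathsf{id}_A \circ \mathsf{id}_A$, which holds. Since the decomposition of $\mathsf{id}_A$ is required to be unique, $\mathcal{Z}[\mathsf{id}_A]$ must equal this list.

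No step is a real obstacle. The only point needing care is recognizing in (iii) that the true-zig/true-zag conditions are vacuous for a length-two list. This is essential, because by (ii) the identity would fail them.
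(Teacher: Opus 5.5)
Your proof is correct and follows the paper's argument essentially step for step: the identity axiom plus $\mathsf{id}_A^\dagger=\mathsf{id}_A$ for (i), the definition of true zig/zag for (ii), and for (iii) exhibiting $(\mathsf{id}_A,\mathsf{id}_A)$ with the true-zig/true-zag conditions vacuous and then invoking uniqueness. The only cosmetic difference is in (i), where the paper applies Lemma~\ref{lemma:zigzagiff}.(\ref{lemma:zigzagiff.1}) to get that $\mathsf{id}_A^\dagger$ is a zag, whereas you use part (\ref{lemma:zigzagiff.2}), which is just the definition of a zag.
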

\begin{proof} For (\ref{lemma:zigzagid.1}), by definition of a zigzag dagger category, we already know that $\mathsf{id}_A$ is a zig. So we need to explain why it is also a zag. This follows from the fact that daggers preserve identities. Indeed, since $\mathsf{id}_A$ is a zig, then by Lem \ref{lemma:zigzagiff}.((\ref{lemma:zigzagiff.1})), we have that $\mathsf{id}_A^\dagger$ is a zag. However since $\mathsf{id}_A^\dagger = \mathsf{id}_A$, we have that $\mathsf{id}_A$ is also a zag. For (\ref{lemma:zigzagid.2}), since $\mathsf{id}_A$ is both a zig and zag, by definition it cannot also be a true zig nor a true zag. Now for (\ref{lemma:zigzagid.3}), since $\mathsf{id}_A$ is both a zig and zag, $(\mathsf{id}_A, \mathsf{id}_A)$ is an even length list (with $n=1$) where the first entry is a zig and the last entry is a zag, with the requirement of true zigs and true zags being vacuously true, and trivially we have that $\mathsf{id}_A \circ \mathsf{id}_A = \mathsf{id}_A$. Therefore, by uniqueness of zigzag decompositions, we have that $\mathcal{Z}[\mathsf{id}_A]=(\mathsf{id}_A, \mathsf{id}_A)$.  
\end{proof}

Then the zigzag decomposition of a zig (resp. zag) is the pairing of itself with the identity map. 

\begin{lemma}\label{lemma:zigzag-zigzag-decomp} For a map $f: A \to B$ in a zigzag dagger category $(\mathbb{D}, \dagger, \mathsf{Zig})$,
\begin{enumerate}[{\em (i)}]
\item \label{lemma:zigzag-zigzag-decomp.1} $f$ is a zig if and only if $\mathcal{Z}[f]=(\mathsf{id}_B,f)$;
\item \label{lemma:zigzag-zigzag-decomp.2} $f$ is a zag if and only if $\mathcal{Z}[f]=(f,\mathsf{id}_A)$. 
\end{enumerate}
\end{lemma}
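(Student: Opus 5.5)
We prove (i) directly from the uniqueness clause in the decomposition axiom of Def \ref{def:zigzag-dag-cat}, and then obtain (ii) from (i) by applying the dagger.

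For the $\Rightarrow$ direction of (i), suppose $f: A \to B$ is a zig. We check that the list $(\mathsf{id}_B, f)$ meets every requirement of a zigzag decomposition, taking $n=1$, ${}_0A = A$, ${}_1A = B$ and $A_1 = B$.
\begin{enumerate}[(a)]
\item The first entry $f$ is a zig, by hypothesis.
\item The last entry $\mathsf{id}_B$ is a zag, by Lemma \ref{lemma:zigzagid}.(\ref{lemma:zigzagid.1}).
\item The conditions on true zigs and true zags concern only indices $1 < i \leq n$ and $1 \leq i < n$, so they are vacuous when $n = 1$.
\item The composite is $\mathsf{id}_B \circ f = f$.
\end{enumerate}
Hence $(\mathsf{id}_B, f)$ is a valid decomposition, and by uniqueness of zigzag decompositions $\mathcal{Z}[f] = (\mathsf{id}_B, f)$. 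This is the same argument as in Lemma \ref{lemma:zigzagid}.(\ref{lemma:zigzagid.3}).

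For the $\Leftarrow$ direction of (i), suppose $\mathcal{Z}[f] = (\mathsf{id}_B, f)$. The decomposition axiom requires the first entry $\mathcal{Z}[f]_1$ to be a zig, and here that entry is $f$. So $f$ is a zig.

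For the $\Rightarrow$ direction of (ii), suppose $f: A \to B$ is a zag. The list $(f, \mathsf{id}_A)$ has length two, and its intermediate object is ${}_1A = A$. Its first entry $\mathsf{id}_A$ is a zig by the identity axiom. Its last entry $f$ is a zag by hypothesis. The true zig and true zag conditions are again vacuous, and $f \circ \mathsf{id}_A = f$. By uniqueness, $\mathcal{Z}[f] = (f, \mathsf{id}_A)$.

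For the $\Leftarrow$ direction of (ii), suppose $\mathcal{Z}[f] = (f, \mathsf{id}_A)$. The last entry ${}_1\mathcal{Z}[f]$ of any decomposition must be a zag, and here that entry is $f$. So $f$ is a zag.

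One could instead derive (ii) from (i) via Lemma \ref{lemma:zigzagiff}: $f$ is a zag if and only if $f^\dagger$ is a zig. However, relating $\mathcal{Z}[f^\dagger]$ to $\mathcal{Z}[f]$ would need a separate lemma, so the direct argument above is cleaner.

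The only point needing care is the typing of the list. Note that Def \ref{def:zigzag-dag-cat} gives ${}_i\mathcal{Z}[f]$ the type ${}_iA \to A_{i+1}$, which for $i = n$ should be read as ${}_nA \to A_n = B$. With that reading, both lists typecheck. There is no genuine obstacle beyond this bookkeeping.
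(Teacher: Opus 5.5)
Your proof is correct and follows the paper's own argument. For the forward directions you check that the length-two list satisfies the decomposition axiom (the true zig/zag conditions are vacuous) and invoke uniqueness; for the converses you read off the zig (resp.\ zag) requirement on $\mathcal{Z}[f]_1$ (resp.\ ${}_1\mathcal{Z}[f]$), and you write out case (ii) where the paper only says ``similar.'' Two cosmetic points: your opening sentence says you will get (ii) from (i) via the dagger, but you actually prove it directly, which is the better choice; and you correctly write $(\mathsf{id}_B,f)$ where the paper's proof has the typo $(f,\mathsf{id}_B)$.
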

\begin{proof} For (\ref{lemma:zigzag-zigzag-decomp.1}), for the $\Rightarrow$ direction, suppose that $f$ is a zig. Then $(\mathsf{id}_B,f)$ is an even length list (with $n=1$) where the first entry is a zig by assumption and the last entry is a zag by Lem \ref{lemma:zigzagid}.(\ref{lemma:zigzagid.1}), with the requirement of true zigs and true zags being vacuously true, and trivially we have that $\mathsf{id}_B \circ f = f$. Therefore, by uniqueness of zigzag decompositions, we have that $\mathcal{Z}[f]=(f,\mathsf{id}_B)$. Conversely, for the $\Leftarrow$ direction, suppose that $\mathcal{Z}[f]=(\mathsf{id}_B,f)$. Then by definition of a zigzag decomposition, we have that $f$ is a zig. One can show (\ref{lemma:zigzag-zigzag-decomp.2}) using similar arguments. 
\end{proof}

From this it follows that only identity maps are both zigs and zags. 

\begin{lemma}\label{lemma:bothzigzag} In a zigzag dagger category $(\mathbb{D}, \dagger, \mathsf{Zig})$,
\begin{enumerate}[{\em (i)}]
\item \label{lemma:bothzigzag.1} A map $f: A \to B$ is both a zig and zag if and only if $A=B$ and $f=\mathsf{id}_A$; 
\item \label{lemma:bothzigzag.2} A map $f: A \to B$ is a true zig (resp. true zag) if and only if $f$ is a zig (resp. zag) such that $f \neq \mathsf{id}_A$. 
\end{enumerate}
\end{lemma}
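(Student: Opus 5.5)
The plan is to derive both parts from the uniqueness of zigzag decompositions, using Lemma \ref{lemma:zigzag-zigzag-decomp} and Lemma \ref{lemma:zigzagid}.

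For (\ref{lemma:bothzigzag.1}), the $\Leftarrow$ direction is exactly Lemma \ref{lemma:zigzagid}.(\ref{lemma:zigzagid.1}). For the $\Rightarrow$ direction, suppose $f: A \to B$ is both a zig and a zag. Lemma \ref{lemma:zigzag-zigzag-decomp}.(\ref{lemma:zigzag-zigzag-decomp.1}) then gives $\mathcal{Z}[f]=(\mathsf{id}_B,f)$. Lemma \ref{lemma:zigzag-zigzag-decomp}.(\ref{lemma:zigzag-zigzag-decomp.2}) gives $\mathcal{Z}[f]=(f,\mathsf{id}_A)$. Since the zigzag decomposition is unique, these two lists are equal as ordered lists of maps. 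Comparing first entries gives $\mathsf{id}_B = f$, and comparing second entries gives $f=\mathsf{id}_A$. Hence $\mathsf{id}_A=\mathsf{id}_B$, which forces $A=B$, and then $f=\mathsf{id}_A$.

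The main point needing care is that list equality means equality of maps entrywise, including their types. In $(\mathsf{id}_B,f)$ the intermediate object ${}_1A$ is $B$, while in $(f,\mathsf{id}_A)$ it is $A$. Equality of the lists therefore already entails $A=B$, since identity maps determine their objects. I expect this typing bookkeeping to be the only subtle step.

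For (\ref{lemma:bothzigzag.2}), I treat the zig case; the zag case is symmetric. By definition, $f$ is a true zig iff $f$ is a zig and $f$ is not a zag. Given that $f$ is a zig, part (\ref{lemma:bothzigzag.1}) says $f$ is also a zag iff $A=B$ and $f=\mathsf{id}_A$. Hence $f$ is a true zig iff $f$ is a zig and it is not the case that ($A=B$ and $f=\mathsf{id}_A$). This is what the statement means by ``$f\neq \mathsf{id}_A$'': when $A\neq B$ the condition holds vacuously, since $f$ cannot be an identity. The true zag case follows by the same argument, or alternatively by applying the zig case to $f^\dagger$ via Lemma \ref{lemma:zigzagiff}, since $\mathsf{id}^\dagger=\mathsf{id}$.
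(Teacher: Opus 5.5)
Your proposal is correct and follows the paper's proof essentially verbatim. Part (i) obtains the two decompositions $(\mathsf{id}_B,f)$ and $(f,\mathsf{id}_A)$ from the preceding lemma and compares them using uniqueness, and part (ii) unfolds the definition of a true zig (resp.\ true zag) and then applies part (i); your remarks on the typing of the intermediate object, and on reading $f \neq \mathsf{id}_A$ as automatic when $A \neq B$, make explicit what the paper leaves implicit.
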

\begin{proof} For (\ref{lemma:bothzigzag.1}), the $\Leftarrow$ direction is Lem \ref{lemma:zigzagid}.(\ref{lemma:zigzagid.1}). For the $\Rightarrow$ direction, suppose that $f$ is both a zig and zag. By Lem \ref{lemma:zigzag-zigzag-decomp}.(\ref{lemma:zigzag-zigzag-decomp.1}), since $f$ is a zig, we have that $\mathcal{Z}[f]=(\mathsf{id}_B,f)$, but by Lem \ref{lemma:zigzag-zigzag-decomp}.(\ref{lemma:zigzag-zigzag-decomp.2}), since $f$ is also a zag, we have that $\mathcal{Z}[f]=(f,\mathsf{id}_A)$. So the two lists must be the same, $(\mathsf{id}_B,f)=(f,\mathsf{id}_A)$, which is only the case if $A=B$ and $f=\mathsf{id}_A$. Then for (\ref{lemma:bothzigzag.2}), by definition, $f$ is a true zig (resp. true zag) if and only if $f$ is a zig (resp. zag) which is not a zag (resp. zig), which by (\ref{lemma:bothzigzag.1}), is the case if and only if $f$ is a zig (resp. zag) such that $f \neq \mathsf{id}_A$. 
\end{proof}

A natural question to ask is what are the zigzag decompositions of composite of maps and of adjoints of maps. Since we will not require formulas for these explicitly in what follows, we just give them informally and leave it to the reader to check the technical details. In particular, as we will see below, giving a composition formula for a composition is a tricky and a bit of a combinatorial mess. So let $f: A \to B$ be a map in a zigzag dagger category $(\mathbb{D}, \dagger, \mathsf{Zig})$. It is straightforward to see that the zigzag decomposition of its adjoints $f^\dagger: B \to A$ will be the reverse list of the adjoints of $\mathcal{Z}[f]$: 
\[ \mathcal{Z}[f^\dagger] = ({}_n\mathcal{Z}[f]^\dagger, \mathcal{Z}[f]_n^\dagger, \hdots, \mathcal{Z}[f]_i^\dagger, {}_{i}\mathcal{Z}[f]^\dagger, \hdots, {}_1\mathcal{Z}[f]^\dagger, \mathcal{Z}[f]_1^\dagger) \]
Now let $g: B \to C$ be another map. So what is the zigzag decomposition of the composite $g \circ f$? Well unfortunately it will not in general be the concatenation of $\mathcal{Z}[f]$ followed by $\mathcal{Z}[g]$. Indeed while the concatenation will be a list of alternating zigs then zags, the issue is that ${}_n\mathcal{Z}[f]$ may not be a true zag or $\mathcal{Z}[g]_1$ may not be a true zig. If ${}_n\mathcal{Z}[f]$ is a true zag and $\mathcal{Z}[g]_1$ is a true zig, then yes, the zigzag decomposition of $g \circ f$ is the simply the concatenation of their zigzag decompositions: 
\[ \mathcal{Z}[g \circ f]=({}_m\mathcal{Z}[g], \mathcal{Z}[g]_m, \hdots, {}_1\mathcal{Z}[g],\mathcal{Z}[g]_1, {}_n\mathcal{Z}[f], \mathcal{Z}[f]_n, \hdots, {}_1\mathcal{Z}[f],\mathcal{Z}[f]_1) \]
If ${}_n\mathcal{Z}[f]$ is not a true zag and $\mathcal{Z}[g]_1$ is not a true zig, hence by Lemma \ref{lemma:bothzigzag}.(\ref{lemma:bothzigzag.2}) both of them are identities, then we remove them are concatenate the remaining lists: 
\[ \mathcal{Z}[g \circ f]=({}_m\mathcal{Z}[g], \mathcal{Z}[g]_m, \hdots, {}_1\mathcal{Z}[g], \mathcal{Z}[f]_n, \hdots, {}_1\mathcal{Z}[f],\mathcal{Z}[f]_1) \]
Now suppose that ${}_n\mathcal{Z}[f]$ is a true zag but $\mathcal{Z}[g]_1$ is not a true zig. Then we remove $\mathcal{Z}[g]_1$ from our list, but then we are left two consecutive zags ${}_1\mathcal{Z}[g]$ followed by ${}_n\mathcal{Z}[f]$. To fix this, simply compose them, ${}_1\mathcal{Z}[g] \circ {}_n\mathcal{Z}[f]$ which still gives a zag. However the composite of true zags is not necessarily a true zag. If ${}_1\mathcal{Z}[g] \circ {}_n\mathcal{Z}[f]$ is a true zag, then our zigzag decomposition in this case is: 
\[ \mathcal{Z}[g \circ f]=({}_m\mathcal{Z}[g], \mathcal{Z}[g]_m, \hdots, {}_1\mathcal{Z}[g] \circ {}_n\mathcal{Z}[f], \hdots, {}_1\mathcal{Z}[f],\mathcal{Z}[f]_1) \]
However if ${}_1\mathcal{Z}[g] \circ {}_n\mathcal{Z}[f]$ is a not true zag, then we remove. Then we get two consecutive true zigs, which we compose and then check if it a true zig. If it is, we get our zigzag decomposition, if it is not, then we remove it and start process again. Since our lists are finite, this process always terminate where we either get true zigs or true zags, or we end with a list of length $2$ where we no longer need to check for true zigs or true zags. Similarly if we had started with ${}_n\mathcal{Z}[f]$ not a true zag but $\mathcal{Z}[g]_1$ now a true zig, we would apply the same kind of procedure. While it might be technically possible to write down an explicit formula for the zigzag decomposition of a composition, it is not necessarily more enlightening. So we leave this as an exercise for the motivated reader. 

Now before we turn our attention to freeness of zigzag dagger categories, we first show that a dagger category can be zigzag with respect to the different subclasses of zig maps. In particular, every zigzag dagger category is also a zigzag dagger category with respect to its zag maps. 

\begin{lemma} Let $(\mathbb{D}, \dagger, \mathsf{Zig})$ be a zigzag dagger category and let $\mathsf{Zag} = \lbrace f \in \mathsf{Maps}(\mathbb{D}) \vert~ f^\dagger \in \mathsf{Zig} \rbrace$. Then $(\mathbb{D}, \dagger, \mathsf{Zag})$ is also a zigzag dagger category. 
\end{lemma}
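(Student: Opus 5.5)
We check the three axioms of Definition \ref{def:zigzag-dag-cat} for the class $\mathsf{Zag}$. Under this new choice, the ``zigs'' are the old zags. The ``zags'' are the maps $f$ with $f^\dagger \in \mathsf{Zag}$, that is, $f^{\dagger\dagger}=f \in \mathsf{Zig}$, which are exactly the old zigs. So the roles of zigs and zags are simply swapped, and the same holds for true zigs and true zags, by Lemma \ref{lemma:zigzagiff}.

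\emph{Composition.} Let $f: A \to B$ and $g: B \to C$ be in $\mathsf{Zag}$. Then $f^\dagger$ and $g^\dagger$ are zigs. Their composite $f^\dagger \circ g^\dagger = (g\circ f)^\dagger$ is a zig by the composition axiom for $\mathsf{Zig}$. Hence $g \circ f \in \mathsf{Zag}$.

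\emph{Identities.} By Lemma \ref{lemma:zigzagid}, every identity is a zag, so it lies in $\mathsf{Zag}$.

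\emph{Decomposition.} This is the main step, and it is a reindexing of the old decomposition. Take $f: A\to B$ with old decomposition $\mathcal{Z}[f]=({}_n\mathcal{Z}[f], \mathcal{Z}[f]_n, \hdots, {}_1\mathcal{Z}[f],\mathcal{Z}[f]_1)$. The new decomposition must start with a new zig (an old zag) and end with a new zag (an old zig). So we pad the old list with identities: define
\[ \mathcal{Z}'[f] = (\mathsf{id}_B, {}_n\mathcal{Z}[f], \mathcal{Z}[f]_n, \hdots, {}_1\mathcal{Z}[f], \mathcal{Z}[f]_1, \mathsf{id}_A), \]
which has length $2(n+1)$. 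Its first entry $\mathsf{id}_A$ is a new zig. Its last entry $\mathsf{id}_B$ is a new zag. Its interior new-zig slots hold ${}_i\mathcal{Z}[f]$ for $1\le i\le n$, and its interior new-zag slots hold $\mathcal{Z}[f]_i$.

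This padding only works when the old interior entries are true. If ${}_n\mathcal{Z}[f]$ or $\mathcal{Z}[f]_1$ is an identity, padding produces non-true interior entries. So we split into cases according to whether $\mathcal{Z}[f]_1$ and ${}_n\mathcal{Z}[f]$ are identities; by Lemma \ref{lemma:bothzigzag}, an end entry is non-true exactly when it is an identity.
\begin{itemize}
\item If $\mathcal{Z}[f]_1$ is an identity, drop it together with the padded $\mathsf{id}_A$ at the right end.
\item Symmetrically, if ${}_n\mathcal{Z}[f]$ is an identity, drop it together with the padded $\mathsf{id}_B$ at the left end.
\item If both ends are identities and $n=1$, then $f=\mathsf{id}$ and we take $(\mathsf{id},\mathsf{id})$ instead.
\end{itemize}
In each case the interior entries are the old interior true zags and true zigs, now in the swapped roles. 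Composition is unchanged, since we only inserted or deleted identities. This gives existence.

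For uniqueness, run the same translation in reverse. Given any new decomposition of $f$, pad or strip identities at the ends by the same case analysis to obtain an old-style decomposition of $f$. By uniqueness of $\mathcal{Z}[f]$, that old-style decomposition equals $\mathcal{Z}[f]$, and therefore the new decomposition is determined. The obstacle is only bookkeeping: we must check that the two translations are mutually inverse, and that the case analysis is exhaustive and respects the alternating true zig/true zag conditions via Lemma \ref{lemma:bothzigzag}.
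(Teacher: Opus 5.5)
Your proposal is correct and takes essentially the same route as the paper: $\mathsf{Zag}$ is closed under composition by contravariance of $\dagger$ and contains identities by Lemma \ref{lemma:zigzagid}; the new decomposition comes from padding $\mathcal{Z}[f]$ with $\mathsf{id}_B$ and $\mathsf{id}_A$ and stripping whichever end entries are identities, and uniqueness is pulled back from uniqueness of $\mathcal{Z}[f]$. You also treat $n=1$ with both ends identities (i.e.\ $f=\mathsf{id}$) separately, which the paper's ``remove both'' case would turn into an empty list, and your reverse-translation sketch for uniqueness is more explicit than the paper's one-line remark.
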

\begin{proof} We first need to show that $\mathsf{Zag}$ is closed under composition and identities. For identities, by Lemma \ref{lemma:zigzagid}.(\ref{lemma:zigzagid.1}), we know that identity maps are zags, hence $\mathsf{id}_A \in \mathsf{Zag}$. On the other hand, suppose that $f: A \to B$ and $g: B \to C$ are both in $\mathsf{Zag}$. So $g^\dagger: C \to B$ and $f^\dagger: B \to A$ are both zigs. Since zigs are closed under composition, $f^\dagger \circ g^\dagger$ is a zig. Since the dagger is contravariant (\ref{eq:dagger}), we get that $(g\circ f)^\dagger = f^\dagger \circ g^\dagger$ is a zig, hence $g \circ f$ is a zag, or in other words, $g \circ f \in \mathsf{Zag}$. Now we need to check that every map $f: A \to B$ has a zigzag decomposition with respect to $\mathsf{Zag}$, which means we need a list of alternating between zags and zigs. If $\mathcal{Z}[f]_1$ is a true zig and ${}_n\mathcal{Z}[f]$ is a true zag, then by simply adding identities we get an appropriate list for $\mathsf{Zag}$: 
\[ (\mathsf{id}_B,{}_n\mathcal{Z}[f], \mathcal{Z}[f]_n, \hdots, {}_{i}\mathcal{Z}[f], \mathcal{Z}[f]_i, \hdots, {}_1\mathcal{Z}[f],\mathcal{Z}[f]_1, \mathsf{id}_A) \]
If $\mathcal{Z}[f]_1$ is not a true zig and ${}_n\mathcal{Z}[f]$ is not a true zag, then by simply removing then we also get an appropriate list: 
\[ (\mathcal{Z}[f]_n, \hdots, {}_{i}\mathcal{Z}[f], \mathcal{Z}[f]_i, \hdots, {}_1\mathcal{Z}[f]) \]
If $\mathcal{Z}[f]_1$ is a true zig but ${}_n\mathcal{Z}[f]$ is not a true zag, we remove the latter and add an identity at the end: 
\[ (\mathcal{Z}[f]_n, \hdots, {}_{i}\mathcal{Z}[f], \mathcal{Z}[f]_i, \hdots, {}_1\mathcal{Z}[f], \mathcal{Z}[f]_1, \mathsf{id}_A) \]
If $\mathcal{Z}[f]_1$ is not a true zig but ${}_n\mathcal{Z}[f]$ is a true zag, we remove the former and add an identity at the start: 
\[ (\mathsf{id}_B,{}_n\mathcal{Z}[f], \mathcal{Z}[f]_n, \hdots, {}_{i}\mathcal{Z}[f], \mathcal{Z}[f]_i, \hdots, {}_1\mathcal{Z}[f]) \]
From here, it is straightforward to check in each case that these lists are indeed alternating between zags then zigs, that the inner maps are true zags or true zigs and of the right type, and that they compose to $f$. Uniqueness of $\mathsf{Zag}$ zigzag decomposition will follow directly from uniqueness of zigzag decompositions for $\mathsf{Zig}$. So we conclude that  $(\mathbb{D}, \dagger, \mathsf{Zag})$ is indeed a zigzag dagger category. 
\end{proof}

We now turn our attention to proving that zigzag dagger categories are free dagger categories. Unsurprisingly, the base category will be the subcategory of zig maps. So let $(\mathbb{D}, \dagger, \mathsf{Zig})$ be a zigzag dagger category. Define $\mathbb{D}_{\mathsf{Zig}}$ to be the wide subcategory of zig maps of $\mathbb{D}$, which is a well-defined subcategory since zig maps are closed under composition and identity. Let $\mathcal{N}_{\mathsf{Zig}}: \mathbb{D}_{\mathsf{Zig}} \to \mathbb{D}$ be the inclusion functor. 

\begin{proposition}\label{prop:zigzag-to-free} Let $(\mathbb{D}, \dagger, \mathsf{Zig})$ be a zigzag dagger category. Then $(\mathbb{D}, \dagger, \mathcal{N}_{\mathsf{Zig}})$ is a free dagger category over $\mathbb{D}_{\mathsf{Zig}}$. 
\end{proposition}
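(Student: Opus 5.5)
The plan is to avoid checking functoriality of a hand-built extension $\mathcal{F}^\sharp$ directly, because the zigzag decomposition of a composite is combinatorially messy. Instead I will compare $(\mathbb{D},\dagger)$ with the canonical free dagger category $(\mathsf{Z}[\mathbb{D}_{\mathsf{Zig}}],\dagger)$. By Prop \ref{prop:canon-free}, the inclusion $\mathcal{N}_{\mathsf{Zig}}: \mathbb{D}_{\mathsf{Zig}} \to \mathbb{D}$ induces a unique dagger functor $\Phi := \mathcal{N}_{\mathsf{Zig}}^\sharp : (\mathsf{Z}[\mathbb{D}_{\mathsf{Zig}}],\dagger) \to (\mathbb{D},\dagger)$ with $\Phi \circ \mathcal{N}_{\mathbb{D}_{\mathsf{Zig}}} = \mathcal{N}_{\mathsf{Zig}}$. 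Explicitly, $\Phi$ is the identity on objects and
\[ \Phi[{}_n f, f_n, \hdots, {}_1 f, f_1] = {}_n f^\dagger \circ f_n \circ \hdots \circ {}_1 f^\dagger \circ f_1, \]
where all $f_i$ and ${}_i f$ are zigs. If $\Phi$ is a dagger isomorphism, then Lemma \ref{lemma:free-iso}.(ii) applied to $(\mathsf{Z}[\mathbb{D}_{\mathsf{Zig}}],\dagger,\mathcal{N}_{\mathbb{D}_{\mathsf{Zig}}})$ shows that $(\mathbb{D},\dagger,\Phi\circ\mathcal{N}_{\mathbb{D}_{\mathsf{Zig}}}) = (\mathbb{D},\dagger,\mathcal{N}_{\mathsf{Zig}})$ is free over $\mathbb{D}_{\mathsf{Zig}}$, which is the claim. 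Since $\Phi$ is a dagger functor and bijective on objects, it remains to show that $\Phi$ is bijective on each homset.

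For surjectivity, take $f: A \to B$ in $\mathbb{D}$ with zigzag decomposition $\mathcal{Z}[f]$. Each ${}_i\mathcal{Z}[f]$ is a zag, so by Lemma \ref{lemma:zigzagiff} its adjoint ${}_i\mathcal{Z}[f]^\dagger: A_{i+1} \to {}_iA$ is a zig. The $\mathcal{Z}[f]_i$ are zigs. Hence the list $({}_n\mathcal{Z}[f]^\dagger, \mathcal{Z}[f]_n, \hdots, {}_1\mathcal{Z}[f]^\dagger, \mathcal{Z}[f]_1)$ is a well-typed element of $\mathsf{pre}\text{-}\mathsf{Z}[\mathbb{D}_{\mathsf{Zig}}](A,B)$. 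By (\ref{eq:zigzag-decomp}), its image under $\Phi$ is exactly $f$.

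For injectivity, which I expect to be the main point, suppose $\Phi(F) = \Phi(G)$ and pass to normal forms via Prop \ref{prop:normal}. Write $F = [{}_n f, f_n, \hdots, {}_1 f, f_1]$ in normal form and consider the list $({}_n f^\dagger, f_n, \hdots, {}_1 f^\dagger, f_1)$ in $\mathbb{D}$. It alternates zig/zag with correct typing, and $f_1$ is a zig and ${}_n f^\dagger$ is a zag. For $1<i\le n$ and $1\le j<n$, the maps $f_i$ and ${}_j f$ are non-identity zigs, so ${}_j f^\dagger$ is a non-identity zag, using that the dagger preserves and reflects identities. By Lemma \ref{lemma:bothzigzag}.(ii) these inner maps are therefore true zigs, respectively true zags, and the list composes to $\Phi(F)$. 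By uniqueness in the Decomposition axiom, this list equals $\mathcal{Z}[\Phi(F)]$. The same argument applies to the normal form of $G$. Since $\Phi(F)=\Phi(G)$, the two lists coincide; applying the dagger entrywise recovers identical normal lists in $\mathsf{pre}\text{-}\mathsf{Z}[\mathbb{D}_{\mathsf{Zig}}]$, so $F = G$ by uniqueness of normal forms.

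The delicate spots are the bookkeeping of the reversed typing convention (${}_i f: A_{i+1}\to {}_iA$ in $\mathsf{Z}[\mathbb{B}]$ versus ${}_i\mathcal{Z}[f]: {}_iA \to A_{i+1}$ in the decomposition) and the matching of the normality condition with the ``true zig/zag'' condition, which is exactly what Lemma \ref{lemma:bothzigzag} supplies. With bijectivity established, $\Phi$ is an isomorphism of categories and hence a dagger isomorphism, completing the argument. As a by-product, for $\mathcal{F}:\mathbb{D}_{\mathsf{Zig}}\to\mathbb{C}$ the induced functor is $\mathcal{F}^\sharp(f) = \mathcal{F}({}_n\mathcal{Z}[f]^\dagger)^\dagger\circ\mathcal{F}(\mathcal{Z}[f]_n)\circ\hdots\circ\mathcal{F}({}_1\mathcal{Z}[f]^\dagger)^\dagger\circ\mathcal{F}(\mathcal{Z}[f]_1)$.
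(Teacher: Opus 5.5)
Your proposal is correct and follows essentially the same route as the paper: it shows that $\mathcal{N}_{\mathsf{Zig}}^\sharp : (\mathsf{Z}[\mathbb{D}_{\mathsf{Zig}}],\dagger)\to(\mathbb{D},\dagger)$ is full using the zigzag decomposition and faithful by passing to normal forms and invoking uniqueness of zigzag decompositions, and it then transfers freeness along this dagger isomorphism via Lemma \ref{lemma:free-iso}. At the step showing that the ${}_i f^\dagger$ are true zags, your explicit use of Lemma \ref{lemma:bothzigzag} and of the fact that the dagger reflects identities is slightly more careful than the paper's wording.
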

\begin{proof} Rather than show the universal property directly (as we did in the cofree scenario) we will instead show that $(\mathbb{D}, \dagger)$ and $(\mathsf{Z}\left[ \mathbb{D}_{\mathsf{Zig}} \right], \dagger)$ are dagger isomorphic. So consider the induced dagger functor $\mathcal{N}_{\mathsf{Zig}}^\sharp: (\mathsf{Z}\left[ \mathbb{D}_{\mathsf{Zig}} \right], \dagger) \to (\mathbb{D}, \dagger)$ as given in Prop \ref{prop:canon-free}. So in this case, on objects we have that $\mathcal{N}_{\mathsf{Zig}}^\sharp(A) = A$, while on maps we have that: 
\begin{align}\label{eq:Nsharp}
\mathcal{N}_{\mathsf{Zig}}^\sharp\left([{}_n f, f_n, \hdots, {}_i f, f_i, \hdots, {}_1f, f_1] \right) = {}_nf^\dagger  \circ f_n \circ \hdots \circ {}_i f^\dagger  \circ f_i \circ \hdots \circ {}_1f^\dagger  \circ f_1
\end{align}
Instead of providing an explicit inverse, since $\mathcal{N}_{\mathsf{Zig}}^\sharp$ is the identity on objects, to show that it is also an isomorphism, it suffices to show that it is full and faithful as well. 

So let $f: A \to B$ be a map in $\mathbb{D}$ and consider its zigzag decomposition. Now by definition, each $\mathcal{Z}[f]_i$ is a zig, while each ${}_i\mathcal{Z}[f]$ is a zag, hence ${}_i\mathcal{Z}[f]^\dagger$ is a zig. Therefore we get that $[{}_n\mathcal{Z}[f]^\dagger, \mathcal{Z}[f]_n, \hdots, {}_1\mathcal{Z}[f]^\dagger,\mathcal{Z}[f]_1]: A \to B$ is a well defined map in $\mathsf{Z}\left[ \mathbb{D}_{\mathsf{Zig}} \right]$. Then we compute: 
\begin{gather*}
\mathcal{N}_{\mathsf{Zig}}^\sharp\left([{}_n\mathcal{Z}[f]^\dagger, \mathcal{Z}[f]_n, \hdots, {}_1\mathcal{Z}[f]^\dagger,\mathcal{Z}[f]_1] \right) \overset{\text{(\ref{eq:Nsharp})}}{=} {}_n\mathcal{Z}[f]^{\dagger\dagger} \circ \mathcal{Z}[f]_n \circ \hdots \circ {}_1\mathcal{Z}[f]^{\dagger\dagger} \circ \mathcal{Z}[f]_1 \\
\overset{\text{(\ref{eq:dagger})}}{=} {}_n\mathcal{Z}[f] \circ \mathcal{Z}[f]_n \circ \hdots \circ {}_1\mathcal{Z}[f] \circ \mathcal{Z}[f]_1 \overset{\text{(\ref{eq:zigzag-decomp})}}{=}f
\end{gather*}
Thus $\mathcal{N}_{\mathsf{Zig}}^\sharp\left([{}_n\mathcal{Z}[f]^\dagger, \mathcal{Z}[f]_n, \hdots, {}_1\mathcal{Z}[f]^\dagger,\mathcal{Z}[f]_1] \right) = f$, and so we get that $\mathcal{N}_{\mathsf{Zig}}^\sharp$ is full. 

On the other hand, let $[{}_n f, f_n, \hdots, {}_i f, f_i, \hdots, {}_1f, f_1]: A \to B$ and $[{}_m g, g_m, \hdots, {}_j g, g_j, \hdots, {}_1g, g_1]: A \to B$ be parallel maps in $\mathsf{Z}\left[ \mathbb{D}_{\mathsf{Zig}} \right]$. Without loss of generality, we can assume they are both in normal form. Also suppose that $\mathcal{N}_{\mathsf{Zig}}^\sharp\left([{}_n f, f_n, \hdots, {}_1f, f_1] \right) = \mathcal{N}_{\mathsf{Zig}}^\sharp\left([{}_m g, g_m, \hdots, {}_1g, g_1] \right)$, that is, the following equality holds: 
\begin{align}\label{eq:Nsharp-inj}
{}_nf^\dagger  \circ f_n \circ \hdots \circ {}_1f^\dagger  \circ f_1 = {}_mg^\dagger  \circ g_m \circ \hdots \circ {}_1g^\dagger  \circ g_1
\end{align}
Let us denote the above composition as the map $h: A \to B$ in $\mathbb{D}$. Now since $[{}_n f, f_n, \hdots, {}_1f, f_1]$ is a map in $\mathsf{Z}\left[ \mathbb{D}_{\mathsf{Zig}} \right]$, each $f_i$ is a zig in $\mathbb{D}$ and each ${}_if$ is also a zig in $\mathbb{D}$, which means each ${}_if^\dagger$ is a zag in $\mathbb{D}$. Moreover, since $[{}_n f, f_n, \hdots, {}_1f, f_1]$ is in normal form, none of the inner maps are identities. Thus for all $1 \leq i < n$ and $1 < j \leq n$, ${}_i f$ are true zags and $f_j$ are true zigs. Thus we get that $({}_n f^\dagger, f_n, \hdots, {}_1f^\dagger, f_1)$ is the zigzag decomposition of $h$. However by a similar argument, we also get that $({}_m g^\dagger, g_m, \hdots, {}_1g^\dagger, g_1)$ is a zigzag decomposition of $h$. However by uniqueness of zigzag decompositions, we get that $({}_n f^\dagger, f_n, \hdots, {}_1f^\dagger, f_1)=({}_m g^\dagger, g_m, \hdots, {}_1g^\dagger, g_1)$. This means that $m=n$, for all $1 \leq i \leq n$, $f_i = g_i$ and ${}_if^\dagger={}_ig^\dagger$, which by applying dagger to the second equality gives us ${}_if={}_ig$. So we conclude that $[{}_n f, f_n, \hdots, {}_1f, f_1]= [{}_m g, g_m, \hdots, {}_1g, g_1]$, and thus $\mathcal{N}_{\mathsf{Zig}}^\sharp$ is faithful. 

Therefore, we can conclude that $\mathcal{N}_{\mathsf{Zig}}^\sharp: (\mathsf{Z}\left[ \mathbb{D}_{\mathsf{Zig}} \right], \dagger) \to (\mathbb{D}, \dagger)$ is a dagger isomorphism. Then by applying Lemma \ref{lemma:free-iso}, we get that $(\mathbb{D}, \dagger,  \mathcal{N}_{\mathsf{Zig}}^\sharp \circ \mathcal{N}_{\mathbb{D}_{\mathsf{Zig}}})$ is also a free dagger category over $\mathbb{D}_{\mathsf{Zig}}$. However by definition, we have that $\mathcal{N}_{\mathsf{Zig}}^\sharp \circ \mathcal{N}_{\mathbb{D}_{\mathsf{Zig}}}= \mathcal{N}_{\mathsf{Zig}}$. Thus we conclude that $(\mathbb{D}, \dagger, \mathcal{N}_{\mathsf{Zig}})$ is a free dagger category over $\mathbb{D}_{\mathsf{Zig}}$. 
\end{proof}

In the above proof we did not work out explicitly the universal property of a zigzag dagger category as a free dagger over its subcategory of zig maps. We do so now, where the induced unique dagger functor is constructed using the zigzag decomposition. It is worth noting that had we proved the universal property of a free dagger category directly, this is where we would have needed an explicit formula for the zigzag decomposition of composition, to show functoriality of our constructed dagger functor. 

\begin{corollary}\label{cor:free-explicit} Let $(\mathbb{D}, \dagger, \mathsf{Zig})$ be a zigzag dagger category. Then for any dagger category $(\mathbb{C}, \dagger)$ and functor $\mathcal{F}: \mathbb{D}_{\mathsf{Zig}} \to \mathbb{C}$, the unique dagger $\mathcal{F}^\sharp: (\mathbb{D}, \dagger) \to (\mathbb{C}, \dagger)$ such that the following diagram commutes: 
\begin{equation}\begin{gathered}\label{diag:free-zigzag}  \xymatrixcolsep{5pc}\xymatrix{\mathbb{D}  \ar@{-->}[r]^-{\exists! ~ \mathcal{F}^\sharp}  & \mathbb{C}  \\
   \mathbb{D}_{\mathsf{Zig}} \ar[u]^-{\mathcal{N}_\mathsf{Zig}} \ar[ur]_-{\mathcal{F}} }
\end{gathered}\end{equation}
is defined on objects as $\mathcal{F}^\sharp(A) = \mathcal{F}(A)$ and on maps as follows: 
\begin{align}
\mathcal{F}^\sharp\left(f \right) = \mathcal{F}\left({}_n\mathcal{Z}[f] \right)^\dagger  \circ \mathcal{F} \left(\mathcal{Z}[f]_n \right) \circ \hdots \circ \mathcal{F}\left({}_i\mathcal{Z}[f] \right)^\dagger  \circ \mathcal{F} \left(\mathcal{Z}[f]_i \right) \circ \hdots \circ \mathcal{F}\left({}_1\mathcal{Z}[f] \right)^\dagger  \circ \mathcal{F} \left(\mathcal{Z}[f]_1 \right)
\end{align}
\end{corollary}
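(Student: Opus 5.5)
The plan is to deduce this from Prop \ref{prop:zigzag-to-free} rather than re-verify functoriality by hand. Prop \ref{prop:zigzag-to-free} says $(\mathbb{D}, \dagger, \mathcal{N}_{\mathsf{Zig}})$ is a free dagger category over $\mathbb{D}_{\mathsf{Zig}}$. So for any $\mathcal{F}: \mathbb{D}_{\mathsf{Zig}} \to \mathbb{C}$, a unique dagger functor $\mathcal{F}^\sharp$ making (\ref{diag:free-zigzag}) commute already exists. It therefore suffices to compute $\mathcal{F}^\sharp$ on objects and maps, and to show it agrees with the stated formula.

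\textbf{Objects.} Since $\mathcal{N}_{\mathsf{Zig}}$ is the identity on objects, $\mathcal{F}^\sharp \circ \mathcal{N}_{\mathsf{Zig}} = \mathcal{F}$ immediately gives $\mathcal{F}^\sharp(A) = \mathcal{F}(A)$.

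\textbf{Maps.} Take $f: A \to B$ in $\mathbb{D}$ with zigzag decomposition $\mathcal{Z}[f]$, so that (\ref{eq:zigzag-decomp}) writes $f$ as an alternating composite. Apply functoriality of $\mathcal{F}^\sharp$ to this composite, giving a composite of the terms $\mathcal{F}^\sharp({}_i\mathcal{Z}[f])$ and $\mathcal{F}^\sharp(\mathcal{Z}[f]_i)$.
\begin{itemize}
\item Each $\mathcal{Z}[f]_i$ is a zig, so it lies in $\mathbb{D}_{\mathsf{Zig}}$ and $\mathcal{F}^\sharp(\mathcal{Z}[f]_i) = \mathcal{F}^\sharp(\mathcal{N}_{\mathsf{Zig}}(\mathcal{Z}[f]_i)) = \mathcal{F}(\mathcal{Z}[f]_i)$.
\item Each ${}_i\mathcal{Z}[f]$ is a zag, so ${}_i\mathcal{Z}[f]^\dagger$ is a zig by Lemma \ref{lemma:zigzagiff}. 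Using involutivity (\ref{eq:dagger}) and that $\mathcal{F}^\sharp$ is a dagger functor (\ref{def:dagfun}), we get $\mathcal{F}^\sharp({}_i\mathcal{Z}[f]) = \mathcal{F}^\sharp({}_i\mathcal{Z}[f]^{\dagger\dagger}) = \mathcal{F}^\sharp({}_i\mathcal{Z}[f]^\dagger)^\dagger = \mathcal{F}({}_i\mathcal{Z}[f]^\dagger)^\dagger$.
\end{itemize}
Substituting these into the composite yields the displayed formula, with each zag factor read as $\mathcal{F}({}_i\mathcal{Z}[f]^\dagger)^\dagger$. This is the only sensible reading, since $\mathcal{F}$ is only defined on zigs.

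\textbf{Cross-check.} As a sanity check, I would also recover the formula by going through the proof of Prop \ref{prop:zigzag-to-free}. There $f = \mathcal{N}_{\mathsf{Zig}}^\sharp([{}_n\mathcal{Z}[f]^\dagger, \mathcal{Z}[f]_n, \hdots, {}_1\mathcal{Z}[f]^\dagger,\mathcal{Z}[f]_1])$. The universal functor out of $\mathbb{D}$ is $\mathcal{F}^\sharp_{\mathsf{Z}} \circ (\mathcal{N}_{\mathsf{Zig}}^\sharp)^{-1}$, where $\mathcal{F}^\sharp_{\mathsf{Z}}$ is the explicit functor of Prop \ref{prop:canon-free}. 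Applying it gives the same expression.

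Uniqueness is inherited from Prop \ref{prop:zigzag-to-free}, and well-definedness of the formula follows from uniqueness of zigzag decompositions. I expect no real obstacle. The only care needed is the typing and notation point above: recognising that the zag factors must be fed to $\mathcal{F}$ via their adjoints, using Lemma \ref{lemma:zigzagiff}.
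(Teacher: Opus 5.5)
Your proof is correct, but your main argument takes a different route from the paper's. The paper's proof is essentially your cross-check.

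The paper uses Lemma \ref{lemma:free-iso} to write the universal functor as the canonical one of Prop \ref{prop:canon-free} precomposed with $(\mathcal{N}_{\mathsf{Zig}}^\sharp)^{-1}$. It then reads off from the proof of Prop \ref{prop:zigzag-to-free} that $(\mathcal{N}_{\mathsf{Zig}}^\sharp)^{-1}(f) = [{}_n\mathcal{Z}[f]^\dagger, \mathcal{Z}[f]_n, \ldots, {}_1\mathcal{Z}[f]^\dagger, \mathcal{Z}[f]_1]$, and substitutes.

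You instead evaluate the already-existing $\mathcal{F}^\sharp$ directly on the factorization (\ref{eq:zigzag-decomp}). You use only functoriality, the equation $\mathcal{F}^\sharp \circ \mathcal{N}_{\mathsf{Zig}} = \mathcal{F}$, and dagger preservation (\ref{def:dagfun}). This is more elementary in two ways:
\begin{itemize}
\item It needs neither the transport along the dagger isomorphism nor an explicit description of its inverse.
\item It shows the formula holds for \emph{any} factorization of $f$ as an alternating composite of zigs and zags. So uniqueness of $\mathcal{Z}[f]$ enters only inside Prop \ref{prop:zigzag-to-free}, to get existence of $\mathcal{F}^\sharp$, and not in the computation itself.
\end{itemize}
The paper's route, in exchange, exhibits the inverse isomorphism $(\mathcal{N}_{\mathsf{Zig}}^\sharp)^{-1}$ explicitly and ties the formula directly to the canonical zigzag construction.

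Your reading of the zag factors as $\mathcal{F}({}_i\mathcal{Z}[f]^\dagger)^\dagger$ is the right one, and it is exactly what the paper's substitution produces too. The displayed $\mathcal{F}({}_i\mathcal{Z}[f])^\dagger$ in the statement is a notational slip, since $\mathcal{F}$ is only defined on zigs.
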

\begin{proof} By Lemma \ref{lemma:free-iso}, the unique dagger functor $\mathcal{F}^\sharp: (\mathbb{D}, \dagger) \to (\mathbb{C}, \dagger)$ is given by the composition of the unique dagger functor $(\mathsf{Z}\left[ \mathbb{D}_{\mathsf{Zig}} \right], \dagger) \to (\mathbb{C}, \dagger)$ as defined in Prop \ref{prop:canon-free}, precomposed with ${\mathcal{N}_{\mathsf{Zig}}^\sharp}^{-1}: (\mathbb{D}, \dagger) \to (\mathsf{Z}\left[ \mathbb{D}_{\mathsf{Zig}} \right], \dagger)$. By the arguments in the proofs of Prop \ref{prop:zigzag-to-free}, it follows that ${\mathcal{N}_{\mathsf{Zig}}^\sharp}^{-1}$ is defined on objects as ${\mathcal{N}_{\mathsf{Zig}}^\sharp}^{-1}(A) = A$ and on maps as follows: 
\begin{align}
{\mathcal{N}_{\mathsf{Zig}}^\sharp}^{-1}(f) = \left[ {}_n\mathcal{Z}[f]^\dagger, \mathcal{Z}[f]_n, \hdots, {}_{i}\mathcal{Z}[f]^\dagger, \mathcal{Z}[f]_i, \hdots, {}_1\mathcal{Z}[f]^\dagger,\mathcal{Z}[f]_1 \right]
\end{align}
From here, it follows that $\mathcal{F}^\sharp$ is indeed given as above. 
\end{proof}

We now prove the converse, that a free dagger category is a zigzag category, where the zig maps are essentially the maps of the chosen base category. 

\begin{proposition}\label{prop:free-to-zigzag} Let $(\mathbb{D}, \dagger, \mathcal{N})$ be a free dagger category over a category $\mathbb{B}$. Then $(\mathbb{D}, \dagger, \mathsf{Zig}_\mathcal{N})$ is a zigzag dagger category where $\mathsf{Zig}_\mathcal{N} = \lbrace \mathcal{N}(f) \vert~ \forall f \in \mathsf{Maps}(\mathbb{B}) \rbrace$. 
\end{proposition}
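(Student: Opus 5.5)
The plan is to transport the zigzag structure along the dagger isomorphism $\mathcal{N}^\sharp: (\mathsf{Z}[\mathbb{B}], \dagger) \to (\mathbb{D}, \dagger)$ of Cor \ref{cor:free-iso.1}. First I would show that the canonical free dagger category $(\mathsf{Z}[\mathbb{B}], \dagger, \mathsf{Zig}_{\mathcal{N}_\mathbb{B}})$ is a zigzag dagger category, where $\mathsf{Zig}_{\mathcal{N}_\mathbb{B}} = \lbrace [\mathsf{id}_B, f] \rbrace$. Then I would prove the analogue of Lemma \ref{lemma:band-dag-iso}: if $\mathcal{F}$ is a dagger isomorphism, the image of a zig class is again a zig class. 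Since $\mathcal{N} = \mathcal{N}^\sharp \circ \mathcal{N}_\mathbb{B}$, the image of $\mathsf{Zig}_{\mathcal{N}_\mathbb{B}}$ under $\mathcal{N}^\sharp$ is exactly $\mathsf{Zig}_\mathcal{N}$. The transport step is routine. Closure under composition and identities, and the zig/zag/true-zig/true-zag predicates, are all preserved by a dagger isomorphism. Decompositions are transported by applying $\mathcal{N}^\sharp$ entrywise, and uniqueness is pulled back via $(\mathcal{N}^\sharp)^{-1}$.

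For the canonical case, closure is immediate. By (\ref{reduce-1}) we have $[\mathsf{id}_C, g] \circ [\mathsf{id}_B, f] = [\mathsf{id}_C, g, \mathsf{id}_B, f] = [\mathsf{id}_C, g \circ f]$, and the identity $[\mathsf{id}_A, \mathsf{id}_A] = \mathcal{N}_\mathbb{B}(\mathsf{id}_A)$ is a zig. Zags are the maps $[f, \mathsf{id}]$. Every map factors as
\[ [{}_n f, f_n, \hdots, {}_1 f, f_1] = [{}_n f, \mathsf{id}] \circ [\mathsf{id}, f_n] \circ \hdots \circ [{}_1 f, \mathsf{id}] \circ [\mathsf{id}, f_1], \]
which follows by concatenation and removal of the inner identities. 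Taking the normal form from Prop \ref{prop:normal}, the inner factors have non-identity $f_j$ ($j>1$) and ${}_i f$ ($i<n$). By faithfulness (Lemma \ref{lemma:N-faithful}) together with normal-form uniqueness, these factors are not identities. I then need that such a zig is a true zig. Lemma \ref{lemma:bothzigzag} is not yet available here, so I would prove directly that $[\mathsf{id}, f] = [g, \mathsf{id}]$ forces $f = g = \mathsf{id}$. Both lists are normal of length $2$, so uniqueness of normal forms gives $(\mathsf{id}, f) = (g, \mathsf{id})$.

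The main obstacle is uniqueness of the decomposition. Suppose $F = z'_m \circ w_m \circ \hdots \circ z'_1 \circ w_1$ with each $w_i = [\mathsf{id}, g_i]$ a zig, each $z'_i = [{}_i g, \mathsf{id}]$ a zag, and the inner ones true. Then $F = [{}_m g, g_m, \hdots, {}_1 g, g_1]$ after removing identities. The true-ness conditions say $g_j \neq \mathsf{id}$ for $j > 1$ and ${}_i g \neq \mathsf{id}$ for $i < m$, so this list is normal. Prop \ref{prop:normal} then gives $m = n$ and equal entries, hence equal factors. The one delicate point is to state carefully that a factor $[\mathsf{id}, g]$ is a true zig exactly when $g \neq \mathsf{id}$, which again uses the normal-form argument above. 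With that in place, the canonical case is complete and the transport step finishes the proof.
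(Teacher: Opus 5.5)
Your proof is correct, but it is organized differently from the paper's. The paper never treats the canonical case separately. It works directly in $\mathbb{D}$: it takes the normal form of $\mathcal{N}_\mathbb{B}^\sharp(f)$ in $\mathsf{Z}[\mathbb{B}]$ and pushes the entries forward along $\mathcal{N}$, daggering the left legs, to produce the decomposition. It recovers $f$ by applying $\mathcal{N}^\sharp$, and it gets uniqueness by pulling a competing decomposition back through $\mathcal{N}_\mathbb{B}^\sharp$ and invoking Prop \ref{prop:normal}. The statement for $(\mathsf{Z}[\mathbb{B}], \dagger, \mathsf{Zig}_\mathbb{B})$ is then only a corollary.

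You reverse the order: canonical case first, then transport along the dagger isomorphism $\mathcal{N}^\sharp$. This mirrors the paper's own cofree strategy (Prop \ref{prop:cofree-canon-band} followed by Lemma \ref{lemma:band-dag-iso}). The mathematical core is the same in both, namely uniqueness of normal forms plus faithfulness.

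Your route gains a reusable transport lemma for zigzag structure, and it is more careful on one point. The paper asserts that the inner entries are true zigs/zags ``since the list is normal and $\mathcal{N}$ is faithful''. That tacitly needs that a non-identity $\mathcal{N}(h)$ is never of the form $\mathcal{N}(g)^\dagger$. In the paper this fact appears only as Lemma \ref{lemma:bothzigzag}, whose proof presupposes uniqueness of decompositions. Your direct check that $[\mathsf{id}, f] = [g, \mathsf{id}]$ forces $f = g = \mathsf{id}$, via uniqueness of length-$2$ normal forms, closes that step without circularity. In exchange, the paper's route is shorter and needs no separate transport lemma.
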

\begin{proof} Since $\mathcal{N}$ is a functor, it follows that $\mathsf{Zig}_\mathcal{N}$ is closed under composition and identity maps. So it remains to show that maps have a zigzag decomposition with respect to $\mathsf{Zig}_\mathcal{N}$. To produce this zigzag decomposition, consider the dagger isomorphism $\mathcal{N}_\mathbb{B}^\sharp: (\mathbb{D}, \dagger) \to (\mathsf{Z}[\mathbb{B}], \dagger)$ and its inverse $\mathcal{N}^\sharp: (\mathsf{Z}[\mathbb{B}], \dagger) \to (\mathbb{D}, \dagger)$ (Cor \ref{cor:free-iso.1}). Now recall by Lemma \ref{lemma:free-objects} that $\mathcal{N}$ gives us an isomorphism $\mathsf{Obj}(\mathbb{B})\cong \mathsf{Obj}(\mathbb{D})$. So every map $f$ in $\mathbb{D}$ is of type $f: \mathcal{N}(A) \to \mathcal{N}(B)$ for some objects $A,B \in \mathsf{Obj}(\mathbb{B})$. Applying $\mathcal{N}_\mathbb{B}^\sharp(f)$ then gives us a map of type $\mathcal{N}_\mathbb{B}^\sharp(f): A \to B$ in $\mathsf{Z}[\mathbb{B}]$. Let us write the normal form of $\mathcal{N}_\mathbb{B}^\sharp(f)$ as follows: 
\begin{align}\label{def:NBsharp-normal}
\mathcal{N}_\mathbb{B}^\sharp(f) = \left[ {}_nf, f_n, \hdots, {}_{i}f, f_i, \hdots, {}_1f, f_1 \right] 
\end{align}
Then consider the following list: 
\begin{align}\label{def:ZNF}
 \mathcal{Z}_{\mathcal{N}}[f] = \left( \mathcal{N}({}_nf)^\dagger, \mathcal{N}(f_n), \hdots, \mathcal{N}({}_{i}f)^\dagger, \mathcal{N}(f_i), \hdots, \mathcal{N}({}_1f)^\dagger, \mathcal{N}(f_1) \right)
\end{align}
By definition, $\mathcal{Z}[f]$ is clearly an alternating list of zigs followed by zags of the right types. Moreover since $\left[ {}_nf, f_n, \hdots, {}_1f, f_1 \right]$ was in normal form and that $\mathcal{N}$ is faithful, it follows that all the inner maps of $ \mathcal{Z}[f]$ are either true zigs or true zags as well. Then we compute: 
\begin{gather*}
\mathcal{N}({}_nf)^\dagger \circ \mathcal{N}(f_n) \circ \hdots \circ \mathcal{N}({}_1f)^\dagger \circ \mathcal{N}(f_1) \overset{\overset{\text{Prop}}{\text{\ref{prop:canon-free}}}}{=} \mathcal{N}^\sharp\left(\left[ {}_nf, f_n, \hdots {}_1f, f_1 \right]  \right) \overset{\text{(\ref{def:NBsharp-normal})}}{=} \mathcal{N}^\sharp\left(\mathcal{N}_\mathbb{B}^\sharp(f) \right) \overset{\overset{\text{Cor}}{\text{\ref{cor:cofree-iso.1}}}}{=} f
\end{gather*}
Thus the composition of $\mathcal{Z}_{\mathcal{N}}[f]$ is $f$. Lastly, it remains to show uniqueness. So suppose we had another zigzag decomposition of $f$, which would have to be of the form:
\[ \left( \mathcal{N}({}_mg)^\dagger, \mathcal{N}(g_m), \hdots, \mathcal{N}({}_1g)^\dagger, \mathcal{N}(g_1) \right) \]
Note that since the inner maps are either true zigs or true zags, hence by Lemma \ref{lemma:bothzigzag}.(\ref{lemma:bothzigzag.2}) not identities, and since $\mathcal{N}$ is faithful, it follows that the map $[{}_mg, g_m, \hdots, {}_1g, g_1]$ is a well-defined map in $\mathsf{Z}[\mathbb{B}]$ and also in normal form. Now we compute: 
\begin{gather*}
[{}_mg, g_m, \hdots, {}_1g, g_1] \overset{\overset{\text{Cor}}{\text{\ref{cor:cofree-iso.1}}}}{=} \mathcal{N}_\mathbb{B}^\sharp \left( \mathcal{N}^\sharp\left(\left[ {}_mg, g_m, \hdots {}_1g, g_1 \right]  \right) \right) \overset{\overset{\text{Prop}}{\text{\ref{prop:canon-free}}}}{=} \mathcal{N}_\mathbb{B}^\sharp \left( \mathcal{N}({}_mg)^\dagger \circ \mathcal{N}(g_m) \circ \hdots \circ \mathcal{N}({}_1g)^\dagger \circ \mathcal{N}(g_1) \right) \\\overset{\text{(\ref{eq:zigzag-decomp})}}{=} \mathcal{N}_\mathbb{B}^\sharp(f) \overset{\text{(\ref{def:NBsharp-normal})}}{=} \left[ {}_nf, f_n, \hdots, {}_1f, f_1 \right] 
\end{gather*}
So $[{}_mg, g_m, \hdots, {}_1g, g_1] = \left[ {}_nf, f_n, \hdots, {}_1f, f_1 \right]$. However since both are in normal form, by Prop \ref{prop:normal}, we must have that $n=m$ and $g_i=f_i$ and ${}_ig={}_if$. From this, it follows that $\mathcal{Z}_{\mathcal{N}}[f]$ is the unique zigzag decomposition of $f$ as desired. So we conclude that $(\mathbb{D}, \dagger, \mathsf{Zig}_\mathcal{N})$ is indeed a zigzag dagger category. 
\end{proof}

\begin{corollary} For a category $\mathbb{B}$, $(\mathsf{Z}[\mathbb{B}], \dagger, \mathsf{Zig}_\mathbb{B})$ is a zigzag dagger category where $\mathsf{Zig}_\mathbb{B} = \lbrace [\mathsf{id},f] \vert~ \forall f \in \mathsf{Maps}(\mathbb{B}) \rbrace$.
\end{corollary}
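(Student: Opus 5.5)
This is a direct specialization of Prop \ref{prop:free-to-zigzag}. By Prop \ref{prop:canon-free}, the triple $(\mathsf{Z}[\mathbb{B}], \dagger, \mathcal{N}_\mathbb{B})$ is a free dagger category over $\mathbb{B}$. Applying Prop \ref{prop:free-to-zigzag} with $\mathcal{N} = \mathcal{N}_\mathbb{B}$ gives that $(\mathsf{Z}[\mathbb{B}], \dagger, \mathsf{Zig}_{\mathcal{N}_\mathbb{B}})$ is a zigzag dagger category. So the only thing left is to identify $\mathsf{Zig}_{\mathcal{N}_\mathbb{B}}$ with $\mathsf{Zig}_\mathbb{B}$.

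This is immediate from the definition $\mathcal{N}_\mathbb{B}(f) = [\mathsf{id}_B, f]$ for $f: A \to B$. Hence
\[ \mathsf{Zig}_{\mathcal{N}_\mathbb{B}} = \lbrace \mathcal{N}_\mathbb{B}(f) \vert~ \forall f \in \mathsf{Maps}(\mathbb{B}) \rbrace = \lbrace [\mathsf{id},f] \vert~ \forall f \in \mathsf{Maps}(\mathbb{B}) \rbrace = \mathsf{Zig}_\mathbb{B}. \]

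It is also worth recording the zigzag decomposition explicitly. Here $\mathcal{N}_\mathbb{B}^\sharp$ is the identity on $\mathsf{Z}[\mathbb{B}]$, so the normal form of $F$ is $[{}_nf, f_n, \hdots, {}_1f, f_1]$ itself. Formula (\ref{def:ZNF}) then gives
\[ \mathcal{Z}[F] = \left([\mathsf{id}, {}_nf]^\dagger, [\mathsf{id}, f_n], \hdots, [\mathsf{id}, {}_1f]^\dagger, [\mathsf{id}, f_1]\right), \]
where $[\mathsf{id}, {}_if]^\dagger = [{}_if, \mathsf{id}]$ by (\ref{def:dag-free}). Composing these via (\ref{def:comp-free}) and removing the inner identities with (\ref{reduce-1}) and (\ref{reduce-2}) recovers $F$.

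There is no real obstacle. The only point needing care is that the identification $\mathcal{N}_\mathbb{B}^\sharp = \mathsf{id}$ follows from uniqueness in the universal property, since the identity dagger functor satisfies $\mathsf{id} \circ \mathcal{N}_\mathbb{B} = \mathcal{N}_\mathbb{B}$. This justification is needed only for the explicit description of $\mathcal{Z}[F]$, not for the statement itself.
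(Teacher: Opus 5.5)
Your proposal is correct and matches the paper's proof: you apply Prop \ref{prop:free-to-zigzag} to $(\mathsf{Z}[\mathbb{B}], \dagger, \mathcal{N}_\mathbb{B})$ and use $\mathcal{N}_\mathbb{B}(f) = [\mathsf{id}_B, f]$ to get $\mathsf{Zig}_{\mathcal{N}_\mathbb{B}} = \mathsf{Zig}_\mathbb{B}$. Your extra description of $\mathcal{Z}[F]$, which relies on $\mathcal{N}_\mathbb{B}^\sharp = \mathsf{id}$ by uniqueness, is a correct supplement that the statement does not need.
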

\begin{proof} By Prop \ref{prop:free-to-zigzag}, we get that $(\mathsf{Z}[\mathbb{B}], \dagger, \mathsf{Zig}_{\mathcal{N}_\mathbb{B}})$ is a zigzag dagger category. However since by definition $\mathcal{N}_\mathbb{B}(f) = [\mathsf{id},f]$, it follows that $\mathsf{Zig}_{\mathcal{N}_\mathbb{B}}=\mathsf{Zig}_\mathbb{B}$. So $(\mathsf{Z}[\mathbb{B}], \dagger, \mathsf{Zig}_\mathbb{B})$ is a zigzag dagger category as desired. 
\end{proof}

Thus, Prop \ref{prop:zigzag-to-free} and Prop \ref{prop:free-to-zigzag} together give us our second main result:  

\begin{theorem}\label{thm:free=zigzag} A dagger category is free if and only if it has a zigzag structure. 
\end{theorem}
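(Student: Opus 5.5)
The theorem follows by combining the two propositions just proved, so the plan is to deduce each direction from one of them.

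For the forward direction, suppose $(\mathbb{D},\dagger)$ is free. By definition there is a category $\mathbb{B}$ and a functor $\mathcal{N}: \mathbb{B} \to \mathbb{D}$ such that $(\mathbb{D},\dagger,\mathcal{N})$ is a free dagger category over $\mathbb{B}$. Prop \ref{prop:free-to-zigzag} then says that $\mathsf{Zig}_\mathcal{N} = \lbrace \mathcal{N}(f) \vert~ f \in \mathsf{Maps}(\mathbb{B}) \rbrace$ is a zigzag structure on $(\mathbb{D},\dagger)$.

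For the converse, suppose $(\mathbb{D},\dagger,\mathsf{Zig})$ is a zigzag dagger category. Prop \ref{prop:zigzag-to-free} says that $(\mathbb{D},\dagger,\mathcal{N}_{\mathsf{Zig}})$ is a free dagger category over the wide subcategory $\mathbb{D}_{\mathsf{Zig}}$. Hence $(\mathbb{D},\dagger)$ is free, with $\mathbb{D}_{\mathsf{Zig}}$ as a base category.

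There is no real obstacle at this level; all the substantive work lies in the two propositions. Of the ingredients they rely on, the most delicate is the uniqueness of normal forms in $\mathsf{Z}[\mathbb{B}]$ (Prop \ref{prop:normal}). Together with faithfulness of $\mathcal{N}$ (Cor \ref{cor:N-faithful}), it gives both the uniqueness of zigzag decompositions and the faithfulness of $\mathcal{N}_{\mathsf{Zig}}^\sharp$. Since all of these are taken as established earlier, the theorem follows by combining the two directions.
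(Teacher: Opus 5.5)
Your proposal is correct and matches the paper exactly: the theorem is stated there as the immediate combination of Prop \ref{prop:free-to-zigzag} (free implies zigzag, via $\mathsf{Zig}_\mathcal{N}$) and Prop \ref{prop:zigzag-to-free} (zigzag implies free over $\mathbb{D}_{\mathsf{Zig}}$). One minor correction to your side commentary: faithfulness of $\mathcal{N}$ is used only in Prop \ref{prop:free-to-zigzag}, while faithfulness of $\mathcal{N}_{\mathsf{Zig}}^\sharp$ comes from the uniqueness axiom for zigzag decompositions, applied to normal-form representatives.
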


Therefore, zigzag structure is an internal characterization of freeness of a dagger category, where we've replaced our outer base category with an internal subcategory. Once again, a natural question to ask is if the constructions of Prop \ref{prop:zigzag-to-free} and Prop \ref{prop:free-to-zigzag} are inverses of each other. Unlike the cofree situation, in this case they are (though in one direction is it only up to isomorphism). 

\begin{proposition} Let $(\mathbb{D}, \dagger, \mathsf{Zig})$ be a zigzag dagger category. Then $\mathsf{Zig}_{\mathcal{N}_{\mathsf{Zig}}}= \mathsf{Zig}$.
\end{proposition}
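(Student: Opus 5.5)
This is essentially a matter of unwinding definitions, so I would prove it by a direct double inclusion. Recall that $\mathcal{N}_{\mathsf{Zig}}: \mathbb{D}_{\mathsf{Zig}} \to \mathbb{D}$ is the inclusion of the wide subcategory of zig maps. By Prop \ref{prop:zigzag-to-free}, $(\mathbb{D}, \dagger, \mathcal{N}_{\mathsf{Zig}})$ is a free dagger category over $\mathbb{D}_{\mathsf{Zig}}$. Prop \ref{prop:free-to-zigzag} then produces the class
\[ \mathsf{Zig}_{\mathcal{N}_{\mathsf{Zig}}} = \lbrace \mathcal{N}_{\mathsf{Zig}}(f) \vert~ \forall f \in \mathsf{Maps}(\mathbb{D}_{\mathsf{Zig}}) \rbrace. \]
So the task is just to identify the image of the inclusion functor.

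For the inclusion $\mathsf{Zig}_{\mathcal{N}_{\mathsf{Zig}}} \subseteq \mathsf{Zig}$, take an element $\mathcal{N}_{\mathsf{Zig}}(f)$ with $f$ a map of $\mathbb{D}_{\mathsf{Zig}}$. By definition of $\mathbb{D}_{\mathsf{Zig}}$, $f$ is a zig of $\mathbb{D}$, and $\mathcal{N}_{\mathsf{Zig}}(f) = f$ because $\mathcal{N}_{\mathsf{Zig}}$ is an inclusion. Hence $\mathcal{N}_{\mathsf{Zig}}(f) \in \mathsf{Zig}$.

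For the reverse inclusion, let $f \in \mathsf{Zig}$. Then $f$ is a map of $\mathbb{D}_{\mathsf{Zig}}$, which is well defined as a wide subcategory since zigs are closed under composition and contain identities. Therefore $f = \mathcal{N}_{\mathsf{Zig}}(f) \in \mathsf{Zig}_{\mathcal{N}_{\mathsf{Zig}}}$.

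Combining the two inclusions gives equality. There is no real obstacle here. The only point needing care is that both classes sit inside the same $\mathsf{Map}(\mathbb{D})$: the identification is literal equality, not merely up to isomorphism, precisely because $\mathcal{N}_{\mathsf{Zig}}$ is the identity on objects and on maps. One might also remark that the zigzag decompositions agree, but this is automatic: they are uniquely determined by the class of zigs, so equal classes yield identical zigzag structures.
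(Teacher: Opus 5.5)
Your proof is correct and matches the paper's argument: both note that $\mathsf{Maps}(\mathbb{D}_{\mathsf{Zig}}) = \mathsf{Zig}$ and that the inclusion $\mathcal{N}_{\mathsf{Zig}}$ sends each map to itself. Hence the image of $\mathcal{N}_{\mathsf{Zig}}$ is exactly $\mathsf{Zig}$, and your double inclusion simply spells out the paper's one-line proof.
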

\begin{proof} By definition we have that $\mathsf{Maps}(\mathbb{D}_\mathsf{Zig}) = \mathsf{Zig}$ and $\mathcal{N}_{\mathsf{Zig}}(f)=f$. Thus we trivially get that $\mathsf{Zig}_{\mathcal{N}_{\mathsf{Zig}}}= \mathsf{Zig}$. 
\end{proof}

\begin{proposition} Let $(\mathbb{D},\dagger, \mathcal{N})$ be a free dagger category over a category $\mathbb{B}$. Then $\mathbb{D}_{\mathsf{Zig}_\mathcal{N}} = \mathsf{Im}[\mathcal{N}]$ and hence $\mathbb{D}_{\mathsf{Zig}_\mathcal{N}} \cong \mathbb{B}$. 
\end{proposition}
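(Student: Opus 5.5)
The plan is to unfold definitions and then invoke the lemma on image categories proved just before Sec \ref{sec:zigzag-decomp}.

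First, I would compare objects. By definition, $\mathbb{D}_{\mathsf{Zig}_\mathcal{N}}$ is the wide subcategory of $\mathbb{D}$, so its objects are all of $\mathsf{Obj}(\mathbb{D})$. The objects of $\mathsf{Im}[\mathcal{N}]$ are those of the form $\mathcal{N}(A)$ for $A \in \mathsf{Obj}(\mathbb{B})$. By Lemma \ref{lemma:free-objects}, $\mathcal{N}$ is bijective on objects, so every object of $\mathbb{D}$ is of the form $\mathcal{N}(A)$ for a unique $A$. Hence the two object classes coincide.

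Next, I would compare maps. The maps of $\mathbb{D}_{\mathsf{Zig}_\mathcal{N}}$ are exactly the elements of $\mathsf{Zig}_\mathcal{N} = \lbrace \mathcal{N}(f) \vert~ f \in \mathsf{Maps}(\mathbb{B}) \rbrace$. These are precisely the maps of $\mathsf{Im}[\mathcal{N}]$, and the sources and targets agree. Composition and identities in both categories are inherited from $\mathbb{D}$. Both are therefore the same subcategory of $\mathbb{D}$, so $\mathbb{D}_{\mathsf{Zig}_\mathcal{N}} = \mathsf{Im}[\mathcal{N}]$, and the inclusion functors $\mathcal{N}_{\mathsf{Zig}_\mathcal{N}}$ and $\mathcal{I}_\mathcal{N}$ agree as well.

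Finally, the isomorphism $\mathbb{D}_{\mathsf{Zig}_\mathcal{N}} \cong \mathbb{B}$ follows from the earlier lemma that $\overline{\mathcal{N}}: \mathbb{B} \to \mathsf{Im}[\mathcal{N}]$ is an isomorphism. That lemma rests on faithfulness of $\mathcal{N}$ (Cor \ref{cor:N-faithful}) together with bijectivity on objects: $\overline{\mathcal{N}}$ is then bijective on objects and on each homset, since it is surjective by construction of the image and injective by faithfulness.

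The only point needing care is that $\mathsf{Im}[\mathcal{N}]$ is genuinely a category, i.e. that its hom-classes are closed under composition. This holds because $\mathcal{N}$ is injective on objects, so composable maps $\mathcal{N}(f)$ and $\mathcal{N}(g)$ come from composable $f$ and $g$. This is already implicit in the paper's definition and in Prop \ref{prop:free-to-zigzag}, so I expect no real obstacle; the argument is essentially bookkeeping.
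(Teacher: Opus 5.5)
Your proposal is correct and follows the same route as the paper. You identify $\mathbb{D}_{\mathsf{Zig}_\mathcal{N}}$ with $\mathsf{Im}[\mathcal{N}]$ by unfolding definitions, then obtain $\mathbb{D}_{\mathsf{Zig}_\mathcal{N}} \cong \mathbb{B}$ from faithfulness of $\mathcal{N}$ (Cor \ref{cor:N-faithful}). You also spell out what the paper calls ``clear'': the object classes agree because $\mathcal{N}$ is bijective on objects (Lemma \ref{lemma:free-objects}), and this same fact is what makes $\overline{\mathcal{N}}$ an isomorphism rather than merely full and faithful.
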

\begin{proof} It is clear that $\mathbb{D}_{\mathsf{Zig}_\mathcal{N}} = \mathsf{Im}[\mathcal{N}]$. Since $\mathcal{N}$ is faithful (Lemma \ref{lemma:N-faithful}), we essentially get back our starting external base category $\mathbb{D}_{\mathsf{Zig}_\mathcal{N}} \cong \mathbb{B}$ as well. 
\end{proof}

\subsection{Free Dagger Categories as Coalgebras}\label{sec:coalg}

In this section we will provide another characterization of free dagger categories, this time as coalgebras of a comonad. As before, for a review on adjunctions, comonads, and their coalgebras, we invite the reader to see \cite{mac1971categories}. 

Now the canonical free dagger category construction from Sec \ref{sec:free-canonical} gives a left adjoint $\mathsf{Z}: \mathsf{CAT} \to \mathsf{U}$ to the forgetful functor $\mathsf{U}: \mathsf{DAG} \to \mathsf{CAT}$ \cite[Thm 3.1.17]{heunen2009categorical}. This induces a comonad $(\mathsf{Z}, \mathcal{V}, \mathcal{E})$ on $\mathsf{DAG}$. The functor $\mathsf{Z}: \mathsf{DAG} \to \mathsf{DAG}$ maps a dagger category $(\mathbb{D},\dagger)$ to the canonical free dagger category of its underlying category, $\mathsf{Z}(\mathbb{D},\dagger)= (\mathsf{Z}[\mathbb{D}], \dagger)$, and sends a dagger functor $\mathcal{F}: (\mathbb{D}_1, \dagger) \to (\mathbb{D}_2, \dagger)$ to the dagger functor $\mathsf{Z}(\mathcal{F}): (\mathsf{Z}[\mathbb{D}_1], \dagger) \to (\mathsf{Z}[\mathbb{D}_2], \dagger)$ defined as the unique dagger functor which makes the following diagram commute: 
\begin{equation}\begin{gathered}\label{diag:ZF}  \xymatrixcolsep{5pc}\xymatrix{\mathsf{Z}[\mathbb{D}_1]    \ar@{-->}[r]^-{\exists! ~ \mathsf{Z}[\mathcal{F}]}  & \mathsf{Z}[\mathbb{D}_2]   \\
 \mathbb{D}_1 \ar[u]^-{\mathcal{N}_{\mathbb{D}_1}} \ar[r]_-{\mathcal{F}} & \mathbb{D}_2 \ar[u]_-{\mathcal{N}_{\mathbb{D}_2}} }
\end{gathered}\end{equation}
Explicitly, $\mathsf{Z}[\mathcal{F}]$ is defined as on objects as $\mathsf{Z}[\mathcal{F}](A) = \mathcal{F}(A)$ and on maps as: 
\begin{align}\label{def:ZF}
\mathsf{Z}[\mathcal{F}]\left([{}_n f, f_n, \hdots, {}_i f, f_i, \hdots, {}_1f, f_1] \right) = \left[ \mathcal{F}({}_nf), \mathcal{F} (f_n), \hdots, \mathcal{F}({}_i f), \mathcal{F} (f_i), \hdots, \mathcal{F}({}_1f), \mathcal{F} (f_1)\right]
\end{align}
Now for a dagger category $(\mathbb{D}, \dagger)$, the comonad comultiplication $\mathcal{V}_{(\mathbb{D},\dagger)}: (\mathsf{Z}[\mathbb{D}], \dagger) \to (\mathsf{Z}\left[\mathsf{Z}[\mathbb{D}]\right], \dagger)$ and the comonad counit  $\mathcal{E}_{(\mathbb{D},\dagger)}: (\mathsf{Z}[\mathbb{D}], \dagger) \to (\mathbb{D},\dagger)$ are of course the unique dagger functors which make the following diagrams commute respectively: 
\begin{equation}\begin{gathered}\label{diag:VE}  \xymatrixcolsep{5pc}\xymatrix{ \mathsf{Z}[\mathbb{D}]   \ar@{-->}[r]^-{\exists! ~ \mathcal{N}_{(\mathbb{D},\dagger)}}  &  \mathbb{D} &  \mathsf{Z}[\mathbb{D}]   \ar@{-->}[r]^-{\exists! ~ \mathcal{V}_{(\mathbb{D},\dagger)}}  & \mathsf{Z}\left[\mathsf{Z}[\mathbb{D}] \right]    \\
 \mathbb{D} \ar[u]^-{\mathcal{E}_{\mathbb{D}}} \ar@{=}[ur]  & & \mathbb{D}  \ar[r]_-{\mathcal{N}_\mathbb{D}} \ar[u]^-{\mathcal{N}_{\mathbb{D}}} &  \mathsf{Z}[\mathbb{D}] \ar[u]_-{\mathcal{N}_{\mathsf{C}[\mathbb{D}]}} }
\end{gathered}\end{equation}
Explicitly, $\mathcal{V}_{(\mathbb{D},\dagger)}$ is defined on objects as $\mathcal{V}_{(\mathbb{D},\dagger)}(A) = A$ and on maps as follows: 
\begin{align}
\mathcal{V}_{(\mathbb{D},\dagger)}\left([{}_n f, f_n, \hdots, {}_i f, f_i, \hdots, {}_1f, f_1] \right) = \left[ [\mathsf{id},{}_nf], [\mathsf{id},f_n], \hdots, [\mathsf{id},{}_i f], [\mathsf{id},f_i], \hdots, [\mathsf{id},{}_1f], [\mathsf{id},f_1]\right]
\end{align}
while $\mathcal{E}_{(\mathbb{D},\dagger)}$ is defined on objects as $\mathcal{E}_{(\mathbb{D},\dagger)}(A) = A $ and on maps as follows: 
\begin{align}
\mathcal{E}_{(\mathbb{D},\dagger)}\left([{}_n f, f_n, \hdots, {}_i f, f_i, \hdots, {}_1f, f_1] \right) = {}_n f^\dagger \circ f_n \circ \hdots \circ {}_i f^\dagger \circ f_i \circ \hdots \circ {}_1f^\dagger \circ f_1 
\end{align}
Since we have a comonad, we can consider its coalgebras.  

\begin{definition} A \textbf{$\mathsf{Z}$-coalgebra} is a triple $(\mathbb{D},\dagger, \mathcal{W})$ consisting of a dagger category $(\mathbb{D},\dagger)$ and a dagger functor ${\mathcal{W}: (\mathbb{D},\dagger) \to (\mathsf{Z}[\mathbb{D}], \dagger)}$ such that the following diagrams commute:
\begin{equation}\begin{gathered}\label{diag:Z-coalg}  \xymatrixcolsep{5pc}\xymatrix{ (\mathbb{D},\dagger) \ar@{=}[dr]^-{} \ar[r]^-{\mathcal{W}} & (\mathsf{Z}[\mathbb{D}], \dagger) \ar[d]^-{\mathcal{E}_{(\mathbb{D},\dagger)}} &  ({\mathsf{Z}[\mathbb{D}]}, \dagger)  \ar[r]^-{\mathcal{W}}  \ar[d]_-{\mathcal{W}} &   (\mathsf{Z}[\mathbb{D}], \dagger) \ar[d]^-{\mathsf{Z}[\mathcal{W}]}    \\
 & (\mathbb{D},\dagger) & (\mathsf{Z}[\mathbb{D}], \dagger)  \ar[r]_-{\mathcal{V}_{(\mathbb{D},\dagger)}} & (\mathsf{Z}\left[ \mathsf{Z}[\mathbb{D}] \right], \dagger)
 }
\end{gathered}\end{equation}
We call $\mathcal{W}$ a \textbf{$\mathsf{Z}$-coalgebra structure} of $(\mathbb{D},\dagger)$. 
\end{definition}

Let us unpack a bit what a $\mathsf{Z}$-coalgebra structure ${\mathcal{W}: (\mathbb{D},\dagger) \to (\mathsf{Z}[\mathbb{D}], \dagger)}$ tells us. First, observe that since $\mathcal{E}_{(\mathbb{D},\dagger)} \circ \mathcal{W} = \mathsf{id}_{\mathbb{D}}$, it follows that $\mathcal{W}$ is the identity on objects, so $\mathcal{W}(A) = A$. Therefore for any map $f: A \to B$ in $\mathbb{D}$, its image is a map of type $\mathcal{W}(f): A \to B$ in $\mathsf{Z}[\mathbb{D}]$. Let us write the normal form of $\mathcal{W}(f)$ as follows: 
\begin{align}\label{eq:W-normal}
\mathcal{W}(f) = \left[ {}_n\mathcal{W}(f), \mathcal{W}(f)_n ,\hdots ,{}_i\mathcal{W}(f),\mathcal{W}(f)_i , \hdots, {}_1 \mathcal{W}(f),\mathcal{W}(f)_1 \right]
\end{align}
Now the left diagram of (\ref{diag:Z-coalg}) tells us that: 
\begin{align}\label{eq:W-comp}
f = {}_n\mathcal{W}(f)^\dagger \circ \mathcal{W}(f)_n \circ \hdots \circ {}_i\mathcal{W}(f)^\dagger \circ \mathcal{W}(f)_i \circ \hdots \circ {}_1 \mathcal{W}(f) \circ \mathcal{W}(f)_1
\end{align}
On the other hand, the right diagram of (\ref{diag:Z-coalg}) first tells us that:
\begin{gather*}
\left[ \mathcal{W}\left( {}_n\mathcal{W}(f) \right), \mathcal{W}\left(\mathcal{W}(f)_n \right) ,\hdots , \mathcal{W}\left({}_i\mathcal{W}(f) \right), \mathcal{W}\left(\mathcal{W}(f)_i\right), \hdots, \mathcal{W}\left({}_1 \mathcal{W}(f) \right), \mathcal{W}\left(\mathcal{W}(f)_1\right) \right] \\
= \left[ [\mathsf{id},{}_n\mathcal{W}(f)], [\mathsf{id},\mathcal{W}(f)_n], \hdots, [\mathsf{id},{}_i \mathcal{W}(f)], [\mathsf{id},\mathcal{W}(f)_i], \hdots, [\mathsf{id},{}_1\mathcal{W}(f)], [\mathsf{id},\mathcal{W}(f)_1]\right]
\end{gather*}
Now since $\left( {}_n\mathcal{W}(f), \mathcal{W}(f)_n ,\hdots, {}_1 \mathcal{W}(f),\mathcal{W}(f)_1 \right)$ was a normal list, it follows that:
\[\left( [\mathsf{id},{}_n\mathcal{W}(f)], [\mathsf{id},\mathcal{W}(f)_n], \hdots [\mathsf{id},{}_1\mathcal{W}(f)], [\mathsf{id},\mathcal{W}(f)_1]\right)\] 
is also a normal list. Note however that $( \mathcal{W}\left( {}_n\mathcal{W}(f) \right), \mathcal{W}\left(\mathcal{W}(f)_n \right) ,\hdots, \mathcal{W}\left({}_1 \mathcal{W}(f) \right), \mathcal{W}\left(\mathcal{W}(f)_1\right)$ is a list of the same length. Thus by Prop \ref{prop:normal} it follows that for all $1 \leq i \leq n$ we have that: 
\begin{align}\label{eq:WW-zig}
 \mathcal{W}\left({}_i\mathcal{W}(f) \right) = [\mathsf{id},{}_i\mathcal{W}(f)] && \mathcal{W}\left(\mathcal{W}(f)_i\right) = [\mathsf{id},\mathcal{W}(f)_i]
\end{align}

By well-known facts about adjunctions, every free dagger category is a $\mathsf{Z}$-coalgebra. 

\begin{lemma}\label{lemma:free-coalgebra} Let $(\mathbb{D}, \dagger, \mathcal{N})$ be a  free dagger category over a category $\mathbb{B}$. Define the dagger functor $\mathcal{W}_\mathcal{N}: (\mathbb{D}, \dagger) \to  (\mathsf{Z}[\mathbb{D}], \dagger)$ as the unique dagger functor which makes the following diagram commute: 
\begin{align*} \xymatrixcolsep{5pc}\xymatrix{ \mathbb{D}  \ar@{-->}[r]^-{\exists! ~ \mathcal{W}_\mathcal{N}}  & \mathsf{Z}[\mathbb{D}]  \\
 \mathbb{B}  \ar[u]^-{\mathcal{N}} \ar[r]_-{\mathcal{N}} & \mathbb{D}\ar[u]_-{\mathcal{N}_\mathbb{D}} }
\end{align*}
Then $(\mathbb{D}, \dagger, \mathcal{W}_\mathcal{N})$ is a $\mathsf{Z}$-coalgebra.
\end{lemma}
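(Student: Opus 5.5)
We check the two coalgebra diagrams of (\ref{diag:Z-coalg}) for $\mathcal{W}_\mathcal{N}$ using only the uniqueness clause of the universal property of $(\mathbb{D},\dagger,\mathcal{N})$ over $\mathbb{B}$. The maps involved are dagger functors out of $(\mathbb{D},\dagger)$. So it suffices to show that each pair of parallel dagger functors agrees after precomposition with $\mathcal{N}$: both then factor the same functor $\mathbb{B} \to \mathbb{C}$ through $\mathcal{N}$, and uniqueness in (\ref{diag:free}) makes them equal. Two facts are used throughout: by definition $\mathcal{W}_\mathcal{N} \circ \mathcal{N} = \mathcal{N}_\mathbb{D} \circ \mathcal{N}$, and composites of dagger functors are dagger functors.

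\emph{Counit diagram.} We must show $\mathcal{E}_{(\mathbb{D},\dagger)} \circ \mathcal{W}_\mathcal{N} = \mathsf{id}_\mathbb{D}$. Both sides are dagger functors $(\mathbb{D},\dagger) \to (\mathbb{D},\dagger)$. Precomposing with $\mathcal{N}$ gives
\[
\mathcal{E}_{(\mathbb{D},\dagger)} \circ \mathcal{W}_\mathcal{N} \circ \mathcal{N} = \mathcal{E}_{(\mathbb{D},\dagger)} \circ \mathcal{N}_\mathbb{D} \circ \mathcal{N} = \mathcal{N}.
\]
Here the last step uses the defining triangle (\ref{diag:VE}) of the counit, namely $\mathcal{E}_{(\mathbb{D},\dagger)} \circ \mathcal{N}_\mathbb{D} = \mathsf{id}_\mathbb{D}$. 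The identity also satisfies $\mathsf{id}_\mathbb{D} \circ \mathcal{N} = \mathcal{N}$. So both functors are the unique $\mathcal{N}^\sharp$ for $\mathcal{F} = \mathcal{N}$, and hence they are equal.

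\emph{Coassociativity diagram.} We must show $\mathsf{Z}[\mathcal{W}_\mathcal{N}] \circ \mathcal{W}_\mathcal{N} = \mathcal{V}_{(\mathbb{D},\dagger)} \circ \mathcal{W}_\mathcal{N}$ as dagger functors $(\mathbb{D},\dagger) \to (\mathsf{Z}[\mathsf{Z}[\mathbb{D}]],\dagger)$. We precompose each side with $\mathcal{N}$.
\begin{itemize}
\item For the left side, the naturality square (\ref{diag:ZF}) gives $\mathsf{Z}[\mathcal{W}_\mathcal{N}] \circ \mathcal{N}_\mathbb{D} = \mathcal{N}_{\mathsf{Z}[\mathbb{D}]} \circ \mathcal{W}_\mathcal{N}$. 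Hence
\[
\mathsf{Z}[\mathcal{W}_\mathcal{N}] \circ \mathcal{W}_\mathcal{N} \circ \mathcal{N} = \mathsf{Z}[\mathcal{W}_\mathcal{N}] \circ \mathcal{N}_\mathbb{D} \circ \mathcal{N} = \mathcal{N}_{\mathsf{Z}[\mathbb{D}]} \circ \mathcal{W}_\mathcal{N} \circ \mathcal{N} = \mathcal{N}_{\mathsf{Z}[\mathbb{D}]} \circ \mathcal{N}_\mathbb{D} \circ \mathcal{N}.
\]
\item For the right side, the defining square of $\mathcal{V}$ in (\ref{diag:VE}) gives $\mathcal{V}_{(\mathbb{D},\dagger)} \circ \mathcal{N}_\mathbb{D} = \mathcal{N}_{\mathsf{Z}[\mathbb{D}]} \circ \mathcal{N}_\mathbb{D}$. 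Hence
\[
\mathcal{V}_{(\mathbb{D},\dagger)} \circ \mathcal{W}_\mathcal{N} \circ \mathcal{N} = \mathcal{V}_{(\mathbb{D},\dagger)} \circ \mathcal{N}_\mathbb{D} \circ \mathcal{N} = \mathcal{N}_{\mathsf{Z}[\mathbb{D}]} \circ \mathcal{N}_\mathbb{D} \circ \mathcal{N}.
\]
\end{itemize}
The two sides agree after precomposition with $\mathcal{N}$, so uniqueness gives equality.

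The only point needing care is reading the diagram (\ref{diag:VE}) correctly: its label on the counit is a typo, and its vertical arrow $\mathcal{N}_{\mathsf{C}[\mathbb{D}]}$ should be $\mathcal{N}_{\mathsf{Z}[\mathbb{D}]}$. Beyond that, we only need to confirm that every functor compared is a dagger functor, so that the uniqueness clause of the universal property applies. No normal-form computations are required.
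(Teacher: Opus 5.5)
Your proof is correct. Both coalgebra laws are equalities between dagger functors out of $(\mathbb{D},\dagger)$ that agree after precomposition with $\mathcal{N}$, so the uniqueness clause settles them, and your corrected readings of the diagrams defining $\mathcal{E}_{(\mathbb{D},\dagger)}$ and $\mathcal{V}_{(\mathbb{D},\dagger)}$ are the intended ones. The paper offers no argument beyond ``well-known facts about adjunctions,'' and what you wrote is that standard argument made explicit for $(\mathbb{D},\dagger,\mathcal{N})$; it also avoids transporting the canonical structure on $\mathsf{Z}[\mathbb{B}]$ along the isomorphism $\mathcal{N}^\sharp$.
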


\begin{corollary}\label{cor:free-canon-coalgebra} Let $\mathbb{B}$ be a category and let $\mathcal{W}_\mathbb{B}:  (\mathsf{Z}[\mathbb{B}], \dagger) \to (\mathsf{Z}\left[\mathsf{Z}[\mathbb{B}]\right], \dagger)$ be the dagger functor defined as $\mathcal{W}_\mathbb{B} := \mathcal{W}_{\mathcal{N}_\mathbb{B}}$. Explicitly, on objects $\mathcal{W}_\mathbb{B}(A)=A$ while on maps:
\begin{align}
\mathcal{W}_\mathbb{B}\left([{}_n f, f_n, \hdots, {}_i f, f_i, \hdots, {}_1f, f_1] \right) = \left[ [\mathsf{id},{}_nf], [\mathsf{id},f_n], \hdots, [\mathsf{id},{}_i f], [\mathsf{id},f_i], \hdots, [\mathsf{id},{}_1f], [\mathsf{id},f_1]\right]
\end{align}
Then $(\mathsf{Z}[\mathbb{B}], \dagger, \mathcal{W}_\mathbb{B})$ is a $\mathsf{Z}$-coalgebra. 
\end{corollary}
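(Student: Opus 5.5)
The coalgebra claim is a direct instance of Lemma \ref{lemma:free-coalgebra}. By Prop \ref{prop:canon-free}, $(\mathsf{Z}[\mathbb{B}], \dagger, \mathcal{N}_\mathbb{B})$ is a free dagger category over $\mathbb{B}$. Applying Lemma \ref{lemma:free-coalgebra} with $\mathbb{D} = \mathsf{Z}[\mathbb{B}]$ and $\mathcal{N} = \mathcal{N}_\mathbb{B}$ shows that $(\mathsf{Z}[\mathbb{B}], \dagger, \mathcal{W}_{\mathcal{N}_\mathbb{B}})$ is a $\mathsf{Z}$-coalgebra. What remains is to identify $\mathcal{W}_\mathbb{B} = \mathcal{W}_{\mathcal{N}_\mathbb{B}}$ with the explicit formula.

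I would do this by uniqueness in the universal property. By definition, $\mathcal{W}_{\mathcal{N}_\mathbb{B}}$ is the unique dagger functor $\mathsf{Z}[\mathbb{B}] \to \mathsf{Z}[\mathsf{Z}[\mathbb{B}]]$ satisfying
$\mathcal{W}_{\mathcal{N}_\mathbb{B}} \circ \mathcal{N}_\mathbb{B} = \mathcal{N}_{\mathsf{Z}[\mathbb{B}]} \circ \mathcal{N}_\mathbb{B}$.
The key observation is that the proposed formula is exactly $\mathsf{Z}[\mathcal{N}_\mathbb{B}]$ as given by (\ref{def:ZF}) with $\mathcal{F} = \mathcal{N}_\mathbb{B}$. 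Indeed, $\mathcal{N}_\mathbb{B}(g) = [\mathsf{id}, g]$ for every map $g$, so applying $\mathcal{N}_\mathbb{B}$ entrywise to a list gives precisely the list of pairs $[\mathsf{id}, -]$ in the statement. Since $\mathsf{Z}[\mathcal{N}_\mathbb{B}]$ is already known to be a well-defined dagger functor, no separate well-definedness check with respect to $\sim$ is needed.

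Next, the commuting square (\ref{diag:ZF}) defining $\mathsf{Z}[\mathcal{F}]$, taken with $\mathbb{D}_1 = \mathbb{B}$, $\mathbb{D}_2 = \mathsf{Z}[\mathbb{B}]$ and $\mathcal{F} = \mathcal{N}_\mathbb{B}$, gives
$\mathsf{Z}[\mathcal{N}_\mathbb{B}] \circ \mathcal{N}_\mathbb{B} = \mathcal{N}_{\mathsf{Z}[\mathbb{B}]} \circ \mathcal{N}_\mathbb{B}$.
This is exactly the defining condition of $\mathcal{W}_{\mathcal{N}_\mathbb{B}}$. As a sanity check one can verify it directly on a map $f$ of $\mathbb{B}$:
\[ \mathsf{Z}[\mathcal{N}_\mathbb{B}]([\mathsf{id}, f]) = \big[[\mathsf{id},\mathsf{id}],[\mathsf{id},f]\big] = \big[\mathsf{id}, [\mathsf{id},f]\big] = \mathcal{N}_{\mathsf{Z}[\mathbb{B}]}(\mathcal{N}_\mathbb{B}(f)). \]
The middle equality uses that $[\mathsf{id},\mathsf{id}]$ is the identity of $\mathsf{Z}[\mathbb{B}]$. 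Uniqueness of the induced dagger functor out of the free dagger category $\mathsf{Z}[\mathbb{B}]$ (Prop \ref{prop:canon-free}) then forces $\mathcal{W}_{\mathcal{N}_\mathbb{B}} = \mathsf{Z}[\mathcal{N}_\mathbb{B}]$, which is the stated formula. On objects both functors act as the identity.

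I do not expect a real obstacle. The only points needing care are recognising the formula as $\mathsf{Z}[\mathcal{N}_\mathbb{B}]$, which makes functoriality and well-definedness automatic, and the bookkeeping identification $[\mathsf{id},\mathsf{id}] = \mathsf{id}$ in $\mathsf{Z}[\mathbb{B}]$, needed to match the first entry of the list.
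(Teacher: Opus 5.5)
Your proof is correct and follows the paper's intended argument. The corollary is Lemma \ref{lemma:free-coalgebra} applied to $(\mathsf{Z}[\mathbb{B}], \dagger, \mathcal{N}_\mathbb{B})$, and the paper gives no verification of the explicit formula; your identification $\mathcal{W}_{\mathcal{N}_\mathbb{B}} = \mathsf{Z}[\mathcal{N}_\mathbb{B}] = (\mathcal{N}_{\mathsf{Z}[\mathbb{B}]} \circ \mathcal{N}_\mathbb{B})^\sharp$ by uniqueness supplies it cleanly. One nitpick: $\mathcal{N}_\mathbb{B}$ is not a dagger functor, so you are using (\ref{diag:ZF}) and (\ref{def:ZF}) for an ordinary functor, which is harmless because Prop \ref{prop:canon-free} only needs a functor $\mathbb{B} \to \mathsf{Z}[\mathbb{B}]$.
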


Therefore, every zigzag dagger category is a $\mathsf{Z}$-coalgebra, whose $\mathsf{Z}$-coalgebra structure can be given in terms of the zigzag decomposition. 

\begin{proposition}\label{prop:zigzag-zalg} Let $(\mathbb{D}, \dagger,\mathsf{Zig})$ be a zigzag dagger category. Then define the functor $\mathcal{W}_\mathsf{Zig}: \mathbb{D} \to \mathsf{Z}[\mathbb{D}]$ on objects as $\mathcal{W}_\mathsf{Zig}(A) = A$ and on maps as:
\begin{align}\label{eq:Wzig-f}
\mathcal{W}_\mathsf{Zig}(f) = \left[ {}_n\mathcal{Z}[f]^\dagger , \mathcal{Z}[f]_n, \hdots, {}_i\mathcal{Z}[f]^\dagger, \mathcal{Z}[f]_i, \hdots, {}_1\mathcal{Z}[f]^\dagger, \mathcal{Z}[f]_1 \right]
\end{align}
Then $(\mathbb{D}, \dagger, \mathcal{W}_\mathsf{Zig})$ is a $\mathsf{Z}$-coalgebra. 
\end{proposition}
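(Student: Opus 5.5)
The plan is to avoid checking functoriality of $\mathcal{W}_\mathsf{Zig}$ directly, since that would need the messy zigzag decomposition of a composite. Instead I will identify $\mathcal{W}_\mathsf{Zig}$ with a functor already known to be a $\mathsf{Z}$-coalgebra structure, transported along a dagger isomorphism.

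By Prop \ref{prop:zigzag-to-free}, $(\mathbb{D}, \dagger, \mathcal{N}_{\mathsf{Zig}})$ is a free dagger category over $\mathbb{D}_{\mathsf{Zig}}$. Lemma \ref{lemma:free-coalgebra} then tells us that $(\mathbb{D}, \dagger, \mathcal{W}_{\mathcal{N}_\mathsf{Zig}})$ is a $\mathsf{Z}$-coalgebra. Here $\mathcal{W}_{\mathcal{N}_\mathsf{Zig}}$ is the unique dagger functor $\mathbb{D} \to \mathsf{Z}[\mathbb{D}]$ satisfying $\mathcal{W}_{\mathcal{N}_\mathsf{Zig}} \circ \mathcal{N}_\mathsf{Zig} = \mathcal{N}_\mathbb{D} \circ \mathcal{N}_\mathsf{Zig}$. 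So it suffices to show $\mathcal{W}_{\mathcal{N}_\mathsf{Zig}} = \mathcal{W}_\mathsf{Zig}$.

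To compute $\mathcal{W}_{\mathcal{N}_\mathsf{Zig}}$, I will use Cor \ref{cor:free-explicit} with $\mathcal{F} = \mathcal{N}_\mathbb{D} \circ \mathcal{N}_\mathsf{Zig}: \mathbb{D}_\mathsf{Zig} \to \mathsf{Z}[\mathbb{D}]$, which sends a zig $g$ to $[\mathsf{id}, g]$. The corollary gives
\[ \mathcal{W}_{\mathcal{N}_\mathsf{Zig}}(f) = [\mathsf{id}, {}_n\mathcal{Z}[f]^\dagger]^\dagger \circ [\mathsf{id}, \mathcal{Z}[f]_n] \circ \hdots \circ [\mathsf{id}, {}_1\mathcal{Z}[f]^\dagger]^\dagger \circ [\mathsf{id}, \mathcal{Z}[f]_1]. \]
Here ${}_i\mathcal{Z}[f]^\dagger$ is a zig, so it is a legitimate input to $\mathcal{F}$. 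Applying (\ref{def:dag-free}) gives $[\mathsf{id}, {}_i\mathcal{Z}[f]^\dagger]^\dagger = [{}_i\mathcal{Z}[f]^\dagger, \mathsf{id}]$. Concatenating via (\ref{def:comp-free}) produces a list of the form
\[ ({}_n\mathcal{Z}[f]^\dagger, \mathsf{id}, \mathsf{id}, \mathcal{Z}[f]_n, \hdots, {}_1\mathcal{Z}[f]^\dagger, \mathsf{id}, \mathsf{id}, \mathcal{Z}[f]_1). \]
The identities in each interior pair are then removed using (\ref{reduce-1}) and (\ref{reduce-2}). Each removal is an absorption of an identity into an adjacent map, so the class stays the same and the result is exactly the right-hand side of (\ref{eq:Wzig-f}). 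Hence $\mathcal{W}_{\mathcal{N}_\mathsf{Zig}} = \mathcal{W}_\mathsf{Zig}$ on maps. On objects both functors are the identity, so they agree there as well.

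The main obstacle is this bookkeeping step. I need the ordering conventions of the lists to line up correctly under (\ref{def:dag-free}), and I need the reduction relations to apply at every seam. At the seam between $[{}_i\mathcal{Z}[f]^\dagger, \mathsf{id}]$ and the following $[\mathsf{id}, \mathcal{Z}[f]_i]$, the two adjacent identities collapse by one application of (\ref{reduce-1}) followed by one of (\ref{reduce-2}). The same reductions handle the seams between consecutive factors. As a fallback, I would verify the two coalgebra diagrams directly. The left diagram is immediate from (\ref{eq:zigzag-decomp}). For the right diagram, I would use Lemma \ref{lemma:zigzag-zigzag-decomp}, which gives $\mathcal{W}_\mathsf{Zig}(g) = [\mathsf{id}, g]$ for zigs and $\mathcal{W}_\mathsf{Zig}(g) = [g^\dagger, \mathsf{id}]$ for zags. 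However, that route would still require proving functoriality, which is exactly why I prefer the identification argument.
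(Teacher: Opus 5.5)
Your proposal is correct and follows the paper's argument: you use Prop~\ref{prop:zigzag-to-free} and Lemma~\ref{lemma:free-coalgebra} to obtain the coalgebra $\mathcal{W}_{\mathcal{N}_\mathsf{Zig}}$, then expand it via Cor~\ref{cor:free-explicit} to see that it equals $\mathcal{W}_\mathsf{Zig}$, which also gives functoriality for free; you correctly read the zag entries there as $\mathcal{F}({}_i\mathcal{Z}[f]^\dagger)^\dagger$, since ${}_i\mathcal{Z}[f]$ itself is not in $\mathbb{D}_\mathsf{Zig}$. One bookkeeping correction: each interior block ${}_i\mathcal{Z}[f]^\dagger, \mathsf{id}, \mathsf{id}, \mathcal{Z}[f]_i$ collapses to ${}_i\mathcal{Z}[f]^\dagger, \mathcal{Z}[f]_i$ by a single application of either (\ref{reduce-1}) or (\ref{reduce-2}), not one of each, and the seams $\mathcal{Z}[f]_{i+1}, {}_i\mathcal{Z}[f]^\dagger$ contain no identities and need no reduction.
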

\begin{proof} By Prop \ref{prop:zigzag-to-free}, we know that $(\mathbb{D}, \dagger,\mathcal{N}_\mathsf{Zig})$ is a free dagger category of $\mathbb{D}_\mathsf{Zig}$. Then applying Lemma \ref{lemma:free-coalgebra}, we get that $(\mathbb{D}, \dagger, \mathcal{W}_{\mathcal{N}_\mathsf{Zig}})$ is a $\mathsf{Z}$-coalgebra. Expanding $\mathcal{W}_{\mathcal{N}_\mathsf{Zig}}$ out explicitly using Cor \ref{cor:free-explicit}, it is straightforward to check that $\mathcal{W}_{\mathcal{N}_\mathsf{Zig}} = \mathcal{W}_\mathsf{Zig}$. Thus $(\mathbb{D}, \dagger, \mathcal{W}_\mathsf{Zig})$ is a $\mathsf{Z}$-coalgebra as desired. 
\end{proof}

We will now show that every $\mathsf{Z}$-coalgebra is a free dagger category. To do so, we will show that every $\mathsf{Z}$-coalgebra is a zigzag dagger category. 

\begin{proposition}\label{prop:zalg-zigzag} If $(\mathbb{D},\dagger, \mathcal{W})$ is a $\mathsf{Z}$-coalgebra, then $(\mathbb{D},\dagger,\mathsf{Zig}_\mathcal{W})$ is a zigzag dagger category where:
\begin{align}\label{def:ZigW}
\mathsf{Zig}_\mathcal{W} = \lbrace f \in \mathsf{Maps}(\mathbb{D}) \vert~ \mathcal{W}(f) = [\mathsf{id},f] \rbrace
\end{align}
Therefore, $(\mathbb{D},\dagger)$ is a free dagger category. 
\end{proposition}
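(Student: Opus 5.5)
We verify the three axioms of Def \ref{def:zigzag-dag-cat} for $\mathsf{Zig}_\mathcal{W}$ directly from the coalgebra equations (\ref{eq:W-comp}) and (\ref{eq:WW-zig}), and then deduce freeness from Thm \ref{thm:free=zigzag} (equivalently Prop \ref{prop:zigzag-to-free}).

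\textbf{Closure and identities.} Since $\mathcal{W}$ is a functor and is the identity on objects, $\mathcal{W}(\mathsf{id}_A) = [\mathsf{id}_A,\mathsf{id}_A] = [\mathsf{id},\mathsf{id}_A]$, so identities are zigs. If $\mathcal{W}(f)=[\mathsf{id},f]$ and $\mathcal{W}(g)=[\mathsf{id},g]$, then by (\ref{def:comp-free}) and (\ref{reduce-2}),
\[\mathcal{W}(g\circ f)=[\mathsf{id},g,\mathsf{id},f]=[\mathsf{id},g\circ f].\]
We also record a dagger description. Because $\mathcal{W}$ is a dagger functor, (\ref{def:dag-free}) gives $\mathcal{W}(f^\dagger)=[f,\mathsf{id}]$ for a zig $f$. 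Conversely, if $\mathcal{W}(h)=[h^\dagger,\mathsf{id}]$, then $h^\dagger$ is a zig. So zags are exactly the maps $h$ with $\mathcal{W}(h)=[h^\dagger,\mathsf{id}]$. By uniqueness of normal forms (Prop \ref{prop:normal}), a map that is both a zig and a zag has $[\mathsf{id},f]=[f^\dagger,\mathsf{id}]$, which forces $f$ to be an identity.

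\textbf{Existence of decompositions.} For $f$, take the normal form (\ref{eq:W-normal}) of $\mathcal{W}(f)$ and set
\[\mathcal{Z}[f]=\left({}_n\mathcal{W}(f)^\dagger,\mathcal{W}(f)_n,\hdots,{}_1\mathcal{W}(f)^\dagger,\mathcal{W}(f)_1\right).\]
By (\ref{eq:WW-zig}), each $\mathcal{W}(f)_i$ and each ${}_i\mathcal{W}(f)$ lies in $\mathsf{Zig}_\mathcal{W}$, so every ${}_i\mathcal{W}(f)^\dagger$ is a zag. The composite of the list equals $f$ by (\ref{eq:W-comp}). Normality says the inner entries are not identities, so by the previous paragraph they are true zigs and true zags.

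\textbf{Uniqueness, the main obstacle.} Suppose $(h_n', h_n, \hdots, h_1', h_1)$ is any list with $f = h_n'\circ h_n\circ\cdots\circ h_1'\circ h_1$, where each $h_i$ is a zig, each $h_i'$ is a zag, and the inner entries are true. Apply the functor $\mathcal{W}$ and use $\mathcal{W}(h_i)=[\mathsf{id},h_i]$ and $\mathcal{W}(h_i')=[h_i'^\dagger,\mathsf{id}]$. Concatenating and removing identities via (\ref{reduce-1}) and (\ref{reduce-2}) gives
\[\mathcal{W}(f)=[h_n'^\dagger,h_n,\hdots,h_1'^\dagger,h_1].\]
The inner entries $h_i$ are true zigs and hence not identities. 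For the $h_i'^\dagger$, we use that $h_i'$ is a true zag: it is not also a zig, so it is not an identity, and its adjoint is not an identity either. Therefore this list is normal. By the uniqueness part of Prop \ref{prop:normal}, it coincides entrywise with the normal form of $\mathcal{W}(f)$. Applying the dagger to the primed entries recovers $\mathcal{Z}[f]$, which proves uniqueness.

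The delicate point is the bookkeeping of the reductions. One has to check that merging adjacent brackets such as $[\ldots,h_{i+1},\mathsf{id}]\circ[h_i'^\dagger,\mathsf{id}]$ only removes identities and never composes nontrivial maps. In the concatenated list $[\mathsf{id},h_{i+1}]\circ[h_i'^\dagger,\mathsf{id}]$, the two middle identities sit in positions where (\ref{reduce-1}) and (\ref{reduce-2}) simply delete them against identity neighbours, because the paired entries are identities. So only identities are discarded and the reduced list is as claimed.

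With all three axioms verified, $(\mathbb{D},\dagger,\mathsf{Zig}_\mathcal{W})$ is a zigzag dagger category, and Thm \ref{thm:free=zigzag} shows that $(\mathbb{D},\dagger)$ is free.
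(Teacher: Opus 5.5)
Your proof is correct and follows essentially the same route as the paper. The decomposition is read off from the normal form of $\mathcal{W}(f)$ using (\ref{eq:WW-zig}) and (\ref{eq:W-comp}), and uniqueness comes from applying $\mathcal{W}$ to a competing decomposition and invoking Prop \ref{prop:normal}; your direct check that a map which is both a zig and a zag must be an identity usefully supplies the true zig/true zag verification that the paper leaves implicit. One small slip in your bookkeeping remark: the identity pairs that get deleted sit in the middle of $[{h'_i}^\dagger,\mathsf{id}]\circ[\mathsf{id},h_i]=[{h'_i}^\dagger,\mathsf{id},\mathsf{id},h_i]$ (removed by (\ref{reduce-1})), not in $[\mathsf{id},h_{i+1}]\circ[{h'_i}^\dagger,\mathsf{id}]$, but the reduced list you state is correct.
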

\begin{proof} Notice that another description of $\mathsf{Zig}_\mathcal{W}$ is saying $f \in \mathsf{Zig}_\mathcal{W}$ if and only if $\mathcal{W}(f)= \mathcal{N}_\mathbb{D}(f)$. Since $\mathcal{W}$ and $\mathcal{N}_\mathbb{D}$ are both functors, it clear that $\mathsf{Zig}_\mathcal{W}$ is closed under composition and identity maps. Now consider an arbitrary map $f: A \to B$ in $\mathbb{D}$, and consider its image $\mathcal{W}(f): A \to B$ in $\mathsf{Z}[\mathbb{D}]$. Write its normal as in (\ref{eq:W-normal}). Then consider the following even length list: 
\begin{align}\label{def:ZigW-decomp}
\mathcal{Z}_\mathcal{W}[f] = \left( {}_n\mathcal{W}(f)^\dagger , \mathcal{W}(f)_n , \hdots , {}_i\mathcal{W}(f)^\dagger , \mathcal{W}(f)_i , \hdots , {}_1 \mathcal{W}(f),  \mathcal{W}(f)_1 \right) 
\end{align}
Now (\ref{eq:WW-zig}) tells us that each $\mathcal{W}(f)_i$ is indeed a zig and also that each ${}_i\mathcal{W}(f)$ is also a zig, hence each ${}_i\mathcal{W}(f)^\dagger$ are zags. Next (\ref{eq:W-comp}) tells us that the composition of the list $\mathcal{Z}_\mathcal{W}[f]$ is equal to $f$ as desired. For uniqueness, suppose there was another zigzag decomposition $({}_m f, f_m, \hdots, {}_j f, f_j, \hdots, {}_1f, f_1)$ of $f$. Then we compute: 
\begin{gather*}
\left[ {}_n\mathcal{W}(f), \mathcal{W}(f)_n ,\hdots, {}_1 \mathcal{W}(f),\mathcal{W}(f)_1 \right] \overset{\text{(\ref{eq:W-normal})}}{=} \mathcal{W}(f) \overset{\text{(\ref{eq:zigzag-decomp})}}{=} \mathcal{W}\left( {}_m f \circ f_m \circ \hdots \circ {}_1f \circ f_1 \right) \\
\overset{\text{(\ref{def:functor})}}{=} \mathcal{W}({}_m f) \circ \mathcal{W}(f_m) \circ \hdots \circ \mathcal{W}({}_1 f) \circ \mathcal{W}(f_1) \overset{\text{(\ref{eq:dagger})}}{=} \mathcal{W}({}_m f^{\dagger\dagger}) \circ \mathcal{W}(f_m) \circ \hdots \circ \mathcal{W}({}_1 f^{\dagger\dagger}) \circ \mathcal{W}(f_1) \\\overset{\text{(\ref{def:dagfun})}}{=} \mathcal{W}({}_m f^\dagger)^\dagger \circ \mathcal{W}(f_m) \circ \hdots \circ \mathcal{W}({}_1 f^\dagger)^\dagger \circ \mathcal{W}(f_1)  \overset{\text{(\ref{def:ZigW})}}{=} [\mathsf{id},{}_mf^\dagger]^\dagger \circ [\mathsf{id},f_m] \circ \hdots \circ [\mathsf{id},{}_1f^\dagger]^\dagger \circ [\mathsf{id},f_1] \\ \overset{\text{(\ref{def:dag-free})}}{=} [{}_mf^\dagger,\mathsf{id}] \circ [\mathsf{id},f_m] \circ \hdots \circ [{}_1f^\dagger, \mathsf{id}]\circ [\mathsf{id},f_1] \overset{\overset{\text{(\ref{def:comp-free})}}{\text{(\ref{reduce-1})+(\ref{reduce-2})}}}{=} \left[{}_mf^\dagger, f_m, \hdots, {}_1f^\dagger, f_1 \right]
\end{gather*}
So we have that $\left[ {}_n\mathcal{W}(f), \mathcal{W}(f)_n ,\hdots, {}_1 \mathcal{W}(f),\mathcal{W}(f)_1 \right]= \left[{}_mf^\dagger, f_m, \hdots, {}_1f^\dagger, f_1 \right]$. However, note that since $({}_m f, f_m, \hdots, {}_1f, f_1)$ is the zigzag decomposition of $f$ in $\mathbb{D}$, it follows that $({}_m f^\dagger, f_m, \hdots, {}_1f^\dagger, f_1)$ is a normal list. Hence by Prop \ref{prop:normal}, we then get that $n=m$ and that $f_i = \mathcal{W}(f)_i$ and ${}_if^\dagger= {}_i\mathcal{W}(f)$ for all $1 \leq i \leq n$, where by (\ref{eq:dagger}) the latter also tells us that ${}_if= {}_i\mathcal{W}(f)^\dagger$. Thus we get that zigzag decompositions are indeed unique. So we conclude that $(\mathbb{D},\dagger,\mathsf{Zig}_\mathcal{W})$ is a zigzag dagger category as desired. 
\end{proof}

It turns out that the constructions of Prop \ref{prop:zigzag-zalg} and Prop \ref{prop:zalg-zigzag} are inverses of each other.

\begin{theorem}\label{thm:coag=zigzag} For a dagger category $(\mathbb{D}, \dagger)$ there is a bijective correspondence between zigzag structure and $\mathsf{Z}$-coalgebra structure. Explicitly,
\begin{enumerate}[{\em (i)}]
\item If $\mathsf{Zig}$ is a zigzag structure for $(\mathbb{D}, \dagger)$, then $\mathsf{Zig}_{\mathcal{W}_\mathsf{Zig}} = \mathsf{Zig}$.
\item If $\mathcal{W}$ is a $\mathsf{Z}$-coalgebra structure for $(\mathbb{D}, \dagger)$, then $\mathcal{W}_{\mathsf{Zig}_\mathcal{W}} = \mathcal{W}$. 
\end{enumerate}
\end{theorem}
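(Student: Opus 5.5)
The plan is to prove both parts by unwinding the definitions (\ref{eq:Wzig-f}) and (\ref{def:ZigW}). The two key tools are the characterization of zigs via their decompositions (Lemma \ref{lemma:zigzag-zigzag-decomp}) and the uniqueness of normal forms (Prop \ref{prop:normal}).

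For (i), start from a zigzag structure $\mathsf{Zig}$. By (\ref{def:ZigW}), a map $f: A \to B$ lies in $\mathsf{Zig}_{\mathcal{W}_\mathsf{Zig}}$ exactly when $\mathcal{W}_\mathsf{Zig}(f) = [\mathsf{id}_B, f]$. If $f$ is a zig, Lemma \ref{lemma:zigzag-zigzag-decomp}.(\ref{lemma:zigzag-zigzag-decomp.1}) gives $\mathcal{Z}[f] = (\mathsf{id}_B, f)$. Then (\ref{eq:Wzig-f}) yields $\mathcal{W}_\mathsf{Zig}(f) = [\mathsf{id}_B^\dagger, f] = [\mathsf{id}_B, f]$, so $\mathsf{Zig} \subseteq \mathsf{Zig}_{\mathcal{W}_\mathsf{Zig}}$.

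For the converse inclusion, suppose $\mathcal{W}_\mathsf{Zig}(f) = [\mathsf{id}_B, f]$. The list $({}_n\mathcal{Z}[f]^\dagger, \mathcal{Z}[f]_n, \hdots, {}_1\mathcal{Z}[f]^\dagger, \mathcal{Z}[f]_1)$ is normal in $\mathsf{Z}[\mathbb{D}]$. This is because its inner entries are true zigs and duals of true zags, hence non-identities by Lemma \ref{lemma:bothzigzag}.(\ref{lemma:bothzigzag.2}) and Lemma \ref{lemma:zigzagiff}. The list $(\mathsf{id}_B, f)$ is also normal. By Prop \ref{prop:normal} the two lists coincide, so $n = 1$ and $\mathcal{Z}[f]_1 = f$. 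Since $\mathcal{Z}[f]_1$ is a zig by definition, $f \in \mathsf{Zig}$.

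For (ii), start from a $\mathsf{Z}$-coalgebra $\mathcal{W}$. By Prop \ref{prop:zalg-zigzag}, the zigzag decomposition of $f$ with respect to $\mathsf{Zig}_\mathcal{W}$ is the list $\mathcal{Z}_\mathcal{W}[f]$ of (\ref{def:ZigW-decomp}). Uniqueness of zigzag decompositions, established in that proposition, gives $\mathcal{Z}[f] = \mathcal{Z}_\mathcal{W}[f]$. Thus $\mathcal{Z}[f]_i = \mathcal{W}(f)_i$ and ${}_i\mathcal{Z}[f] = {}_i\mathcal{W}(f)^\dagger$. Plugging into (\ref{eq:Wzig-f}) and using $f^{\dagger\dagger} = f$ gives
\[ \mathcal{W}_{\mathsf{Zig}_\mathcal{W}}(f) = \left[ {}_n\mathcal{W}(f), \mathcal{W}(f)_n, \hdots, {}_1\mathcal{W}(f), \mathcal{W}(f)_1 \right], \]
which is $\mathcal{W}(f)$ by (\ref{eq:W-normal}). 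Both functors are the identity on objects, so they are equal.

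The only delicate step is the typing and normality bookkeeping: the zigzag decomposition lists zags ${}_i\mathcal{Z}[f]$, while the $\mathsf{Z}[\mathbb{D}]$ lists contain their daggers. So I must ensure the dagger conventions match in both directions, and confirm that the normality of the list in (i) genuinely follows from the true zig/zag conditions on inner entries.
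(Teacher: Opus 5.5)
Your proposal is correct and follows essentially the same route as the paper: for part (i) you use Lemma \ref{lemma:zigzag-zigzag-decomp} for one inclusion and uniqueness of normal forms (Prop \ref{prop:normal}) for the other, and for part (ii) you substitute $\mathcal{Z}_\mathcal{W}[f]$ into (\ref{eq:Wzig-f}) and cancel the double daggers. You also justify explicitly, via Lemma \ref{lemma:bothzigzag}, that the list $({}_n\mathcal{Z}[f]^\dagger, \mathcal{Z}[f]_n, \hdots, {}_1\mathcal{Z}[f]^\dagger, \mathcal{Z}[f]_1)$ is normal, which is slightly more careful than the paper, where this is simply asserted.
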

\begin{proof} Starting with a zigzag structure $\mathsf{Zig}$. Suppose that $f \in \mathsf{Zig}_{\mathcal{W}_\mathsf{Zig}}$, so $\mathcal{W}_\mathsf{Zig}(f) = [\mathsf{id},f]$. However by definition this means $\left[ {}_n\mathcal{Z}[f]^\dagger, \mathcal{Z}[f]_{n}, \hdots, {}_1\mathcal{Z}[f]^\dagger, \mathcal{Z}[f]_1 \right] = [\mathsf{id},f]$. But since $({}_n\mathcal{Z}[f], \mathcal{Z}[f]_{n}, \hdots, {}_1\mathcal{Z}[f], \mathcal{Z}[f]_1)$ is the zigzag decomposition of $f$, this means that  $\left[ {}_n\mathcal{Z}[f]^\dagger, \mathcal{Z}[f]_{n}, \hdots, {}_1\mathcal{Z}[f]^\dagger, \mathcal{Z}[f]_1 \right]$ is in normal form. Thus by Prop \ref{prop:normal}, we get that $n=1$ and ${}_1\mathcal{Z}[f]=\mathsf{id}$ and $\mathcal{Z}[f]_1=f$. Since by definition of a zigzag decomposition, $\mathcal{Z}[f]_1$ is a zig, we get that $f$ is a zig, so $f \in \mathsf{Zig}$. Conversely, if $f \in \mathsf{Zig}$, by Lemma \ref{lemma:zigzag-zigzag-decomp}.(\ref{lemma:zigzag-zigzag-decomp.1}), $(\mathsf{id},f)$ is the zigzag decomposition of $f$. As such, this gives us that $\mathcal{W}_\mathsf{Zig}(f) = [\mathsf{id},f]$ and so $f \in \mathsf{Zig}_{\mathcal{W}_\mathsf{Zig}}$. Thus we conclude that $\mathsf{Zig}_{\mathcal{W}_\mathsf{Zig}} = \mathsf{Zig}$ as desired. 

On the other hand, starting instead with a $\mathsf{Z}$-coalgebra structure $\mathcal{W}$. Then we compute: 
\begin{gather*}
\mathcal{W}_{\mathsf{Zig}_\mathcal{W}}(f)  \overset{\text{(\ref{eq:Wzig-f}) + (\ref{def:ZigW-decomp}}}{=} \left[ {}_n\mathcal{W}(f)^{\dagger\dagger} , \mathcal{W}(f)_{n}, \hdots, {}_i\mathcal{W}(f)^{\dagger\dagger}, \mathcal{W}(f)_{i}, \hdots, {}_1\mathcal{W}(f)^{\dagger\dagger}, \mathcal{W}(f)_1 \right] \\
\overset{\text{(\ref{eq:dagger})}}{=} \left[ {}_n\mathcal{W}(f), \mathcal{W}(f)_n ,\hdots ,{}_i\mathcal{W}(f),\mathcal{W}(f)_i , \hdots, {}_1 \mathcal{W}(f),\mathcal{W}(f)_1 \right] \overset{\text{(\ref{eq:W-normal})}}{=} \mathcal{W}(f) 
\end{gather*}
Thus from here we can conclude that $\mathcal{W}_{\mathsf{Zig}_\mathcal{W}} = \mathcal{W}$ as desired. 
\end{proof}

Applying the above constructions to a free dagger category with a specified base category recaptures the canonical $\mathsf{Z}$-coalgebra structure and zigzag structure. 

\begin{corollary}\label{cor:Wzig=} Let $(\mathbb{D}, \dagger, \mathcal{N})$ be a free dagger category over a category $\mathbb{B}$. Then $\mathcal{W}_\mathcal{N} = \mathcal{W}_{\mathsf{Zig}_\mathcal{N}}$ and $\mathsf{Zig}_\mathcal{N} = \mathsf{Zig}_{\mathcal{W}_\mathcal{N}}$.
\end{corollary}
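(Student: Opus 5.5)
The plan is to prove the second equality $\mathsf{Zig}_\mathcal{N} = \mathsf{Zig}_{\mathcal{W}_\mathcal{N}}$ first, by direct computation, and then deduce the first equality from Thm \ref{thm:coag=zigzag}. Recall that $\mathcal{W}_\mathcal{N}$ is the unique dagger functor with $\mathcal{W}_\mathcal{N} \circ \mathcal{N} = \mathcal{N}_\mathbb{D} \circ \mathcal{N}$. So for any $g \in \mathsf{Maps}(\mathbb{B})$ we have $\mathcal{W}_\mathcal{N}(\mathcal{N}(g)) = [\mathsf{id}, \mathcal{N}(g)]$. By the definition (\ref{def:ZigW}), this says $\mathcal{N}(g) \in \mathsf{Zig}_{\mathcal{W}_\mathcal{N}}$, which gives $\mathsf{Zig}_\mathcal{N} \subseteq \mathsf{Zig}_{\mathcal{W}_\mathcal{N}}$.

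For the reverse inclusion, take $f$ with $\mathcal{W}_\mathcal{N}(f) = [\mathsf{id}, f]$.
\begin{enumerate}[(1)]
\item By Prop \ref{prop:free-to-zigzag}, $f$ has a zigzag decomposition $\mathcal{Z}_\mathcal{N}[f]$ whose entries are $\mathcal{N}(f_i)$ and $\mathcal{N}({}_if)^\dagger$. Here $[{}_nf, f_n, \hdots, {}_1f, f_1]$ is the normal form of $\mathcal{N}_\mathbb{B}^\sharp(f)$.
\item Apply $\mathcal{W}_\mathcal{N}$ to $f = \mathcal{N}({}_nf)^\dagger \circ \mathcal{N}(f_n) \circ \hdots \circ \mathcal{N}({}_1f)^\dagger \circ \mathcal{N}(f_1)$. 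Since $\mathcal{W}_\mathcal{N}$ is a dagger functor with $\mathcal{W}_\mathcal{N}\mathcal{N} = \mathcal{N}_\mathbb{D}\mathcal{N}$, the same computation as in the proof of Prop \ref{prop:zalg-zigzag}, using (\ref{def:dag-free}), (\ref{def:comp-free}), (\ref{reduce-1}) and (\ref{reduce-2}), gives
\[ \mathcal{W}_\mathcal{N}(f) = \left[\mathcal{N}({}_nf), \mathcal{N}(f_n), \hdots, \mathcal{N}({}_1f), \mathcal{N}(f_1)\right]. \]
\item This list is normal in $\mathsf{Z}[\mathbb{D}]$. The inner entries are not identities because $\mathcal{N}$ is faithful (Cor \ref{cor:N-faithful}) and the list $({}_nf, \hdots, f_1)$ is normal in $\mathsf{Z}[\mathbb{B}]$.
\item The list $(\mathsf{id}, f)$ is also normal. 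Comparing the two normal forms of $\mathcal{W}_\mathcal{N}(f)$ via Prop \ref{prop:normal} gives $n=1$ and $\mathcal{N}(f_1) = f$.
\end{enumerate}
Hence $f \in \mathsf{Zig}_\mathcal{N}$, and so $\mathsf{Zig}_\mathcal{N} = \mathsf{Zig}_{\mathcal{W}_\mathcal{N}}$.

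It is tempting to take a shortcut here by passing through Prop \ref{prop:zigzag-zalg}. That would require the composite $\mathcal{N}^\sharp$ to agree with the inclusion of $\mathsf{Zig}_\mathcal{N}$. This holds, but only up to the isomorphism $\mathbb{B} \cong \mathsf{Im}[\mathcal{N}]$, which is why I prefer the direct computation above.

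For the first equality, apply Thm \ref{thm:coag=zigzag}(ii) to the $\mathsf{Z}$-coalgebra $\mathcal{W}_\mathcal{N}$ from Lemma \ref{lemma:free-coalgebra}. This gives $\mathcal{W}_{\mathsf{Zig}_{\mathcal{W}_\mathcal{N}}} = \mathcal{W}_\mathcal{N}$. Substituting the second equality then yields $\mathcal{W}_{\mathsf{Zig}_\mathcal{N}} = \mathcal{W}_\mathcal{N}$.

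As an alternative check of the first equality, one could use uniqueness: show that $\mathcal{W}_{\mathsf{Zig}_\mathcal{N}}\circ\mathcal{N} = \mathcal{N}_\mathbb{D}\circ\mathcal{N}$. This holds because each $\mathcal{N}(g)$ is a zig with zigzag decomposition $(\mathsf{id},\mathcal{N}(g))$ by Lemma \ref{lemma:zigzag-zigzag-decomp}. The functor $\mathcal{W}_{\mathsf{Zig}_\mathcal{N}}$ is a dagger functor by Prop \ref{prop:zigzag-zalg}, so uniqueness in Lemma \ref{lemma:free-coalgebra} identifies it with $\mathcal{W}_\mathcal{N}$.

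The main obstacle is step (2), the explicit computation of $\mathcal{W}_\mathcal{N}(f)$, together with step (3), justifying that the resulting list is normal. Step (2) relies on the identity reductions collapsing adjacent $[\,\cdot\,,\mathsf{id}]\circ[\mathsf{id},\,\cdot\,]$ factors correctly. Step (3) relies on faithfulness of $\mathcal{N}$ to rule out identity entries.
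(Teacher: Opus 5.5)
Your proof is correct, but your main route runs in the opposite direction from the paper's. The paper proves $\mathcal{W}_\mathcal{N} = \mathcal{W}_{\mathsf{Zig}_\mathcal{N}}$ first, using exactly the argument you give as your ``alternative check''. That argument has two ingredients: $\mathcal{W}_{\mathsf{Zig}_\mathcal{N}}$ is a dagger functor (Prop~\ref{prop:zigzag-zalg}), and it sends each zig $\mathcal{N}(g)$ to $[\mathsf{id},\mathcal{N}(g)]$ (Lemma~\ref{lemma:zigzag-zigzag-decomp}). Uniqueness in the universal property then identifies it with $\mathcal{W}_\mathcal{N}$. The equality $\mathsf{Zig}_\mathcal{N} = \mathsf{Zig}_{\mathcal{W}_\mathcal{N}}$ is read off from Thm~\ref{thm:coag=zigzag}(i).

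You instead prove the equality of zig classes directly by a normal-form computation, and then invoke Thm~\ref{thm:coag=zigzag}(ii). The paper's route is a two-line uniqueness argument that only evaluates functors on zigs. Yours needs the identity-reduction computation and a normality check. In return, it produces the explicit formula $\mathcal{W}_\mathcal{N}(f) = [\mathcal{N}({}_nf), \mathcal{N}(f_n), \ldots, \mathcal{N}({}_1f), \mathcal{N}(f_1)]$.

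That formula already gives the first equality pointwise. By (\ref{eq:Wzig-f}) applied to the decomposition (\ref{def:ZNF}), $\mathcal{W}_{\mathsf{Zig}_\mathcal{N}}(f) = [\mathcal{N}({}_nf)^{\dagger\dagger}, \mathcal{N}(f_n), \ldots, \mathcal{N}({}_1f)^{\dagger\dagger}, \mathcal{N}(f_1)]$, which is the same class. So once your step (2) is done, the appeal to Thm~\ref{thm:coag=zigzag}(ii) is redundant.

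One small point in step (3): faithfulness alone does not show that $\mathcal{N}({}_if)$ is not an identity, because it only compares parallel maps. You also need injectivity of $\mathcal{N}$ on objects (Lemma~\ref{lemma:free-objects}), so that $\mathcal{N}(g) = \mathsf{id}$ forces $g$ to be an endomorphism, and then faithfulness gives $g = \mathsf{id}$. The paper glosses over the same point in Prop~\ref{prop:free-to-zigzag}.
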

\begin{proof} Observe that for $\mathcal{N}(f) \in \mathsf{Zig}_\mathcal{N}$, by definition we have that $\mathcal{W}_{\mathsf{Zig}_\mathcal{N}}\left( \mathcal{N}(f) \right) = \left[ \mathsf{id}, \mathcal{N}(f) \right] = \mathcal{N}_\mathbb{D}(\mathcal{N}(f))$. Thus we have that $\mathcal{W}_{\mathsf{Zig}_\mathcal{N}} \circ \mathcal{N} = \mathcal{N}_\mathbb{D} \circ \mathcal{N}$. Since $\mathcal{W}_{\mathsf{Zig}_\mathcal{N}}$ is a dagger functor, by the universal property of free dagger categories (Def \ref{def:dag-free}), it follows that $\mathcal{W}_\mathcal{N} = \mathcal{W}_{\mathsf{Zig}_\mathcal{N}}$ as desired. Then by Thm \ref{thm:coag=zigzag}, we get that $\mathsf{Zig}_{\mathcal{W}_\mathcal{N}} =  \mathsf{Zig}_{\mathcal{W}_{\mathsf{Zig}_\mathcal{N}}} = \mathsf{Zig}_\mathcal{N}$, as desired. 
\end{proof}

\begin{corollary} Let $\mathbb{B}$ be a category. Then for $(\mathsf{Z}[\mathbb{B}], \dagger)$, we have that $\mathcal{W}_\mathbb{B} = \mathcal{W}_{\mathsf{Zig}_\mathbb{B}}$.
\end{corollary}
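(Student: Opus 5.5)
The plan is to reduce the statement to Corollary \ref{cor:Wzig=} applied to the canonical free dagger category $(\mathsf{Z}[\mathbb{B}], \dagger, \mathcal{N}_\mathbb{B})$, which is free over $\mathbb{B}$ by Prop \ref{prop:canon-free}. That corollary gives $\mathcal{W}_{\mathcal{N}_\mathbb{B}} = \mathcal{W}_{\mathsf{Zig}_{\mathcal{N}_\mathbb{B}}}$. By definition (Cor \ref{cor:free-canon-coalgebra}), $\mathcal{W}_\mathbb{B} := \mathcal{W}_{\mathcal{N}_\mathbb{B}}$, so the left-hand side is already $\mathcal{W}_\mathbb{B}$. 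It therefore remains to identify the zig class.

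Since $\mathcal{N}_\mathbb{B}(f) = [\mathsf{id}, f]$, the class $\mathsf{Zig}_{\mathcal{N}_\mathbb{B}} = \lbrace \mathcal{N}_\mathbb{B}(f) \vert~ f \in \mathsf{Maps}(\mathbb{B}) \rbrace$ is literally $\mathsf{Zig}_\mathbb{B}$. This is exactly the observation made in the proof of the corollary that $(\mathsf{Z}[\mathbb{B}], \dagger, \mathsf{Zig}_\mathbb{B})$ is a zigzag dagger category. Hence $\mathcal{W}_{\mathsf{Zig}_{\mathcal{N}_\mathbb{B}}} = \mathcal{W}_{\mathsf{Zig}_\mathbb{B}}$, and chaining the equalities gives $\mathcal{W}_\mathbb{B} = \mathcal{W}_{\mathsf{Zig}_\mathbb{B}}$.

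There is no real obstacle; the only point to be careful about is that $\mathcal{W}_{\mathsf{Zig}}$ depends only on the class $\mathsf{Zig}$ through its zigzag decompositions, so equal classes give equal functors. As a sanity check, I could also verify directly on a normal form $F = [{}_n f, f_n, \hdots, {}_1 f, f_1]$ that its zigzag decomposition with respect to $\mathsf{Zig}_\mathbb{B}$ is $([\mathsf{id},{}_nf]^\dagger, [\mathsf{id},f_n], \hdots, [\mathsf{id},{}_1f]^\dagger, [\mathsf{id},f_1])$, as in Prop \ref{prop:free-to-zigzag}. Formula (\ref{eq:Wzig-f}) then returns exactly the explicit formula for $\mathcal{W}_\mathbb{B}$ in Cor \ref{cor:free-canon-coalgebra}.
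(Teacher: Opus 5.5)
Your proposal is correct and follows the paper's own proof: apply Cor \ref{cor:Wzig=} to $(\mathsf{Z}[\mathbb{B}], \dagger, \mathcal{N}_\mathbb{B})$, use $\mathcal{W}_\mathbb{B} := \mathcal{W}_{\mathcal{N}_\mathbb{B}}$ from Cor \ref{cor:free-canon-coalgebra}, and note that $\mathsf{Zig}_{\mathcal{N}_\mathbb{B}} = \mathsf{Zig}_\mathbb{B}$. Your direct sanity check on normal forms is also correct; it is extra confirmation rather than a needed step.
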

\begin{proof} This follows from Cor \ref{cor:free-canon-coalgebra} and then applying Cor \ref{cor:Wzig=} to $(\mathsf{Z}[\mathbb{B}], \dagger)$. 
\end{proof}

Therefore, bringing all of this together, we obtain our main result about free dagger categories. 

\begin{theorem}\label{thm:full-free} For a dagger category, the following are equivalent: 
\begin{enumerate}[{\em (i)}]
\item It is free;
\item It is a zigzag dagger category; 
\item It is a $\mathsf{Z}$-coalgebra. 
\end{enumerate}
\end{theorem}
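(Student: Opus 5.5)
The plan is to assemble Thm \ref{thm:full-free} from results already established in this section, arguing around the cycle (i) $\Rightarrow$ (ii) $\Rightarrow$ (iii) $\Rightarrow$ (i). No new computation is needed, only a check that each implication is covered by an earlier proposition.

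For (i) $\Rightarrow$ (ii), suppose $(\mathbb{D},\dagger)$ is free. Then there is a base category $\mathbb{B}$ and a functor $\mathcal{N}$ with $(\mathbb{D},\dagger,\mathcal{N})$ free over $\mathbb{B}$. Prop \ref{prop:free-to-zigzag} shows that $\mathsf{Zig}_\mathcal{N}$ is a zigzag structure. Equivalently, this direction is Thm \ref{thm:free=zigzag}.

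For (ii) $\Rightarrow$ (iii), given a zigzag structure $\mathsf{Zig}$, Prop \ref{prop:zigzag-zalg} shows that $\mathcal{W}_\mathsf{Zig}$ is a $\mathsf{Z}$-coalgebra structure. That result rests on Prop \ref{prop:zigzag-to-free} together with Lemma \ref{lemma:free-coalgebra}.

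For (iii) $\Rightarrow$ (i), given a $\mathsf{Z}$-coalgebra $\mathcal{W}$, Prop \ref{prop:zalg-zigzag} shows that $\mathsf{Zig}_\mathcal{W}$ is a zigzag structure. Prop \ref{prop:zigzag-to-free} then shows that $(\mathbb{D},\dagger)$ is free over $\mathbb{D}_{\mathsf{Zig}_\mathcal{W}}$. Alternatively, (iii) $\Rightarrow$ (i) follows directly from the last sentence of Prop \ref{prop:zalg-zigzag}.

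If a sharper statement is wanted, Thm \ref{thm:coag=zigzag} can also be cited: it upgrades (ii) $\Leftrightarrow$ (iii) to a bijection between the two kinds of structure. This is not needed for the equivalence itself.

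There is no real obstacle, since the substantive work is already done in the cited propositions. The one place to be careful is (iii) $\Rightarrow$ (ii), where one uses the normal-form uniqueness of Prop \ref{prop:normal} via (\ref{eq:WW-zig}); that argument has already been carried out in Prop \ref{prop:zalg-zigzag}.
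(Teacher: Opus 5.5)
Your proposal is correct and matches the paper. The paper proves this theorem by assembling Props~\ref{prop:zigzag-to-free} and~\ref{prop:free-to-zigzag} (i.e.\ Thm~\ref{thm:free=zigzag}) with Props~\ref{prop:zigzag-zalg} and~\ref{prop:zalg-zigzag}, exactly as in your cycle (i) $\Rightarrow$ (ii) $\Rightarrow$ (iii) $\Rightarrow$ (i), and citing Thm~\ref{thm:coag=zigzag} is optional for the equivalence itself.
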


We conclude this section with the observation that Thm \ref{thm:coag=zigzag} can easily be extended to an isomorphism between the (large) category of zigzag dagger categories and the category of $\mathsf{Z}$-coalgebras, where the latter is often called the coEilenberg-Moore category. 

\begin{definition}\label{def:zigzagfun} For zigzag dagger categories $(\mathbb{D}_1, \dagger, \mathsf{Zig}_1)$ and $(\mathbb{D}_2, \dagger, \mathsf{Zig}_2)$, a \textbf{zigzag dagger functor} ${\mathcal{F}: (\mathbb{D}_1, \dagger,\mathsf{Zig}_1) \to (\mathbb{D}_2, \dagger,\mathsf{Zig}_2)}$ is a dagger functor ${\mathcal{F}: (\mathbb{D}_1, \dagger) \to (\mathbb{D}_2, \dagger)}$ which maps zig maps to zig maps, that is, if $f \in \mathsf{Zig}_1$ then $\mathcal{F}(f) \in \mathsf{Zig}_2$. Let $\mathsf{ZZDAG}$ be the (large) category of zigzag dagger categories and zigzag dagger functors between them. 
\end{definition}

\begin{definition} For $\mathsf{Z}$-coalgebras $(\mathbb{D}_1, \dagger, \mathcal{W}_1)$ and $(\mathbb{D}_2, \dagger, \mathcal{W}_2)$, a \textbf{$\mathsf{Z}$-coalgebra functor} $\mathcal{F}: (\mathbb{D}_1, \dagger,\mathcal{W}_1) \to (\mathbb{D}_2, \dagger,\mathcal{W}_2)$ is a dagger functor ${\mathcal{F}: (\mathbb{D}_1, \dagger) \to (\mathbb{D}_2, \dagger)}$ such that the following diagram commutes: 
\begin{equation}\begin{gathered}\label{diag:Z-coalg-fun}   \xymatrixcolsep{5pc}\xymatrix{  \mathbb{D}_1  \ar[r]^-{\mathcal{F}}  \ar[d]_-{\mathcal{W}_1} & \mathbb{D}_2 \ar[d]^-{\mathcal{W}_2} \\ 
\mathsf{Z}[\mathbb{D}_1] \ar[r]_-{\mathsf{Z}[\mathcal{F}]}  &  \mathsf{Z}[\mathbb{D}_2]
 }
\end{gathered}\end{equation}
Let $\mathsf{Z}\text{-}\mathsf{COALG}$ be the (large) category of $\mathsf{Z}$-coalgebras and $\mathsf{Z}$-algebra functors between them. 
\end{definition}

\begin{lemma} Let ${\mathcal{F}: (\mathbb{D}_1, \dagger,\mathsf{Zig}_1) \to (\mathbb{D}_2, \dagger,\mathsf{Zig}_2)}$ be a zigzag dagger functor. Then $\mathcal{F}: (\mathbb{D}_1, \dagger,\mathcal{W}_{\mathsf{Zig}_1}) \to (\mathbb{D}_2, \dagger,\mathcal{W}_{\mathsf{Zig}_2})$ is a $\mathsf{Z}$-coalgebra functor. 
\end{lemma}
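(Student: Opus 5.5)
We need to show $\mathcal{W}_{\mathsf{Zig}_2} \circ \mathcal{F} = \mathsf{Z}[\mathcal{F}] \circ \mathcal{W}_{\mathsf{Zig}_1}$ as functors $\mathbb{D}_1 \to \mathsf{Z}[\mathbb{D}_2]$. On objects both composites send $A$ to $\mathcal{F}(A)$, since $\mathcal{W}_{\mathsf{Zig}_i}$ and $\mathsf{Z}[\mathcal{F}]$ act as identity and $\mathcal{F}$ on objects respectively. The comparison on maps will be done by applying both sides to the zigzag decomposition of an arbitrary map and reducing everything to the behaviour of the two functors on zigs and zags.

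Fix $f: A \to B$ in $\mathbb{D}_1$ with zigzag decomposition $\mathcal{Z}[f]$. The right-hand side is immediate from (\ref{eq:Wzig-f}) and (\ref{def:ZF}):
\[ \mathsf{Z}[\mathcal{F}]\left(\mathcal{W}_{\mathsf{Zig}_1}(f)\right) = \left[ \mathcal{F}({}_n\mathcal{Z}[f]^\dagger), \mathcal{F}(\mathcal{Z}[f]_n), \hdots, \mathcal{F}({}_1\mathcal{Z}[f]^\dagger), \mathcal{F}(\mathcal{Z}[f]_1) \right]. \]
For the left-hand side, I write $f$ as the composite (\ref{eq:zigzag-decomp}) and use functoriality of $\mathcal{F}$ and of $\mathcal{W}_{\mathsf{Zig}_2}$ (the latter is a dagger functor by Prop \ref{prop:zigzag-zalg}). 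This gives
\[ \mathcal{W}_{\mathsf{Zig}_2}(\mathcal{F}(f)) = \mathcal{W}_{\mathsf{Zig}_2}(\mathcal{F}({}_n\mathcal{Z}[f])) \circ \mathcal{W}_{\mathsf{Zig}_2}(\mathcal{F}(\mathcal{Z}[f]_n)) \circ \hdots \circ \mathcal{W}_{\mathsf{Zig}_2}(\mathcal{F}(\mathcal{Z}[f]_1)). \]

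Each factor can now be computed. Each $\mathcal{Z}[f]_i$ is a zig, and since $\mathcal{F}$ is a zigzag dagger functor, $\mathcal{F}(\mathcal{Z}[f]_i)$ is a zig. Lemma \ref{lemma:zigzag-zigzag-decomp}.(\ref{lemma:zigzag-zigzag-decomp.1}) then yields $\mathcal{W}_{\mathsf{Zig}_2}(\mathcal{F}(\mathcal{Z}[f]_i)) = [\mathsf{id}, \mathcal{F}(\mathcal{Z}[f]_i)]$. Each ${}_i\mathcal{Z}[f]$ is a zag, so ${}_i\mathcal{Z}[f]^\dagger$ is a zig, and hence $\mathcal{F}({}_i\mathcal{Z}[f]^\dagger) = \mathcal{F}({}_i\mathcal{Z}[f])^\dagger$ is a zig. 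Since $\mathcal{W}_{\mathsf{Zig}_2}$ commutes with dagger, (\ref{def:dag-free}) gives
\[ \mathcal{W}_{\mathsf{Zig}_2}(\mathcal{F}({}_i\mathcal{Z}[f])) = [\mathsf{id}, \mathcal{F}({}_i\mathcal{Z}[f]^\dagger)]^\dagger = [\mathcal{F}({}_i\mathcal{Z}[f]^\dagger), \mathsf{id}]. \]

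Finally, I compose these pairs by concatenation (\ref{def:comp-free}) and remove the interior identities using (\ref{reduce-1}) and (\ref{reduce-2}), exactly as in the last step of the displayed computation in the proof of Prop \ref{prop:zalg-zigzag}. This produces $[\mathcal{F}({}_n\mathcal{Z}[f]^\dagger), \mathcal{F}(\mathcal{Z}[f]_n), \hdots, \mathcal{F}({}_1\mathcal{Z}[f]^\dagger), \mathcal{F}(\mathcal{Z}[f]_1)]$, which matches the right-hand side. The argument compares only equivalence classes in $\mathsf{Z}[\mathbb{D}_2]$ and never needs the image list to be normal, so it does not matter that $\mathcal{F}$ may send true zigs to identities. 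The only point needing care is this bookkeeping of the identity reductions, which is routine.
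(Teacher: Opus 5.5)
Your proof is correct, and it differs from the paper's at the key step. The paper claims that a zigzag dagger functor preserves zigzag decompositions, i.e.\ ${}_i\mathcal{Z}[\mathcal{F}(f)] = \mathcal{F}({}_i\mathcal{Z}[f])$ and $\mathcal{Z}[\mathcal{F}(f)]_i = \mathcal{F}(\mathcal{Z}[f]_i)$ as in (\ref{eq:ZF=FZ}). It then reads $\mathcal{W}_{\mathsf{Zig}_2}(\mathcal{F}(f))$ directly off the formula (\ref{eq:Wzig-f}), without using functoriality of $\mathcal{W}_{\mathsf{Zig}_2}$. You never identify $\mathcal{Z}[\mathcal{F}(f)]$ at all. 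Instead you evaluate $\mathcal{W}_{\mathsf{Zig}_2}$ on the images of the individual zigs and zags via Lemma \ref{lemma:zigzag-zigzag-decomp}, and reassemble in $\mathsf{Z}[\mathbb{D}_2]$ using functoriality (Prop \ref{prop:zigzag-zalg}) and the identity-removal relations.

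As you anticipated, this difference matters. The paper's preservation claim needs $\mathcal{F}$ to send true zigs to true zigs, which the definition of a zigzag dagger functor does not guarantee. For instance, let $\mathbb{B}$ be the additive monoid of natural numbers as a one-object category with generator $g$. Let $\mathcal{F}$ be the unique dagger functor from $(\mathsf{Z}[\mathbb{B}], \dagger, \mathsf{Zig}_\mathbb{B})$ to the one-object, one-morphism dagger category, whose only zig is the identity. Then $\mathcal{F}$ preserves zigs. Yet $[\mathsf{id},g]\circ[g,\mathsf{id}]$ has zigzag decomposition $(\mathsf{id},[\mathsf{id},g],[g,\mathsf{id}],\mathsf{id})$ of length four, while its image $\mathsf{id}$ has decomposition $(\mathsf{id},\mathsf{id})$ of length two. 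So (\ref{eq:ZF=FZ}) fails in general.

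The paper's final equality nonetheless holds, because the two lists it compares represent the same class in $\mathsf{Z}[\mathbb{D}_2]$. That is exactly what your factor-by-factor computation establishes. Your route is therefore the one that actually covers all zigzag dagger functors. Its only extra cost is invoking that $\mathcal{W}_{\mathsf{Zig}_2}$ is a dagger functor, which the statement presupposes anyway.
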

\begin{proof} It is straightforward to see that since a zigzag dagger functor preserves zigs, it preserves zags, and hence also preserves zigzag decomposition. Explicitly, the zigzag decomposition of $\mathcal{F}(f)$ is $\mathcal{Z}[\mathcal{F}(f)] = (\mathcal{F}\left({}_n\mathcal{Z}[f] \right), \mathcal{F}\left(\mathcal{Z}[f]_n\right), \hdots, \mathcal{F}\left({}_1\mathcal{Z}[f] \right), \mathcal{F}\left( \mathcal{Z}[f]_1 \right))$. So we may write that for all $1 \leq i \leq n$, we have that: 
\begin{align}\label{eq:ZF=FZ}
\mathcal{Z}\left[\mathcal{F}(f) \right]_i = \mathcal{F}\left(\mathcal{Z}[f]_i\right) && {}_i\mathcal{Z}\left[\mathcal{F}(f) \right] = \mathcal{F}\left({}_i\mathcal{Z}[f]\right) 
\end{align}
Then we compute: 
\begin{gather*}
\mathsf{Z}[\mathcal{F}]\left( \mathcal{W}_{\mathsf{Zig}_1}(f) \right) \overset{\text{(\ref{eq:Wzig-f})}}{=} \mathsf{Z}[\mathcal{F}]\left( \left[ {}_n\mathcal{Z}[f]^\dagger , \mathcal{Z}[f]_n, \hdots, {}_1\mathcal{Z}[f]^\dagger, \mathcal{Z}[f]_1 \right] \right) \\\overset{\text{(\ref{def:ZF})}}{=} \left[ \mathcal{F}\left( {}_n\mathcal{Z}[f]^\dagger \right) , \mathcal{F}\left(\mathcal{Z}[f]_n\right), \hdots, \mathcal{F}\left({}_1\mathcal{Z}[f]^\dagger\right), \mathcal{F}\left(\mathcal{Z}[f]_1 \right) \right] \\
\overset{\text{(\ref{def:dagfun})}}{=} \left[ \mathcal{F}\left( {}_n\mathcal{Z}[f] \right)^\dagger , \mathcal{F}\left(\mathcal{Z}[f]_n\right), \hdots, \mathcal{F}\left({}_1\mathcal{Z}[f]\right)^\dagger, \mathcal{F}\left(\mathcal{Z}[f]_1 \right) \right] \\\overset{\text{(\ref{eq:ZF=FZ})}}{=}\left[ {}_n\mathcal{Z}\left[\mathcal{F}(f) \right]^\dagger , \mathcal{Z}\left[\mathcal{F}(f) \right]_n, \hdots, {}_1\mathcal{Z}\left[\mathcal{F}(f) \right]^\dagger , \mathcal{Z}\left[\mathcal{F}(f) \right]_1 \right] \overset{\text{(\ref{eq:Wzig-f})}}{=} \mathcal{W}_{\mathsf{Zig}_2}\left( \mathcal{F}(f) \right)
\end{gather*}
Thus it follows that $\mathsf{Z}[\mathcal{F}] \circ \mathcal{W}_{\mathsf{Zig}_1} =  \mathcal{W}_{\mathsf{Zig}_2} \circ \mathcal{F}$. So ${\mathcal{F}: (\mathbb{D}_1, \dagger,\mathcal{W}_{\mathsf{Zig}_1}) \to (\mathbb{D}_2, \dagger,\mathcal{W}_{\mathsf{Zig}_2})}$ is a $\mathsf{Z}$-coalgebra functor. 
\end{proof}

\begin{lemma} Let ${\mathcal{F}: (\mathbb{D}_1, \dagger,\mathcal{W}_1) \to (\mathbb{D}_2, \dagger,\mathcal{W}_2)}$ be a $\mathsf{Z}$-coalgebra functor. Then $\mathcal{F}: (\mathbb{D}_1, \dagger,\mathsf{Zig}_{\mathcal{W}_1}) \to (\mathbb{D}_2, \dagger,\mathsf{Zig}_{\mathcal{W}_2})$ is a zigzag dagger functor.
\end{lemma}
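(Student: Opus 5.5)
We must show that $\mathcal{F}$ sends $\mathsf{Zig}_{\mathcal{W}_1}$ into $\mathsf{Zig}_{\mathcal{W}_2}$; it is already a dagger functor, so this is the only thing to check. The plan is a direct computation using only the commuting square (\ref{diag:Z-coalg-fun}) and the explicit formula (\ref{def:ZF}) for $\mathsf{Z}[\mathcal{F}]$.

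Let $f: A \to B$ lie in $\mathsf{Zig}_{\mathcal{W}_1}$. By (\ref{def:ZigW}) this means $\mathcal{W}_1(f) = [\mathsf{id}_B, f]$. The square (\ref{diag:Z-coalg-fun}) says $\mathcal{W}_2 \circ \mathcal{F} = \mathsf{Z}[\mathcal{F}] \circ \mathcal{W}_1$. Applying this to $f$ gives
\[ \mathcal{W}_2\left(\mathcal{F}(f)\right) = \mathsf{Z}[\mathcal{F}]\left(\mathcal{W}_1(f)\right) = \mathsf{Z}[\mathcal{F}]\left([\mathsf{id}_B, f]\right) = [\mathcal{F}(\mathsf{id}_B), \mathcal{F}(f)] = [\mathsf{id}_{\mathcal{F}(B)}, \mathcal{F}(f)], \]
where the third equality is (\ref{def:ZF}) and the last is functoriality (\ref{def:functor}). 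By (\ref{def:ZigW}) for $\mathcal{W}_2$, this says exactly that $\mathcal{F}(f) \in \mathsf{Zig}_{\mathcal{W}_2}$. Equivalently, in the reformulation used in the proof of Prop \ref{prop:zalg-zigzag}: $\mathcal{W}_1(f) = \mathcal{N}_{\mathbb{D}_1}(f)$ implies $\mathcal{W}_2(\mathcal{F}(f)) = \mathcal{N}_{\mathbb{D}_2}(\mathcal{F}(f))$, since $\mathsf{Z}[\mathcal{F}] \circ \mathcal{N}_{\mathbb{D}_1} = \mathcal{N}_{\mathbb{D}_2} \circ \mathcal{F}$ by (\ref{diag:ZF}).

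I expect no real obstacle; the only point needing care is that $\mathsf{Z}[\mathcal{F}]$ is well-defined on equivalence classes. This is already settled by its definition as the unique dagger functor in (\ref{diag:ZF}), so the formula may be applied to the particular representative $(\mathsf{id}_B, f)$ without passing to normal forms. Hence $\mathcal{F}$ preserves zigs and is a zigzag dagger functor in the sense of Def \ref{def:zigzagfun}.
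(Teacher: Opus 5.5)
Your proposal is correct and is essentially the paper's own proof. You apply the square (\ref{diag:Z-coalg-fun}) to $f$, substitute $\mathcal{W}_1(f)=[\mathsf{id},f]$, and use (\ref{def:ZF}) together with functoriality to get $\mathcal{W}_2(\mathcal{F}(f))=[\mathsf{id},\mathcal{F}(f)]$, which says exactly that $\mathcal{F}(f)\in\mathsf{Zig}_{\mathcal{W}_2}$.
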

\begin{proof} Let $f \in \mathsf{Zig}_{\mathcal{W}_1}$. Then we compute: 
\begin{gather*}
\mathcal{W}_2\left( \mathcal{F}(f) \right) \overset{\text{(\ref{diag:Z-coalg-fun})}}{=} \mathsf{Z}[\mathcal{F}]\left( \mathcal{W}_{\mathsf{Zig}_1}(f) \right) \overset{\text{(\ref{def:ZigW})}}{=}  \mathsf{Z}[\mathcal{F}]\left( [\mathsf{id}, f] \right) \overset{\text{(\ref{def:ZF})}}{=} [\mathcal{F}(\mathsf{id}), \mathcal{F}(f)] \overset{\text{(\ref{def:functor})}}{=} [\mathsf{id}, \mathcal{F}(f)]
\end{gather*}
So $\mathcal{F}(f) \in \mathsf{Zig}_{\mathcal{W}_2}$. Thus ${\mathcal{F}: (\mathbb{D}_1, \dagger,\mathsf{Zig}_{\mathcal{W}_1}) \to (\mathbb{D}_2, \dagger,\mathsf{Zig}_{\mathcal{W}_2})}$ is a zigzag dagger functor.
\end{proof}

By combining the above lemmas with Thm \ref{thm:coag=zigzag}, we conclude that: 

\begin{theorem}\label{thm:zzdag=coalg} $\mathsf{ZZDAG} \simeq \mathsf{Z}\text{-}\mathsf{COALG}$. 
\end{theorem}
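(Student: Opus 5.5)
We will exhibit mutually inverse functors $\Phi: \mathsf{ZZDAG} \to \mathsf{Z}\text{-}\mathsf{COALG}$ and $\Psi: \mathsf{Z}\text{-}\mathsf{COALG} \to \mathsf{ZZDAG}$, which in fact yields an isomorphism of categories, and in particular the stated equivalence. On objects, $\Phi$ sends $(\mathbb{D}, \dagger, \mathsf{Zig})$ to $(\mathbb{D}, \dagger, \mathcal{W}_\mathsf{Zig})$; this is a $\mathsf{Z}$-coalgebra by Prop \ref{prop:zigzag-zalg}. In the other direction, $\Psi$ sends $(\mathbb{D}, \dagger, \mathcal{W})$ to $(\mathbb{D}, \dagger, \mathsf{Zig}_\mathcal{W})$, which is a zigzag dagger category by Prop \ref{prop:zalg-zigzag}. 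On maps, both functors are the identity on the underlying dagger functor. The two preceding lemmas show that this is well defined: a zigzag dagger functor is a $\mathsf{Z}$-coalgebra functor between the induced coalgebras, and conversely a $\mathsf{Z}$-coalgebra functor preserves the induced zigs.

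Functoriality of $\Phi$ and $\Psi$ is immediate. Composition and identities in $\mathsf{ZZDAG}$ and $\mathsf{Z}\text{-}\mathsf{COALG}$ are both inherited from $\mathsf{DAG}$, and $\Phi$ and $\Psi$ leave the underlying dagger functors unchanged.

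Next we verify that the functors are inverse. On objects, $\Psi\Phi(\mathbb{D},\dagger,\mathsf{Zig}) = (\mathbb{D},\dagger,\mathsf{Zig}_{\mathcal{W}_\mathsf{Zig}}) = (\mathbb{D},\dagger,\mathsf{Zig})$ by Thm \ref{thm:coag=zigzag}.(i). Similarly, $\Phi\Psi(\mathbb{D},\dagger,\mathcal{W}) = (\mathbb{D},\dagger,\mathcal{W}_{\mathsf{Zig}_\mathcal{W}}) = (\mathbb{D},\dagger,\mathcal{W})$ by Thm \ref{thm:coag=zigzag}.(ii). On maps, both composites are the identity by construction. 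Hence $\Phi$ and $\Psi$ are inverse isomorphisms of categories, and $\mathsf{ZZDAG} \cong \mathsf{Z}\text{-}\mathsf{COALG}$, so in particular $\mathsf{ZZDAG} \simeq \mathsf{Z}\text{-}\mathsf{COALG}$.

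The only point needing care is the well-definedness on morphisms, since the hom-sets of the two categories must correspond exactly. A morphism between $\Phi$ of two objects must be exactly a zigzag functor between the original objects. This holds because those objects are $\Psi\Phi$-fixed: a $\mathsf{Z}$-coalgebra functor $(\mathbb{D}_1,\dagger,\mathcal{W}_{\mathsf{Zig}_1}) \to (\mathbb{D}_2,\dagger,\mathcal{W}_{\mathsf{Zig}_2})$ preserves $\mathsf{Zig}_{\mathcal{W}_{\mathsf{Zig}_i}} = \mathsf{Zig}_i$. This again reduces to the second lemma combined with Thm \ref{thm:coag=zigzag}, so no real obstacle is expected.
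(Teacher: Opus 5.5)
Your proposal is correct and follows the paper's own route: the paper obtains the theorem by combining Thm \ref{thm:coag=zigzag} (the object-level bijection) with the two lemmas transporting morphisms in each direction. Your identity-on-morphisms functors $\Phi,\Psi$ just make explicit the isomorphism $\mathsf{ZZDAG} \cong \mathsf{Z}\text{-}\mathsf{COALG}$ that the paper alludes to.

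One caveat concerns a lemma you cite. A zigzag dagger functor may send a true zig to an identity, so $\mathcal{F}$ need not carry zigzag decompositions termwise to zigzag decompositions, as the paper's proof of that lemma asserts. The lemma is nonetheless true, because $\mathsf{Z}[\mathcal{F}](\mathcal{W}_{\mathsf{Zig}_1}(f))$ is represented by a list of zigs of $\mathbb{D}_2$. Its normal form in $\mathsf{Z}[\mathbb{D}_2]$, with the zag-position entries daggered, is then by uniqueness the zigzag decomposition of $\mathcal{F}(f)$.
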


We can in fact say a bit more about $\mathsf{Z}\text{-}\mathsf{COALG}$. Indeed, the forgetful functor $\mathsf{U}: \mathsf{DAG} \to \mathsf{CAT}$ is monadic \cite[Thm 2.1.11]{karvonen2019way}, which means that the algebras of the induced monad on $\mathsf{CAT}$ are precisely dagger categories, or in other words, the Eilenberg-Moore category of this induced monad is equivalent to $\mathsf{DAG}$. It turns out that $\mathsf{Z}: \mathsf{CAT} \to \mathsf{U}$ is comonadic, which means that $\mathsf{Z}\text{-}\mathsf{COALG}$ is equivalent to $\mathsf{CAT}$. In other words, the free dagger category monad on $\mathsf{CAT}$ is of effective descent type \cite[Sec 2]{mesablishvili2006monads}.

\begin{theorem} $\mathsf{Z}\text{-}\mathsf{COALG} \simeq \mathsf{CAT}$. Hence $\mathsf{Z}: \mathsf{CAT} \to \mathsf{U}$ is comonadic. 
\end{theorem}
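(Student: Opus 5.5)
To prove $\mathsf{Z}\text{-}\mathsf{COALG} \simeq \mathsf{CAT}$, I would route everything through Thm \ref{thm:zzdag=coalg} and instead establish $\mathsf{ZZDAG} \simeq \mathsf{CAT}$. The comparison functor $\mathsf{K}: \mathsf{CAT} \to \mathsf{Z}\text{-}\mathsf{COALG}$ sends $\mathbb{B}$ to $(\mathsf{Z}[\mathbb{B}], \dagger, \mathcal{W}_\mathbb{B})$ (Cor \ref{cor:free-canon-coalgebra}) and a functor $\mathcal{G}$ to $\mathsf{Z}[\mathcal{G}]$. Under Thm \ref{thm:zzdag=coalg}, this corresponds to $\mathbb{B} \mapsto (\mathsf{Z}[\mathbb{B}], \dagger, \mathsf{Zig}_\mathbb{B})$. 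Comonadicity of $\mathsf{Z}$ is then exactly the statement that $\mathsf{K}$ is an equivalence, so it suffices to show $\mathsf{K}$ is full, faithful and essentially surjective.

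\textbf{Essential surjectivity.} Given a zigzag dagger category $(\mathbb{D}, \dagger, \mathsf{Zig})$, Prop \ref{prop:zigzag-to-free} gives a dagger isomorphism $\mathcal{N}_{\mathsf{Zig}}^\sharp: \mathsf{Z}[\mathbb{D}_{\mathsf{Zig}}] \to \mathbb{D}$. This isomorphism sends $[\mathsf{id}, f]$ to $f$, so it carries $\mathsf{Zig}_{\mathbb{D}_{\mathsf{Zig}}}$ onto $\mathsf{Zig}$. Its inverse, given in Cor \ref{cor:free-explicit}, sends a zig $f$ to $[\mathsf{id}, f]$ by Lemma \ref{lemma:zigzag-zigzag-decomp}. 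Hence both directions are zigzag dagger functors, and we have an isomorphism in $\mathsf{ZZDAG}$.

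\textbf{Faithfulness.} Suppose $\mathsf{Z}[\mathcal{G}] = \mathsf{Z}[\mathcal{G}']$. Then on objects $\mathcal{G} = \mathcal{G}'$, and on maps $[\mathsf{id}, \mathcal{G}(f)] = [\mathsf{id}, \mathcal{G}'(f)]$. Lemma \ref{lemma:N-faithful} then gives $\mathcal{G}(f) = \mathcal{G}'(f)$.

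\textbf{Fullness.} This is the key step. Let $\mathcal{H}: \mathsf{Z}[\mathbb{B}_1] \to \mathsf{Z}[\mathbb{B}_2]$ be a dagger functor preserving zigs.
\begin{enumerate}[(1)]
\item Since zigs of $\mathsf{Z}[\mathbb{B}_2]$ have the form $[\mathsf{id}, g]$ and these $g$ are unique by Lemma \ref{lemma:N-faithful}, we may define $\mathcal{G}: \mathbb{B}_1 \to \mathbb{B}_2$ by $\mathcal{G}(A) = \mathcal{H}(A)$ and $\mathcal{H}([\mathsf{id}, f]) = [\mathsf{id}, \mathcal{G}(f)]$.
\item $\mathcal{G}$ is a functor, because $\mathcal{H}$ and $\mathcal{N}_{\mathbb{B}_i}$ are functors and $\mathcal{N}_{\mathbb{B}_2}$ is faithful.
\item By construction $\mathcal{H} \circ \mathcal{N}_{\mathbb{B}_1} = \mathcal{N}_{\mathbb{B}_2} \circ \mathcal{G} = \mathsf{Z}[\mathcal{G}] \circ \mathcal{N}_{\mathbb{B}_1}$.
\item Both $\mathcal{H}$ and $\mathsf{Z}[\mathcal{G}]$ are dagger functors, so the uniqueness clause of Prop \ref{prop:canon-free} forces $\mathcal{H} = \mathsf{Z}[\mathcal{G}]$.
\end{enumerate}

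\textbf{Expected obstacle.} The main difficulty is bookkeeping. I need to check that $\mathsf{K}$, transported along Thm \ref{thm:zzdag=coalg}, really is the standard comparison functor, i.e. that $\mathcal{W}_\mathbb{B}$ corresponds to $\mathsf{Zig}_\mathbb{B}$; the corollary following Cor \ref{cor:Wzig=} supplies this. I also need the categorical fact that the comparison functor being an equivalence is what comonadicity means.

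An alternative would be to verify Beck's comonadicity theorem directly: show $\mathsf{Z}$ reflects isomorphisms and preserves $\mathsf{Z}$-split equalizers. I would use this only as a fallback, since the explicit route above avoids any limit computations.
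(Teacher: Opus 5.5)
Your proposal is correct and follows the same strategy as the paper: you show the comparison functor $\mathbb{B} \mapsto (\mathsf{Z}[\mathbb{B}], \dagger, \mathcal{W}_\mathbb{B})$ is full, faithful and essentially surjective, using the same faithfulness argument and the same essential-surjectivity idea (the dagger isomorphism $\mathsf{Z}[\mathbb{D}_{\mathsf{Zig}}] \cong \mathbb{D}$ respects the structure), merely phrased in $\mathsf{ZZDAG}$ rather than $\mathsf{Z}\text{-}\mathsf{COALG}$. Your fullness step is more careful than the paper's: the paper writes $\mathcal{G} = \mathsf{Z}[\mathcal{G}\circ\mathcal{N}_{\mathbb{B}_1}]$, which does not typecheck because $\mathcal{G}\circ\mathcal{N}_{\mathbb{B}_1}$ lands in $\mathsf{Z}[\mathbb{B}_2]$, whereas you use zig-preservation (i.e.\ the coalgebra-morphism condition) to factor $\mathcal{H}\circ\mathcal{N}_{\mathbb{B}_1}$ through $\mathcal{N}_{\mathbb{B}_2}$ before invoking uniqueness.
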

\begin{proof} Abusing notation, it straightforward to workout that the comparison functor sends a category $\mathbb{B}$ to the $\mathsf{Z}$-coalgebra $(\mathsf{Z}[\mathbb{B}], \dagger, \mathcal{W}_\mathbb{B})$, as defined in Cor \ref{cor:free-canon-coalgebra}, while it sends a functor $\mathcal{F}: \mathbb{B}_1 \to \mathbb{B}_2$ to the $\mathsf{Z}$-coalgebra functor $\mathsf{Z}[\mathcal{F}]: (\mathsf{Z}[\mathbb{B}_1], \dagger, \mathcal{W}_{\mathbb{B}_1}) \to (\mathsf{Z}[\mathbb{B}_2], \dagger, \mathcal{W}_{\mathbb{B}_2})$ where $\mathsf{Z}[\mathcal{F}]$ is defined as in (\ref{def:ZF}), so $\mathsf{Z}[\mathcal{F}] = (\mathcal{N}_{\mathbb{B}_2} \circ \mathcal{F})^\sharp$. $\mathsf{Z}[\mathcal{F}]$ is indeed a $\mathsf{Z}$-coalgebra functor due to well-known results about adjunctions. We need to show that this comparison functor is equivalence, which we will do so by showing that it is full, faithful, and essentially surjective. 

For fullness consider a $\mathsf{Z}$-coalgebra functor $\mathcal{G}: (\mathsf{Z}[\mathbb{B}_1], \dagger, \mathcal{W}_{\mathbb{B}_1}) \to (\mathsf{Z}[\mathbb{B}_2], \dagger, \mathcal{W}_{\mathbb{B}_2})$. By the universal property of free dagger categories, we must have that $\mathcal{G} = (\mathcal{G} \circ \mathbb{N}_{\mathbb{B}_1})^\sharp$, or in other words, $\mathcal{G} = \mathsf{Z}\left[ \mathcal{G} \circ \mathbb{N}_{\mathbb{B}_1} \right]$. For faithfulness, consider functors $\mathcal{F}_1: \mathbb{B}_1 \to \mathbb{B}_2$ and $\mathcal{F}_2: \mathbb{B}_1 \to \mathbb{B}_2$ such that $\mathcal{Z}[\mathcal{F}_1] = \mathcal{Z}[\mathcal{F}_2]$. This first implies that on objects, $\mathcal{F}_1(A) = \mathcal{F}_2(A)$. While on maps note that this gives us that $[\mathsf{id}, \mathcal{F}_1(f)] = \mathcal{Z}[\mathcal{F}_1](f) = \mathcal{Z}[\mathcal{F}_2](f) = [\mathsf{id}, \mathcal{F}_1(f)]$. However this of course implies that $\mathcal{F}_1(f)=\mathcal{F}_2(f)$. Thus we get that $\mathcal{F}_1=\mathcal{F}_2$. Lastly essential surjectivity follows from the fact that by Prop \ref{prop:zalg-zigzag}, every $\mathsf{Z}$-coalgebra is a free dagger category thus by Cor \ref{cor:free-iso.2} dagger isomorphic to a canonical free dagger category, and it easy to check that this dagger isomorphism is in fact an isomorphism of $\mathsf{Z}$-coalgebras. 
\end{proof}

\bibliographystyle{plain}      
\bibliography{references}   

\end{document}